\documentclass[a4paper,10pt]{amsart}


\usepackage[T1]{fontenc}
\usepackage[utf8]{inputenc}
\usepackage[english]{babel}
\usepackage{amsmath,amsthm,amssymb,bbm,enumerate,latexsym,tabularx}
\usepackage[all]{xy}
\usepackage{hyperref}


\renewcommand{\k}{\mathbbm k}


\theoremstyle{plain}
\newtheorem*{thm}{Theorem}
\newtheorem{THM}{Theorem}
\newtheorem*{prop}{Proposition}
\newtheorem{PROP}{Proposition}
\newtheorem{COR}{Corollary}
\newtheorem*{lem}{Lemma}
\newtheorem*{cor}{Corollary}

\theoremstyle{definition}
\newtheorem*{defn}{Definition}

\theoremstyle{remark}
\newtheorem*{rem}{Remark}
\newtheorem*{ex}{Example}


\numberwithin{equation}{subsubsection}


\subjclass[2010]{Primary 16E65, 16S40; Secondary 16E40, 16E45, 18G10}

\begin{document}

\title{Smash Products of Calabi-Yau Algebras by Hopf Algebras}

\author{Patrick Le Meur}

\address[Patrick Le Meur]{
  Laboratoire de Math\'ematiques Blaise Pascal, UMR6620 CNRS,
  Université Clermont Auvergne, Campus des Cézeaux, 3 place Vasarely,
  63178 Aubière cedex, France}

\curraddr{Universit\'e de Paris, Sorbonne Universit\'e, CNRS,
  Institut de Math\'ematiques de Jussieu-Paris Rive Gauche, IMJ-PRG,
  F-75013, Paris, France}

\email{patrick.le-meur@imj-prg.fr}

\thanks{First published in: Le Meur Patrick, Smash Products of Calabi-Yau Algebras by Hopf Algebras. J. Noncommut. Geom. 13 (2019), 887-961. DOI 10.4171/JNCG/341. © European Mathematical Society.}
\date{\today}

\keywords{Hopf algebra, smash product, Calabi-Yau algebra, skew
  Calabi-Yau algebra, Van den Bergh duality, Nakayama automorphism,
  homological determinant, weak homological determinant}

\begin{abstract}
  Let $H$ be a Hopf algebra and $A$ be an $H$-module algebra. This
  article investigates when the smash product $A\sharp H$ is (skew)
  Calabi-Yau, has Van den Bergh duality or is Artin-Schelter regular
  or Gorenstein. In particular, if $A$ and $H$ are skew Calabi-Yau,
  then so is $A\sharp H$ and its Nakayama automorphism is expressed
  using the ones of $A$ and $H$. This is based on a description of the
  inverse dualising complex of $A\sharp H$ when $A$ is a homologically
  smooth dg algebra and $H$ is homologically smooth and with
  invertible antipode. This description is also used to explain the
  compatibility of standard constructions of Calabi-Yau dg algebras
  with taking smash products.
\end{abstract}

\maketitle


\section*{Introduction}

The Calabi-Yau algebras were defined in \cite{G06} and are now widely
investigated. They appear in deformations of unimodular Poisson
structures (see \cite{MR3254768}, \cite{MR2734346} and \cite{vbd}). In
noncommutative geometry, many relevant Artin-Schelter regular algebras
are Calabi-Yau, like the Sklyannin algebras. The Calabi-Yau algebras
also appear as noncommutative resolutions of singularities, for
instance, as Jacobian algebras arising from brane tilings
(\cite{MR2592501}) or as skew group algebras of polynomial algebras
(see \cite{MR3357123} and \cite{MR3601076}).  Finally, there are
general constructions of candidates for being Calabi-Yau dg algebras,
such as the Ginzburg dg algebras of \cite{G06} or, more generally, the
(deformed) Calabi-Yau completions of \cite{MR2795754}, which are used
in the construction of cluster categories and their generalisations
(see \cite{A09a} and \cite{MR2739775}).

In these frameworks, many algebras of interest take the shape of a
smash product. While the works of Yekutieli (\cite{MR1762922}) and of
Brown and Zhang (\cite{MR2437632}) have shown that many interesting
Hopf algebras have a duality which is (weaker than and) close to being
Calabi-Yau, Reyes, Rogalski and Zhang initiated the study of the
algebras having this weaker duality (and called skew Calabi-Yau
algebras) by proving that, in the connected graded setting, being skew
Calabi-Yau is, on one hand, equivalent to being Artin-Schelter regular
and, on the other hand, relatively stable under taking smash products
with finite-dimensional Hopf algebras.


This motivates the work done in this article, which is, for a Hopf
algebra $H$ and an $H$-module differential graded algebra $A$, to
determine if $A\sharp H$ is Calabi-Yau (or, in case $A$ is an algebra,
if $A\sharp H$ has Van den Bergh duality or is skew Calabi-Yau).


Let $\k$ be a field. A differential graded (dg) ($\k$-)algebra $A$ is
called \emph{$n$-Calabi-Yau} if it is homologically smooth (that is,
$A\in {\mathrm{per}}(A^e)$) and ${\mathrm{RHom}}_{A^e}(A,A^e)[n]\simeq A$ in the
derived category $\mathcal D(A^e)$ of
$A^e=A\underset{\k}\otimes A^{\mathrm{op}}$. Often, a cofibrant replacement
of ${\mathrm{RHom}}_{A^e}(A,A^e)$ is referred to as an \emph{inverse dualising
  complex} of $A$.

When $A$ is a
$\k$-algebra, being Calabi-Yau means that $A$ admits a finite resolution by
finitely generated projective left $A^e$-modules (or, $A$-bimodules), and ${\mathrm{Ext}}_{A^e}^i(A,A^e)$ is isomorphic to $A$ as an $A$-bimodule if
$i=n$ and is zero otherwise. Recall the following weaker forms of duality.
\begin{itemize}
\item $A$ has \emph{Van den Bergh duality} in dimension $n$ if it is
  homologically smooth and the $A$-bimodule
  $\mathrm{Ext}^i_{A^e}(A,A^e)$ is invertible if $i=n$ and is zero
  otherwise.
\item $A$ is \emph{skew Calabi-Yau} in dimension $n$ when it has Van
  den Bergh duality in dimension $n$ and, moreover,
  $\mathrm{Ext}^n_{A^e}(A,A^e)$ is isomorphic to
  $A^{\mu_A}=\,^1A^{\mu_A}$ as an $A$-bimodule, for some automorphism
  $\mu_A \in {\mathrm{Aut}}_{\k-{\mathrm{alg}}}(A)$.
\end{itemize}
The naming in the former case refers to the sufficient conditions for
the duality theorem of Van den Bergh on the Hochschild (co)homology of
$A$ to hold true (see \cite[Theorem~1]{MR1443171}).  In the latter
case, $\mu_A$ is called a \emph{Nakayama automorphism.} It is uniquely
determined up to the composition with an inner automorphism. As usual,
given $\k$-algebra homomorphisms $\tau,\sigma\colon A\to A$, the piece
of notation $^\tau A^\sigma$ stands for the $\k$-vector space $A$ with
$A$-bimodule structure given by
$a\cdot x \cdot b = \tau(a)x\sigma(b)$.


This article hence describes an inverse dualising complex of
$A\sharp H$ when $A$ is a homologically smooth dg algebra acted on by
a homologically smooth Hopf algebra $H$ with invertible antipode. On
one hand, when $H$ is involutive, this description is applied to
express the deformed Calabi-Yau completions of $A\sharp H$ in terms of
smash products with $H$ of the deformed Calabi-Yau completions of
$A$. On the other hand, when $A$ is an algebra, this description is
applied to give necessary and/or sufficient conditions for $A\sharp H$
to have Van den Bergh duality or to be skew Calabi-Yau (with an
explicit Nakayama automorphism). As a consequence, the Nakayama
automorphisms of Artin-Schelter regular algebras have trivial
homological determinants as conjectured in \cite[Conjecture
6.4]{MR3250287}. Also, explicit Nakayama automorphisms are computed for
the smash products arising from actions of finite-dimensional Lie
algebras on polynomial algebras and from actions of
$\mathcal U_q(\mathfrak{sl}_2)$ on the quantum plane.


In this article, $H$ denotes a Hopf algebra with antipode $S$ and $A$
denotes an $H$-module dg algebra. ``$A$ is an algebra'' means that $A$
is concentrated in degree $0$ as a dg algebra. The smash product
$A\sharp H$ is denoted by $\Lambda$. When $A$ is augmented (or
connected ($\mathbb N$-)graded), it is also assumed that the
augmentation ideal is an $H$-submodule of $A$ (or that the action of
$H$ on $A$ preserves the grading, respectively).

\section{Main results and structure of the article}
\label{sec:main-results-sturct}

Assuming that $S$ is invertible is convenient and many Hopf algebras
which are relevant to the dualities considered here have this property
(see \cite{MR2437632}). This is the case of noetherian and Calabi-Yau
Hopf algebras (see \cite[Theorem 2.3]{MR2678828} whose proof can be
adapted to noetherian Hopf algebras with Van den Bergh duality). This
is actually the case for a broader class of Hopf algebras.

\begin{PROP}[\ref{sec:38}]
  \label{prop1}
  Any Hopf algebra with Van den Bergh duality has an invertible antipode.
\end{PROP}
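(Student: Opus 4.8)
The plan is to compute the inverse dualising complex $\rhom_{H^e}(H,H^e)$ explicitly and to read off the invertibility of $S$ from the shape of the resulting bimodule. Writing $H^e=H\otimes H^{\rm op}$ and regarding $H$ as an $H^e$-module in the usual way, the first step is the standard reduction of Hochschild cohomology for a Hopf algebra: for every $H$-bimodule $M$ there is a natural isomorphism $\rhom_{H^e}(H,M)\simeq\rhom_H(\k,M_{\rm ad})$, where $M_{\rm ad}$ is $M$ equipped with the adjoint action $h\rightharpoonup m=\sum h_{(1)}\,m\,S(h_{(2)})$ and $\k$ is the trivial module defined by the counit. Crucially, this reduction is built only from $\Delta$, $\varepsilon$ and $S$, and never uses $S^{-1}$; it is therefore available for an arbitrary Hopf algebra, and this non-circularity is exactly what lets the argument \emph{produce} invertibility of $S$ rather than presuppose it.

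Applying this to $M=H^e=H\otimes H$ with its outer bimodule structure (left multiplication on the first factor, right multiplication on the second), the second step is to untwist the adjoint module $(H\otimes H)_{\rm ad}$. A change of variables such as $x\otimes y\mapsto\sum x_{(1)}\otimes S(x_{(2)})y$, again assembled solely from $\Delta$ and $S$, identifies $(H\otimes H)_{\rm ad}$, as a left $H$-module, with $H\otimes_\k H$ carrying the regular action on the first factor and the trivial action on the second. Since the trivial factor pulls out of $\rhom_H(\k,-)$, this yields
\[
\rhom_{H^e}(H,H^e)\;\simeq\;\rhom_H(\k,H)\otimes_\k H,
\]
in which the right-hand factor $H$ retains the outer right $H$-action coming from the second factor of $H^e$, while $\rhom_H(\k,H)$ carries a twisted outer left $H$-action. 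Homological smoothness of $H$ guarantees that this is a perfect complex with finite-dimensional cohomology.

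Now impose Van den Bergh duality. By definition $\rhom_{H^e}(H,H^e)$ is concentrated in a single degree $n$ and equals there an invertible $H$-bimodule $U$. Concentration forces ${\rm Ext}^i_H(\k,H)=0$ for $i\neq n$, and in degree $n$ the displayed isomorphism gives $U\cong\xi\otimes_\k H$, where $\xi={\rm Ext}^n_H(\k,H)$; in particular $U$ is free of rank $\dim_\k\xi$ as a right $H$-module. Invertibility supplies an inverse bimodule $V$ with $U\otimes_H V\cong H$, so $V^{\oplus\dim_\k\xi}\cong H$ as right modules; tensoring this on the right with the trivial left module $\k$ over $H$ (i.e.\ applying the augmentation) forces $\dim_\k\xi=1$. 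Thus $\xi$ is one-dimensional, given by an algebra character $\pi\colon H\to\k$, and $U$ is free of rank one as a right $H$-module.

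Finally, being free of rank one on the right, $U\cong{}^\nu H$ for a unique $\k$-algebra endomorphism $\nu$ (the left action read through the generator), and invertibility of $U$ forces $\nu\in{\rm Aut}_{\k-{\rm alg}}(H)$. The last step is to evaluate $\nu$ by tracing the twisted outer left action through the adjoint reduction and the untwisting above: the winding contribution of the character $\pi$ combines with the two occurrences of $S$ to give $\nu=\xi_\pi\circ S^2$, the composite of $S^2$ with the left winding automorphism $\xi_\pi\colon h\mapsto\sum\pi(h_{(1)})h_{(2)}$ (which is always bijective, with inverse the winding by $\pi\circ S$). Since $\nu$ and $\xi_\pi$ are bijective, so is $S^2=\xi_\pi^{-1}\circ\nu$; hence $S$ is both injective and surjective, i.e.\ invertible. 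I expect the main obstacle to be precisely the bookkeeping of steps two and four: carrying the outer $H^e$-bimodule structure faithfully through the adjoint reduction and the untwisting so that $S^2$ (and not $S^{-1}$) appears in $\nu$. Because every map used is expressed through $\Delta$, $\varepsilon$ and $S$ alone, the whole computation stays valid for an arbitrary Hopf algebra, and invertibility of $S$ emerges only at the very end.
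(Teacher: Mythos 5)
Your proposal follows the same global strategy as the paper's proof in \ref{sec:38} --- express $\rhom_{H^e}(H,H^e)$ through a one-sided ${\rm Ext}$ of the trivial module, observe that the resulting invertible bimodule $U$ is free of rank one on one side and twisted on the other by a winding automorphism composed with $S^2$, and conclude that $S^2$, hence $S$, is bijective --- but it deviates at two points. First, you reduce via the adjoint action (induction along $h\mapsto h_1\otimes S(h_2)$, landing on ${\rm Ext}^*_H(\,_H\k,H)$), whereas the paper works on the mirror side: in \ref{sec:24} it turns a resolution $P$ of $\k_H$ by finitely generated projective right $H$-modules into a bimodule resolution $\overline P$ of $H$ and obtains $\rhom_{H^{\rm op}}(\k_H,H)\!\!\uparrow^{H^e}$, with bimodule structure $h(\ell\otimes e)k=S^2(h_1)\ell k\otimes (h_2\rightharpoonup e)$; both reductions use only $\Delta$, $\epsilon$, $S$, so both are non-circular, as you rightly stress. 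Second, and more substantially, your argument for one-dimensionality of $\xi$ is genuinely different: you use only that $U\cong\xi\otimes_\k H$ is free on one side together with invertibility (so $V^{\oplus \dim_\k\xi}\cong H$ on that side, and applying the augmentation forces $\dim_\k\xi=1$), whereas the paper invokes the spectral-sequence lemma of \ref{sec:suff-cond-right} (adapted from Brown--Zhang), $E_2^{p,q}={\rm Ext}^p_H({\rm Ext}^{-q}_{H^{\rm op}}(\k_H,H),H)\Rightarrow\k$, together with finiteness of injective dimension. Your Morita-style rank argument is more elementary and suffices for this proposition; the paper's lemma yields more (one-dimensionality of the ${\rm Ext}$ on \emph{both} sides, i.e.\ the Artin--Schelter property), which it reuses elsewhere. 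Note also that your parenthetical claim that smoothness gives finite-dimensional cohomology is false --- perfectness of $\,_H\k$ only makes ${\rm Ext}^n_H(\k,H)$ finitely generated over $H$ (for $H=\k F_2$ the space ${\rm Ext}^1_H(\k,H)$ is infinite-dimensional) --- but this is harmless, since $(V\otimes_H\k)^{\oplus\dim_\k\xi}\cong\k$ rules out infinite $\dim_\k\xi$ by itself.

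There is one genuine computational error, though it is local and fixable: the untwisting map $\phi\colon x\otimes y\mapsto x_1\otimes S(x_2)y$ is not $H$-linear for the structures you describe. With the adjoint action $h\rightharpoonup(x\otimes y)=h_1x\otimes yS(h_2)$ one computes $\phi(h\rightharpoonup(x\otimes y))=h_1x_1\otimes S(x_2)S(h_2)\,y\,S(h_3)$, which is not $hx_1\otimes S(x_2)y$. The map that does work is $x\otimes y\mapsto x_1\otimes yS^2(x_2)$, with inverse $u\otimes v\mapsto u_1\otimes vS(u_2)$ (the usual trick $H\otimes M\cong H\otimes M_{\rm triv}$ applied to $M=H$ carrying the action $h\cdot y=yS(h)$). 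Carrying the residual (inner) $H^e$-action through this isomorphism gives, on $\xi\otimes H$, the structure $b\cdot(e\otimes v)\cdot a=e a_1\otimes b\,v\,S^2(a_2)$: the free side and the twisted side therefore come out opposite to what you wrote ($U$ is free as a \emph{left} module and twisted on the right by $S^2\circ\Xi^{\ell}_{\pi}$), but after swapping left and right throughout, your rank argument and your final step --- an invertible bimodule free of rank one on one side has its twisting endomorphism forced to be an automorphism, whence $S^2$ and then $S$ are bijective --- go through unchanged, in agreement with the paper's conclusion $U\simeq\,^{S^2\circ\Xi^r_{\int_r}}H$. You flagged exactly this bookkeeping as the main obstacle, and it is indeed the only place where your writeup, as it stands, fails.
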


When $S$ is invertible and $H$ is noetherian, it is proved in
\cite{MR2437632} that, if $H$ has Van den Bergh duality or is
Artin-Schelter regular, then it is skew Calabi-Yau with
$S^{-2}\circ \Xi^r_{\int_\ell}$ as a Nakayama automorphism. Here,
$\int_r$ is the right homological integral of $H$ and $\Xi_{\int_r}^r$
is the corresponding right winding automorphism of $H$. Combining this
result and Proposition~\ref{prop1} yields the following
characterisation.

\begin{THM}[\ref{sec:25}, \ref{sec:nak}, \ref{sec:char-calabi-yau}]
  \label{thm2}
  Let $H$ be a Hopf algebra with antipode $S$. The following
  conditions are equivalent 
  \begin{enumerate}[(i)]
  \item $H$ has Van den Bergh duality,
  \item  $\k_H\in
    {\mathrm{per}}(H^{\mathrm{op}})$, $S$ is invertible and the graded
    $\k$-vector space ${\mathrm{Ext}}^*_{H^{\mathrm{op}}}(\k_H,H)$ is
    finite-dimensional and concentrated in one degree,
  \item $H$ is skew Calabi-Yau.
  \end{enumerate}
  Under any of these conditions, $S^{-2}\circ \Xi_{\int_r\circ S}^r$
  is a Nakayama automorphism of $H$. In particular, $H$ is Calabi-Yau
  if and only if $\k_H\in {\mathrm{per}}(H^{\mathrm{op}})$, $S^2$ is an inner
  automorphism of $H$ and $H$ has the right Artin-Schelter property
  with trivial right homological integral.
\end{THM}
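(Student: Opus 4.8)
The plan is to treat the easy implication directly and to reduce the remaining bimodule-level conditions to one-sided conditions over $H$, using Proposition~\ref{prop1} to supply the invertibility of $S$. The implication (iii)$\Rightarrow$(i) is immediate: if $\rhom_{H^e}(H,H^e)[n]\simeq H^\sigma$ in $\mathcal D(H^e)$ for some $\sigma\in{\rm Aut}_{\k-{\rm alg}}(H)$, then the cohomology bimodule $H^\sigma$ is invertible, its inverse being $H^{\sigma^{-1}}$, so $H$ has Van den Bergh duality. The substance of the theorem therefore lies in the cycle (i)$\Rightarrow$(ii)$\Rightarrow$(iii) together with the identification of the Nakayama automorphism.

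The central device is the classical reduction, available for any Hopf algebra, of the bimodule inverse dualising complex to the homology of the trivial module. Exploiting that $H\otimes H$ is free on each side and that the antipode interchanges left and right actions, one produces in $\mathcal D(H^e)$ an isomorphism whose cohomology is, degree by degree, ${\rm Ext}^*_{H^{\rm op}}(\k_H,H)$ tensored over $\k$ with $H$, the $H$-bimodule structure on the right-hand side being the regular one deformed by the antipode. Two facts are then read off. First, homological smoothness $H\in{\rm per}(H^e)$ is equivalent to $\k_H\in{\rm per}(H^{\rm op})$; transporting perfectness across the two sides is exactly where $S$ must be invertible, which under hypothesis (i) is granted by Proposition~\ref{prop1} and under hypothesis (ii) is assumed. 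Second, since tensoring a graded $\k$-space with $H$ is exact and faithful, $\rhom_{H^e}(H,H^e)$ is concentrated in a single degree with invertible cohomology if and only if ${\rm Ext}^*_{H^{\rm op}}(\k_H,H)$ is concentrated in a single degree and is one-dimensional over $\k$. As an invertible bimodule forces the one-sided datum to be one-dimensional, this yields (i)$\Rightarrow$(ii) at once.

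For (ii)$\Rightarrow$(iii) the same isomorphism is run in reverse, and here lies the main obstacle, since this is precisely the point at which the noetherian hypothesis of \cite{MR2437632} must be removed. The difficulty is that (ii) only asks the one-sided module $E:={\rm Ext}^*_{H^{\rm op}}(\k_H,H)$ to be finite-dimensional, whereas invertibility of the dualising bimodule requires $E$ to be one-dimensional. The passage from finite-dimensional to one-dimensional is the genuinely Hopf-theoretic input: the homological integral $E$ carries, beyond its $H$-module structure, a rational $H$-comodule structure coming from the comultiplication, so that $E$ is a Hopf module; by the fundamental theorem of Hopf modules a finite-dimensional Hopf module concentrated in a single cohomological degree is forced to be one-dimensional. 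Once $E\simeq\int_r$ is a character, the reduction isomorphism delivers an $H$-bimodule isomorphism ${\rm Ext}^n_{H^e}(H,H^e)\simeq H^\sigma$, which is (iii).

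It remains to compute $\sigma$ and to specialise to the Calabi-Yau case. Tracking the bimodule structure through the reduction isomorphism, the left action acquires the twist by $S^{-2}$ arising from moving the regular bimodule across the antipode twice, while the right action is deformed by the character $\int_r\circ S$ read off from the integral; this gives $\sigma=S^{-2}\circ\Xi^r_{\int_r\circ S}$, matching \cite{MR2437632} through the standard identity $\int_\ell=\int_r\circ S$ between the two homological integrals. Finally, $H$ is Calabi-Yau precisely when $\sigma$ is inner. Because the winding automorphism by $\int_r\circ S$ and the automorphism $S^{-2}$ contribute independently—the former being inner only for the trivial character, and $S$ being a bijection on characters—innerness of $\sigma=S^{-2}\circ\Xi^r_{\int_r\circ S}$ amounts to $\int_r$ being trivial together with $S^2$ being inner. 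Combined with $\k_H\in{\rm per}(H^{\rm op})$ and the right Artin-Schelter property extracted above, this is exactly the stated characterisation of the Calabi-Yau property.
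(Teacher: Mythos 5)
Your proposal reproduces the paper's architecture in most respects: the reduction $\rhom_{H^e}(H,H^e)\simeq \rhom_{H^{\rm op}}(\k_H,H)\!\!\uparrow^{H^e}$ (the paper's \ref{sec:24}), the equivalence of homological smoothness with $\k_H\in{\rm per}(H^{\rm op})$ (\ref{sec:23}), the appeal to Proposition~\ref{prop1} for invertibility of $S$, the identification of the Nakayama automorphism by tracking the twisted bimodule structure, and the Calabi--Yau characterisation via the fact that $\epsilon$ kills inner automorphisms (so that innerness of $S^{-2}\circ \Xi^r_{\int_r\circ S}$ forces $\int_r$ trivial and $S^2$ inner) --- this last point is essentially the paper's \ref{sec:nak}(2). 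However, the step you yourself single out as the crux of (ii)$\Rightarrow$(iii), upgrading ``${\rm Ext}^*_{H^{\rm op}}(\k_H,H)$ finite-dimensional and concentrated in one degree'' to ``one-dimensional'', rests on an argument that fails. First, you give no construction of the claimed coaction: $E={\rm Ext}^d_{H^{\rm op}}(\k_H,H)$ is a left $H$-module, but Ext-spaces over a Hopf algebra carry no natural compatible $H$-comodule structure in this generality, so there is no reason $E$ is a Hopf module. Second, and decisively, even if $E$ were a Hopf module, the fundamental theorem of Hopf modules asserts $E\simeq E^{{\rm co}H}\otimes H$, i.e.\ $E$ is \emph{free} over $H$; hence a nonzero finite-dimensional Hopf module can only exist when $H$ is finite-dimensional, and then its dimension is a multiple of $\dim_\k H$. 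The theorem can never output ``one-dimensional'' (unless $H=\k$), so it cannot be the mechanism by which the noetherian hypothesis of \cite{MR2437632} is removed.

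The paper's actual mechanism is the lemma in \ref{sec:suff-cond-right} (adapted from \cite{MR2437632} and \cite{MR1482982}), which is homological rather than comodule-theoretic: under (a) ${\rm id}(\,_HH)<\infty$, (b) $\k_H$ perfect over $H^{\rm op}$, and (c) finite-dimensionality in a single degree $d$, the convergent spectral sequence ${\rm Ext}^p_H({\rm Ext}^{-q}_{H^{\rm op}}(\k_H,H),H)\Rightarrow \k$ degenerates and gives $\dim_\k {\rm Ext}^d_H(V,H)=1$ for $V=E$; then the untwisting isomorphism $H\otimes V^*\simeq H^{\dim_\k V}$ in ${\rm mod}(H)$ (the only genuinely Hopf-theoretic input, and a module-level cousin of the freeness you invoked) yields ${\rm Ext}^d_H(V,H)\simeq {\rm Ext}^d_H(\,_H\k,H)^{\dim_\k V}$, whence $1=\dim_\k V\cdot \dim_\k{\rm Ext}^d_H(\,_H\k,H)$ forces both factors to equal $1$. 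Without this (or an equivalent) argument your implication (ii)$\Rightarrow$(iii) does not go through. A smaller remark of the same kind: in (i)$\Rightarrow$(ii) you assert that invertibility of the dualising bimodule forces the one-sided datum to be one-dimensional ``at once''; for (ii) only finite-dimensionality is needed, and that does follow easily (an invertible bimodule is finitely generated on each side, while $H\otimes E$ is right-free of rank $\dim_\k E$), but one-dimensionality again requires a real argument --- e.g.\ applying $-\underset{H}\otimes\,_H\k$ to $U\underset{H}\otimes U^{-1}\simeq H$, or, as the paper does, the same lemma \ref{sec:suff-cond-right}, which it also needs to establish the right Artin--Schelter property appearing in the final characterisation.
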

Note that it is proved in \cite[Theorem 2.3]{MR2678828} that, when $H$
is noetherian, $H$ is Calabi-Yau if and only if $S^2$ is an inner
automorphism and $H$ is Artin-Schelter regular with trivial left
homological integral.


This article is based on the description of
${\mathrm{RHom}}_{\Lambda^e}(\Lambda,\Lambda^e)$. When $S$ is invertible, there
exists a dg $A$-bimodule $D_A$ which is $H_{S^2}$-equivariant in the
sense of \cite{MR3250287} (see Section~\ref{sec:equivariant-modules})
and such that $D_A\simeq {\mathrm{RHom}}_{A^e}(A,A^e)$ in $\mathcal D(A^e)$. A
suitable extension of $D_A$ is then isomorphic to
${\mathrm{RHom}}_{\Lambda^e}(\Lambda,\Lambda^e)$ in the following sense. See
\ref{sec:4} for a general statement. See also \cite{MR2115022,
  MR3188338, MR2678828, MR2809906, MR2905560, MR3250287, MR3125886,
  MR2813562} for previous results describing
${\mathrm{RHom}}_{\Lambda^e}(\Lambda,\Lambda^e)$ when $A$ is a connected graded
algebra and $H$ is finite-dimensional, semisimple or cocommutative.

\begin{PROP}[\ref{sec:5}]
  \label{prop2}
  Let $H$ be a Hopf algebra with Van den Bergh duality in dimension $d$.
Let $A$ be an $H$-module dg algebra. Assume that $A$ is
  homologically smooth. Then, $\Lambda$ is homologically smooth and
  ${\mathrm{RHom}}_{\Lambda^e}(\Lambda,\Lambda^e) 
  \simeq D_A\sharp\,^\sigma H[-d]$ where $\sigma=(S^{-2}\circ
  \Xi_{\int_\ell}^r)^{-1} = S^2\circ \Xi_{\int_\ell\circ S}^r$.  
\end{PROP}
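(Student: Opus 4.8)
The plan is to obtain this proposition as the specialisation to Van den Bergh duality of the general description of $\rhom_{\Lambda^e}(\Lambda,\Lambda^e)$ established in \ref{sec:4}, the only genuine work being to make the $H$-factor explicit and to identify the automorphism $\sigma$. First I would record the inputs. Since $H$ has Van den Bergh duality, Proposition~\ref{prop1} guarantees that $S$ is invertible, so the $H_{S^2}$-equivariant dg $A$-bimodule $D_A$ with $D_A\simeq\rhom_{A^e}(A,A^e)$ in $\mathcal D(A^e)$ is available. Van den Bergh duality also forces $H$ to be homologically smooth, and $A$ is homologically smooth by hypothesis; thus $A$ and $H$ are homologically smooth and $S$ is invertible, which are precisely the standing assumptions of \ref{sec:4}. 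Applying that result yields at once that $\Lambda$ is homologically smooth and a description, in $\mathcal D(\Lambda^e)$, of $\rhom_{\Lambda^e}(\Lambda,\Lambda^e)$ as the smash product of $D_A$ with an equivariant inverse dualising complex of $H$.

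Next I would make the $H$-factor explicit. By Theorem~\ref{thm2}, $H$ is skew-Calabi-Yau of dimension $d$ with Nakayama automorphism $\mu=S^{-2}\circ\Xi_{\int_r\circ S}^r$, so that $\rhom_{H^e}(H,H^e)\simeq{}^1H^\mu[-d]$ as dg $H$-bimodules. Van den Bergh duality presents the twist on the right of $H$, whereas the smash product $D_A\sharp{}^\sigma H$ carries it on the left; rewriting it via the standard isomorphism ${}^1H^\mu\simeq{}^{\mu^{-1}}H^1$ and substituting into the general formula therefore turns the $H$-factor into ${}^\sigma H[-d]$ with $\sigma=\mu^{-1}$, the shift $[-d]$ being exactly the Van den Bergh dimension of $H$. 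It then remains to check that $\mu^{-1}$ is the automorphism displayed in the statement. Using that $\int_\ell$ and $\int_r$ are interchanged by the antipode, i.e. $\int_\ell=\int_r\circ S$ at the level of characters, one first rewrites $\mu=S^{-2}\circ\Xi^r_{\int_\ell}$, whence $\sigma=\mu^{-1}=(S^{-2}\circ\Xi^r_{\int_\ell})^{-1}$; and using that $\eta\mapsto\Xi^r_\eta$ is compatible with composition, that the convolution inverse of a character $\eta$ is $\eta\circ S$, and that $S^2$ conjugates $\Xi^r_\eta$ into $\Xi^r_{\eta\circ S^2}$, one simplifies this to $S^2\circ\Xi^r_{\int_\ell\circ S}$, giving both forms of $\sigma$.

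Since the substance of the argument is imported from \ref{sec:4}, the delicate point within the present proof is purely the bookkeeping of module structures and twists. One must verify that the $H_{S^2}$-equivariant structure carried by $D_A$ is compatible with the $\sigma$-twisting of $H$, so that $D_A\sharp{}^\sigma H$ is genuinely a dg $\Lambda$-bimodule and the isomorphism furnished by the general formula survives the substitution $\rhom_{H^e}(H,H^e)\simeq{}^1H^\mu[-d]$. The most error-prone part is tracking which winding automorphism appears: the passage from $\mu$, expressed through $\int_r\circ S$ in Theorem~\ref{thm2}, to the $\int_\ell$-form required here rests on several convention-sensitive identities relating $\int_\ell$, $\int_r$ and their twists by $S$, and a single side— or $S$-versus-$S^{-1}$—slip there would produce the wrong $\sigma$.
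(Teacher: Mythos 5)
Your reduction to the general description in \ref{sec:4} is the right first move, and the formula you end up with for $\sigma$ is the correct one, but the central substitution step rests on a misreading of what \ref{sec:4} actually provides, and this is a genuine gap. The result in \ref{sec:4} does \emph{not} express $\rhom_{\Lambda^e}(\Lambda,\Lambda^e)$ as a smash product of $D_A$ with an equivariant inverse dualising complex of $H$: it gives $D_A\otimes H\otimes E_H$, where $E_H$ is a cofibrant replacement in $\mathcal C(H^{\rm op})$ of the \emph{one-sided} complex $\rhom_H(\,_H\k,H)$ (a complex of right $H$-modules), and where the three factors are tied together by the intertwined $\Lambda$-bimodule structure \eqref{eq:11}. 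The $H$-bimodule $\rhom_{H^e}(H,H^e)$ is simply not a factor of that formula, so there is nothing into which one can substitute the skew-Calabi-Yau isomorphism $\rhom_{H^e}(H,H^e)\simeq\,^{\mu^{-1}}H[-d]$ coming from Theorem~\ref{thm2}; the proposed substitution does not parse, and no argument is offered to convert the factor $\rhom_H(\,_H\k,H)$ into the bimodule $\rhom_{H^e}(H,H^e)$. A comparison of that kind is the content of \ref{sec:24}, but even there it is established for $\rhom_{H^{\rm op}}(\k_H,H)$ with its \emph{left} $H$-action, i.e.\ on the other side, so an additional antipode twist would be needed, and none of this is in your write-up.

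What the specialisation actually requires (and what the paper does in \ref{sec:5}) is the following: Van den Bergh duality of $H$ yields the left Artin-Schelter property, so ${\rm Ext}^i_H(\,_H\k,H)$ is one-dimensional for $i=d$ and zero otherwise, with right $H$-action through the character $\int_\ell$; hence $E_H\simeq \k_{\int_\ell}[-d]$ in $\mathcal D(H^{\rm op})$. Feeding this into \eqref{eq:11} replaces $e\leftharpoonup S^{-1}(h_3)$ by the scalar $\int_\ell(S^{-1}(h_3))$, which produces on $D_A\otimes H[-d]$ exactly the structure \eqref{eq:25}; one then recognises, using $\Xi^r_{\int_\ell\circ S^{-1}}=(\Xi^r_{\int_\ell})^{-1}$ and the fact that $S^2$ commutes with right winding automorphisms, that this is $D_A\sharp\,^\sigma H[-d]$ with $\sigma=(S^{-2}\circ\Xi^r_{\int_\ell})^{-1}$. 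In other words, the twist $\sigma$ emerges from the terms $S^2(h_2)$ and $\int_\ell(S^{-1}(h_3))$ in \eqref{eq:11}, not from a bimodule identification of $\rhom_{H^e}(H,H^e)$. Note also that your final identification passes from Theorem~\ref{thm2}'s automorphism $S^{-2}\circ\Xi^r_{\int_r\circ S}$ to the $\int_\ell$-form via the relation $\int_\ell=\int_r\circ S$; that relation is true but is nowhere established in the paper, whereas the paper's route works with $\int_\ell$ from the start and never needs to mention $\int_r$.
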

Here, $D_A\sharp \,^\sigma H$ is the dg $\Lambda$-bimodule associated
to the $H_{S^2}$-equivariant dg $A$-bimodule $D_A$ and defined in
\cite{MR3250287} (see \ref{sec:asharp-h-bimodules} for a reminder).

The description of ${\mathrm{RHom}}_{\Lambda^e}(\Lambda,\Lambda^e)$ can be used
to describe the deformed Calabi-Yau completions of $\Lambda$. Recall
that ${\mathrm{HH}}_{n-2}(A)\simeq H^0{\mathrm{Hom}}_{A^e}(D_A[n-1],A[1])$ when $D_A$
is cofibrant over $A^e$, which is possible to assume. The following
result was proved in \cite{L10c} when $H$ is the (semisimple) group
algebra of a finite group.

\begin{THM}[\ref{sec:calabi-yau-compl},
  \ref{sec:smash-products-with}]
  \label{sec:main-results-sturct-1}
  Let $H$ be an involutive Hopf algebra which is moreover Calabi-Yau
  in dimension $d$. Let $A$ be a homologically smooth $H$-module dg
  algebra. Let $n\in \mathbb Z$.
  \begin{enumerate}
  \item The $n$-Calabi-Yau completion $\Pi_n(A)$ is an $H$-module dg
    algebra and the dg algebras $\Pi_n(A)\sharp H$ and
    $\Pi_{n+d}(A\sharp H)$ are isomorphic.
  \item Given a deformed Calabi-Yau completion $\Pi_n(A,\alpha)$ such
    that $\alpha \in {\mathrm{HH}}_{n-2}(A)$ arises from an $H$-linear
    cocycle $D_A[n-1]\to A[1]$, then $\Pi_n(A,\alpha)$ is an
    $H$-module dg algebra and there is an associated $\overline \alpha
    \in {\mathrm{HH}}_{n+d-2}(\Lambda)$ such that $\Pi_n(A,\alpha)\sharp
    H\simeq \Pi_{n+d}(A\sharp H,\overline \alpha)$.
  \end{enumerate}
\end{THM}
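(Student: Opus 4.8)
The plan is to reduce both statements to a single monoidality principle—that the functor $-\sharp H$ from $H$-equivariant dg $A$-bimodules to dg $\Lambda$-bimodules intertwines $\otimes_A$ with $\otimes_\Lambda$ and hence carries tensor algebras to tensor algebras—combined with the identification of an inverse dualising complex of $\Lambda$ furnished by Proposition~\ref{prop2}. First I would specialise that proposition to the present hypotheses: a Calabi-Yau Hopf algebra has Van den Bergh duality and, by Proposition~\ref{prop1}, invertible antipode, so Proposition~\ref{prop2} applies, and its twisting automorphism $\sigma=S^2\circ\Xi^r_{\int_\ell\circ S}$ is the identity here because the antipode is involutive (so $S^2=\mathrm{id}$) and the homological integral of a Calabi-Yau Hopf algebra is trivial (so $\Xi^r_{\int_\ell\circ S}=\mathrm{id}$). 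Taking $D_A$ cofibrant over $A^e$ makes $D_A\sharp H$ cofibrant over $\Lambda^e$, so $\rhom_{\Lambda^e}(\Lambda,\Lambda^e)\simeq D_A\sharp H[-d]$ is an admissible inverse dualising complex for forming the Calabi-Yau completion of $\Lambda$, and in particular $\rhom_{\Lambda^e}(\Lambda,\Lambda^e)[n+d-1]\simeq(D_A\sharp H)[n-1]$.

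For part (1), since $S^2=\mathrm{id}$ the $H_{S^2}$-equivariant bimodule $D_A$ is an honest $H$-equivariant dg $A$-bimodule, so the $H$-action extends through the coproduct to all tensor powers and hence endows $\Pi_n(A)=T_A(D_A[n-1])$ with an $H$-module dg algebra structure. The monoidality of $-\sharp H$ provides natural isomorphisms $(M\sharp H)\otimes_\Lambda(M'\sharp H)\cong(M\otimes_A M')\sharp H$ of dg $\Lambda$-bimodules compatible with the associativity constraints; iterating these and using that smash product commutes with shifts yields an isomorphism of dg algebras
\[
  \Pi_n(A)\sharp H = T_A(D_A[n-1])\sharp H\;\cong\;T_\Lambda\big((D_A\sharp H)[n-1]\big).
\]
By the shift computation recorded above, the right-hand side is precisely $\Pi_{n+d}(\Lambda)$, which proves the claim.

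For part (2), the $H$-linear cocycle $\theta\colon D_A[n-1]\to A[1]$ representing $\alpha$ is transported by $-\sharp H$ to a cocycle $\theta\sharp H\colon (D_A\sharp H)[n-1]\to\Lambda[1]$ of dg $\Lambda$-bimodules, whose class is the desired $\overline\alpha\in{\rm HH}_{n+d-2}(\Lambda)$ under the identification ${\rm HH}_{n+d-2}(\Lambda)\simeq H^0\Hom_{\Lambda^e}((D_A\sharp H)[n-1],\Lambda[1])$. The $H$-linearity of $\theta$ guarantees that the induced derivation of $\Pi_n(A)$ is $H$-equivariant, so $\Pi_n(A,\alpha)$ is an $H$-module dg algebra. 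Both deformed completions have the same underlying graded algebras as their undeformed counterparts, so the graded isomorphism of part (1) is available; it then remains to check that this isomorphism intertwines the two deformation terms of the differentials, which follows from naturality of $-\sharp H$ applied to $\theta$ together with the identification $\theta\sharp H$ of the two cocycles.

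The step I expect to be the main obstacle is the verification that $-\sharp H$ is genuinely monoidal at the differential graded level, namely that the isomorphisms $(M\sharp H)\otimes_\Lambda(M'\sharp H)\cong(M\otimes_A M')\sharp H$ are compatible with the differentials, the equivariant structures and the associativity constraints, so that they assemble into an isomorphism of dg algebras between the two tensor algebras rather than a mere degreewise linear one. A secondary technical point is to ensure that cofibrancy is preserved under $-\sharp H$, so that the derived identification of Proposition~\ref{prop2} can be used as the strict bimodule defining the completion, and that the shift bookkeeping places $\overline\alpha$ in Hochschild degree exactly $n+d-2$.
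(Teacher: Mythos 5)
Your proposal follows the paper's own route, just entered at the specialised end. The ``monoidality principle'' you defer as the main obstacle is precisely the paper's lemma on tensor products of equivariant bimodules (\ref{sec:26}--\ref{sec:27}), proved there in the twisted form $(D\sharp\,^\sigma H)\underset{\Lambda}{\otimes}(D'\sharp\,^\tau H)\simeq (D\underset{A}{\otimes}D')\sharp\,^{\tau\circ\sigma}H$, with equivariance types adding; from this the paper gets $\Pi_{n+d}(A\sharp H)\simeq \Pi_n(A)\sharp\,^{\sigma^*}H$ for \emph{every} Hopf algebra $H$ with Van den Bergh duality (\ref{sec:calabi-yau-compl}), and the theorem's case is exactly your reduction: Calabi-Yau plus involutive antipode forces $\int_\ell=\epsilon$ and $S^2={\rm Id}_H$, hence $\sigma={\rm Id}_H$. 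Your treatment of part (2) --- transport the $H$-linear cocycle $c$ to $\overline c(d\otimes \ell)=c(d)\ell$ and check that the part-(1) isomorphism intertwines the deformed differentials --- is likewise the paper's argument (\ref{sec:41}, \ref{sec:smash-products-with}). So the extra generality the paper carries (the twisted smash product $\sharp\,^{\sigma^*}$) buys a description of $\Pi_{n+d}(A\sharp H)$ beyond the involutive Calabi-Yau case, but is not needed for the statement as given.

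The one genuine flaw is your cofibrancy step. You assert that taking $D_A$ cofibrant over $A^e$ makes $D_A\sharp H$ cofibrant over $\Lambda^e$; that is false in general, and it matters, because the Calabi-Yau completion of $\Lambda$ must be formed on a cofibrant bimodule. What is true, and what the paper uses, is that cofibrancy of $D_A$ as a dg $\Delta_1$-module (equivariant cofibrancy) passes to $D_A\sharp\,^\sigma H$, via the isomorphism $D_A\sharp\,^\sigma H\simeq (\Lambda\otimes\,^\varphi\Lambda)\underset{\Delta_1}{\otimes}D_A$ of \ref{sec:43} and the fact that extension of scalars preserves cofibrant objects. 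Cofibrancy over $A^e$ alone does not suffice: take $A=\k$ with $H$ acting through $\epsilon$, and $H=\mathcal U(\mathfrak g)$ for $\mathfrak g\neq 0$ abelian (Calabi-Yau, involutive antipode). Then $\Delta_1\cong H$, and $D_A=\k$ (concentrated in degree $0$) is cofibrant over $A^e=\k$, yet $D_A\sharp H\cong H$ is the regular $H$-bimodule concentrated in degree $0$, which is not cofibrant in $\mathcal C(H^e)$ since it is not projective as an $H^e$-module (its projective dimension is $\dim \mathfrak g>0$). The repair is cheap --- replace $D_A$ by a cofibrant resolution in $\mathcal C(\Delta_1)$, which is still cofibrant over $A^e$ because $\Delta_1\simeq A^e\otimes H$ in $\mathcal C(A^e)$ --- but it genuinely requires the equivariant dg algebra $\Delta_1$ and the identification of $-\sharp\,^\sigma H$ as extension of scalars along it, which your proposal does not supply.
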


The reader is referred to \ref{sec:calabi-yau-compl} and \ref{sec:33}
for generalisations to the case where $H$ is merely a Hopf algebra
with Van den Bergh duality.

 
When $A$ is an algebra, Proposition~\ref{prop2} can also be used to
characterise when $\Lambda$ has Van den Bergh duality.

\begin{THM}[\ref{sec:18}]
  \label{thm3}
  Let $A$ be an $H$-module algebra where $H$ is a Hopf algebra.
  Assume that the antipode $S$ is invertible and that both $A$ and $H$
  are homologically smooth. Then, the following assertions are
  equivalent
  \begin{enumerate}[(i)]
  \item $A$ and $H$ have Van den Bergh duality,
  \item $\Lambda$ has Van den Bergh duality.
  \end{enumerate} When these conditions are satisfied and $n$, $d$ are
  the corresponding homological dimensions of $A$ and $H$,
  respectively, then $\Lambda$ has dimension $n+d$ and
  \[{\mathrm{Ext}}^{n+d}_{\Lambda^e}(\Lambda,\Lambda^e)\simeq {\mathrm{Ext}}^n_{A^e}(A,A^e)\sharp\,^{(S^{-2}\circ
    \Xi_{\int_\ell}^r)^{-1}}H\,.\]
\end{THM}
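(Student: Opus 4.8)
The plan is to derive both implications from the explicit form of $\rhom_{\Lambda^e}(\Lambda,\Lambda^e)$ given by Proposition~\ref{prop2} together with its homologically smooth generalisation in \ref{sec:4}. For (i)$\Rightarrow$(ii), note that $H$ having Van den Bergh duality forces $S$ to be invertible by Proposition~\ref{prop1}, so Proposition~\ref{prop2} applies and yields that $\Lambda$ is homologically smooth with $\rhom_{\Lambda^e}(\Lambda,\Lambda^e)\simeq D_A\sharp\,^\sigma H[-d]$, where $\sigma=(S^{-2}\circ\Xi^r_{\int_\ell})^{-1}$. Since $A$ has Van den Bergh duality of dimension $n$, I would choose $D_A$ so that its cohomology is concentrated in degree $n$, where it equals the invertible $A$-bimodule $U_A={\rm Ext}^n_{A^e}(A,A^e)$ equipped with the induced $H_{S^2}$-equivariant structure.

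Because $H$ is free as a $\k$-module, the construction $-\sharp\,^\sigma H$ is exact, so $H^i(D_A\sharp\,^\sigma H)\cong H^i(D_A)\sharp\,^\sigma H$, which vanishes for $i\ne n$ and equals $U_A\sharp\,^\sigma H$ for $i=n$. The one remaining point is that $U_A\sharp\,^\sigma H$ is an invertible $\Lambda$-bimodule; this I would obtain from the monoidal formalism of the assignment $M\mapsto M\sharp\,^\sigma H$ on $H_{S^2}$-equivariant bimodules recalled in \ref{sec:asharp-h-bimodules}, the inverse being produced from $U_A^{-1}$. Altogether
\[\rhom_{\Lambda^e}(\Lambda,\Lambda^e)\simeq (U_A\sharp\,^\sigma H)[-n-d],\]
so $\Lambda$ has Van den Bergh duality of dimension $n+d$ with the asserted top Ext-bimodule.

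For the converse (ii)$\Rightarrow$(i), the plan is to run this factorisation backwards using the general description of \ref{sec:4}, which expresses $\rhom_{\Lambda^e}(\Lambda,\Lambda^e)$ in terms of $D_A$ and an inverse dualising complex of $H$. Assuming $\Lambda$ has Van den Bergh duality, I would first establish the $H$-factor: $H$ is already homologically smooth, so by Theorem~\ref{thm2} it suffices to see that ${\rm Ext}^*_{H^{\rm op}}(\k_H,H)$ is finite-dimensional and concentrated in one degree, which I would extract by separating variables in the formula of \ref{sec:4}---applying the fibre functor that reduces along the homologically smooth factor $A$ isolates the $H$-contribution and transfers the one-degree concentration of $\rhom_{\Lambda^e}(\Lambda,\Lambda^e)$ to it. Once $H$ has Van den Bergh duality, Proposition~\ref{prop2} again gives $\rhom_{\Lambda^e}(\Lambda,\Lambda^e)\simeq D_A\sharp\,^\sigma H[-d]$; comparing with the hypothesis that the left-hand side is invertible and concentrated in a single degree then forces $D_A$ to be concentrated in one degree with invertible cohomology, that is, $A$ has Van den Bergh duality, the dimensions adding up to the dimension of $\Lambda$.

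The main obstacle will be this reflection step in the converse: showing that the one-degree concentration and the invertibility of the $\Lambda$-bimodule $\rhom_{\Lambda^e}(\Lambda,\Lambda^e)$ genuinely descend to each of the $A$- and $H$-factors. For invertibility this means proving that $M\mapsto M\sharp\,^\sigma H$ \emph{reflects}, and not merely preserves, invertibility, which I expect to handle either by exhibiting a quasi-inverse on the relevant subcategory of $H_{S^2}$-equivariant bimodules or by faithfully flat descent along the inclusions $A\to\Lambda$ and $H\to\Lambda$. The delicate point is to separate the $A$- and $H$-contributions cleanly, so that the single dimension of $\Lambda$ splits as the sum of those of $A$ and $H$; the hypotheses that both $A$ and $H$ be homologically smooth and that $S$ be invertible are precisely what keep the fibre and (co)induction functors well enough behaved for this separation to succeed.
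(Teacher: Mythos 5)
Your plan follows the same route as the paper: the forward implication is obtained exactly as in the paper from the formula of \ref{sec:5} together with the fact that $-\sharp\,^{\sigma}H$ preserves invertibility (the paper's \ref{sec:invertible-dg-lambda-1}, where the inverse is built from $\Hom_A(U_A,A)$ with its $H_{S^{-2}}$-equivariant structure of \ref{sec:equiv-acti-duals-1} and the monoidal isomorphism of \ref{sec:26}), so that half is sound. The converse is where your proposal has genuine gaps. To apply Theorem~\ref{thm2} to $H$ you must produce \emph{finite-dimensionality} of the relevant ${\rm Ext}$-space, not only concentration in a single degree. Concentration does descend from $\Lambda$ to both factors by a K\"unneth argument on $D_A\otimes H\otimes E_H$, but finite-dimensionality does not follow from any ``separation of variables'', and the ``fibre functor'' step is not an argument. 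What is actually needed is the finite generation, as a one-sided $\Lambda$-module, of the invertible bimodule ${\rm Ext}^{n+d}_{\Lambda^e}(\Lambda,\Lambda^e)\simeq {\rm Ext}^n_{A^e}(A,A^e)\otimes H\otimes E$, where $E={\rm Ext}^d_H(\,_H\k,H)$: the paper tensors with the inverse bimodule over $\Lambda$ and runs a direct-sum argument to get $\dim_\k E<\infty$ (steps (b)--(c) of its proof), and only then concludes via the lemma of \ref{sec:suff-cond-right} and Theorem~\ref{thm1}. (A shorter variant consistent with \eqref{eq:11}: the right $\Lambda$-action there does not touch the factor $E$, so the top ${\rm Ext}$ of $\Lambda$ is a direct sum of $\dim_\k E$ copies of a nonzero right $\Lambda$-module, and finite generation forces $\dim_\k E<\infty$. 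Some argument of this kind must be made; note also that \ref{sec:4} produces the left-sided ${\rm Ext}^*_H(\,_H\k,H)$, so matching the right-sided condition (ii) of Theorem~\ref{thm2} itself goes through \ref{sec:suff-cond-right}.)

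The second gap is the reflection step, which you correctly single out as the main obstacle but do not close. Neither of your two suggested strategies is carried out, and faithfully flat descent along $A\to\Lambda$ does not apply as stated: $U_A\sharp\,^{\sigma}H$ is not the base change of $U_A$ along $A\to\Lambda$ as a bimodule (the $H$-action twists the two $A$-actions), so invertibility over $\Lambda$ is not a naive descent datum. The paper proves the reflection as a standalone proposition (\ref{sec:invertible-dg-lambda-2}): first, $D$ is finitely generated projective on each side because $(D\sharp\,^\sigma H)\otimes_\Lambda A\simeq D$ in ${\rm mod}(A)$ and $A\otimes_\Lambda (D\sharp\,^\sigma H)\simeq D$ in ${\rm mod}(A^{\rm op})$; next, $\Hom_\Lambda(D\sharp\,^\sigma H,\Lambda)$ is identified with $\Hom_A(D,A)\sharp\,^{\sigma^{-1}}H$ (\ref{sec:equiv-acti-duals-2}); finally, applying $-\otimes_\Lambda A$ to the evaluation isomorphism $(D\sharp\,^\sigma H)\otimes_\Lambda\Hom_\Lambda(D\sharp\,^\sigma H,\Lambda)\to\Lambda$ shows that the evaluation maps $D\otimes_A\Hom_A(D,A)\to A$ and $\Hom_{A^{\rm op}}(D,A)\otimes_A D\to A$ are bijective. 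Without executing this (or an equivalent) argument, your converse remains a plan rather than a proof.
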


This characterisation specialises to skew Calabi-Yau algebras. A
Nakayama automorphism of $A\sharp H$ was proved to exist and was
described in \cite[Theorem 0.2]{MR3250287} when $A$ is noetherian,
connected graded, and skew Calabi-Yau and $H$ is
finite-dimensional. This uses the homological determinant
(${\mathrm{hdet}}\colon H\to \k$) of the action of $H$ on $A$ (in the sense
of \cite{MR1758250,MR2568355}). In general, ${\mathrm{hdet}}$ is not
defined and \cite[Question 7.2]{MR3250287} asked for an extension of
its definition. As a partial answer, the concept of \emph{weak}
homological determinant ${\mathrm{whdet}}\colon H\to A$ is introduced in
\ref{sec:17} when $A$ is skew Calabi-Yau. It is determined by the
choice of a free generator of ${\mathrm{Ext}}^n_{A^e}(A,A^e)$ in
${\mathrm{mod}}(A)$ and it defines an algebra homomorphism
$\theta_{\mathrm{whdet}}\colon H\to \Lambda$ by
$h\mapsto {\mathrm{whdet}}(S^2(h_1))h_2$ (this replaces
$\Xi_{\mathrm{hdet}}^\ell$ when ${\mathrm{hdet}}$ is not defined).  The
following result extends \cite[Theorem 0.2]{MR3250287} which was
mentioned previously and answers \cite[Question 4.3]{MR3250287}.  See
\eqref{eq:56} and Table~\ref{tab:4} for examples where
$\mathrm{whdet}$ takes values outside $\k$.

\begin{THM}[\ref{sec:19-1} and \ref{sec:19-2}]
  \label{thm4}
  Let $H$ be a Hopf algebra with invertible antipode.  Let $A$ be an
  $H$-module algebra. Assume  that  $A$
  and $H$ are homologically smooth.
  \begin{enumerate}
  \item If $A$ and $H$ are skew Calabi-Yau, then so is $\Lambda$.
  \item If $\Lambda$ is skew Calabi-Yau, then so is $H$ and the action
    of $H$ on $A$ has a weak homological determinant. If, moreover, a
    homological determinant exists, then $A$ is skew Calabi-Yau.
  \end{enumerate}
  In the setting of (1), then 
  $\Lambda$ admits as a Nakayama automorphism
  \[
  \mu_\Lambda =\mu_A\sharp (\theta_{\mathrm{whdet}}\circ \mu_H)
  \]
  where $\mu_A$ is a Nakayama automorphism of $A$,
  ${\mathrm{whdet}}\colon H\to A$ is an associated weak homological
  determinant, and $\mu_H=S^{-2}\circ \Xi_{\int_\ell}^r$.
\end{THM}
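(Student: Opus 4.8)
The plan is to bootstrap from the Van den Bergh level results and then convert the invertible dualising bimodule into an explicit twist by an automorphism. For assertion (1), note first that a skew-Calabi-Yau algebra has Van den Bergh duality, because the bimodules ${}^1A^{\mu_A}$ and ${}^1H^{\mu_H}$ are invertible. Hence $A$ and $H$ both have Van den Bergh duality, and Theorem~\ref{thm3} applies: $\Lambda$ has Van den Bergh duality of dimension $n+d$, with
\[
{\rm Ext}^{n+d}_{\Lambda^e}(\Lambda,\Lambda^e)\simeq {\rm Ext}^n_{A^e}(A,A^e)\,\sharp\,{}^{\sigma}H,\qquad\sigma=(S^{-2}\circ\Xi^r_{\int_\ell})^{-1}=\mu_H^{-1}.
\]
Since $A$ is skew-Calabi-Yau, ${\rm Ext}^n_{A^e}(A,A^e)\cong{}^1A^{\mu_A}$ is free of rank one in ${\rm mod}(A)$; by the discussion preceding the theorem, this is exactly the hypothesis ensuring that a weak homological determinant ${\rm whdet}\colon H\to A$ and the algebra homomorphism $\theta_{\rm whdet}\colon H\to\Lambda$, $h\mapsto{\rm whdet}(S^2(h_1))h_2$, are defined. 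Everything then reduces to identifying the $\Lambda$-bimodule ${}^1A^{\mu_A}\sharp{}^{\sigma}H$ with ${}^1\Lambda^{\mu_\Lambda}$.

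The heart of the proof is this bimodule identification, and it is where I expect the main obstacle to lie. First I would verify that $\mu_\Lambda=\mu_A\sharp(\theta_{\rm whdet}\circ\mu_H)$ is a genuine $\k$-algebra automorphism of $\Lambda$: it restricts to $\mu_A$ on the copy of $A$ and to $\theta_{\rm whdet}\circ\mu_H$ (valued in $\Lambda$, not merely in $H$) on the copy of $H$, and one must check that it respects the smash relations $h\,a=(h_1\cdot a)h_2$, which forces the use of the $H$-equivariance built into ${\rm whdet}$. Then, choosing a free generator $e$ of ${}^1A^{\mu_A}$ over $A$, I would define a candidate isomorphism sending $e\sharp 1$ to $1\in{}^1\Lambda^{\mu_\Lambda}$ and check, on the generators $a\in A$ and $h\in H$, that the left and right $\Lambda$-actions agree. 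The twist $\sigma$ carries the factor $S^{-2}$ and the winding automorphism $\Xi^r_{\int_\ell}$ (that is, $\sigma=\mu_H^{-1}$), while $\theta_{\rm whdet}$ reintroduces $S^2$ together with the $A$-valued determinant; the delicate point is that these conspire, on the $H$-part, to reproduce precisely $\theta_{\rm whdet}\circ\mu_H$. Tracking the $H_{S^2}$-equivariant structure on $D_A={}^1A^{\mu_A}$ through the $\sharp$-construction so that this matching comes out exactly right is the crux.

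For assertion (2), I would reverse the reductions. Being skew-Calabi-Yau, $\Lambda$ has Van den Bergh duality, so Theorem~\ref{thm3} yields that $A$ and $H$ have Van den Bergh duality, and Theorem~\ref{thm2} upgrades the latter to $H$ being skew-Calabi-Yau. Feeding the invertible bimodule ${}^1\Lambda^{\mu_\Lambda}$ back through the isomorphism of Theorem~\ref{thm3}, I obtain ${}^1\Lambda^{\mu_\Lambda}\cong{\rm Ext}^n_{A^e}(A,A^e)\sharp{}^{\sigma}H$, and by analysing the underlying right $A$-module structure transported through the $\sharp$-construction I would conclude that ${\rm Ext}^n_{A^e}(A,A^e)$ is free of rank one in ${\rm mod}(A)$, whence the weak homological determinant exists. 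Finally, if a genuine homological determinant ${\rm hdet}\colon H\to\k$ is defined, then the $H_{S^2}$-equivariant structure on ${\rm Ext}^n_{A^e}(A,A^e)$ is governed by a $\k$-valued character and can be trivialised, giving ${\rm Ext}^n_{A^e}(A,A^e)\cong{}^1A^{\mu_A}$ for an honest automorphism $\mu_A$, so that $A$ is skew-Calabi-Yau. The reason the converse produces skew-Calabi-Yauness of $A$ only in the presence of a genuine ${\rm hdet}$, and a weak one does not suffice, is precisely that an $A$-valued determinant cannot in general be absorbed into an automorphism of $A$.
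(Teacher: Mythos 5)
Your overall route is the same as the paper's: both parts are bootstrapped from Theorem~\ref{thm3}, the weak homological determinant is obtained from freeness of ${\rm Ext}^n_{A^e}(A,A^e)$ in ${\rm mod}(A)$, and part (1) is reduced to identifying ${\rm Ext}^n_{A^e}(A,A^e)\sharp\,^{\mu_H^{-1}}H$ with $^1\Lambda^{\mu_\Lambda}$ by tracking the free generator $e_A\otimes 1$. This is exactly what \ref{sec:19-1} does; the ``crux'' you flag but do not carry out is the short computation combining the identity \eqref{eq:44} (which rewrites $d\otimes\ell$ as $\mu_H(\ell_2)\,(S^{-3}(\ell_1)\rightharpoonup d\otimes 1)$, since $\sigma^{-1}=\mu_H$ and $i=1$) with the identity \eqref{eq:17} defining ${\rm whdet}$. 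Likewise your reduction in part (2) (Theorem~\ref{thm3}, then Theorem~\ref{thm2} for $H$, then extraction of the rank-one property of ${\rm Ext}^n_{A^e}(A,A^e)$ from the smash decomposition) matches \ref{sec:19-2}.

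There is, however, a genuine gap, and it concerns bijectivity of the candidate Nakayama maps --- the very point that separates ``skew-Calabi-Yau'' from ``dualising bimodule twisted by an algebra \emph{endomorphism}''. In part (1) you propose to ``verify that $\mu_\Lambda=\mu_A\sharp(\theta_{\rm whdet}\circ\mu_H)$ is a genuine algebra automorphism'': checking the smash relations only shows it is a homomorphism, and since $\theta_{\rm whdet}$ takes values in $A$ rather than $\k$, bijectivity is not evident (one would need, e.g., a convolution-type inverse extracted from \eqref{eq:19}, which you do not construct). The paper never proves bijectivity directly: it shows that $e_A\otimes 1$ is a free generator of the Ext-bimodule on \emph{both} sides --- freeness in ${\rm mod}(\Lambda)$ requiring the auxiliary description $D\sharp\,^\sigma H\simeq H^{\sigma^{-1}}\sharp D$ of \ref{sec:45} --- and two-sided freeness together with the identity $(e_A\otimes 1)(ah)=\mu_\Lambda(ah)(e_A\otimes 1)$ forces $\mu_\Lambda$ to be bijective. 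The same issue is more serious in part (2): ``the equivariant structure is governed by a $\k$-valued character and can be trivialised'' is not an argument, because a free generator $e_A$ only yields an algebra \emph{endomorphism} $\mu_A$ with $e_Aa=\mu_A(a)e_A$, and the whole content of the implication is that $\mu_A$ is invertible. The paper's mechanism, absent from your proposal, is a bootstrap through $\Lambda$: since $\Lambda$ is skew-Calabi-Yau, the lemma in \ref{sec:dual-cond-dg-2} applies to ${\rm Ext}^{n+d}_{\Lambda^e}(\Lambda,\Lambda^e)\simeq {\rm Ext}^n_{A^e}(A,A^e)\sharp\,^{\mu_H^{-1}}H$ and shows that $\mu_A\sharp(\Xi^\ell_{\rm hdet}\circ\mu_H)$ is an automorphism of $\Lambda$; injectivity of $\mu_A$ then follows by restriction to $A$, and surjectivity by composing with ${\rm Id}_A\otimes\epsilon\colon\Lambda\to A$ --- a projection argument that works precisely because ${\rm hdet}$ is $\k$-valued, so the $H$-component of the automorphism is the winding map $\Xi^\ell_{\rm hdet}\circ\mu_H$ rather than something $A$-valued. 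Your closing remark shows you sense where the obstruction sits, but the proposal supplies no mechanism to overcome it.
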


Here, for given mappings $\alpha\colon A\to \Lambda$ and $\beta\colon
H\to \Lambda$, the piece of notation $\alpha\sharp \beta$ denotes the
mapping $\Lambda\to \Lambda,\,ah \mapsto \alpha(a)\beta(h)$.

Should $\mathrm{whdet}$ take its values in $\k$ then 
$\mu_\Lambda=\mu_A\sharp (\Xi_{\mathrm{whdet}}^{\ell}\circ
\mu_H)$. In particular, when $A$ is connected graded and skew
Calabi-Yau, then a generator of $\mathrm{Ext}^n_{A^e}(A,A^e)$ may be
chosen such that $\mathrm{whdet} = \mathrm{hdet}$. This yields
characterisations of when $A\sharp H$ is Calabi-Yau assuming that $H$
is so. See \cite[Theorem 3.4]{MR2678828} for a characterisation of
when $\mathcal U(\mathfrak g) \sharp \k G$ is Calabi-Yau, for 
finite-dimensional Lie algebras $\mathfrak g$ and finite groups $G$
(in zero characteristic), note that $\mathcal U(\mathfrak g)$ need
not be graded and that $\k G$ is then semisimple and Calabi-Yau in
dimension $0$.

\begin{THM}[\ref{sec:6}]
  \label{thm6}
    Let $H$ be a Calabi-Yau Hopf algebra. 
   Let $A$ be a connected graded  $H$-module algebra.  Let $h_0\in
  H^\times$ be such that $S^{-2}$ is the inner automorphism of $h_0$
  (see Theorem~\ref{thm2}). Then, $\Lambda=A\sharp H$ is
  Calabi-Yau if and only if the following conditions hold
  \begin{enumerate}[(a)]
    \item $A$ is skew Calabi-Yau,
    \item  ${\mathrm{hdet}}=\epsilon$,
    \item $(\exists k_A\in Z(H^\times))
          (\forall a\in A)\ \ 
          \mu_A(a)=(h_0k_A)\rightharpoonup a \underset{\text{in
              $\Lambda$}} = (h_0k_A)a (h_0 k_A)^{-1}$.
  \end{enumerate}
\end{THM}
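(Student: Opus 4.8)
The plan is to reduce the statement to a single, explicitly computed Nakayama automorphism and then to decide when it is inner by exploiting the $\mathbb N$-grading of $\Lambda$. Throughout I use that a homologically smooth algebra is Calabi-Yau precisely when it is skew-Calabi-Yau with an inner Nakayama automorphism, and that for the Calabi-Yau Hopf algebra $H$ the homological integral is trivial, so that $\mu_H=S^{-2}$ is the inner automorphism of $h_0$ (Theorem~\ref{thm2}). I also use that, since $A$ is connected graded, any weak homological determinant arising from a free generator of ${\rm Ext}^n_{A^e}(A,A^e)$ is an honest homological determinant; hence, by Theorem~\ref{thm4}, and since $H$ is Calabi-Yau, $\Lambda$ is skew-Calabi-Yau if and only if $A$ is so (the homological smoothness of $A$ being available, in the forward direction, from that of $\Lambda$ and $H$ together with the invertibility of $S$). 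In that case Theorem~\ref{thm4} supplies the explicit Nakayama automorphism $\mu_\Lambda=\mu_A\sharp(\Xi^\ell_{\rm hdet}\circ S^{-2})$. This already yields (a) in both directions, and reduces the whole statement to: assuming $A$ skew-Calabi-Yau, $\Lambda$ is Calabi-Yau if and only if $\mu_\Lambda$ is inner, equivalently (b) together with (c).

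The first key step, for the forward direction, is to locate the implementing unit inside $H$. Because $A$ is connected graded and the action preserves the grading, $\Lambda$ is $\mathbb N$-graded with $\Lambda_i=A_i\otimes H$ and $\Lambda_0\cong H$; since $\mu_A$ is graded, $\mu_\Lambda$ is graded as well. If $\mu_\Lambda$ is the inner automorphism of $u\in\Lambda^\times$, I would expand $u=\sum_i u_i$ into homogeneous components and compare degrees in $ux=\mu_\Lambda(x)u$ for homogeneous $x$; this forces $u_0x=\mu_\Lambda(x)u_0$ for the degree-zero component. As the projection $\pi\colon\Lambda\to\Lambda_0=H$ is an algebra homomorphism (degrees being nonnegative), $u_0=\pi(u)$ is invertible in $H$ and already implements $\mu_\Lambda$. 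Thus one may assume $u\in H^\times$.

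With $u\in H^\times$ I would read off $\mu_\Lambda$ on the two generating subalgebras. On $H$ the relation $uhu^{-1}=\Xi^\ell_{\rm hdet}(S^{-2}(h))$ together with $S^{-2}(h)=h_0hh_0^{-1}$ shows (after substituting $h\mapsto h_0^{-1}hh_0$) that $\Xi^\ell_{\rm hdet}$ is the inner automorphism of $w=uh_0^{-1}$; applying the counit to $\Xi^\ell_{\rm hdet}(h)=whw^{-1}$ and using $\epsilon\circ\Xi^\ell_{\rm hdet}={\rm hdet}$ gives ${\rm hdet}=\epsilon$, which is (b). Condition (b) collapses $\Xi^\ell_{\rm hdet}$ to the identity, so $u$ and $h_0$ conjugate identically on $H$, whence $u=h_0k$ with $k\in Z(H^\times)$. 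On $A$ the relation $uau^{-1}=\mu_A(a)$ lands in $A$; applying ${\rm id}\otimes\epsilon$ to $uau^{-1}$, computed from $ua=\sum(u_1\rightharpoonup a)u_2$, yields $\mu_A(a)=\epsilon(u)^{-1}(u\rightharpoonup a)$. Rescaling $k$ to $k_A=\epsilon(u)^{-1}k\in Z(H^\times)$, so that $\epsilon(u)^{-1}u=h_0k_A$, gives simultaneously $\mu_A(a)=(h_0k_A)\rightharpoonup a$ and, since rescaling by a scalar leaves conjugation unchanged, $\mu_A(a)=(h_0k_A)a(h_0k_A)^{-1}$ in $\Lambda$. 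This is exactly (c).

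For the converse, given (a), (b), (c), I would simply exhibit the implementing unit $u=h_0k_A$. By (b), $\Xi^\ell_{\rm hdet}={\rm id}$, so $\mu_\Lambda=\mu_A\sharp S^{-2}$. Conjugation by $u$ agrees with $\mu_\Lambda$ on $A$ by (c), and on $H$ it equals $h_0(\,\cdot\,)h_0^{-1}=S^{-2}$ because $k_A$ is central in $H$; as $\Lambda$ is generated by $A$ and $H$ and both maps are algebra endomorphisms, they coincide on $\Lambda$. Hence $\mu_\Lambda$ is inner and $\Lambda$ is Calabi-Yau. I expect the main obstacle to be the forward direction's two structural reductions: pinning the implementing unit down to an invertible element of $H$ via the grading, and, more delicately, converting the conjugation $uau^{-1}$ into the Hopf action $u\rightharpoonup a$ through the ${\rm id}\otimes\epsilon$ computation, since it is this step that manufactures the central unit $k_A$ and forces both equalities of (c) to hold for one and the same $k_A$.
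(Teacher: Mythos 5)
Your proposal is correct and takes essentially the same route as the paper: it reduces the statement to deciding when the explicit Nakayama automorphism $\mu_\Lambda=\mu_A\sharp(\Xi^{\ell}_{\rm hdet}\circ S^{-2})$ supplied by Theorem~\ref{thm4} is inner, uses the $\mathbb N$-grading to replace an implementing unit by an invertible element of $\Lambda_0=H$, extracts (b) and (c) by applying the counit and ${\rm Id}_A\otimes\epsilon$, and conversely checks that $h_0k_A$ implements $\mu_\Lambda$ on the generating subalgebras $A$ and $H$. This is exactly the content of the paper's lemma in \ref{sec:22} together with its closing remark, specialised to the connected graded case and inlined, down to the same reading of $Z(H^\times)$ as the invertible central elements of $H$.
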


When $A$ is, moreover, Calabi-Yau, Theorem~\ref{thm6} simplifies as
follows.
\begin{COR}[\ref{sec:28}]
  \label{sec:main-results-sturct-2}
  Let $H$ be a Calabi-Yau Hopf algebra. Let $A$ be a connected graded
  $H$-module algebra which is moreover Calabi-Yau. The following
  assertions are equivalent
  \begin{enumerate}[(i)]
  \item $A\sharp H$ is Calabi-Yau,
  \item ${\mathrm{hdet}}=\epsilon$.
  \end{enumerate}
\end{COR}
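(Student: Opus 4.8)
The plan is to obtain the statement by specialising Theorem~\ref{thm6} to the case where $A$ is Calabi-Yau, the only genuine work being the verification of condition~(c) of that theorem from the hypothesis ${\rm hdet}=\epsilon$.

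First I would record the simplifications forced by the extra hypothesis. Since $A$ is connected $\mathbb N$-graded, any homogeneous unit lies in $A_0=\k$ and is central, so every graded inner automorphism of $A$ is trivial and the graded Nakayama automorphism of $A$ is uniquely determined; as $A$ is Calabi-Yau this automorphism is inner, hence equals $\mathrm{id}_A$. Thus one may take $\mu_A=\mathrm{id}_A$, and in particular $A$ is skew-Calabi-Yau, so condition~(a) of Theorem~\ref{thm6} holds automatically. With these inputs Theorem~\ref{thm6} asserts that $\Lambda$ is Calabi-Yau if and only if (b)~${\rm hdet}=\epsilon$ and (c)~there exists $k_A\in Z(H^\times)$ with $a=(h_0k_A)\rightharpoonup a=(h_0k_A)a(h_0k_A)^{-1}$ for all $a\in A$. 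The implication (i)$\Rightarrow$(ii) is then immediate, since (i) forces (b).

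For (ii)$\Rightarrow$(i) I would assume ${\rm hdet}=\epsilon$ and show that (c) holds, which by Theorem~\ref{thm6} finishes the proof. By the discussion following Theorem~\ref{thm4}, the homological determinant does exist in the present connected graded setting and the Nakayama automorphism of $\Lambda$ is $\mu_\Lambda=\mu_A\sharp(\Xi_{\rm hdet}^\ell\circ\mu_H)$. Since $H$ is Calabi-Yau, Theorem~\ref{thm2} shows that $S^2$ is inner (this is the role of $h_0$) and that the homological integral of $H$ is trivial, so that the relevant winding automorphism is the identity and $\mu_H=S^{-2}$; combined with $\mu_A=\mathrm{id}_A$ and $\Xi_\epsilon^\ell=\mathrm{id}_H$, this gives $\mu_\Lambda=\mathrm{id}_A\sharp S^{-2}$. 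Exhibiting this automorphism as inner is exactly equivalent to producing the unit $k_A$ demanded by (c): conjugation by $h_0k_A$ restricts on $H\subseteq\Lambda$ to $h\mapsto h_0hh_0^{-1}=S^{-2}(h)$ (the central factor $k_A$ cancels), so one only has to arrange that $h_0k_A$ act as the identity on $A$ and centralise it in $\Lambda$, which is precisely the chain of equalities in (c).

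The main obstacle is therefore this last normalisation, and it is exactly where ${\rm hdet}=\epsilon$ is used. The homological determinant records, through the $H_{S^2}$-equivariant structure carried by a bimodule $D_A\simeq\rhom_{A^e}(A,A^e)$ whose cohomology is ${\rm Ext}^n_{A^e}(A,A^e)\cong A$, the failure of that equivariant structure to be counital on a free generator of ${\rm Ext}^n_{A^e}(A,A^e)$ in ${\rm mod}(A)$; the element $h_0$, which implements the $S^2$-twist, is determined only up to a central unit of $H$, and this is the ambiguity absorbed by $k_A$. When ${\rm hdet}=\epsilon$ the equivariant structure is counital on the generator, so a suitable representative $h_0k_A$ acts trivially on $A$ and centralises it; this yields~(c), and hence by Theorem~\ref{thm6} that $\Lambda$ is Calabi-Yau.
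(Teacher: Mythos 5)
Your skeleton is the same as the paper's: note that $\mu_A={\rm Id}_A$ because $A$ is connected graded and Calabi-Yau, then feed everything into Theorem~\ref{thm6}. Within that skeleton, the implication (i)$\Rightarrow$(ii), the computation $\mu_\Lambda={\rm Id}_A\sharp S^{-2}$, and your reduction of (ii)$\Rightarrow$(i) to condition (c) of Theorem~\ref{thm6} are all correct; you have in fact isolated exactly the point that the paper's two-line proof leaves implicit. The gap is your final step, where you claim that ${\rm hdet}=\epsilon$ produces the unit demanded by (c). What ${\rm hdet}=\epsilon$ says is $h\rightharpoonup e_A=\epsilon(h)e_A$ for a free generator $e_A$ of ${\rm Ext}^n_{A^e}(A,A^e)$; combined with $e_Aa=ae_A$ and the $H_{S^2}$-equivariance \eqref{eq:33} this yields $(h\rightharpoonup a)e_A=h\rightharpoonup(e_Aa)=(S^2(h)\rightharpoonup a)e_A$, i.e.\ only that the action $\rho\colon H\to{\rm End}_\k(A)$ satisfies $\rho\circ S^2=\rho$, equivalently that $\rho(h_0)$ commutes with $\rho(H)$. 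Condition (c) asks for far more: some $u=h_0k_A$ with $k_A\in Z(H^\times)$ must act as the \emph{identity} on $A$ and commute with $A$ inside $\Lambda$. Counitality on the generator constrains the action of $H$ on the Ext-bimodule, not on $A$, and the central-unit ambiguity cannot manufacture triviality: evaluating $u\rightharpoonup 1$ forces the normalisation $\epsilon(u)=1$, after which $\rho(u)={\rm id}_A$ is a genuine extra constraint on the action. The "so" in your last paragraph is a non sequitur.

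Moreover, the implication you need is not merely unproved, it fails, so no argument along these lines can close the gap without further hypotheses. Take $H=U_q(\mathfrak{sl}_2)$ with $q$ generic: it is homologically smooth, unimodular (the top syzygy in the Koszul-type resolution of $\,_H\k$ is $(-F,E,K-1)$, the $q$-twists of $E$ and $F$ cancelling), and $S^2$ is conjugation by $K$, hence $H$ is Calabi-Yau by Theorem~\ref{thm2}, with $h_0=K^{-1}$. Let $H$ act on $A=\k[x,y]$ through the Hopf surjection $H\to\k[\mathbb{Z}/2\mathbb{Z}]$, $K\mapsto g$, $E,F\mapsto 0$, where $g$ acts by $-{\rm id}$ in degree one. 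Then ${\rm hdet}$ is the determinant of the degree-one action, so ${\rm hdet}=\epsilon$; but $Z(H^\times)=\k^\times$, so the candidates $u\in\k^\times K^{-1}$ never act as the identity on $A$ ($u\rightharpoonup 1=1$ forces $u=K^{-1}$, while $K^{-1}\rightharpoonup x=-x$), hence (c) fails; and indeed $\Lambda$ is not Calabi-Yau, since its units are the $cK^n$ and conjugation by $K^n$ is $(-1)^n$ on $x$ but $q^{2n}$ on $E$, so it never realises ${\rm Id}_A\sharp S^{-2}$. Two remarks for perspective: your argument does become complete when $S^2={\rm Id}_H$, since then $h_0$ may be taken central and $k_A=h_0^{-1}$ verifies (c) trivially — this covers all the previously known cases cited after the Corollary; and the paper's own proof ("$\mu_A={\rm Id}_A$, then apply \ref{sec:6}") is silent on precisely the same point, tacitly using ${\rm hdet}=\epsilon\Rightarrow\text{(c)}$. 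So you found the right obstacle, but what you offer in its place is an assertion, not a proof, and the assertion is false in general.
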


This characterisation was proved previously in the
following  situations
\begin{itemize}
\item  in \cite{MR2813562} assuming that $A$ is $p$-Koszul Calabi-Yau
  and that $H=\k G$ for any finite subgroup $G$ of ${\mathrm{Aut}}_{\k-{\mathrm{alg}}}(A)$ such that ${\mathrm{car}}(\k)$ does not divide ${\mathrm{Card}}(G)$,
  \item in \cite{MR2905560} assuming that $A$ is
    $p$-Koszul and Calabi-Yau and that $S^2={\mathrm{Id}}_H$,
  \item  in \cite[Corollary 3.4]{MR2809906} assuming that
    $A$ is $p$-Koszul and Calabi-Yau and $H=(\k G)^*$  for any finite
    group $G$.
  \end{itemize}

  It was conjectured in \cite[Conjecture 6.4]{MR3250287} that the
  Nakayama automorphisms of all connected graded Artin-Schelter
  Gorenstein algebras have trivial homological determinant and this
  was proved for noetherian and connected graded Artin-Schelter
  regular algebras in \cite[Corollary 5.4]{MR3557775}.  Combining
  Theorem~\ref{thm6} and the main result of \cite{MR3188338}, it is
  possible to prove that the noetherian hypothesis is unnecessary.
\begin{COR}[~\ref{sec:49}]
  \label{sec:main-results-struct}
  Let $A$ be a connected graded Artin-Schelter regular algebra
  (equivalently, a connected graded skew Calabi-Yau algebra, see
  \cite[Lemma 1.2]{MR3250287}). Let $\mu_A$ be its (graded) Nakayama
  automorphism. Let $H=\k \mathbb Z$ and consider the action of $H$ on $A$
  induced by $\mu_A$. Then, ${\mathrm{hdet}}(\mu_A)=1$.
\end{COR}
The above mentioned conjecture was proved previously in the following
cases,
\begin{itemize}
\item in \cite[Theorem 0.4]{MR3250287}, for noetherian connected
  graded Koszul Artin-Schelter regular algebras;
\item in \cite[Corollary 5.4]{MR3557775}, for noetherian and connected
  graded Artin-Schelter Gorenstein algebras of one of the following
  shapes
  \begin{itemize}
  \item graded twists of algebras which are finite over their affine
    centres
  \item quotients of noetherian Artin-Schelter regular algebras;
  \end{itemize}
\item in \cite[Theorem 1.6]{MR3421098}, for $m$-Koszul Artin-Schelter
  regular algebras;
\item in \cite[Theorem 3.11]{2017arXiv170605754C}, for certain $4$
  dimensional connected graded Artin-Schelter regular algebras which
  are normal extensions of $3$ dimensional ones.
\end{itemize}


The last main result of this text gives sufficient conditions for
$\Lambda$ to be Artin-Schelter Gorenstein/regular when $A$ is an
augmented $H$-module algebra. Note that the hypotheses below entail
that the antipode of $H$ is invertible.
\begin{THM}[\ref{sec:13}, \ref{sec:21}]
  \label{thm5}
  Let $H$ be a  Hopf algebra. Let $A$ be an
  augmented $H$-module algebra which is moreover noetherian. Assume
  that $A$ is Artin-Schelter 
  Gorenstein in dimension $n$.
  \begin{enumerate}
  \item If ${\mathrm{gl.dim.}}\,A<\infty$ and $H$ has Van den Bergh duality
    in dimension $d$, then  $\Lambda$ is Artin-Schelter regular
    in dimension $n+d$.
  \item If ${\mathrm{dim}}_\k\,H<\infty$, then $\Lambda$ is Artin-Schelter
    Gorenstein in dimension $n$.
  \item If $H$ has Van den Bergh duality in dimension $d$, then
    $A\sharp H$ is Artin-Schelter Gorenstein in dimension $n+d$.
  \end{enumerate}
\end{THM}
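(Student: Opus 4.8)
The plan is to reduce the Artin-Schelter conditions for $\Lambda$ to homological data that can be controlled through the inverse dualising complex and the behaviour of the trivial module $\k_\Lambda$ under the smash product. First I would record the augmented structure: since the augmentation ideal of $A$ is an $H$-submodule, the augmentation $A\to\k$ is $H$-linear, so $\k$ becomes an $A$-module algebra-compatible object and $\Lambda=A\sharp H$ inherits an augmentation $\Lambda\to H\to\k$ (composing the projection $A\sharp H\to\k\sharp H=H$ with the counit $\epsilon$). The trivial right $\Lambda$-module $\k_\Lambda$ then satisfies $\k_\Lambda\simeq \k_A\underset{A}\otimes\Lambda$ in a suitable sense, and the key computation is of $\rhom_\Lambda(\k_\Lambda,\Lambda)$. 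The natural tool is a change-of-rings/base-change spectral sequence or direct resolution argument relating $\mathbb R\Hom_\Lambda(\k,\Lambda)$ to $\mathbb R\Hom_A(\k,A)$ and $\mathbb R\Hom_H(\k,H)$, exploiting that $\Lambda$ is free as a right $H$-module and as a left $A$-module.

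The core step is an Ext-computation showing the Artin-Schelter property multiplies correctly. For part (2), where $H$ is finite-dimensional, $H$ is automatically Frobenius (hence self-injective with $\k_H$ of finite projective dimension zero in the appropriate sense, but more precisely $H$ has the Artin-Schelter-Gorenstein behaviour of dimension $0$), so I would show $\mathbb R\Hom_\Lambda(\k_\Lambda,\Lambda)\simeq \mathbb R\Hom_A(\k_A,A)\otimes_{\k}(\text{something }1\text{-dimensional over }H)$, concentrated in degree $n$ and one-dimensional over $\k$; this is exactly the Artin-Schelter Gorenstein condition with dimension $n$. For parts (1) and (3), where $H$ has Van den Bergh duality of dimension $d$, the idea is instead to invoke Proposition~\ref{prop2} and Theorem~\ref{thm3}: the inverse dualising complex $\rhom_{\Lambda^e}(\Lambda,\Lambda^e)\simeq D_A\sharp\,^\sigma H[-d]$ already encodes the shift by $d$, and since $A$ is Artin-Schelter Gorenstein of dimension $n$ with $\k_A$ having $\Ext^*_A(\k_A,A)$ concentrated in degree $n$, tensoring over the smash product raises the relevant concentration degree to $n+d$. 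The regularity in part (1) then follows from the extra hypothesis ${\rm gl.dim.}\,A<\infty$ together with homological smoothness of $H$, which forces $\Lambda$ to have finite global dimension.

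Concretely, the steps in order are: (i) set up the augmentation on $\Lambda$ and identify $\k_\Lambda$; (ii) establish the freeness of $\Lambda$ over $A$ and over $H$ on both sides, giving the two projection formulas needed to transport resolutions; (iii) compute $\mathbb R\Hom_\Lambda(\k_\Lambda,\Lambda)$ by filtering the smash product and combining the $A$-level and $H$-level Ext groups; (iv) read off the concentration degree and one-dimensionality to conclude Artin-Schelter Gorenstein, handling separately the finite-dimensional case (dimension $d=0$ contribution) and the Van den Bergh case (genuine shift by $d$); (v) in part (1), upgrade Gorenstein to regular using finite global dimension. For part (3), which is the common generalisation, I would verify that the argument of (1) goes through without the global-dimension hypothesis, yielding only the Gorenstein (not regular) conclusion.

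The hard part will be step (iii): controlling the interaction between the two Ext-computations when neither $A$ nor $H$ is assumed connected graded, so that a naive Künneth-type splitting is unavailable. In the Van den Bergh case one cannot simply tensor the two Artin-Schelter structures, because the $H$-action on $\Ext^n_A(\k_A,A)$ twists the module structure; the correct statement requires tracking the winding automorphism $\Xi^r_{\int_\ell}$ and the antipode-squared twist $S^{-2}$ appearing in $\sigma$ (as in Proposition~\ref{prop2} and Theorem~\ref{thm3}), and verifying that these twists act by scalars on the one-dimensional trivial module so as not to disturb the concentration in a single degree. Ensuring that $\Ext^{n+d}_\Lambda(\k_\Lambda,\Lambda)$ is genuinely one-dimensional over $\k$ (rather than merely finitely generated) is where the bulk of the technical verification lies, and it is precisely where the homological integral $\int_\ell$ of $H$ and the Artin-Schelter data of $A$ must be matched up.
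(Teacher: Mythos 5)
Your plan for parts (1) and (3) rests on invoking Proposition~\ref{prop2} and Theorem~\ref{thm3}, and this is where the argument breaks down: both of those results require $A$ to be homologically smooth ($A\in{\rm per}(A^e)$), which is not among the hypotheses of the theorem. In part (3) the algebra $A$ is merely noetherian Artin-Schelter Gorenstein and may have infinite global dimension, so it is certainly not homologically smooth in general; even in part (1), finite global dimension of $A$ does not by itself give $A\in {\rm per}(A^e)$. This is exactly why the paper does not use the inverse dualising complex here: Section~\ref{sec:appl-artin-schelt} states explicitly that the results of Section~\ref{sec:proof-main-result} cannot be used in this setting, and instead builds a Stefan-type Grothendieck spectral sequence ${\rm Ext}^p_H(\,_H\k,{\rm Ext}^q_A(M,N))\Rightarrow{\rm Ext}^{p+q}_\Lambda(M,N)$ (see \ref{sec:10}), together with an identification of the $E_2$-page ${\rm Ext}^p_H(\,_H\k,{\rm Ext}^q_A(M,\Lambda))\simeq{\rm Ext}^q_A(M,A)\otimes{\rm Ext}^p_H(\,_H\k,H)$ as twisted right $\Lambda$-modules (see \ref{sec:11}). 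Taking $M=\,_\Lambda\k$ and $N=\Lambda$, the Artin-Schelter conditions on $A$ and $H$ concentrate this page in the single bidegree $(p,q)=(d,n)$, so the sequence degenerates and yields the left Artin-Schelter property of $\Lambda$ in dimension $n+d$ (see \ref{sec:12}); the dual version gives the right-hand condition, and the same spectral sequence applied to arbitrary modules gives the bound ${\rm gl.dim}\,\Lambda\leqslant n+d$ needed for part (1). Your instinct that a base-change spectral sequence is the right tool for part (2) is correct --- this is in fact the paper's mechanism for all three parts --- but it must carry the whole proof, not just the finite-dimensional case.

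A second, independent gap: for the Gorenstein conclusions in parts (2) and (3), your step (iv) concludes Artin-Schelter Gorenstein from the concentration and one-dimensionality of ${\rm Ext}^*_\Lambda(\k_\Lambda,\Lambda)$ alone. But the paper's definition of Artin-Schelter Gorenstein also demands that the injective dimensions of $_\Lambda\Lambda$ and $\Lambda_\Lambda$ be finite, and this does not follow from the behaviour of the trivial module. The paper proves it separately (see \ref{sec:20}): when $H$ is finite-dimensional one applies the spectral sequence to every finitely generated left $\Lambda$-module and passes to direct limits using noetherianness of $A$; when ${\rm pd}_H(\,_H\k)<\infty$ one uses that $\Lambda\simeq A\otimes H$ in ${\rm mod}(A)$ and that direct sums of injective modules over a left noetherian ring are injective. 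Any complete proof has to include an argument of this kind before the Gorenstein property can be asserted.
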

Part (2) was proved in \cite[Theorem 4.1]{MR3250287} when $A$ is
connected graded Artin-Schelter Gorenstein. Besides \cite[Proposition
3.8]{MR2809906} proved that, when $H$ is finite-dimensional and
semi-simple and $A$ is an $H$-module dg algebra concentrated in
nonnegative degrees and with zero component equal to $\k$, then $A$ is
Artin-Schelter Gorenstein if and only if so is $A\sharp H$.


This article is organised as
follows. Section~\ref{sec:basic-definitions} recalls useful
definitions, it sets conventions and it proves useful folklore
results.  Section~\ref{sec:homol-dual-hopf} proves
Proposition~\ref{prop1} and derives
Theorem~\ref{thm2}. Section~\ref{sec:equivariant-modules} is proves
needed properties of $H_{S^{2i}}$-equivariant dg $A$-bimodules
($i\in \mathbb Z$). Section~\ref{sec:proof-main-result} is devoted to
the description of $\mathrm{RHom}_{\Lambda^e}(\Lambda,\Lambda^e)$ and
the proof of
Proposition~\ref{prop2}. Section~\ref{sec:appl-constr-calabi} applies
this description to the compatibility of deformed Calabi-Yau
completions with taking smash products, it proves
Theorem~\ref{sec:main-results-sturct-1}.
Section~\ref{sec:appl-dual-non} uses this description to prove
Theorem~\ref{thm3} and Theorem~\ref{thm4}. As a corollary it proves
Theorem~\ref{thm6} and Corollary~\ref{sec:main-results-sturct-2}.  The
results of this section are applied in
Section~\ref{sec:exampl-acti-u_qm} to the computation of a Nakayama
automorphism of $A\sharp H$ when $A =\mathbb C_q[x,y]$ and
$H=\mathcal U_q(\mathfrak{sl}_2)$ ($q\in \mathbb C^\times$ not being a
root of unity).  Finally, Section~\ref{sec:appl-artin-schelt}
concentrates on the case where $A$ is an augmented $\k$-algebra. It
proves Theorem~\ref{thm5}.

For the ease of reading, an index of notation is provided at the end
of the article.

\section{Basic definitions and conventions}
\label{sec:basic-definitions}

\subsection{Conventions on notation}
\label{sec:conventions-notation}

For simplicity, $\otimes_\k$ is denoted by $\otimes$.


The counit of $H$ is denoted by $\epsilon$. The Sweedler notation
$h_1\otimes h_2$ is used for the comultiplication of $h\in H$,
omitting the summation symbol. The action of $h\in H$ on an element
$x$ of a left (or right) $H$-module is written as $h\rightharpoonup x$
(or, $x\leftharpoonup h$, respectively).


The category of left dg $A$-modules is denoted
by $\mathcal C(A)$. And $\mathcal C (A^{\mathrm{op}})$ is
identified with the category of right dg $A$-modules. The derived category of $A$ is denoted by
$\mathcal D(A)$ and defined as the localisation of $\mathcal C(A)$ at
the class of all quasi-isomorphisms. The perfect derived category of
$A$ is denoted by ${\mathrm{per}}(A)$ and defined as the smallest
triangulated subcategory of $\mathcal D(A)$ containing $A$ and stable
under taking direct summands.
When $A$ is an algebra, the category of left
$A$-modules is denoted by ${\mathrm{mod}}(A)$. And ${\mathrm{mod}}(A^{\mathrm{op}})$
is identified with the category of right $A$-modules.

For all $X\in\mathcal C(A)$, the suspension of $X$ is denoted by
$X[1]$. For all $X,Y\in \mathcal C(A)$, then
${\mathrm{Hom}}_A(X,Y)$ denotes the
following complex of vector spaces
\begin{itemize}
\item for $n\in \mathbb Z$, its component of degree $n$
  is the vector space of (homogeneous of degree zero) morphisms of
  graded vector spaces $f\colon X\to Y[n]$ such that
  $f(ax) = (-1)^{n\cdot {\mathrm{deg}}(a)} af(x)$ for all homogeneous
  $x\in X$ and $a\in A$,
  \item the differential is given by $f\mapsto d_Y\circ f-(-1)^{\mathrm{deg}(f)}f\circ d_X$.
  \end{itemize}
Hence, the morphism space $\mathcal C(A)(X,Y)$ equals $Z^0{\mathrm{Hom}}_A(X,Y)$.
  
No difference is made between  dg $A$-bimodules and  left dg
$A^e$-modules.  For such a dg $A$-bimodule
$M$,  the identity $a_1ma_2=(-1)^{{\mathrm{deg}}(m){\deg(a_2)}}(a_1\otimes
a_2)\cdot m$ holds 
 when $m\in M$ and
$a_2\in A$ are homogeneous. In particular, given $M,N\in \mathcal
C(A^e)$, $n\in \mathbb Z$ and $f\in {\mathrm{Hom}}_\k(M,N)^n$, then $f\in
{\mathrm{Hom}}_{A^e}(M,N)$ if and only if $f(axb) = (-1)^{n\cdot {\mathrm{deg}}(a)}
af(x)b$, for every $a\in A$ homogeneous, $x\in X$ and $b\in B$.

Here is a reminder of the features of $\mathcal C(A)$ (see
\cite{MR1258406} for details). A dg module $P\in \mathcal C(A)$ is
\emph{cofibrant} if, for every surjective quasi-isomorphism $X\to Y$
in $\mathcal C(A)$, then any morphism $P\to Y$ in $\mathcal C(A)$
lifts to $X$. There exists a model structure on $\mathcal C(A)$ whose
class of weak equivalences consists of all the quasi-isomorphisms, and
whose class of cofibrant objects consists of all the cofibrant left dg
$A$-modules. In particular,
\begin{itemize}
\item for every $X\in \mathcal C(A)$ there exists a quasi-isomorphism
  $P\to X$ in $\mathcal C(A)$ where $P$ is cofibrant (such a $P$ is
  called a \emph{cofibrant replacement} of $X$),
\item for every $P,X\in \mathcal C(A)$ such that $P$ is cofibrant, the
  canonical mapping $H^0{\mathrm{Hom}}_A(P,X)\to \mathcal D(A)(P,X)$ is
  bijective,
\item every cofibrant $P\in \mathcal C(A)$ is \emph{homotopically
    projective,} that is, for every quasi-isomorphism $X\to Y$ in
  $\mathcal C(A)$, then ${\mathrm{Hom}}_A(P,X)\to {\mathrm{Hom}}_A(P,Y)$ is a
  quasi-isomorphism.
\end{itemize}
Fibrant dg modules are defined dually and feature dual properties. In
particular, they are \emph{homotopically injective.}

The two following basic facts are used without further reference in
this article.
\begin{lem}
  Let $A,B$ be dg algebras.
  \begin{enumerate}
  \item Let $A\to B$ be a morphism of dg algebras. If  $B\simeq
    A\otimes V$ in 
    $\mathcal C(A)$ for some complex of vector spaces $V$, then the
    restriction-of-scalars functor $\mathcal C(B)\to \mathcal C(A)$
    preserves cofibrant objects.
  \item Let $A\to B$ be a morphism of dg algebras. Then, the
    extension-of-scalars functor $\mathcal C(A)\to \mathcal C(B)$
    preserves cofibrant objects.
  \item The restriction-of-scalars functor $\mathcal C(A\otimes B^{\mathrm{op}})\to \mathcal C(A)$ preserves fibrant objects.
  \end{enumerate}
\end{lem}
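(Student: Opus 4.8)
The plan is to treat all three statements uniformly through the relevant restriction/extension/coinduction adjunctions, combined with the lifting-property characterisations of cofibrant and fibrant dg modules recalled above. Two elementary observations drive every case: restriction of scalars does not change the underlying complex of $\k$-vector spaces, so it automatically preserves surjections and quasi-isomorphisms; and, because $\k$ is a field, the functors $V\otimes_\k-$ and $\Hom_\k(V,-)$ attached to any complex $V$ of $\k$-vector spaces are exact and hence preserve injective (respectively surjective) quasi-isomorphisms. These reduce each statement to transporting a lifting problem across an adjunction.

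For (2), I would use that extension of scalars $B\otimes_A-$ is left adjoint to $\mathrm{res}\colon\mathcal C(B)\to\mathcal C(A)$. Given a cofibrant $P\in\mathcal C(A)$, a surjective quasi-isomorphism $X\to Y$ in $\mathcal C(B)$ and a morphism $B\otimes_A P\to Y$, applying $\mathrm{res}$ turns $X\to Y$ into a surjective quasi-isomorphism in $\mathcal C(A)$ and the adjunction turns the given morphism into $P\to\mathrm{res}(Y)$. Cofibrancy of $P$ lifts this to $P\to\mathrm{res}(X)$, and transporting back through the adjunction yields the desired lift $B\otimes_A P\to X$, so $B\otimes_A P$ is cofibrant. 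For (3), the restriction functor along the canonical map $A\to A\otimes B^{\rm op}$ is the \emph{right} adjoint of the extension functor $(A\otimes B^{\rm op})\otimes_A-$, which on underlying complexes is $-\otimes_\k B$. Dually to (2), given a fibrant $I$, an injective quasi-isomorphism $X\to Y$ in $\mathcal C(A)$, and a morphism $X\to\mathrm{res}(I)$, the adjunction produces $(A\otimes B^{\rm op})\otimes_A X\to I$; since tensoring over $\k$ with $B$ preserves injective quasi-isomorphisms, the map $(A\otimes B^{\rm op})\otimes_A X\to(A\otimes B^{\rm op})\otimes_A Y$ is again an injective quasi-isomorphism, so fibrancy of $I$ yields an extension, which transports back to $Y\to\mathrm{res}(I)$.

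Part (1) is where I expect the only genuine work, and it is the one using the extra hypothesis. The relevant adjunction is $\mathrm{res}\dashv\Hom_A(B,-)$, in which $\mathrm{res}\colon\mathcal C(B)\to\mathcal C(A)$ is the \emph{left} adjoint; so the argument of (2) applies verbatim \emph{provided} the coinduction functor $\Hom_A(B,-)$ sends surjective quasi-isomorphisms of $\mathcal C(A)$ to surjective quasi-isomorphisms of $\mathcal C(B)$. This is precisely where the assumption $B\simeq A\otimes V$ in $\mathcal C(A)$ is needed: it furnishes a natural isomorphism $\Hom_A(B,-)\cong\Hom_\k(V,-)$ on underlying complexes, and over a field $\Hom_\k(V,-)$ is exact and preserves surjections and quasi-isomorphisms (these being detected on underlying complexes anyway). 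Concretely, for a cofibrant $P\in\mathcal C(B)$, a surjective quasi-isomorphism $X\to Y$ in $\mathcal C(A)$ and a morphism $\mathrm{res}(P)\to Y$, I would pass through the adjunction to $P\to\Hom_A(B,Y)$, lift it along the surjective quasi-isomorphism $\Hom_A(B,X)\to\Hom_A(B,Y)$ by cofibrancy of $P$, and transport back to obtain $\mathrm{res}(P)\to X$. The main obstacle is thus not the formal adjunction bookkeeping but verifying this preservation property for $\Hom_A(B,-)$, which is exactly the role of the freeness hypothesis on $B$ over $A$.
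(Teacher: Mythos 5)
Your proof is correct, and it is precisely the ``classical adjunctions'' argument that the paper invokes for this lemma without writing out any details: left adjoints of functors preserving surjective quasi-isomorphisms preserve cofibrant objects, right adjoints of functors preserving injective quasi-isomorphisms preserve fibrant objects. In particular, you correctly isolate the only non-formal point, namely that the hypothesis $B\simeq A\otimes V$ in $\mathcal C(A)$ in part (1) is what makes the coinduction functor $\Hom_A(B,-)\cong \Hom_\k(V,-)$ preserve surjective quasi-isomorphisms over the field $\k$.
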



For the sake of simplicity, the verifications of module structures
omit the obvious quantifiers and use implicitly lower case letters for
elements in a space named with the corresponding upper case letter
($x\in X$, $y\in Y$, \emph{etc.}). Also $h,k,\ell$ always denote
elements of $H$, whereas $a,b$ always denote homogeneous elements of
$A$. As for the equalities presented as ``identities'', and which may
involve parameters ($a$, $h$, $d$, \emph{etc.}), it is implicit that
they hold true for all possible values of the parameters
($\forall a\in A$, $\forall h\in H$, $\forall d\in D$, \emph{etc.}).

\subsection{The smash product $\Lambda=A\sharp H$}
\label{sec:cross-prod-lambd}

\subsubsection{}
\label{sec:cross-prod-lambd-1}
Recall that a structure of $H$-module dg algebra on $A$ is a morphism
of complexes of vector spaces $H\otimes A\to A,\,h\otimes a \mapsto h
\rightharpoonup a$  (with $H$ in
degree $0$) such that the following identities hold true in $A$
\[
  \begin{array}{rclccrcl}
    1 \rightharpoonup a & = & a,
    &&&
       h\rightharpoonup (ab) & = & (h_1\rightharpoonup a)
                                   (h_2\rightharpoonup b),
    \\
    h\rightharpoonup 1 & = & \epsilon(h),
    &&&
    (hk)\rightharpoonup a & = & h\rightharpoonup (k\rightharpoonup
                                a)\,.
  \end{array}
  \]
  
  The dg algebra $A\sharp H$ has $A\otimes H$ as underlying complex of
  vector spaces. A tensor $a\otimes h$ is denoted by $ah$. The
  (associative) product of $A\sharp H$ is given by
\[
(ah) \times (bk) = (a\otimes h) \times (b\otimes k) = a
(h_1\rightharpoonup b) \otimes  h_2k = a (h_1\rightharpoonup b) h_2k\,.
\]

\subsubsection{}
\label{sec:cross-prod-lambd-2}

Assume that $S$ is invertible.
The following identities hold in $\Lambda^e$
\begin{equation}
  \label{eq:4}
\begin{array}{l}
  (h\otimes k)\times (a\otimes b) = ((h_1\rightharpoonup a)\otimes
  (S^{-1}(k_1)\rightharpoonup b)) \times (h_2\otimes k_2)\\
(a\otimes b) \times (h\otimes k) = (h_2\otimes k_2)\times
  ((S^{-1}(h_1)\rightharpoonup a) \otimes (k_1\rightharpoonup b))\,.
\end{array}
\end{equation}

The algebra $H^{\mathrm{op}}$ is a Hopf
algebra with coproduct given by $h\mapsto h_1\otimes h_2$ and antipode
$S^{-1}$. Also $H^e$ is a Hopf algebra with coproduct given by
$(h\otimes k)_1\otimes (h\otimes k)_2=(h_1\otimes k_1)\otimes
(h_2\otimes k_2)$ and antipode $S\otimes S^{-1}$.  

There is  a structure of dg $H^e$-module algebra on $A^e$ such that
\begin{equation}
  \label{eq:51}
  (h\otimes k) \rightharpoonup (a\otimes b)
=        (h\rightharpoonup
a)\otimes (S^{-1}(k)\rightharpoonup b)\,.
\end{equation}
 The resulting 
smash product dg algebra $A^e\sharp H^e$ is isomorphic to $\Lambda^e$
\emph{via} the mapping $A^e\otimes H^e\to \Lambda^e$ defined by $a\otimes
b\otimes h \otimes k \mapsto (a\otimes b)\times (h\otimes k)$. In
other words, the following identity holds true in $\Lambda^e$
\begin{equation}
  \label{eq:15}
  (h\otimes k) \times (a\otimes b) = (h\otimes k)_1\rightharpoonup
  (a\otimes b) \times (h\otimes k)_2\,.
\end{equation}

The natural structure of left dg $A^e$-module of $A^e$ extends to a
structure of left dg $\Lambda^e$-module such that
$(a h\otimes b 
k) \rightharpoonup (x\otimes y)=
(-1)^{{\mathrm{deg}}(b) (
  {\mathrm{deg}}(x) + {\mathrm{deg}}(y)
  )}
(a(h\rightharpoonup x))\otimes
(S^{-1}(k)\rightharpoonup (yb))$.

This structure and the structure of right dg $A^e$-module of $A^e$ do
not form a structure of $\Lambda^e-A^e$-bimodule. Instead, those two
structures are compatible in
the following sense (where $m\in A^e$)
\begin{equation}
  \label{eq:compat}
  \begin{array}{l}
    (h\otimes k)\rightharpoonup (m
    \leftharpoonup (a\otimes b))
     = 
          ((h\otimes k)_1\rightharpoonup m) \leftharpoonup ((h\otimes
    k)_2\rightharpoonup (a\otimes 
          b)) \\
    (a'\otimes b') \rightharpoonup (m \leftharpoonup
    (a\otimes b))
     = 
          ((a'\otimes b') \rightharpoonup m)
          \leftharpoonup (a\otimes b)\,.
  \end{array}
\end{equation}
(which is part of the identities defining the $H^e$-module dg algebra
structure on $A^e$).

\subsection{Duality conditions on dg algebras}
\label{sec:dual-cond-dg}

Let $n$ be a natural integer.

\subsubsection{}
\label{sec:dual-cond-dg-1}

The definitions of the dualities considered in this article are
recalled in the introduction. Note that, given a dg $A^e$-bimodule
$X$, then ${\mathrm{Hom}}_{A^e}(A,X)$ is a dg $A$-bimodule. Whence the functor
${\mathrm{Hom}}_{A^e}(A,-) \colon \mathcal C(A^e\otimes (A^e)^{\mathrm op}) \to
\mathcal C(A^e)$.
This is how ${\mathrm{RHom}}_{A^e}(A,A^e)$ is considered as an object of
$\mathcal D(A^e)$.  When $A$ is a $\k$-algebra,
${\mathrm{Ext}}^i_{A^e}(A,A^e)$ inherits of a structure of $A$-bimodule for
every $i$. Recall that, if $A$ is a $\k$-algebra, then $A$ is
Calabi-Yau if and only if it is skew Calabi-Yau and any Nakayama
automorphism for $A$ is inner (equivalently, the identity map of $A$
is a Nakayama automorphism).

\subsubsection{}
\label{sec:dual-cond-dg-2}

When $A$ is a skew Calabi-Yau algebra in dimension $n$, every free
generator $e$ of ${\mathrm{Ext}}^n_{A^e}(A,A^e)$ in
${\mathrm{mod}}(A)$ determines a unique Nakayama automorphism
$\mu\in {\mathrm{Aut}}_{\k-{\mathrm{alg}}}(A)$ such that the identity
$ea = \mu(a) e$ holds in ${\mathrm{Ext}}^n_{A^e}(A,A^e)$.
\begin{lem}
  Let  $A$ be a $\k$-algebra. Let $D\in {\mathrm{mod}}(A^e)$ be
  such that there exists $\nu\in {\mathrm{Aut}}_{\k-{\mathrm{alg}}}(A)$
  verifying $D\simeq A^\nu$ in ${\mathrm{mod}}(A^e)$. Let $d\in D$ be a
  free generator of $D$ in ${\mathrm{mod}}(A)$ and denote by $\mu\colon
  A\to A$ the algebra homomorphism such that the identity $da=\mu(a)d$
  holds in $D$. Then, $\mu\in {\mathrm{Aut}}_{\k-{\mathrm{alg}}}(A)$ and
  $D\simeq A^\mu$ in ${\mathrm{mod}}(A^e)$.
\end{lem}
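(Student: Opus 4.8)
The plan is to first exploit the freeness of $d$ to produce the map $\mu$ together with an explicit bimodule isomorphism $A^\mu \simeq D$, and only afterwards to invoke the existence of $\nu$ in order to upgrade $\mu$ to an automorphism. Since $d$ is a free generator of $D$ in ${\rm mod}(A)$, every element of $D$ is uniquely of the form $xd$ with $x\in A$; in particular, for each $a\in A$ there is a unique $\mu(a)\in A$ with $da=\mu(a)d$, which is precisely the defining relation for $\mu$. That $\mu$ is a $\k$-algebra homomorphism is then a routine verification resting only on this uniqueness: additivity and $\k$-linearity follow from left-linearity of $x\mapsto xd$; the relation $d\cdot 1=d=1\cdot d$ gives $\mu(1)=1$; and associativity gives $d(ab)=(da)b=\mu(a)(db)=\mu(a)\mu(b)d$, whence $\mu(ab)=\mu(a)\mu(b)$.

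Once $\mu$ is known to be an algebra homomorphism, the twisted bimodule $A^\mu$ is defined, and I claim the map $\phi\colon A^\mu\to D$, $x\mapsto xd$, is an isomorphism in ${\rm mod}(A^e)$. Left-linearity is immediate, and right-linearity is the computation $\phi(x\cdot a)=x\mu(a)d=x(da)=(xd)a=\phi(x)\cdot a$, recalling that the right action on $A^\mu$ is $x\cdot a=x\mu(a)$. Bijectivity of $\phi$ is exactly the freeness of $d$ in ${\rm mod}(A)$. This already establishes $D\simeq A^\mu$ in ${\rm mod}(A^e)$, which is one of the two assertions.

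It remains to show that $\mu$ is bijective, and this is the only step that genuinely uses the hypothesis $D\simeq A^\nu$; I expect it to be the crux. Composing $\phi$ with the given isomorphism $D\xrightarrow{\sim}A^\nu$ yields a bimodule isomorphism $\Theta\colon A^\mu\to A^\nu$. Viewed as a morphism of \emph{left} $A$-modules between two copies of the free rank-one module $A$, $\Theta$ must be right multiplication $\Theta=R_u$ by the element $u=\Theta(1)$; since $\Theta$ is an isomorphism, $R_u$ is bijective, which forces $u\in A^\times$ (the endomorphism algebra of $A$ in ${\rm mod}(A)$ being $A^{\rm op}$ acting by right multiplication, whose units are those of $A$). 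Spelling out right-linearity of $\Theta$ and evaluating at $1$ gives $\mu(a)u=u\nu(a)$, that is $\mu(a)=u\nu(a)u^{-1}$ for all $a\in A$. Thus $\mu$ is the composite of $\nu$ with conjugation by the unit $u$, hence an automorphism, which completes the proof.
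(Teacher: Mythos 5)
Your proof is correct and follows essentially the same route as the paper: first the isomorphism $A^\mu\simeq D$, $x\mapsto xd$, coming from freeness of $d$, and then the comparison with $A^\nu$ to exhibit $\mu$ as $\nu$ composed with conjugation by a unit. Your unit $u=\Theta(1)$ is exactly the inverse of the paper's $\alpha$ (where $d'=\alpha d$ is the generator corresponding to $1\in A^\nu$), and the identity $\mu(a)=u\nu(a)u^{-1}$ is the paper's computation $\mu(a)=\alpha^{-1}\nu(a)\alpha$ in different notation.
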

\begin{proof}
  The mapping $A^\mu \to D\,,a\mapsto ad$ is an isomorphism in
  ${\mathrm{mod}}(A^e)$.  It therefore suffices to prove that $\mu$ is
  bijective.  There exists a free generator $d'\in D$ in
  ${\mathrm{mod}}(A)$ such that the identity $d'a=\nu(a)d'$ holds in
  $D$. Let $\alpha\in A^\times$ be such that $d'=\alpha d$. Then,
  $\mu(a) d = da = \alpha^{-1}d'a = \alpha^{-1}\nu(a)d' =
  \alpha^{-1}\nu(a) \alpha d$.
  Therefore, $\mu\circ \nu^{-1}\in {\mathrm{Aut}}_{\k-{\mathrm{alg}}}(A)$, and
  hence $\mu\in {\mathrm{Aut}}_{\k-{\mathrm{alg}}}(A)$.
\end{proof}

\subsubsection{}
\label{sec:dual-cond-dg-3}

When $A$ is moreover connected ($\mathbb N$-)graded, it is required
that there exists $\ell\in \mathbb Z$ and a homogeneous
$\mu_A\in {\mathrm{Aut}}_{\k-{\mathrm{alg}}}(A)$ such that
${\mathrm{Ext}}_{A^e}^n(A,A^e)\simeq A^{\mu_A}(\ell)$ as graded
$A$-bimodules for $A$ to be considered as skew Calabi-Yau in the
graded sense. Here, $-(\ell)$ denotes the degree shift of graded
modules.  The following lemma is used later on. Its proof is
elementary and omitted.
\begin{lem}
  Let $A$ be a connected graded $\k$-algebra. Assume that $A$ is
  skew Calabi-Yau in the ungraded sense, then so is it in the graded sense.
\end{lem}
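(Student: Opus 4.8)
The plan is to reduce the graded skew-Calabi-Yau property to a single statement about the bimodule $E:={\rm Ext}^n_{A^e}(A,A^e)$, namely that it refines to a graded $A$-bimodule isomorphic to $A^{\mu_A}(\ell)$, and to extract that graded refinement from the connectedness of the grading. Since $A$ is connected graded, so is $A^e=A\otimes A^{\rm op}$, with $(A^e)_0=\k$. The first and main task is to promote the ungraded homological smoothness $A\in{\rm per}(A^e)$ to a \emph{graded} one: I would show that $A$ admits a finite resolution by finitely generated graded free $A^e$-modules. To this end I would consider the minimal graded free resolution $F_\bullet\to A$ over $A^e$, which exists and is unique because $A^e$ is connected graded (graded Nakayama), and whose $i$-th term is graded free on a homogeneous basis of ${\rm Tor}^{A^e}_i(\k,A)$. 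The point is that these ${\rm Tor}$ spaces are finite-dimensional and vanish for $i\gg 0$: computing ${\rm Tor}^{A^e}_\bullet(\k,A)$ from a bounded complex $P_\bullet$ of finitely generated projective (ungraded) $A^e$-modules with $P_\bullet\simeq A$ — which exists precisely because $A\in{\rm per}(A^e)$ — exhibits $\k\otimes_{A^e}P_\bullet$ as a bounded complex of finite-dimensional vector spaces. As graded ${\rm Tor}$ forgets to ungraded ${\rm Tor}$, the same boundedness and finite-dimensionality hold gradedly, so $F_\bullet$ is finite with finitely generated terms.

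With the finite graded free resolution $F_\bullet$ in hand, $E$ is the degree-$n$ cohomology of the bounded complex $\Hom_{A^e}(F_\bullet,A^e)$ of finitely generated graded $A$-bimodules; hence $E$ is a finitely generated graded $A$-bimodule whose underlying ungraded bimodule is the ${\rm Ext}^n_{A^e}(A,A^e)$ of the hypothesis, that is, $A^\sigma$ for some $\sigma\in{\rm Aut}_{\k-{\rm alg}}(A)$ (and the graded ${\rm Ext}^i$ for $i\neq n$ vanish, since they do ungradedly). Viewed as a graded left $A$-module, $E$ is therefore finitely generated and, ungradedly, free of rank one. Over the connected graded algebra $A$ the vanishing of ${\rm Tor}^A_1(\k,E)$ — which holds because $E$ is ungraded free — forces $E$ to be graded free by graded Nakayama, and counting ungraded ranks forces the rank to be one. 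Thus $E\simeq A(\ell)$ as graded left $A$-modules for a single shift $\ell$, and I would fix a homogeneous free generator $e\in E$.

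Finally, I would read off the automorphism from the right $A$-action on $e$: for homogeneous $a\in A$ there is a unique $\mu_A(a)\in A$ with $ea=\mu_A(a)e$, and comparing degrees on the two sides shows $\mu_A$ is degree-preserving, while $e(ab)=(ea)b$ shows it is an algebra endomorphism. Applying the Lemma of \ref{sec:dual-cond-dg-2} with $D=E$, $\nu=\sigma$ and $d=e$ then gives that $\mu_A$ is an automorphism and $E\simeq A^{\mu_A}$ ungradedly; the map $a\mapsto ae$ is a graded bimodule isomorphism $A^{\mu_A}(\ell)\xrightarrow{\ \sim\ }E$ once $\ell$ is fixed by the degree of $e$, which is exactly the graded skew-Calabi-Yau condition. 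The routine degree bookkeeping determining $\ell$ and the verification of left/right linearity are straightforward. The only genuinely non-formal step is the first one, the passage from ungraded to graded homological smoothness; its crux is the finite-dimensionality and boundedness of ${\rm Tor}^{A^e}_\bullet(\k,A)$, which is where perfectness (rather than mere finiteness of the projective dimension) is essential.
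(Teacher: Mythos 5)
Your proof is correct, but it takes a genuinely different route from the paper's. The paper argues directly on a generator: starting from an ungraded isomorphism $A^\nu\simeq {\rm Ext}^n_{A^e}(A,A^e)$, it takes the image $e$ of $1$, asserts that ${\rm Ext}^n_{A^e}(A,A^e)$ is a graded bimodule, and observes that, because $A^e$ is connected graded, the \emph{lowest nonzero homogeneous component} $d$ of $e$ is again a free generator in ${\rm mod}(A)$ and in ${\rm mod}(A^{\rm op})$; the lemma of \ref{sec:dual-cond-dg-2} applied to $d$ then yields $\mu$, whose homogeneity follows from the identity $da=\mu(a)d$ with $d$ homogeneous and free on both sides. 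You instead rebuild the graded structure from scratch: you first upgrade $A\in{\rm per}(A^e)$ to \emph{graded} homological smoothness via the minimal graded free resolution over the connected graded algebra $A^e$, using boundedness and finite-dimensionality of ${\rm Tor}^{A^e}_\bullet(\k,A)$ (which is exactly where perfectness enters), then deduce that ${\rm Ext}^n_{A^e}(A,A^e)$ carries a grading and is graded free of rank one as a left module (Tor-vanishing plus graded Nakayama), and only then pick a homogeneous generator and conclude as the paper does. What your route buys is completeness: the paper's sentence ``Note that ${\rm Ext}^n_{A^e}(A,A^e)$ is a graded $A$-bimodule'' is precisely the point that needs a finite resolution by finitely generated graded projectives (so that graded and ungraded $\Hom$, hence ${\rm Ext}$, coincide), and your first step supplies this, with graded smoothness as a useful byproduct. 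What the paper's route buys is brevity: no minimal resolutions or Tor computations, only the elementary fact that over a connected graded ring the lowest homogeneous component of a free generator of a graded module is again a free generator. One cosmetic point in your write-up: finite generation of $E$ as a graded \emph{left} $A$-module does not follow from its finite generation as a graded bimodule, but rather from its ungraded rank-one freeness (the finitely many homogeneous components of one ungraded generator generate it gradedly); this is how your argument in effect proceeds, so nothing is lost.
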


\subsubsection{}
\label{sec:dual-cond-dg-4}

Assume that $A$ is an augmented algebra. Then, $A$ is said to satisfy
the \emph{left Artin-Schelter} condition in dimension $n$ if
\begin{equation}
  \label{eq:23}
  {\mathrm{dim}}_{\k}{\mathrm{Ext}}_{A}^i(\,_A\k,A) =
  \left\{
    \begin{array}{ll}
      1 & \text{for $i=n$}\\
      0 & \text{otherwise.}
    \end{array}\right.
\end{equation}
The right Artin-Schelter condition is defined analogously using right
$A$-modules instead of left $A$-modules. When $A$ satisfies both the
left and right Artin-Schelter conditions, then $A$ is said to satisfy
the Artin-Schelter condition.

The algebra $A$ is called Artin-Schelter Gorenstein when it satisfies
the Artin-Schelter condition
and the injective dimensions of $_AA$ and $A_A$ are finite and equal. It is called
Artin-Schelter regular when, moreover, ${\mathrm{gl.dim.}}(A)<\infty$.

\section{Homological dualities for Hopf algebras}
\label{sec:homol-dual-hopf}

The objective of this section is to prove Proposition~\ref{prop1}:  the antipode of $H$ is
invertible when $H$ has Van den Bergh duality.
This is done in Section~\ref{sec:invert-antip-van}.
For this purpose, a brief reminder on winding automorphisms is given in
Section~\ref{sec:homol-integr-wind}, next,  a general description
of an inverse dualising complex of $H$ is given in Section~\ref{sec:24}, and
a useful characterisation of homologically smooth Hopf algebras is
proved in Section~\ref{sec:23}. 
Some consequences
regarding Calabi-Yau duality and Nakayama automorphisms are discussed
in Section~\ref{sec:van-den-bergh}. In
particular, Theorem~\ref{thm2} is proved there.

\subsection{Winding automorphisms (see \cite[Section 4.5]{MR2437632})}
\label{sec:homol-integr-wind}
If $H$ satisfies the right Artin-Schelter condition in dimension
$d$, then ${\mathrm{Ext}}^d_{H^{\mathrm{op}}}(\k_H,H)$ is called the \emph{right
  homological integral} of $H$ and denoted by $\int_r$. The algebra
homomorphism $\pi\colon H\to \k$ such that the left $H$-module
structure of ${\mathrm{Ext}}^d_{H^{\mathrm{op}}}(\k_H,H)$ is given by
$h\rightharpoonup \alpha = \pi(h)\alpha$ is also denoted by
$\int_r$. It is called \emph{trivial} if $\int_r=\epsilon$ as maps
$H\to \k$. The \emph{left} homological integral $\int_{\ell}$ is
defined analogously using ${\mathrm{Ext}}_H^d(\,_H\k,H)$.

Let $\pi\colon H\to \k$ be any algebra homomorphism. The following
mappings
\[
\begin{array}{crcl}
  \Xi_{\pi}^\ell\colon & H & \to & H\\
  & h &\mapsto &\pi(h_1)h_2
\end{array}
\ \ 
\begin{array}{crcl}
  \Xi_{\pi}^r \colon & H & \to & H\\
  & h &\mapsto &h_1\pi(h_2)
\end{array}
\]
are algebra automorphism with respective inverses
$\Xi_{\pi\circ S}^\ell$ and $\Xi_{\pi\circ S}^r$. Since
$\pi\circ S^2=\pi$ (see \cite[(E1.2.2)]{MR3250287}), then both
$\Xi_{\pi}^\ell$ and $\Xi_\pi^r$ commute with $S^2$. The automorphisms
$\Xi_{\pi}^\ell$ and $\Xi_\pi^r$ are the \emph{left and right winding
  automorphisms} of $\pi$.

\begin{ex}
  Let $\mathfrak g$ be a $d$-dimensional Lie algebra ($d\in \mathbb
  N$). Assume that $H$ is the universal enveloping algebra $\mathcal
  U(\mathfrak g)$. Following \cite[Corollary 2.2]{MR1762922}, there is
  an isomorphism of right $H$-modules ${\mathrm{Ext}}^d_H(\,_H\k,H)\simeq
  \Lambda^d\mathfrak g^*$. In particular
  \begin{equation}
    \label{eq:50}
    (\forall X\in \mathfrak g)\ \ \int_\ell(X) = {\mathrm{Tr}}({\mathrm{ad}}_X)\,.
  \end{equation}
\end{ex}

\subsection{The inverse dualising complex of a Hopf algebra}
\label{sec:24}
The result below describes ${\mathrm{RHom}}_{H^e}(H,H^e)$ in terms of
${\mathrm{RHom}}_{H^{\mathrm{op}}}(\k_H,H)$. Given any left $H$-module $N$, denote by
$N\!\!\uparrow^{H^e}$ the $H$-bimodule with underlying vector space
$H\otimes N$ and with action by $H^e$ given by
$h(\ell \otimes n)k = S^2(h_1)\ell k\otimes (h_2\rightharpoonup n)$.
Note that $H\!\!\uparrow^{H^e}\simeq H^e$ in ${\mathrm{mod}}(H^e)$; More
precisely, the mapping $H\!\!\uparrow^{H^e}\to H^e$ defined by
$\ell \otimes n \mapsto n_2\otimes S(n_1) \ell$ is an isomorphism of
$H$-bimodules with inverse given by
$h\otimes k \mapsto S^2(h_1) k \otimes h_2$. The assignment
$N\mapsto N\!\!\uparrow^{H^e}$ defines an exact functor
${\mathrm{mod}}(H)\to {\mathrm{mod}}(H^e)$ preserving projectives. It is
isomorphic to the extension-of-scalars functor along the algebra
homomorphism $H\to H^e$ given by $h \mapsto h_2 \otimes S(h_1)$.
The resulting total derived functor
$\mathcal D(H) \to \mathcal D(H^e)$ is also denoted by
$N\mapsto N\!\!\uparrow^{H^e}$.

\begin{prop}
  Let $H$ be a Hopf algebra. Assume that $\k_H$ has a resolution in
  ${\mathrm{mod}}(H^{\mathrm{op}})$ by finitely generated projectives. Then,
  \begin{center}
    ${\mathrm{RHom}}_{H^e}(H,H^e)\simeq {\mathrm{RHom}}_{H^{\mathrm{op}}}(\k_H,H)\!\!\uparrow^{H^e}$ in $\mathcal D((H^e)^{\mathrm{op}})$.
  \end{center}
\end{prop}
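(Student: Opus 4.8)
The plan is to realise the bimodule $H$ as a module induced along an algebra map $H^{\mathrm{op}}\to H^e$, and then to transport the computation of $\rhom_{H^e}(H,H^e)$ to a one-sided computation over $H^{\mathrm{op}}$ by tensor--Hom adjunction. First I would introduce the map $\delta\colon H^{\mathrm{op}}\to H^e$, $b\mapsto S(b_1)\otimes b_2$. A short Sweedler computation, using that $S$ is an algebra anti-homomorphism and $\Delta$ is multiplicative, gives $\delta(b)\delta(c)=\delta(cb)$, so $\delta$ is a homomorphism of algebras; note it uses only $S$, not $S^{-1}$, matching the fact that no invertibility of the antipode is assumed. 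Viewing $\k_H$ as a left $H^{\mathrm{op}}$-module via $\epsilon$, the assignment $H^e\otimes_{H^{\mathrm{op}}}\k_H\to H$, $(h\otimes k)\otimes 1\mapsto hk$, is then an isomorphism of left $H^e$-modules (equivalently of $H$-bimodules): it is well defined and $H^e$-linear because $(h\otimes k)\delta(b)=hS(b_1)\otimes b_2k$ acts on $1_H$ as $hS(b_1)b_2k=\epsilon(b)hk$.

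Next I would prove that $H^e$ is \emph{free} as a right $H^{\mathrm{op}}$-module along $\delta$. Explicitly, the right $H^{\mathrm{op}}$-linear map $\mu$ from the free module $H\otimes H^{\mathrm{op}}$ to $H^e$ given by $h\otimes b\mapsto(h\otimes 1)\delta(b)=hS(b_1)\otimes b_2$ admits the two-sided inverse $\nu\colon H^e\to H\otimes H^{\mathrm{op}}$, $x\otimes y\mapsto xy_1\otimes y_2$; the identities $\nu\mu=\mathrm{id}$ and $\mu\nu=\mathrm{id}$ are two routine applications of the antipode axioms. Freeness shows that $H^e$ is flat over $H^{\mathrm{op}}$, so that $H\simeq H^e\otimes^{\mathbb L}_{H^{\mathrm{op}}}\k_H$, and that $H^e\otimes_{H^{\mathrm{op}}}-$ preserves finitely generated projectives. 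Applying the derived tensor--Hom adjunction for $\delta$ to a resolution of $\k_H$ by finitely generated projective right $H$-modules (which exists by hypothesis) then yields, in $\mathcal D((H^e)^{\mathrm{op}})$,
\[
\rhom_{H^e}(H,H^e)\simeq \rhom_{H^{\mathrm{op}}}(\k_H,\,{}_{\delta}H^e),
\]
where ${}_{\delta}H^e$ is $H^e$ with left $H^{\mathrm{op}}$-action via $\delta$, the surviving right regular $H^e$-action providing the $(H^e)^{\mathrm{op}}$-structure on both sides.

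It remains to identify the coefficient module. Here I would invoke the isomorphism $H\!\!\uparrow^{H^e}\simeq H^e$ recalled just above the statement to rewrite ${}_{\delta}H^e\simeq ({}_HH)\!\!\uparrow^{H^e}$ as an $(H^{\mathrm{op}},H^e)$-bimodule, and then commute the functor $\uparrow^{H^e}$ past $\rhom_{H^{\mathrm{op}}}(\k_H,-)$. This is legitimate precisely because the underlying functor of $\uparrow^{H^e}$ is $H\otimes_\k-$ and $\k_H$ is resolved by \emph{finitely generated} projectives: for a finitely generated projective $P$ one has $\Hom_{H^{\mathrm{op}}}(P,H\otimes_\k X)\simeq H\otimes_\k\Hom_{H^{\mathrm{op}}}(P,X)$, and $\uparrow^{H^e}$ being exact it passes to the derived category. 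This produces
\[
\rhom_{H^{\mathrm{op}}}(\k_H,{}_{\delta}H^e)\simeq \rhom_{H^{\mathrm{op}}}(\k_H,H)\!\!\uparrow^{H^e},
\]
and composing with the previous isomorphism gives the statement.

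The main obstacle is the bookkeeping of module structures: each reduction above is an isomorphism of vector spaces almost by inspection, but one must check at every stage that it respects the correct actions --- the left $H^{\mathrm{op}}$-action twisted by $\delta$ (hence by $S$), the right regular $H^e$-action encoding the $(H^e)^{\mathrm{op}}$-structure of the target, and the $S^2$-twist built into $\uparrow^{H^e}$. Concretely, the delicate point is that the explicit isomorphism $H\!\!\uparrow^{H^e}\xrightarrow{\ \sim\ }H^e$, $\ell\otimes n\mapsto n_2\otimes S(n_1)\ell$, must simultaneously intertwine the $\delta$-twisted left $H^{\mathrm{op}}$-action with the action through the $N$-slot and the right regular $H^e$-action with the one transported from $\uparrow^{H^e}$; verifying these two compatibilities is where the antipode identities (notably $\sum S(h_2)S^2(h_1)\otimes h_3=1\otimes h$ together with $\sum h_1S(h_2)=\epsilon(h)=\sum S(h_1)h_2$) are genuinely used. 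Once these identifications are seen to be bimodule-linear, the displayed isomorphisms hold in $\mathcal D((H^e)^{\mathrm{op}})$ and the proof is complete.
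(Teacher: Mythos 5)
Your strategy is not genuinely different from the paper's; it is the same proof in more structural packaging. Unwinding it, your induction $H^e\otimes_{H^{\rm op}}-$ along $\delta$ is exactly the paper's functor $X\mapsto \overline X$ from Step 1 (transport the left $H^e$-structure of $H^e\otimes_{H^{\rm op}}X$ through your freeness isomorphism $\mu$ and you recover the formula $h(\ell\otimes x)k=h\ell k_1\otimes (x\leftharpoonup k_2)$), your tensor--Hom adjunction replaces the explicit computation of Step 2, and your coefficient identification is Step 3. Your Steps 1--4 are correct as written: $\delta$ is an algebra map, $H^e$ is free over $H^{\rm op}$ via $\delta$, hence $H\simeq H^e\otimes^{\mathbb L}_{H^{\rm op}}\k_H$, and the adjunction gives $\rhom_{H^e}(H,H^e)\simeq\rhom_{H^{\rm op}}(\k_H,{}_{\delta}H^e)$ in $\mathcal D((H^e)^{\rm op})$; indeed the right $H$-action on ${}_{\delta}H^e$, namely $(x\otimes y)\cdot b=S(b_1)x\otimes yb_2$, is precisely the twisted action $(h\otimes k)\leftharpoonup\ell=S(\ell_1)h\otimes k\ell_2$ appearing in the paper's Step 2.

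The flaw sits exactly at the point you flagged as delicate, and as written that step fails. The recalled map $\phi\colon H\!\!\uparrow^{H^e}\to H^e$, $\ell\otimes n\mapsto n_2\otimes S(n_1)\ell$, is an isomorphism onto $H^e$ with its \emph{left regular} bimodule structure $h(x\otimes y)k=hx\otimes yk$; it does \emph{not} intertwine the structures you need on ${}_{\delta}H^e$, where the right $H^e$-action is right multiplication and the left $H^{\rm op}$-action is $b\cdot(x\otimes y)=\delta(b)(x\otimes y)=S(b_1)x\otimes yb_2$ --- i.e.\ the left action lives on the $H^{\rm op}$-slot and the right action on the $H$-slot, the opposite distribution. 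Concretely, testing $H^{\rm op}$-linearity at $\ell=n=1$: $\phi(b\cdot(1\otimes 1))=\phi(1\otimes b)=b_2\otimes S(b_1)$, whereas $\delta(b)\phi(1\otimes 1)=S(b_1)\otimes b_2$; for $H=\k G$ and $b=g$ with $g^2\neq 1$ these are $g\otimes g^{-1}\neq g^{-1}\otimes g$. The identification you want is nevertheless true: the map that realizes it is the factor-swapped one, $\psi(\ell\otimes n)=S(n_1)\ell\otimes n_2$, with inverse $x\otimes y\mapsto S^2(y_1)x\otimes y_2$. Then $\psi(\ell\otimes nb)=S(b_1)S(n_1)\ell\otimes n_2b_2=\delta(b)\psi(\ell\otimes n)$, and right $H^e$-linearity follows from the identity you quoted, $\sum S(k_2)S^2(k_1)\otimes k_3=1\otimes k$, since $\psi\bigl(S^2(k_1)\ell h\otimes k_2n\bigr)=S(n_1)S(k_2)S^2(k_1)\ell h\otimes k_3n_2=S(n_1)\ell h\otimes kn_2=\psi(\ell\otimes n)\times(h\otimes k)$. (This $\psi$ is exactly the map underlying the paper's Step 3 formula $\ell\otimes\theta\mapsto(p\mapsto S(\theta(p)_1)\ell\otimes\theta(p)_2)$.) With $\psi$ substituted for $\phi$, your remaining argument --- commuting $H\otimes_{\k}-$ past $\Hom_{H^{\rm op}}(P,-)$ for $P$ finitely generated projective, which is right $H^e$-linear by naturality --- goes through and closes the proof.
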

\begin{proof}
  Let $P\to \k_H$ be a resolution in ${\mathrm{mod}}(H^{\mathrm{op}})$ by finitely generated projectives. Hence,
  ${\mathrm{RHom}}_{H^{\mathrm{op}}}(\k_H,H)\simeq {\mathrm{Hom}}_{H^{\mathrm{op}}}(P,H)$ in
  $\mathcal D(H)$. The stated isomorphism is proved in three
  steps. First, by proving that a projective resolution $\overline P$
  of $H$ in ${\mathrm{mod}}(H^e)$ can be deduced from $P$. Next, by proving
  that ${\mathrm{RHom}}_{H^e}(H,H^e)\simeq {\mathrm{Hom}}_{H^{\mathrm{op}}}(P,H\otimes
  H)$ (with an adequate structure of right $H$-module on $H\otimes
  H$). Finally, by proving that 
  ${\mathrm{Hom}}_{H^{\mathrm{op}}}(P,H\otimes H)\simeq {\mathrm{Hom}}_{H^{\mathrm{op}}}(P,H)\!\!\uparrow^{H^e}$.


  \textbf{Step 1 - } Given $X\in {\mathrm{mod}}(H^{\mathrm{op}})$, denote by $\overline X$ the
  $H$-bimodule equal to $H\otimes X$ as a vector space and with actions
  of $H$ given by $h(\ell \otimes x) k = h\ell k_1\otimes
  x\leftharpoonup k_2$. Then,
  \begin{itemize}
  \item $\overline{H_H}\simeq H^e$ in ${\mathrm{mod}}(H^e)$; More
    precisely, the mapping $\overline{H_H}\to H^e$ defined by $\ell
    \otimes x\mapsto \ell S(x_1) \otimes x_2$ is an isomorphism of
    $H$-bimodules with inverse given by $h\otimes k \mapsto h k_1
    \otimes k_2$,
  \item $\overline{\k_H}\simeq H$ in ${\mathrm{mod}}(H^e)$.
  \end{itemize}
  Thus, $\overline P \to \overline{\k_H}\simeq H$ is a projective
  resolution in ${\mathrm{mod}}(H^e)$. Hence, ${\mathrm{RHom}}_{H^e}(H,H^e)$ is
  isomorphic to 
  ${\mathrm{Hom}}_{H^e}(\overline P, H^e)$ in 
  $\mathcal D(H^e)$. Here, ${\mathrm{Hom}}_{H^e}(\overline P,H^e)$ is a complex
  of $H$-bimodules in the following sense: $(hfk)(-)=f(-)\times
  (k\otimes h)\in H^e$ if $f\in {\mathrm{Hom}}_{H^e}(\overline P,H^e)$, $h,k\in
  H$.


  \textbf{Step 2 - } The following mapping 
  \[
  \begin{array}{rcl}
    {\mathrm{Hom}}_{H^e}(\overline P, H^e) & \to & {\mathrm{Hom}}_{H^{\mathrm{op}}}(P,H\otimes
                                         H) \\
    f   & \mapsto & f(1\otimes -)
  \end{array}
  \]
  is well-defined provided that $H\otimes H$ is considered as a right
  $H$-module for the action such that $(h\otimes k)\leftharpoonup \ell
  = S(\ell_1) h\otimes k\ell_2$. Indeed, this follows from the
  identity $(1\otimes p\leftharpoonup h) = S(h_1) (1\otimes p)h_2$ in $\overline
  P$. This mapping  is moreover  bijective with
  inverse the mapping ${\mathrm{Hom}}_{H^{\mathrm{op}}}(P,H\otimes H)\to
  {\mathrm{Hom}}_{H^e}(\overline P,H^e)$ which assigns to any $g$ the morphism of
  $H$-bimodules $\overline P\to H^e$ defined by $\ell \otimes p\mapsto
  \ell g(p)$. Finally, it is an isomorphism of $H$-bimodules if
  ${\mathrm{Hom}}_{H^{\mathrm{op}}}(P,H\otimes H)$ is endowed with the action of
  $H^e$ such that $(hfk)(-)=f(-)\times (k\otimes h)\in H^e$ (hence
  $H\otimes H$ is a right $H\otimes H^e$-module). Thus,
  ${\mathrm{RHom}}_{H^e}(H,H^e)\simeq {\mathrm{Hom}}_{H^{\mathrm{op}}}(P,H\otimes H)$ in
  $\mathcal D(H^e)$.


  \textbf{Step 3 - } In order to get the announced description of ${\mathrm{RHom}}_{H^e}(H,H^e)$,
  it is necessary to transform ${\mathrm{Hom}}_{H^{\mathrm{op}}}(P,H\otimes H)$. Note
  that ${\mathrm{Hom}}_{H^{\mathrm{op}}}(P,H)$ is a complex of left $H$-modules in a
  natural way. This defines ${\mathrm{Hom}}_{H^{\mathrm{op}}}(P,H)\!\!\uparrow^{H^e}$. The following mapping is
  well-defined
  \[
  \begin{array}{rcl}
    {\mathrm{Hom}}_{H^{\mathrm{op}}}(P,H)\!\!\uparrow^{H^e} & \to & {\mathrm{Hom}}_{H^{\mathrm{op}}}(P,H\otimes
                                                     H) \\
    \ell \otimes \theta & \mapsto &
                                   \left(
                                   p\mapsto S(\theta(p)_1) \ell
                                   \otimes \theta(p)_2
                                   \right)
                                                                                        
  \end{array}
  \]
  It is moreover a morphism of complexes of $H$-bimodules. When $P$ is replaced by
  $H$, then it identifies with the mapping $H\otimes H\to H\otimes H$
  defined by $\ell \otimes \theta \mapsto S(\theta_1)\ell \otimes
  \theta_2$; This is an isomorphism with inverse given by $h\otimes k
  \mapsto S^2(k_1) h \otimes k_2$. Since $P$ consists of finitely
  generated projective $H$-modules, it follows that the complexes of
  $H$-bimodules ${\mathrm{Hom}}_{H^{\mathrm{op}}}(P,H)\!\!\uparrow^{H^e}$ and
  ${\mathrm{Hom}}_{H^{\mathrm{op}}}(P,H\otimes H)$ are isomorphic. Thus,
  ${\mathrm{RHom}}_{H^e}(H,H^e)\simeq {\mathrm{Hom}}_{H^{\mathrm{op}}}(P,H)\!\!\uparrow^{H^e}$.
\end{proof}

Keep the setting of the previous result. Taking cohomology shows that,
for every $n\in\mathbb N$, there is an isomorphism of $H$-bimodules
\[
{\mathrm{Ext}}^n_{H^e}(H,H^e)\simeq H\otimes {\mathrm{Ext}}_{H^{\mathrm{op}}}^n(\k_H,H)
\]
where the right hand-side term is endowed with the structure of
$H$-bimodule such that
$h (\ell \otimes e) k = S^2(h_1)\ell k \otimes h_2\rightharpoonup
e$.
See \cite[Corollary 2.2]{MR2678828} (and \cite[Section
4.5]{MR2437632}) for a previous similar description when $H$ is
Artin-Schelter Gorenstein (and with invertible antipode, respectively).

\subsection{Homological smoothness of Hopf algebras}
\label{sec:23}

The following result seems to be well-known at least when $H$ is
noetherian. See \cite[Lemma 2.4]{MR3341818} for a proof using that $S$
is invertible.  In the present situation, where $S$ need not be
invertible, it follows from the properties of the functor
$\overline{?} \colon \mathrm{mod}(H^{\mathrm{op}}) \to
\mathrm{mod}(H^e)$
considered in the proof in \ref{sec:24} and from the corresponding
ones of $\k\otimes_H- \colon \mathrm{mod}(H^e) \to \mathrm{mod}(H^{\mathrm{op}})$.
\begin{lem}
  The Hopf algebra $H$ is homologically smooth if and only if
  $\k_H\in {\mathrm{per}}(H^{\mathrm{op}})$. If it is so, then
  ${\mathrm{pd}}_{H^e}(H)={\mathrm{pd}}_{H^{\mathrm{op}}}(\k_H)$.
\end{lem}

\subsection{Invertibility of the antipode}
\label{sec:invert-antip-van}

\subsubsection{A sufficient condition for the right Artin-Schelter
  property}
\label{sec:suff-cond-right}

The following lemma is a key-step in the proof of
Proposition~\ref{prop2}, it is very similar to the lemma in
\cite[Section 1.2]{MR2437632} except that the condition that $S$ is
invertible is dropped here. The proof here is adapted from the proof
there.

\begin{lem}
  Let $H$ be a Hopf algebra. Assume the following:
  \begin{enumerate}[(a)]
  \item ${\mathrm{id}}(\,_HH)<\infty$,
  \item $\k_H$ has a resolution in ${\mathrm{mod}}(H^{\mathrm{op}})$ by finitely
    generated projective modules,
  \item there exists $d\in \mathbb N$ such that ${\mathrm{Ext}}_{H^{\mathrm{op}}}^i(\k_H,H)$ is finite dimensional if $i=d$ and zero
    otherwise. 
  \end{enumerate}
  Then, ${\mathrm{dim}}_\k {\mathrm{Ext}}^d_H(\,_H\k,H) =
  {\mathrm{dim}}_\k {\mathrm{Ext}}^d_{H^{\mathrm{op}}}(\k_H,H)=1$.
\end{lem}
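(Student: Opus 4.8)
The plan is to use the characterization of the inverse dualising complex from Proposition \ref{sec:24} together with the finiteness hypotheses to transfer information between the left and right homological integrals. First I would observe that condition (c) says $\rhom_{H^{\rm op}}(\k_H,H)$ is concentrated in cohomological degree $d$, where it equals the finite-dimensional space $E := {\rm Ext}^d_{H^{\rm op}}(\k_H,H)$. By the Proposition in \ref{sec:24}, this gives $\rhom_{H^e}(H,H^e)\simeq E\!\!\uparrow^{H^e}[-d]$, so that ${\rm Ext}^n_{H^e}(H,H^e)$ vanishes except for $n=d$, where it is the $H$-bimodule $H\otimes E$ with the twisted action $h(\ell\otimes e)k = S^2(h_1)\ell k\otimes (h_2\rightharpoonup e)$.

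The heart of the argument is to pass from this bimodule information to a computation of ${\rm Ext}^*_H(\,_H\k,H)$. The standard device is to apply the functor $-\underset{H}\otimes \k$ (or $\rhom_H(\k,-)$) to kill one of the two $H$-actions. Concretely, I would take a resolution of $\,_{H^e}H$ by finitely generated projective $H$-bimodules — which exists because hypothesis (a), that $\,_HH$ has finite injective dimension, together with (b) will let me control the relevant Ext groups — and then compute ${\rm Ext}^*_H(\,_H\k,H)$ by restricting the bimodule duality along the augmentation. The finite injective dimension of $\,_HH$ guarantees that ${\rm Ext}^i_H(\,_H\k,H)$ is bounded, and a spectral sequence or base-change argument (applying $\k\underset{H}\otimes^{\mathbb L}-$ to the one-term complex $E\!\!\uparrow^{H^e}[-d]$) should collapse to give ${\rm Ext}^i_H(\,_H\k,H)\cong \k\underset{H}\otimes(H\otimes E) = E$ concentrated in degree $d$. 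Since $E$ is finite-dimensional, this already shows ${\rm dim}_\k{\rm Ext}^d_H(\,_H\k,H)={\rm dim}_\k E$ and vanishing elsewhere.

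It then remains to pin down that this common dimension equals $1$. Here I would exploit the symmetry of the situation: the bimodule $H\otimes E$ is free of rank $\dim_\k E$ as a one-sided module (via the $S^2$-twisted left action, which is just $H$ acting freely on the first tensor factor up to an automorphism), so the Van den Bergh–type duality forces $E$ to behave like a rank-one module after collapsing. More precisely, reducing $H\otimes E$ modulo the augmentation ideal on the appropriate side, and using that both ${\rm Ext}^d_H(\,_H\k,H)$ and ${\rm Ext}^d_{H^{\rm op}}(\k_H,H)$ arise as one-dimensional-or-not invariants of the same complex, I would compare dimensions via the Euler characteristic or a local-duality/Matlis-type pairing to conclude each is exactly $1$. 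The cleanest route is likely to compute $\rhom_H(\k,\rhom_{H^e}(H,H^e))$ in two ways and match it against $\rhom_{H^{\rm op}}(\k_H,H)$ applied to $\k$, forcing $\dim_\k E = 1$.

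I expect the main obstacle to be the bookkeeping in the base-change step: showing rigorously that applying $\k\underset{H}\otimes^{\mathbb L}-$ to $E\!\!\uparrow^{H^e}$ yields precisely ${\rm Ext}^*_H(\,_H\k,H)$ with the right twist, and in particular that no higher Tor contributions appear. This requires the finiteness in (b) and (c) to ensure the relevant complexes are perfect enough for the tensor-Ext interchange to hold, and it requires care with the $S^2$-twist in the action of $E\!\!\uparrow^{H^e}$ so that the augmentation interacts correctly. Once that identification is in place, the final dimension-count to reach $1$ is the genuinely delicate point, and I would lean on the rank-one freeness of $H\otimes E$ as a one-sided module — echoing the cited lemma in \cite[Section 1.2]{MR2437632} — to force $\dim_\k E = 1$ rather than merely finite.
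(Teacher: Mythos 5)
There are two genuine gaps here. The first is your central base-change step: the claim that applying $\k\otimes^{\mathbb L}_H-$ to $\rhom_{H^e}(H,H^e)\simeq {\rm Ext}^d_{H^{\rm op}}(\k_H,H)\!\!\uparrow^{H^e}[-d]$ yields $\rhom_H(\,_H\k,H)$ is not a standard fact and is not justified in your argument. What is true (and is how the paper itself exploits the bimodule description, in \ref{sec:38}) is that tensoring on the \emph{other} side, $-\otimes^{\mathbb L}_H\,_H\k$, recovers $\rhom_{H^{\rm op}}(\k_H,H)$, because the right action on $N\!\!\uparrow^{H^e}=H\otimes N$ is free on the first tensor factor; but for the present lemma that is circular, since it merely returns $E:={\rm Ext}^d_{H^{\rm op}}(\k_H,H)$, which is hypothesis (c). Killing the left action instead computes ${\rm Tor}^H_*(\k,E\!\!\uparrow^{H^e})$, and identifying even ${\rm Tor}^H_0$ with $E$ requires untwisting the action $h\cdot(\ell\otimes e)=S^2(h_1)\ell\otimes(h_2\rightharpoonup e)$, which one can only do when $S$ is invertible --- precisely what cannot be assumed, since this lemma is the engine behind Proposition \ref{prop1} (invertibility of the antipode). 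Moreover, no mechanism is given that links this Tor to ${\rm Ext}^*_H(\,_H\k,H)$; the standard passage between bimodule Ext and one-sided Ext for Hopf algebras goes through the adjoint action, not through one-sided tensoring with $\k$.

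The second gap is the endgame: even if the base-change step were repaired, it would only show $\dim_\k{\rm Ext}^d_H(\,_H\k,H)=\dim_\k E$, and nothing in your proposal produces the value $1$ (two equal dimensions can both be $7$). Your appeal to ``rank-one freeness'' is off: $H\otimes E$ is, at best after untwisting, free of rank $\dim_\k E$ as a one-sided module, not of rank one, so it cannot force $\dim_\k E=1$. The paper's source of the number $1$ is hypothesis (a), which you invoke only for ``boundedness'': conditions (a) and (b) feed the Ischebeck-type bidual spectral sequence ${\rm Ext}^p_H({\rm Ext}^{-q}_{H^{\rm op}}(\k_H,H),H)\Rightarrow\k$ (cited from \cite[Section 3.2]{MR2437632}), whose abutment is the \emph{one-dimensional} space $\k$. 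Under (c) this degenerates at $E_2$, giving $\dim_\k{\rm Ext}^d_H(E,H)=1$; then the Hopf-module untwisting $H\otimes E^*\cong H^{\dim_\k E}$ in ${\rm mod}(H)$ (which needs no inverse of $S$) yields ${\rm Ext}^d_H(E,H)\cong{\rm Ext}^d_H(\,_H\k,H)^{\dim_\k E}$, whence $1=\dim_\k E\cdot\dim_\k{\rm Ext}^d_H(\,_H\k,H)$ and both factors equal $1$. This multiplicative counting identity, anchored by the one-dimensional abutment, is the idea your proposal is missing.
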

\begin{proof}
  According to \cite[Section 3.2]{MR2437632}, conditions (a) and (b)
  grant the existence of Ischebeck's spectral sequence
  (\cite[1.8]{MR0237613})
  \[
  E_2^{p,q}:={\mathrm{Ext}}_H^p({\mathrm{Ext}}_{H^{\mathrm{op}}}^{-q}(\k_H,H),H)
  \Rightarrow \mathrm{Tor}^H_{-p-q}(\k_H, H) =
\left\{
    \begin{array}{ll}
      \k & \text{if $p+q=0$} \\
      0 & \text{otherwise.}
    \end{array}\right.
  \]
  Because of condition (c), the spectral sequence degenerates at
  $E_2$. In particular,
  \begin{equation}
    \label{eq:40}
    {\mathrm{dim}}_{\k}{\mathrm{Ext}}_H^d({\mathrm{Ext}}_{H^{\mathrm{op}}}^d(\k_H,H),H) =
    1\,.
  \end{equation}
Following the ideas in the proof of \cite[Lemma 1.11]{MR1482982},
denote by $V$ the finite dimensional left $H$-module ${\mathrm{Ext}}_{H^{\mathrm{op}}}^d(\k_H,H)$. 
Endow ${\mathrm{Hom}}_\k(V,H)$ with its usual structure of left $H$-module.
The canonical bijection  ${\mathrm{Hom}}_\k(V,H)\xrightarrow{\sim}
{\mathrm{Hom}}_\k(\k, {\mathrm{Hom}}_\k(V,H))$ restricts to an isomorphism ${\mathrm{Hom}}_H(V,H)
\xrightarrow{\sim} {\mathrm{Hom}}_H(\,_H\k,{\mathrm{Hom}}_\k(V,H))$. Deriving this
isomorphism yields that ${\mathrm{Ext}}^d_H(V,H)\simeq {\mathrm{Ext}}^d_H(\,_H\k,{\mathrm{Hom}}_\k(V,H))$ (see  \cite[Proposition
1.3]{MR1482982}). Besides, consider $H\otimes V^*$ as a left $H$-module in
the usual way also. The canonical mapping $H\otimes V^*\to
{\mathrm{Hom}}_\k(V,H)$ 
is $H$-linear. And it is bijective since ${\mathrm{dim}}_\k
V<\infty$. Therefore, ${\mathrm{Ext}}^d_H(\,_H\k,{\mathrm{Hom}}_\k(V,H))\simeq {\mathrm{Ext}}^d_H(\,_H\k,H\otimes V^*)$.
Now, the left $H$-module $H\otimes V^*$ (with action given by
$h\rightharpoonup (\ell \otimes \varphi) = h_1\rightharpoonup \ell
\otimes h_2\rightharpoonup \varphi$) is isomorphic to the free of rank
${\mathrm{dim}}_\k\,V$ left $H$-module $H\otimes V$ with action 
given by $h\rightharpoonup (\ell \otimes \varphi) = h\ell \otimes
\varphi$. Indeed, the mapping from the former to the latter defined by
$\ell \otimes \varphi \mapsto \ell_1\otimes S(\ell_2)\rightharpoonup
\varphi$ is an isomorphism in ${\mathrm{mod}}(H)$ with inverse given by
$\ell \otimes \varphi \mapsto \ell_1\otimes \ell_2\rightharpoonup
\varphi$. Thus, 
\begin{equation}
  \label{eq:41}
  {\mathrm{Ext}}_H^d(V,H) \simeq {\mathrm{Ext}}^d_H(\,_H\k,H)^{{\mathrm{dim}}\,V}\,.
\end{equation}
Combining \eqref{eq:40} and \eqref{eq:41} yields the announced equalities.
\end{proof}

\subsubsection{}
\label{sec:38}
The following result entails Proposition~\ref{prop2}. Part (2) is
proved in \cite[Section 4.4]{MR2437632} when $H$ is Artin-Schelter
Gorenstein and noetherian and $S$ is invertible.
\begin{prop}
 If a Hopf algebra $H$ has Van den Bergh duality in dimension $d$, then
\begin{enumerate}
\item $H$ is right Artin-Schelter regular in dimension $d$, and
\item  ${\mathrm{Ext}}^d_{H^e}(H,H^e)$ is isomorphic to $^{S^2\circ
    \Xi_{\int_r}^r}H$ as an $H$-bimodule.
\end{enumerate}
As a consequence, the antipode $S$ is invertible.
\end{prop}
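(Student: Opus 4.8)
The plan is to transport the computation of $\rhom_{H^e}(H,H^e)$ to the one-sided module $\k_H$ by means of the Proposition of \ref{sec:24}, to apply the Lemma of \ref{sec:suff-cond-right}, and then to read off the bimodule structure explicitly; note that none of these tools requires $S$ to be invertible (they only involve $S$ and $S^2$), so there is no circularity. Since $H$ has Van den Bergh duality it is homologically smooth, whence $\k_H\in{\rm per}(H^{\rm op})$ by \ref{sec:23}, and in particular $\k_H$ has a resolution by finitely generated projective right $H$-modules. The Proposition of \ref{sec:24} then gives, for every $n$, an isomorphism of $H$-bimodules ${\rm Ext}^n_{H^e}(H,H^e)\simeq H\otimes{\rm Ext}^n_{H^{\rm op}}(\k_H,H)$ with action $h(\ell\otimes e)k=S^2(h_1)\ell k\otimes h_2\rightharpoonup e$. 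As Van den Bergh duality makes the left-hand side vanish for $n\ne d$ and be invertible (hence nonzero) for $n=d$, and as $H\otimes V\ne 0$ for $V\ne 0$, I would deduce that $N:={\rm Ext}^d_{H^{\rm op}}(\k_H,H)$ is nonzero while ${\rm Ext}^n_{H^{\rm op}}(\k_H,H)=0$ for $n\ne d$.

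Next I would establish the three hypotheses of the Lemma of \ref{sec:suff-cond-right}. Hypothesis (b) is the resolution just mentioned. For (c) I must show $\dim_\k N<\infty$: specialising the formula above to $h=1$ gives $(\ell\otimes e)k=\ell k\otimes e$, so $H\otimes N\simeq{\rm Ext}^d_{H^e}(H,H^e)$ is free as a right $H$-module on $\{1\otimes e\}$ with $e$ ranging over a $\k$-basis of $N$; since an invertible bimodule is finitely generated projective on each side, that basis is finite. Hypothesis (a), ${\rm id}(\,_HH)<\infty$, follows from homological smoothness: tensoring over $H$ a finite resolution of $H$ by finitely generated projective bimodules with an arbitrary left module yields a finite projective resolution, so ${\rm gl.dim.}(H)<\infty$. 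The Lemma then gives $\dim_\k N=1$, and together with the vanishing in degrees $n\ne d$ this is precisely the right Artin-Schelter condition with dimension $d$; with finite global dimension this proves~(1).

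For~(2), now that $N=\int_r$ is one-dimensional, I would fix a basis vector $e_0$, so that $h\rightharpoonup e_0=\int_r(h)e_0$, and transport the structure along $H\otimes N\simeq H$, $\ell\otimes e_0\mapsto\ell$. The left action becomes $h\cdot\ell=S^2(h_1)\int_r(h_2)\ell=(S^2\circ\Xi_{\int_r}^r)(h)\,\ell$ (using $\Xi_{\int_r}^r(h)=h_1\int_r(h_2)$ from \ref{sec:homol-integr-wind} and the $\k$-linearity of $S^2$), while the right action is unchanged; hence ${\rm Ext}^d_{H^e}(H,H^e)\simeq{}^{S^2\circ\Xi_{\int_r}^r}H$. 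Finally, writing $\phi=S^2\circ\Xi_{\int_r}^r$, the consequence follows from the characterisation of invertible bimodules: for an invertible bimodule $P$ the left action induces an algebra isomorphism $H\xrightarrow{\sim}{\rm End}(P_H)$. For $P={}^{\phi}H$ the right module is free of rank one, so ${\rm End}(P_H)\simeq H$ acting by left multiplication, and this isomorphism is exactly $\phi$; thus $\phi$ is an automorphism. As $\Xi_{\int_r}^r$ is an automorphism, $S^2=\phi\circ(\Xi_{\int_r}^r)^{-1}$ is bijective, and therefore so is $S$.

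The main obstacle is the passage from Van den Bergh duality to hypothesis (c) of the Lemma, namely the finite-dimensionality of $N$. This does not follow from $N$ being a finitely generated $H$-module; the argument genuinely uses the \emph{invertibility} (not merely the perfection) of ${\rm Ext}^d_{H^e}(H,H^e)$ together with the fact that, as a right module, $H\otimes N$ is free on a $\k$-basis of $N$.
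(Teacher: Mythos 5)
Your proof is correct, and it rests on the same two pillars as the paper's (the bimodule description of $\rhom_{H^e}(H,H^e)$ from \ref{sec:24} and the Artin--Schelter lemma of \ref{sec:suff-cond-right}), but the middle step is executed differently. The paper works at the chain level: it takes a length-$d$ resolution $P\to\k_H$ by finitely generated projectives, observes that $\Hom_{H^{\rm op}}(P,H)\!\!\uparrow^{H^e}$ and $U[-d]$ (with $U={\rm Ext}^d_{H^e}(H,H^e)$) are bounded complexes of projective right $H$-modules, so the quasi-isomorphism between them is a homotopy equivalence over $H^{\rm op}$; applying $-\underset{H}{\otimes}\,\k$ then identifies ${\rm Ext}^i_{H^{\rm op}}(\k_H,H)$ with $U\underset{H}{\otimes}\k$ for $i=d$ (zero otherwise), and finite-dimensionality follows by applying $-\underset{H}{\otimes}\k$ to a finite free cover of $U$ in ${\rm mod}(H^{\rm op})$. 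You instead stay at the cohomology level: the isomorphism ${\rm Ext}^d_{H^e}(H,H^e)\simeq H\otimes N$ already recorded after \ref{sec:24} makes the right $H$-module structure visibly free on a $\k$-basis of $N$, so finite generation of the invertible bimodule $U$ on the right forces $\dim_\k N<\infty$ directly, bypassing the homotopy-equivalence argument and the identification $N\simeq U\underset{H}{\otimes}\k$ altogether. Both routes hinge on the same Morita-theoretic fact (invertible bimodules are finitely generated projective on each side); yours is slightly shorter here, and at the end you are more explicit than the paper about why invertibility of $^{S^2\circ\Xi_{\int_r}^r}H$ forces $S^2\circ\Xi_{\int_r}^r$ (hence $S$) to be bijective, via $H\simeq{\rm End}(P_H)$. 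Your verifications of hypotheses (a) and (b), and the computation identifying the bimodule in part (2), coincide with the paper's.
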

\begin{proof}
Note that $H$ has finite global dimension equal to $d$ because $H\in {\mathrm{per}}(H^e)$ and that $d={\mathrm{pd}}_{H^e}(H)$. 
  There exists a projective resolution $P\to \k_H$ in ${\mathrm{mod}}(H^{\mathrm{op}})$ having length $d$ and consisting of finitely
  generated projective 
  right $H$-modules (see \ref{sec:23}). Consider the resulting
  complex of $H$-bimodules 
  $\overline P$ as introduced in the proof in \ref{sec:24}.
  In particular, $\overline P$ is a
  projective resolution with length $d$ of $H$ in ${\mathrm{mod}}(H^e)$. 
  Note that ${\mathrm{Hom}}_{H^{\mathrm{op}}}(P,H)$ is a complex of projective left
  $H$-modules concentrated in degrees $0,\ldots,d$. Similarly,
  ${\mathrm{Hom}}_{H^{\mathrm{op}}}(P,H)\!\!\uparrow^{H^e}$ is a complex of
  projective right $H^e$-modules whose cohomology is ${\mathrm{Ext}}_{H^e}^*(H,H^e)$ (see \ref{sec:24}).

  
  (1) 
  Denote by $U$ the right $H^e$-bimodule ${\mathrm{Ext}}^d_{H^e}(H,H^e)$.  Because of the assumption on the length of
  $P$, and since $H$ has Van den Bergh duality, there is a
  quasi-isomorphism in $\mathcal C(H^e)$ 
  \[
{\mathrm{Hom}}_{H^{\mathrm{op}}}(P,H)\!\!\uparrow^{H^e}\to U[-d]\,.
\]
This is a homotopy equivalence in ${\mathrm{mod}}(H^{\mathrm{op}})$ because both sides
are bounded complexes of projective right $H$-modules. Applying
  $-\underset{H}{\otimes}\,_H\k$ therefore yields a quasi-isomorphism
  in $\mathcal C(H)$
  \[
  \left({\mathrm{Hom}}_{H^{\mathrm{op}}}(P,H)\!\!\uparrow^{H^e}\right)\underset{H}{\otimes}\,_H\k \to
  U[-d]\underset{H}{\otimes}\,_H\k\,.
  \]
A direct computation shows that
  $\left({\mathrm{Hom}}_{H^{\mathrm{op}}}(P,H)\!\!\uparrow^{H^e}\right)\underset{H}{\otimes}
  \,_H\k\simeq {\mathrm{Hom}}_{H^{\mathrm{op}}}(P,H)$ in $\mathcal C(H)$.
  Consequently,
  \[
  {\mathrm{Ext}}^i_{H^{\mathrm{op}}}(\k_H,H) \simeq \left\{
    \begin{array}{ll}
      U\underset H\otimes \,_H\k & \text{if $i=d$} \\
      0 & \text{otherwise.}
    \end{array}
    \right.
    \]
    Now, as any invertible $H$-bimodule, $U$ is finitely generated in ${\mathrm{mod}}(H^{\mathrm{op}})$. Applying $-\underset{H}\otimes \,_H\k$ to a free 
    of finite rank cover of $U$ in ${\mathrm{mod}}(H^{\mathrm{op}})$ yields that
    $U\underset H\otimes \,_H\k$ is finite dimensional. Therefore,
    \ref{sec:suff-cond-right} applies here. Thus, $H$ is
right Artin Schelter regular in dimension $d$.


(2)  As explained earlier, $U\simeq H^d({\mathrm{Hom}}_{H^{\mathrm{op}}}(P,H)\!\!\uparrow^{H^e})$. The functor ${\mathrm{mod}}(H)\to
{\mathrm{mod}}(H^e)$ defined by $N\mapsto N\!\!\uparrow^{H^e}$ is
exact. Therefore, $H^d({\mathrm{Hom}}_{H^{\mathrm{op}}}(P,H)\!\!\uparrow^{H^e})\simeq {\mathrm{Ext}}^d_{H^{\mathrm{op}}}(\k_H,H)\!\!\uparrow^{H^e}$ in ${\mathrm{mod}}(H^e)$. Moreover, the construction in \ref{sec:24} and the
definition of $\int_r\colon H\to \k$ yield that ${\mathrm{Ext}}^d_{H^{\mathrm{op}}}(\k_H,H)\!\!\uparrow^{H^e}\simeq \,^{S^2\circ \Xi_{\int_r}^r}H$ in
${\mathrm{mod}}(H^e)$. Thus, $U\simeq \,^{S^2\circ \Xi_{\int_r}^r}H$. This
proves (2).


Since $U$ is invertible as an $H$-bimodule and $U\simeq \,^{S^2\circ
  \Xi_{\int_r}^r}H$ in ${\mathrm{mod}}(H^e)$, then $S^2\circ
\Xi_{\int_r}^r$ is an automorphism of $H$. And hence so is $S^2$. Thus,
$S$ is invertible.
\end{proof}

\subsection{On Van den Bergh duality of Hopf algebras}
\label{sec:van-den-bergh}

For later purposes, here are some consequences of \ref{sec:38} many of
which were proved in \cite{MR2437632}, \cite{MR2678828}, and
\cite{MR3250287} assuming that $S$ is invertible and/or that
$H$ is noetherian.

\subsubsection{Summary on Van den Bergh duality}
\label{sec:25}
The following characterisation is obtained from \ref{sec:invert-antip-van}
and from the main results in \cite{MR2437632}. When $S$ is assumed to
be invertible and $H$ to be noetherian, the same result is already
proved in \cite[Lemma
1.3]{MR3250287}.
\begin{thm}
  \label{thm1}
  Let $H$ be a Hopf algebra.  The
  following conditions are equivalent
  \begin{enumerate}[(i)]
  \item $H$ has Van den Bergh duality,
  \item $\k_H\in
    {\mathrm{per}}(H^{\mathrm{op}})$, $S$ is invertible and the graded
    $\k$-vector space ${\mathrm{Ext}}^*_{H^{\mathrm{op}}}(\k_H,H)$ is
    finite-dimensional and concentrated in one degree,
  \item $H$ is skew Calabi-Yau.
  \end{enumerate}
  Under any of these conditions, the homological dimensions involved
  in $(i)$, $(ii)$ and $(iii)$ coincide and
  $S^{-2}\circ \Xi_{\int_r\circ S}^r$ is a Nakayama automorphism of
  $H$.
\end{thm}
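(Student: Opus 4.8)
The plan is to run the cycle of implications $(i)\Rightarrow(ii)\Rightarrow(iii)\Rightarrow(i)$ and to read off the dimension and the Nakayama automorphism along the way. Two of these implications are short. For $(iii)\Rightarrow(i)$ I would simply observe that, for any $\sigma\in{\rm Aut}_{\k-{\rm alg}}(H)$, the bimodule $H^\sigma$ is invertible (with inverse $H^{\sigma^{-1}}$); hence a skew-Calabi-Yau algebra has Van den Bergh duality of the same dimension, by definition. For $(i)\Rightarrow(ii)$ I would invoke the proposition of~\ref{sec:38}: if $H$ has Van den Bergh duality with dimension $d$, then $S$ is invertible and $H$ is right Artin-Schelter regular with dimension $d$. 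The right Artin-Schelter condition says precisely that ${\rm Ext}^i_{H^{\rm op}}(\k_H,H)$ is $1$-dimensional for $i=d$ and zero otherwise, so ${\rm Ext}^*_{H^{\rm op}}(\k_H,H)$ is finite-dimensional and concentrated in one degree; and Van den Bergh duality forces homological smoothness, so $\k_H\in{\rm per}(H^{\rm op})$ by the lemma of~\ref{sec:23}. This gives (ii).

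The crux is $(ii)\Rightarrow(iii)$, and the first task is to match the hypotheses of (ii) with the three hypotheses of the lemma of~\ref{sec:suff-cond-right}. Hypotheses (b) and (c) of that lemma are given verbatim by (ii) (a finitely generated projective resolution of $\k_H$, and the concentration of ${\rm Ext}^*_{H^{\rm op}}(\k_H,H)$ in a single degree $d$ with finite dimension). For hypothesis (a), that is ${\rm id}(\,_HH)<\infty$, I would argue that $\k_H\in{\rm per}(H^{\rm op})$ means ${\rm pd}_{H^{\rm op}}(\k_H)<\infty$; by the global dimension computation for Hopf algebras recorded after the lemma of~\ref{sec:23} (see \cite[Section 5.3]{MR2437632}), this forces ${\rm gl.dim.}(H)<\infty$, whence in particular $_HH$ has finite injective dimension. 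The lemma of~\ref{sec:suff-cond-right} then yields ${\rm dim}_\k{\rm Ext}^d_{H^{\rm op}}(\k_H,H)=1$.

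Next I would feed this into the description of the inverse dualising complex. Since $\k_H\in{\rm per}(H^{\rm op})$, the cohomological form of the proposition of~\ref{sec:24} applies and gives ${\rm Ext}^i_{H^e}(H,H^e)\simeq H\otimes{\rm Ext}^i_{H^{\rm op}}(\k_H,H)$ as $H$-bimodules. This vanishes for $i\neq d$, while for $i=d$ the right-hand factor is one-dimensional. Writing $\pi=\int_r$ for the algebra homomorphism governing the left action on ${\rm Ext}^d_{H^{\rm op}}(\k_H,H)$, the bimodule structure recalled in~\ref{sec:24} becomes $h\cdot\ell\cdot k=S^2(h_1)\,\ell k\,\pi(h_2)=(S^2\circ\Xi_{\int_r}^r)(h)\,\ell k$, so that ${\rm Ext}^d_{H^e}(H,H^e)\simeq\,^{S^2\circ\Xi_{\int_r}^r}H$. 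Using the standard bimodule isomorphism $^\tau H\simeq H^{\tau^{-1}}$ (realised by $\tau^{-1}$), this identifies the top Ext with $H^\sigma$ for $\sigma=(S^2\circ\Xi_{\int_r}^r)^{-1}$; together with the homological smoothness supplied by $\k_H\in{\rm per}(H^{\rm op})$, this is exactly the skew-Calabi-Yau condition, giving (iii).

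Finally I would identify $\sigma$ and verify the coincidence of dimensions. Since $S^2$ commutes with $\Xi_{\int_r}^r$ and the inverse of $\Xi_{\int_r}^r$ is $\Xi_{\int_r\circ S}^r$ (see~\ref{sec:homol-integr-wind}), one gets $\sigma=(S^2\circ\Xi_{\int_r}^r)^{-1}=S^{-2}\circ\Xi_{\int_r\circ S}^r$, the asserted Nakayama automorphism; and the dimensions in $(i)$, $(ii)$, $(iii)$ all equal the unique degree $d$ in which ${\rm Ext}^*_{H^e}(H,H^e)$, equivalently ${\rm Ext}^*_{H^{\rm op}}(\k_H,H)$, is nonzero, by the isomorphism of the proposition of~\ref{sec:24}. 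I expect the main obstacle to be the verification of hypothesis (a) in the absence of any noetherian assumption: one must be certain that finite projective dimension of the one-sided trivial module genuinely forces finite injective dimension of $_HH$, which is precisely where the Hopf-algebraic global dimension computation of \cite{MR2437632} is indispensable.
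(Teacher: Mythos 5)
Your proposal is correct and follows essentially the same route as the paper: the cycle $(i)\Rightarrow(ii)$ via the propositions of \ref{sec:38} and \ref{sec:23}, $(ii)\Rightarrow(iii)$ via the Artin--Schelter lemma of \ref{sec:suff-cond-right} combined with the bimodule description of ${\rm Ext}^*_{H^e}(H,H^e)$ from \ref{sec:24}, and $(iii)\Rightarrow(i)$ by invertibility of $H^\sigma$, with the same identification $\left(S^2\circ\Xi^r_{\int_r}\right)^{-1}=S^{-2}\circ\Xi^r_{\int_r\circ S}$ of the Nakayama automorphism. Your explicit verification of hypothesis (a) of \ref{sec:suff-cond-right} (finite injective dimension of $_HH$ via the Hopf-algebra global dimension fact) only makes explicit what the paper leaves implicit through homological smoothness, so it is a welcome clarification rather than a departure.
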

\begin{proof}
  
The implication $(i)\Rightarrow (ii)$ is proved in
  \ref{sec:38} and \ref{sec:23}.

  The implication $(ii)\Rightarrow (iii)$ is proved in
  \cite{MR2437632} when $H$ is noetherian and may be adapted to the
  present situation. Here is a proof for the convenience of the
  reader. Assume $(ii)$. Then, $H$ is homologically smooth (see
  \ref{sec:23}). Let $d\in \mathbb N$ be such that ${\mathrm{dim}}_\k\,{\mathrm{Ext}}^i_{H^{\mathrm{op}}}(\k_H,H)$ is finite if $i=d$ and $0$
  otherwise.  Therefore, ${\mathrm{dim}}_\k{\mathrm{Ext}}^d_{H^{\mathrm{op}}}(\k_H,H)=1$ (see~\ref{sec:suff-cond-right}). It
  follows from \ref{sec:24} that ${\mathrm{Ext}}^i_{H^e}(H,H^e)$ is isomorphic to $^{S^2\circ \Xi_{\int_r}^r}H
  \simeq H^{S^{-2}\circ \Xi_{\int_r\circ S}^r}$ in ${\mathrm{mod}}(H^e)$
  when $i=d$ and is $0$ otherwise. This proves that $(ii)\Rightarrow (iii)$.

The implication
  $(iii)\Rightarrow (i)$ follows from the definition.
\end{proof}

\subsubsection{Nakayama automorphisms}
\label{sec:nak}

Using \ref{sec:25} yields the following relationship between the right
homological integral and the Nakayama automorphisms of $H$. Part (1)
was proved in \cite[Section 0.3]{MR2437632} assuming that $S$ is
invertible and $H$ is noetherian.
\begin{prop}
  If a Hopf algebra  $H$ has Van den Bergh duality in
  dimension $d$, then the antipode is invertible and
  \begin{enumerate}
  \item $\left(S^2\circ \Xi_{\int_r}^r\right)^{-1}=S^{-2}\circ
    \Xi_{\int_r\circ S}^r$ is a Nakayama automorphism of $H$.
  \item If $\mu\in {\mathrm{Aut}}_{\k-{\mathrm{alg}}}(H)$ is any Nakayama
    automorphism of $H$ then $\epsilon \circ \mu^{-1}$ is the right
    homological integral of $H$,
  \end{enumerate}
\end{prop}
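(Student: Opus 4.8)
The plan is to read off both statements directly from the bimodule description of the inverse dualising complex established in \ref{sec:38}. That proposition already grants that $S$ is invertible and that ${\rm Ext}^d_{H^e}(H,H^e)\simeq {}^{S^2\circ\Xi_{\int_r}^r}H$ as an $H$-bimodule. The only work for part (1) is to rewrite this left-twisted bimodule in the right-twisted form required by the definition of a Nakayama automorphism. For this I would use the elementary fact that, for any $\phi\in{\rm Aut}_{\k-{\rm alg}}(H)$, the map $x\mapsto\phi^{-1}(x)$ is an isomorphism ${}^\phi H\xrightarrow{\sim}H^{\phi^{-1}}$ of $H$-bimodules; applying it with $\phi=S^2\circ\Xi_{\int_r}^r$ identifies $(S^2\circ\Xi_{\int_r}^r)^{-1}$ as a Nakayama automorphism. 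It then remains to simplify this inverse: using from \ref{sec:homol-integr-wind} that $\Xi_{\int_r}^r$ and $S^2$ commute, that $(\Xi_{\int_r}^r)^{-1}=\Xi_{\int_r\circ S}^r$, and that $\int_r\circ S^2=\int_r$, one obtains $(S^2\circ\Xi_{\int_r}^r)^{-1}=S^{-2}\circ\Xi_{\int_r\circ S}^r$, which is the form recorded in \ref{sec:25}.

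For part (2), the key observation is that $\epsilon\circ\mu^{-1}$ is the composite of the two algebra homomorphisms $\mu^{-1}\colon H\to H$ and $\epsilon\colon H\to\k$, hence is itself an algebra homomorphism $H\to\k$; in particular it is unchanged when $\mu$ is replaced by a composite with an inner automorphism, since an algebra homomorphism into the commutative field $\k$ kills every inner automorphism ($\pi(uhu^{-1})=\pi(u)\pi(h)\pi(u)^{-1}=\pi(h)$). Because any two Nakayama automorphisms differ by an inner automorphism (\ref{sec:dual-cond-dg-2}), the map $\epsilon\circ\mu^{-1}$ does not depend on the chosen Nakayama automorphism $\mu$, so it suffices to verify the claim for the distinguished automorphism $\mu_0=S^{-2}\circ\Xi_{\int_r\circ S}^r$ of part (1). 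Computing $\mu_0^{-1}=S^2\circ\Xi_{\int_r}^r$ and using $\epsilon\circ S^2=\epsilon$ together with the counit identity $\epsilon(\Xi_{\int_r}^r(h))=\epsilon(h_1)\int_r(h_2)=\int_r(\epsilon(h_1)h_2)=\int_r(h)$, one gets $\epsilon\circ\mu_0^{-1}=\int_r$, as required.

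I expect the only genuinely delicate point to be the ``for any Nakayama automorphism'' clause of part (2): the statement is about the whole inner-automorphism class, so the argument must make the independence of the choice of $\mu$ explicit before reducing to the single automorphism supplied by part (1). The remaining manipulations are routine once the winding-automorphism identities of \ref{sec:homol-integr-wind} are in hand; the one place to be careful is bookkeeping with the composition order and the repeated use of $\int_r\circ S^2=\int_r$ and $\epsilon\circ S^2=\epsilon$, so that the left/right twist conversion in part (1) and the inversion of $\mu_0$ in part (2) remain compatible.
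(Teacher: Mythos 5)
Your proposal is correct and follows essentially the same route as the paper: part (1) is exactly the content of \ref{sec:38} combined with the left-to-right twist conversion $^{\phi}H\simeq H^{\phi^{-1}}$ and the winding-automorphism identities, which is what \ref{sec:25} records, and part (2) is the paper's argument (invariance of $\epsilon\circ\mu^{-1}$ under inner modification of $\mu$, then the computation $\epsilon\circ S^2\circ\Xi_{\int_r}^r=\int_r$) with the details the paper leaves implicit written out.
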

\begin{proof}
  (1) follows from \ref{sec:25}. (2) follows from (1) and from the
  fact that   Nakayama automorphisms differ from one another by an
  inner automorphism. 
\end{proof}

\subsubsection{Calabi-Yau duality}
\label{sec:char-calabi-yau}

Combining \ref{sec:25} and \ref{sec:nak} yields the following.
\begin{cor}
  Let $H$ be a Hopf algebra. Then $H$ is Calabi-Yau in dimension $d$
  if and only if the following conditions hold
  \begin{enumerate}[(a)]
  \item $\k_H\in {\mathrm{per}}(H^{\mathrm{op}})$,
  \item $S^2$ is an inner automorphism of $H$,
  \item $H$ is satisfies the right Artin-Schelter condition and its right
    homological integral is trivial.
  \end{enumerate}
\end{cor}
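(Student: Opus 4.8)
The plan is to deduce the corollary by combining the Van den Bergh/skew-Calabi-Yau characterisation of \ref{sec:25} with the Nakayama-automorphism description of \ref{sec:nak}, and feeding these into the characterisation of the Calabi-Yau property recalled in \ref{sec:dual-cond-dg-1}: a homologically smooth algebra is Calabi-Yau if and only if it is skew-Calabi-Yau and some (equivalently, any) Nakayama automorphism is inner. Since \ref{sec:23} identifies homological smoothness of $H$ with the condition $\k_H\in {\rm per}(H^{\rm op})$, the task reduces to matching skew-Calabi-Yauness with conditions (a)--(c) and innerness of a Nakayama automorphism with condition (b). The single computation underlying everything is that $\Xi_\epsilon^r={\rm id}_H$, since $\Xi_\epsilon^r(h)=h_1\epsilon(h_2)=h$; consequently, whenever the right homological integral is trivial, the Nakayama automorphism $S^{-2}\circ \Xi_{\int_r\circ S}^r$ furnished by \ref{sec:25} simplifies, using $\epsilon\circ S=\epsilon$, to $S^{-2}\circ \Xi_\epsilon^r=S^{-2}$.

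For the direct implication, I would assume $H$ is Calabi-Yau of dimension $d$. Then $H$ is in particular skew-Calabi-Yau, so \ref{sec:25} shows that $H$ has Van den Bergh duality with $S$ invertible and $\k_H\in {\rm per}(H^{\rm op})$, which is (a); moreover \ref{sec:38} gives that $H$ is right Artin-Schelter regular of dimension $d$, yielding the Artin-Schelter part of (c). Because the identity is a Nakayama automorphism, \ref{sec:nak}(2) forces $\int_r=\epsilon\circ{\rm id}^{-1}=\epsilon$, so the integral is trivial and (c) is complete. The Nakayama automorphism of \ref{sec:25} is then $S^{-2}$ by the opening computation, and since it differs from the identity by an inner automorphism, $S^{-2}$, hence $S^2$, is inner, giving (b).

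Conversely, assuming (a), (b) and (c), I would first note that (b) makes $S^2$ bijective, whence $S$ is injective and surjective, i.e. invertible. Condition (c) says ${\rm Ext}^i_{H^{\rm op}}(\k_H,H)$ is one-dimensional for $i=d$ and zero otherwise, so ${\rm Ext}^*_{H^{\rm op}}(\k_H,H)$ is finite-dimensional and concentrated in one degree; together with (a) these are precisely the hypotheses of condition (ii) in \ref{sec:25}. Thus $H$ is skew-Calabi-Yau of dimension $d$ with Nakayama automorphism $S^{-2}\circ \Xi_{\int_r\circ S}^r$, which by triviality of $\int_r$ equals $S^{-2}$, inner by (b). Invoking \ref{sec:dual-cond-dg-1}, a skew-Calabi-Yau algebra with inner Nakayama automorphism is Calabi-Yau, so $H$ is Calabi-Yau of dimension $d$.

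The argument contains no genuine obstacle once the preceding results are granted; the two points requiring care, rather than difficulty, are the elementary reductions that triviality of $\int_r$ collapses the Nakayama automorphism to $S^{-2}$ (via $\Xi_\epsilon^r={\rm id}_H$ and $\epsilon\circ S=\epsilon$) and that innerness of $S^2$ by itself already forces $S$ to be invertible. With these in hand, the corollary is a direct bookkeeping of \ref{sec:25} and \ref{sec:nak}.
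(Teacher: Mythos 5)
Your proof is correct and follows essentially the same route as the paper, whose own justification is simply to combine the characterisation in \ref{sec:25} with the Nakayama-automorphism statement in \ref{sec:nak}; your write-up makes explicit exactly this combination, including the reductions $\Xi_\epsilon^r={\rm id}_H$ and $\epsilon\circ S=\epsilon$ and the equivalence from \ref{sec:dual-cond-dg-1} between Calabi-Yau and skew-Calabi-Yau with inner Nakayama automorphism.
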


\section{Equivariant modules}
\label{sec:equivariant-modules}

This section develops properties and constructions based on dg
$A$-bimodules which are $H_{S^{2i}}$-equivariant ($i\in \mathbb Z$)
and which are needed to describe
$\mathrm{RHom}_{\Lambda}(\Lambda,\Lambda^e)$
(Section~\ref{sec:proof-main-result}), to describe the deformed
Calabi-Yau completions of $\Lambda$
(Section~\ref{sec:appl-constr-calabi}), and to describe the Nakayama
automorphisms of $\Lambda$
(Section~\ref{sec:appl-dual-non}). Section~\ref{sec:algebras-delta_i}
introduces a dg algebra $\Delta_i$ such that $\mathcal C(\Delta_i)$
consists of those equivariant dg
bimodules. Section~\ref{sec:asharp-h-bimodules} gives details on
certain extension-of-scalars functors $D \mapsto D\sharp^\sigma H$ from
$\mathcal C(\Delta_i)$ to $\mathcal C(\Lambda^e)$.
Section~\ref{sec:AdjunctionDelta} explains how $H$ acts on morphism
spaces between $H_{S^{2i}}$-equivariant dg
$A$-bimodules. Sections~\ref{sec:equiv-acti-duals} and
\ref{sec:tens-prod-equiv} detail the behaviour of $H_{S^{2i}}$-equivariant dg
$A$-bimodules under ${\mathrm{Hom}}_A(-,A)$ and under tensor products over $A$,
respectively. Finally, Section~\ref{sec:invertible-dg-lambda} proves
that such a bimodule $D$ is invertible over $A$ if and only if
$D\sharp^\sigma H$ is invertible over $\Lambda$.

$\Delta_0$ was introduced in \cite[Definition 3.1]{MR2366958} and
applied to smash products whether to investigate Calabi-Yau duality
(see \cite{MR2905560,MR3250287}) or Hochschild cohomology (see
\cite{MR3341818}).

\subsection{Dg algebras which dg modules are equivariant $A$-bimodules}
\label{sec:algebras-delta_i}\label{sec:running-example}

\begin{defn}
  Let $i\in \mathbb Z$ and assume that $S$ is invertible when
  $i<0$. Define $\Delta_i$ to be the dg algebra whose
  underlying complex of vector spaces is $A^e\otimes H$ and whose
  (associative) product is given by the rule
  \[
    \begin{array}{c}
      (a\otimes b\otimes h) \times (a'\otimes b'\otimes k) = \\
      (-1)^{\deg(b)\cdot (\deg(a')+\deg(b'))} \times
  (a (h_1\rightharpoonup a')) \otimes ((S^{2i}(h_3)\rightharpoonup
      b')b) \otimes h_2k\,.
      \end{array}
  \]
\end{defn}
It is elementary although tedious to check that the product is indeed
associative. Note that the product in $\Delta_i$ is determined by the
three following properties
  \begin{itemize}
  \item the product is associative,
  \item $A^e$ is a dg subalgebra of $\Delta_i$ \emph{via} the mapping $A^e\to A^e\otimes H,\, a\otimes b \mapsto
    a\otimes b\otimes 1$,
  \item $H$ is dg subalgebra of $\Delta_i$ \emph{via} the mapping $H\to A^e\otimes H,\, h\mapsto 1\otimes 1\otimes
    h$,
  \item the product satisfies the following identity in $\Delta_i$
    \begin{equation}
      \label{eq:21}
      h\times (a\otimes b) =
    (h_1\rightharpoonup a \otimes S^{2i}(h_3)\rightharpoonup
    b) \times h_2\,.
    \end{equation}
  \end{itemize}
  When $S$ is invertible, $\Delta_i$
  features the following useful identity
\begin{equation}
  \label{eq:32}
  (a\otimes b) \times h = h_2 \times (S^{-1}(h_1)\rightharpoonup a
  \otimes S^{2i+1}(h_3)\rightharpoonup b)\,.
\end{equation}

In general, the following mapping is a homomorphism of dg algebras
that makes of $\Lambda^e$ a left (and right) dg $\Delta_i$-module
\begin{equation}
  \label{eq:31}
  \begin{array}{rcl}
    \Delta_i & \to & \Lambda^e\\
    (a\otimes b)\times h & \mapsto & (a\otimes b) \times
                                     (h_1\otimes S^{2i+1}(h_2))\,.
  \end{array}
\end{equation}
The image of this mapping is the dg subalgebra of $\Lambda^e$
generated by $A^e\cup \{h_1\otimes S^{2i+1}(h_2)\ |\ h\in H\}$ because
of the following identity in $\Lambda^e$,
\[
(h_1\otimes S^{2i+1}(h_2))\times (a\otimes b) =
(h_1\rightharpoonup a \otimes S^{2i}(h_4)\rightharpoonup b)
\times (h_2\otimes S^{2i+1}(h_3))\,.
\]
When $S$ is invertible, this mapping is injective and has a retraction
given by
$ah \otimes bk \mapsto a\otimes S^{-1}(k)\rightharpoonup b \otimes h$.

If $H$ is cocommutative, then $\Delta_i$ does not depend on $i$ and is
isomorphic to a smash product of $A^e\sharp H$ (see \cite[Remark
1.7]{MR2905560}). This does not hold true in general.


The left dg $\Delta_i$-modules are the $H_{S^{2i}}$-equivariant left
dg $A$-bimodules defined in \cite[Definition 2.2]{MR3250287}. The
latter are the left dg $A$-bimodules $M$ endowed with a structure of
left $H$-module (which preserves the degree and is compatible with the
differential of the complex) in such a way that the following identity
holds in $M$
\begin{equation}
  \label{eq:33}
h\rightharpoonup (amb)=(h_1\rightharpoonup a) (h_2\rightharpoonup m)
(S^{2i}(h_3)\rightharpoonup b)\,.
\end{equation}
In particular, $A\in \mathcal C(\Delta_0)$.

\begin{ex}
  Let $A=\k[x_1,\ldots,x_n]$. Let $H=\mathcal U(\mathfrak g)$ as in
  the example of \ref{sec:homol-integr-wind}. Let
  $\mathfrak g\to {\mathrm{Der}}_\k(A),\,X\mapsto \partial_X$ be a
  homomorphism of Lie algebras.  Hence, $A$ is an $H$-module
  algebra. The structure of $\Delta_0$-module on $A^e$ is given
  by
\[
X\rightharpoonup (a\otimes b) = \partial_X(a)\otimes b +
a\otimes \partial_X(b)\,.
\]

The sequence $(x_i\otimes 1 - 1\otimes x_i)_{1\leqslant i\leqslant n}$
of the commutative ring $A^e$ is regular and the quotient of $A^e$ by
the ideal generated by this sequence is isomorphic to $A$. Recall that
the Koszul resolution $(K^\bullet,d_K)$ of $_AA_A$ is as follows.
Denote $\oplus_{i=1}^n\k\cdot x_i$ by $V$. Let $K^\bullet$ be the
graded vector space $\Lambda^{-\bullet}_{A^e}(A\otimes V \otimes A)$
concentrated in degrees $-n,-n+1,\ldots,-1,0$. This is a
graded-commutative algebra over the commutative ring $A^e$ in the
usual way (the product is denoted by $\wedge$). By a
\emph{skew derivation} of degree $\ell$ of $K^\bullet$ is meant a
homogeneous $\k$-linear mapping
$d \colon K^\bullet \to K^{\bullet+\ell}$ such that, for all
homogeneous $\omega_1,\omega_2\in K^\bullet$,
\[
d(\omega_1\wedge \omega_2) = d(\omega_1)\wedge \omega_2
+(-1)^{\ell\mathrm{deg}(\omega_1)} \omega_1\wedge d(\omega_2)\,.
\]
The skew derivations of degree $0$ are usual $\k$-linear algebra
derivations.  Denote by $d_K$ the unique skew derivation of degree
$+1$ of $K^\bullet$ such that
$d_K(1\otimes v\otimes 1) = v\otimes 1-1\otimes v\in A^e$ for all
$v\in V$.  Hence, $d_K$ is $A^e$-linear and squares to zero.  By
construction, $(K^\bullet,d_K)$ is the Koszul complex of the sequence
$(x_i\otimes 1-1\otimes x_i)_{1\leqslant i\leqslant n}$ of the
commutative ring $A^e$. This is a projective resolution of $A$ as an
$A$-bimodule.

Here is how to endow $(K^\bullet,d_K)$ with an action of $H$ for which
$K^\bullet$ lies in $\mathcal C(\Delta_0)$.  Given any $m\in A$
written as a linear combination of monomials
\[
m = \lambda_0 + \sum_{r\geqslant 1}\ \sum_{1\leqslant j_1\leqslant \cdots
  \leqslant j_r\leqslant n}
\lambda_{j_1,\ldots,j_r}\ x_{j_1}x_{j_2}\cdots x_{j_r}\,,
\]
where $\lambda_0,\lambda_{j_1,\ldots,j_r}\in \k$, use the symbolic piece
of notation
\[
\sum_im'_i\otimes x_i\otimes m''_i
\]
to denote
\[
\sum_{r\geqslant 1}\ \sum_{1\leqslant j_1\leqslant \cdots \leqslant
  j_r\leqslant n} \lambda_{j_1,\ldots,j_r} \sum_{t=1}^r x_{j_1} \cdots
x_{j_{t-1}}\otimes x_{j_t} \otimes x_{j_{t+1}} \cdots x_{j_r}\in
A\otimes V\otimes A\,.
\]
For every $X\in \mathfrak g$, there exists a unique skew derivation of
degree $0$ of $K^\bullet$ denoted by $\partial_X$ and such that
\begin{enumerate}[(a)]
\item $\partial_X(a\otimes b) = X\rightharpoonup (a\otimes b)$ for all
  $a\otimes b\in A\otimes A$,
\item and $\partial_X(1\otimes v\otimes 1) = \sum_i \partial_X(v)_i' \otimes
x_i\otimes \partial_X(v)_i''$ for all $v\in V$.
\end{enumerate}
Then, $d_K\circ \partial_X - \partial_X \circ d_K$ is a
skew derivation of degree $+1$ of $K^\bullet$. Therefore, it vanishes
on $K^0$. In view of (b), it vanishes on $1\otimes V\otimes 1$. Thus,
it is zero, that is, $\partial_X \colon K^\bullet \to K^\bullet$ is a
morphism of complexes of vector spaces. The family
$(\partial_X)_{X\in \mathfrak g}$ hence yields an action of $H$ on
$K^\bullet$. In view of (a), $K^\bullet$ is a dg $\Delta_0$-module.
\end{ex}

\subsection{$A\sharp H$-bimodules arising from equivariant $A$-bimodules}
\label{sec:asharp-h-bimodules}

Let $i\in \mathbb Z$ and assume that $S$ is invertible whenever
$i<0$. Let $D$ be an $H_{S^{2i}}$-equivariant 
dg $A$-bimodule (equivalently, a left dg  $\Delta_i$-module). Let $\sigma\in {\mathrm{Aut}}_{\k-{\mathrm{alg}}}(H)$ be such that the following identity holds in $H$
\begin{equation}
  \label{eq:43}
\sigma(h)_1 \otimes \sigma(h)_2 = S^{2i}(h_1)\otimes \sigma(h_2)\,.  
\end{equation}
(for instance, $\sigma=S^{2i}\circ
\Xi_\pi^r$ for some algebra homomorphism $\pi\colon H\to
\k$). When $S$ is invertible, this condition is equivalent to the
following identity in $H$
\begin{equation}
  \label{eq:42}
  \sigma^{-1}(h)_1\otimes \sigma^{-1}(h)_2 =
  S^{-2i}(h_1)\otimes \sigma^{-1}(h_2)\,.
\end{equation}

\subsubsection{}
\label{sec:44}
According to \cite[Lemma 2.5]{MR3250287}, the following actions endow
$D\otimes H$ with a structure of dg  $\Lambda$-bimodule (see \eqref{eq:33})
\begin{equation}
  \label{eq:35}
  \left\{
    \begin{array}{l}
      a  (d\otimes \ell) b= ad(\ell_1\rightharpoonup b)
      \otimes \ell_2 \\
      h(d\otimes \ell)k= h_1\rightharpoonup d
      \otimes \sigma(h_2)\ell k\,.
    \end{array}
  \right.
\end{equation}
In the sequel, it is denoted by $D\sharp \,^\sigma H$. When $S$ is
invertible, $D\sharp \,^\sigma H$ features the following identity
\begin{equation}
  \label{eq:44}
  d\otimes \ell = \sigma^{-1}(\ell_2) ( S^{-1-2i}(\ell_1)\rightharpoonup d
  \otimes 1)\,.
\end{equation}

In \cite[Definition 3.2]{MR3341818}, the construction $D\sharp H$ is
extended to $D\sharp X$, where $X$ is any complex of Hopf bimodules
over $H$.

\subsubsection{}
\label{sec:43}
Assume that $S$ is invertible.  The following lemma is a functorial
interpretation of the previous construction. The mapping
$\varphi\colon \Lambda\to \Lambda$ defined by
$\varphi(ah)=a\,\sigma(S^{-2i}(h))$ is an automorphism of the dg
algebra $\Lambda$. Hence, $\Lambda\otimes \,^\varphi\Lambda$ is a
$\Lambda^e-\Delta_i$-bimodule. As a left dg $\Lambda^e$-module it
equals $_\Lambda\Lambda \otimes \Lambda_\Lambda$. Its structure of
right dg $\Delta_i$-module is given by
$(\lambda\otimes \lambda')\cdot (a\otimes b \otimes h) = \pm\lambda a
h_1 \otimes \varphi(S^{2i+1}(h_2) b)\lambda'$
(where $\lambda,\lambda'\in \Lambda$). In particular,
$(\Lambda\otimes\,^\varphi\Lambda)\underset{\Delta_i}\otimes D$
inherits of a structure of dg $\Lambda$-bimodule.
\begin{lem}
  Keep the setting introduced previously. Then, $D\sharp \,^\sigma
  H\simeq \left(\Lambda\otimes\,^{\varphi}\Lambda\right)
  \underset{\Delta_i}{\otimes}D$ in $\mathcal C(\Lambda^e)$.
\end{lem}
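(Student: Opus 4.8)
The plan is to exhibit an explicit morphism of dg $\Lambda$-bimodules
\[
\Phi\colon D\sharp{}^\sigma H\longrightarrow \left(\Lambda\otimes{}^\varphi\Lambda\right)\underset{\Delta_i}{\otimes}D,\qquad d\otimes\ell\longmapsto (1\otimes\ell)\underset{\Delta_i}{\otimes}d,
\]
and then to prove it is bijective by showing that $\Lambda\otimes{}^\varphi\Lambda$ is free as a right dg $\Delta_i$-module on the family $\{1\otimes\ell\}_{\ell\in H}$. Throughout I would use the two facts, immediate from the definition of $\varphi$ in \ref{sec:43}, that $\varphi$ restricts to the identity on $A$ and satisfies $\varphi(g)=\sigma(S^{-2i}(g))$ for $g\in H$, so that in particular $\varphi(S^{2i+1}(h))=\sigma(S(h))$.

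\textbf{Bilinearity.} First I would check that $\Phi$ is $\Lambda$-bilinear by testing the left and right actions of $A$ and of $H$ separately, using the description \eqref{eq:35} of $D\sharp{}^\sigma H$ on the source and the two module structures of \ref{sec:43} on the target. Compatibility with the left $A$-action and with the right $H$-action is immediate from the definition of $\Phi$ together with balancing over the subalgebras $A^e\subset\Delta_i$ and $H\subset\Delta_i$. The two substantial identities are the following. For the right $A$-action one is reduced, after moving $\ell_1\rightharpoonup b$ back across the balancing, to the equality $(\ell_1\rightharpoonup b)\ell_2=\ell b$ in $\Lambda$, which is exactly the smash-product relation. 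For the left $H$-action one writes $h_1\rightharpoonup d=(1\otimes1\otimes h_1)\cdot d$, moves it across the balancing, uses $\varphi(S^{2i+1}(h))=\sigma(S(h))$, and is then reduced to the antipode identity $\sum h_1\otimes S(h_2)h_3=h\otimes1$. Compatibility with differentials and signs is routine since $H$ sits in degree $0$ and $\sigma,\varphi,S$ are degree-$0$ chain maps.

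\textbf{Bijectivity — the main obstacle.} The crux of the argument is bijectivity. For this I would consider the right dg $\Delta_i$-linear map
\[
\theta\colon H\otimes\Delta_i\longrightarrow \Lambda\otimes{}^\varphi\Lambda,\qquad \ell\otimes\delta\longmapsto (1\otimes\ell)\cdot\delta,
\]
where $H\otimes\Delta_i$ is viewed as the free right $\Delta_i$-module on the basis $H$. Using the right $\Delta_i$-action of \ref{sec:43} and the anticomultiplicativity of the odd power $S^{2i+1}$, a direct computation gives, in the coordinates $\Lambda\otimes{}^\varphi\Lambda=A\otimes H\otimes A\otimes H$,
\[
\theta(\ell\otimes a\otimes b\otimes h)=\sum a\otimes h_1\otimes\big(S^{2i+1}(h_3)\rightharpoonup b\big)\otimes\sigma(S(h_2))\ell .
\]
I would then prove $\theta$ bijective by factoring it, up to the evident permutation of the tensor factors, as a composite of two ``Hopf-twist'' maps of the respective shapes $x\otimes v\mapsto x_1\otimes(S^{2i+1}(x_2)\rightharpoonup v)$ and $x\otimes w\mapsto x_1\otimes\sigma(S(x_2))w$. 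Each such map is bijective by the same antipode-collapsing mechanism already exploited in \ref{sec:24} and \ref{sec:suff-cond-right}: its inverse is obtained by replacing $S$ with $S^{-1}$ in the twisting factor, the verification reducing to identities of the type $\sum h_1\otimes h_2S(h_3)=h\otimes1$. This is the step I expect to be the most delicate, as it is where invertibility of $S$ and the consistent handling of the powers $S^{2i}$, $S^{2i+1}$ genuinely enter.

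\textbf{Conclusion.} Bijectivity of $\theta$ says precisely that $\Lambda\otimes{}^\varphi\Lambda$ is free as a right $\Delta_i$-module on $\{1\otimes\ell\}_{\ell}$, whence
\[
\left(\Lambda\otimes{}^\varphi\Lambda\right)\underset{\Delta_i}{\otimes}D\;\cong\;(H\otimes\Delta_i)\underset{\Delta_i}{\otimes}D\;\cong\;H\otimes D
\]
via $(1\otimes\ell)\underset{\Delta_i}{\otimes}d\mapsto\ell\otimes d$. Under this identification $\Phi$ becomes the flip $D\otimes H\to H\otimes D$, hence is bijective; combined with the bilinearity established above, $\Phi$ is the desired isomorphism in $\mathcal C(\Lambda^e)$. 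As an alternative to the freeness statement, one could instead reduce a general generator $(ah\otimes a'h')\underset{\Delta_i}{\otimes}d$ to the normal form $\sum(1\otimes\ell)\underset{\Delta_i}{\otimes}d'$ by hand, thereby producing an explicit inverse $\Psi$ (this is where \eqref{eq:44} would be convenient); but the freeness route has the advantage of concentrating all the antipode bookkeeping in the single map $\theta$.
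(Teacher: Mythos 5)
Your route to bijectivity is genuinely different from the paper's, and in outline it works. The paper proceeds by exhibiting the two mutually inverse maps directly: it first records the normal-form identity \eqref{eq:34}, namely $(h\otimes k)\underset{\Delta_i}{\otimes} d = (1\otimes \sigma(h_2) k) \underset{\Delta_i}{\otimes} (h_1\rightharpoonup d)$ in $\left(\Lambda\otimes{}^{\varphi}\Lambda\right)\underset{\Delta_i}{\otimes}D$, uses it to see that $(ha\otimes bk)\underset{\Delta_i}{\otimes}d \mapsto \pm\, (h_1\rightharpoonup (adb))\otimes \sigma(h_2)k$ is a well-defined two-sided inverse of your $\Phi$, and then checks that $\Phi$ is a morphism of dg $\Lambda$-bimodules by exactly the computations you sketch. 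Your alternative — proving that $\Lambda\otimes{}^{\varphi}\Lambda$ is free as a right dg $\Delta_i$-module on $\{1\otimes\ell\}_{\ell\in H}$, so that $-\underset{\Delta_i}{\otimes}D$ collapses to $H\otimes D$ and $\Phi$ becomes the flip — is a legitimate, arguably more conceptual organisation; your coordinate formula for $\theta$ is correct, and so is its factorisation, up to permutation of tensor factors, into the two twist maps.

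There is, however, a concrete error in the one step you yourself identify as the crux: the claimed inverses of the twist maps. The inverse of a twist $x\otimes v\mapsto x_1\otimes(\tau(x_2)\rightharpoonup v)$ is the twist by the \emph{convolution} inverse of $\tau$, not by the map obtained from $\tau$ by replacing $S$ with $S^{-1}$. For $H=\k G$ a group algebra, your candidate inverse of the first twist composes with it to give $g\otimes v\mapsto g\otimes(g^{-2}\rightharpoonup v)$, which is not the identity; the second twist fails similarly. The correct inverses are: the twist by $S^{2i}$ for $x\otimes v\mapsto x_1\otimes(S^{2i+1}(x_2)\rightharpoonup v)$, since $S^{2i}(h_1)S^{2i+1}(h_2)=S^{2i}(h_1S(h_2))=\epsilon(h)1$ (using that the even power $S^{2i}$ is an algebra map), with the symmetric identity $S^{2i+1}(h_1)S^{2i}(h_2)=S^{2i}(S(h_1)h_2)=\epsilon(h)1$ on the other side; and the twist by $\sigma$ itself (no antipode) for $x\otimes w\mapsto x_1\otimes\sigma(S(x_2))w$, since $\sigma(h_1)\sigma(S(h_2))=\sigma(h_1S(h_2))=\epsilon(h)1$. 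Note that the paper's own instances of this mechanism, which you cite, behave the same way: in \ref{sec:24} the $S$-twist is inverted by an $S^2$-twist, and in \ref{sec:suff-cond-right} by the untwisted map — never by an $S^{-1}$-twist. Once the inverses are corrected, the verification does reduce to identities of the type $h_1\otimes h_2S(h_3)=h\otimes 1$, and the rest of your argument (right $\Delta_i$-linearity and bijectivity of $\theta$, hence freeness, hence bijectivity of $\Phi$) goes through.
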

\begin{proof}
 Denote $\left(\Lambda\otimes\,^{\varphi}\Lambda\right)
  \underset{\Delta_i}{\otimes}D$ by $\overline D$. Note the
  identity in $\overline D$
  \begin{equation}
    \label{eq:34}
          (h\otimes k)\underset{\Delta_i}{\otimes} d
          =
              (1\otimes \sigma 
    (h_2) k) \underset{\Delta_i}{\otimes} (h_1\rightharpoonup
    d)\,.
  \end{equation}
  Indeed, the right hand-side term equals $\left( h_1\otimes
    \varphi(S^{2i+1}(h_2)) \sigma(h_3)k \right)
  \underset{\Delta_i}\otimes d$; This is equal to $(h\otimes
  k)\underset{\Delta_i}{\otimes} d$.
Therefore, the linear mapping 
from   $(\Lambda\otimes \Lambda)\otimes D$  to $D\otimes H$ defined by
$(ha \otimes bk) \otimes d \mapsto \pm(h_1\rightharpoonup (adb))
\otimes \sigma (h_2) 
k$ induces a linear mapping
\[
\begin{array}{crcl}
  \Phi \colon & \overline D & \to & D\otimes H \\
  & (ha \otimes bk) \underset{\Delta_i}{\otimes}d
                            & \mapsto &
                                        \pm (h_1\rightharpoonup
                                        (adb))
                                        \otimes
                                        \sigma(h_2)
                                        k \,. 
\end{array}
\]
Here, $\pm$ is the sign $(-1)^{{\mathrm{deg}}(b)\cdot {\mathrm{deg}}(d)}$.
In view of \eqref{eq:34}, it is bijective and its inverse is the
mapping defined by  $d\otimes \ell \mapsto (1\otimes
\ell)\underset{\Delta_i}{\otimes} d$.
In order to prove the assertion of the lemma, it therefore suffices to
prove that the latter mapping is a morphism of dg
$\Lambda$-bimodules from $D\sharp\,^\sigma H$ to $\overline D$. This
is done in the computations below made in 
$\overline D$: 
\[
\begin{array}{rcl}
  a((1\otimes \ell)\underset{\Delta_i}{\otimes} d)b
  & = &
        (-1)^{{\mathrm{deg}}(b){\mathrm{deg}}(d)}(a\otimes \ell b) \underset{\Delta_i}{\otimes} d\\
  & = &
        (1\otimes \ell_2) \underset{\Delta_i}{\otimes}
        ad(\ell_1\rightharpoonup b)\,.
\end{array}
\]
and
\[
\begin{array}{rcl}
  h ((1\otimes \ell) \underset{\Delta_i}{\otimes} d)k
  & = &
        (h\otimes \ell k) \underset{\Delta_i}{\otimes} d \\
  & = &
        (1 \otimes \sigma (h_2) \ell k)
        \underset{\Delta_i}{\otimes} (h_1\rightharpoonup d)\,.
\end{array}
\]
\end{proof}

\subsubsection{}
\label{sec:45}
Assume that $S$ is invertible and define a dg $\Lambda$-bimodule
$H^{\sigma^{-1}} \sharp D$ as follows. Its underlying complex of
vector spaces is $H\otimes D$ and the actions of $\Lambda$ are given
by
\begin{equation}
  \label{eq:45}
  \left\{
    \begin{array}{rcl}
      a(\ell \otimes d) b & = & \ell_2\otimes
                                (S^{-1}(\ell_1)\rightharpoonup a)
                                d b \\
      h (\ell \otimes d) k & = &
                                 h\ell \sigma^{-1}(k_2) \otimes
                                 S^{-2i-1}(k_1)\rightharpoonup d\,.
    \end{array}
    \right.
  \end{equation}

  \begin{lem}
    \label{sec:40}
    Keep the setting introduced previously. The following mapping is
    an isomorphism in $\mathcal C(\Lambda^e)$
    \[
    \begin{array}{rcl}
      D\sharp \,^\sigma H & \to & H^{\sigma^{-1}}\sharp D \\
      d\otimes \ell & \mapsto & \sigma^{-1}(\ell_2) \otimes
                                S^{-2i-1}(\ell_1)\rightharpoonup d\,.
    \end{array}
    \]
  \end{lem}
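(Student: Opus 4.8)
Denote by $\Psi$ the proposed map, $\Psi\colon D\sharp\,^\sigma H\to H^{\sigma^{-1}}\sharp D$, $d\otimes\ell\mapsto\sigma^{-1}(\ell_2)\otimes(S^{-2i-1}(\ell_1)\rightharpoonup d)$. Since both source and target are the explicit dg $\Lambda$-bimodules \eqref{eq:35} and \eqref{eq:45}, the plan is to prove in turn that $\Psi$ is a morphism of complexes, that it is $\Lambda$-bilinear, and that it admits a two-sided inverse. The first point is immediate: $H$ is concentrated in degree $0$, the maps $\sigma^{\pm1}$ and $S^{-2i-1}$ are degree-preserving algebra (anti-)morphisms, and the $H$-action on $D$ commutes with $d_D$ by the equivariance axiom \eqref{eq:33}; hence $\Psi$ preserves degrees and commutes with the differentials of $D\otimes H$ and $H\otimes D$ (which only see $d_D$).

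For bilinearity I would use that $\Lambda$ is generated as an algebra by the images of $A$ and of $H$, so that it suffices to check the four identities $\Psi(a\cdot x)=a\cdot\Psi(x)$, $\Psi(x\cdot b)=\Psi(x)\cdot b$, $\Psi(h\cdot x)=h\cdot\Psi(x)$ and $\Psi(x\cdot k)=\Psi(x)\cdot k$ on $x=d\otimes\ell$, comparing the left action \eqref{eq:35} with the target action \eqref{eq:45}. Each of these is a Sweedler computation whose only inputs are the coproduct rule \eqref{eq:42} for $\sigma^{-1}$, the coproduct rule $\Delta\circ S^{-2i-1}=(S^{-2i-1}\otimes S^{-2i-1})\circ\tau\circ\Delta$ for the odd antipode power (where $\tau$ is the flip), and the equivariance rule \eqref{eq:33} used to expand $S^{-2i-1}(\ell_1)\rightharpoonup(adb)$. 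For instance, for left multiplication by $a$ one expands $S^{-2i-1}(\ell_1)\rightharpoonup(ad)=(S^{-2i-1}(\ell_{(2)})\rightharpoonup a)(S^{-2i-1}(\ell_{(1)})\rightharpoonup d)$ while, on the other side, \eqref{eq:42} splits $\Delta(\sigma^{-1}(\ell_2))=S^{-2i}(\ell_{(2)})\otimes\sigma^{-1}(\ell_{(3)})$ and the telescoping $S^{-1}\circ S^{-2i}=S^{-2i-1}$ makes the two expressions coincide. The remaining three cases are analogous.

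To establish bijectivity I would exhibit the explicit inverse $\Theta\colon H^{\sigma^{-1}}\sharp D\to D\sharp\,^\sigma H$, $\ell\otimes d\mapsto(\ell_1\rightharpoonup d)\otimes\sigma(\ell_2)$, and verify $\Theta\circ\Psi=\mathrm{id}$ and $\Psi\circ\Theta=\mathrm{id}$ directly. In $\Theta\circ\Psi$ one uses \eqref{eq:42} to expand the coproduct of $\sigma^{-1}(\ell_2)$ and the telescoping $S^{-2i}=S^{-2i-1}\circ S$ to reduce the $d$-factor to $\bigl(S^{-2i-1}(\ell_{(1)}S(\ell_{(2)}))\bigr)\rightharpoonup d$, which collapses via the antipode identity $\sum\ell_{(1)}S(\ell_{(2)})\otimes\ell_{(3)}=1\otimes\ell$ to recover $d\otimes\ell$. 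Dually, in $\Psi\circ\Theta$ one uses \eqref{eq:43} to expand $\Delta(\sigma(\ell_2))=S^{2i}(\ell_{(2)})\otimes\sigma(\ell_{(3)})$ and the telescoping $S^{-2i-1}\circ S^{2i}=S^{-1}$, so that the $d$-factor becomes $(S^{-1}(\ell_{(2)})\ell_{(1)})\rightharpoonup d$ and collapses via $\sum S^{-1}(\ell_2)\ell_1=\epsilon(\ell)1$ to recover $\ell\otimes d$.

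The routine parts are the degree/differential compatibility and the plain algebra relations; the genuine obstacle is the antipode bookkeeping. One has to keep three different powers of $S$ aligned simultaneously—the $S^{2i}$ built into the equivariant structure \eqref{eq:33}, the $S^{\pm2i}$ twist that \eqref{eq:42}--\eqref{eq:43} introduce in one leg of the coproduct of $\sigma^{\pm1}$, and the explicit $S^{-2i-1}$ in $\Psi$—and to check that they telescope so that the residual antipode hitting a single coproduct leg is exactly $S^{\pm1}$. Only then do the standard antipode axioms contract the entangled Sweedler legs, and getting these powers and the order of multiplication right across the double coproducts is where all the care lies. (Alternatively, one could read off bijectivity from identity \eqref{eq:44} and its analogue for $H^{\sigma^{-1}}\sharp D$, but the explicit inverse $\Theta$ seems the most transparent route.)
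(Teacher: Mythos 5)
Your proposal is correct and follows essentially the same route as the paper: the paper likewise exhibits the explicit inverse $\ell\otimes d\mapsto \ell_1\rightharpoonup d\otimes\sigma(\ell_2)$ (citing \eqref{eq:43}, \eqref{eq:42} and \eqref{eq:44} for the two compositions) and then verifies $\Lambda^e$-linearity by the same Sweedler computations, using the anti-multiplicativity of $S^{-2i-1}$ and the twisted coproduct rules for $\sigma^{\pm1}$ so that the residual antipode legs contract. Your splitting of bilinearity into four one-sided checks versus the paper's two combined two-sided computations is only an organizational difference, not a different argument.
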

  \begin{proof}
    Denote this mapping by $\varphi$. Consider the mapping
    $H^{\sigma^{-1}}\sharp D \to D\sharp \,^\sigma H$ defined by
    $\ell\otimes d \mapsto \ell_1\rightharpoonup d \otimes \sigma(\ell_2)$. In
    view of \eqref{eq:43}, \eqref{eq:42} and \eqref{eq:44}, it is an inverse of
    $\varphi$.
    That $\varphi$ is a morphism in $\mathcal C(\Lambda^e)$ follows
    from the computations below
    \[
    \begin{array}{rcl}
      \varphi(h(d\otimes \ell) k)
      & = &
            \varphi(h_1\rightharpoonup d \otimes \sigma(h_2)\ell k) \\
      & = &
            h_3 \sigma^{-1}(\ell_2) \sigma^{-1}(k_2) \otimes
            (S^{-2i-1}(k_1) S^{-2i-1}(\ell_1) S^{-1}(h_2) h_1)
            \rightharpoonup d \\
      & = &
            h\varphi(d\otimes \ell) k
    \end{array}
    \]
    and
    \[
    \begin{array}{rcl}
      \varphi(a (d\otimes \ell) b)
      & = &
            \varphi(ad(\ell_1\rightharpoonup b) \otimes \ell_2) \\
      & = &
            \sigma^{-1}(\ell_3) \otimes S^{-2i-1}(\ell_2)
            \rightharpoonup (a d (\ell_1\rightharpoonup b))
      \\
      & = &
            \sigma^{-1}(\ell_5) \otimes \\
      &  &
            (S^{-2i-1}(\ell_4)\rightharpoonup a)
            (S^{-2i-1}(\ell_3) \rightharpoonup d)
            ((S^{-1}(\ell_2)  \ell_1)\rightharpoonup b)
      \\
      & = &
            a (\sigma^{-1}(\ell_2) \otimes
            S^{-2i-1}(\ell_1)\rightharpoonup d) b\,.
    \end{array}
    \]
  \end{proof}

\subsection{Morphisms defined on equivariant dg bimodules} 
\label{sec:AdjunctionDelta}

The following result is used in the description of
${\mathrm{RHom}}_{\Lambda^e}(\Lambda,\Lambda^e)$ in
Section~\ref{sec:proof-main-result} and in the description of
(deformed) Calabi-Yau completions of $A\sharp H$ in
Section~\ref{sec:appl-constr-calabi}.  Note that,
\begin{itemize}
\item when $i=1$, part (1) is being considered
  in \cite[(1.3)]{MR2905560},
\item when $i=0$, the definition of $h\rightharpoonup f$ in part (2)
  coincides with the one of $S^{-2}(h)\rightharpoonup f$ in
  \cite[(1.2)]{MR2905560} and with the one of $fS^{-1}(h)$ in
  \cite[Definition 4.1]{MR3341818},
\end{itemize}
\begin{lem} Let $i\in \mathbb Z$ and assume that $S$ is invertible.
  Let $X\in \mathcal C(\Delta_i)$, $U\in \mathcal C(\Delta_{1-i})$,
  and $M\in \mathcal C(\Lambda^e)$. 
  \begin{enumerate}
  \item There exists a structure of left dg $H$-module on
    $U\underset{A^e}{\otimes} M$ such that 
    \[
h\rightharpoonup (u\otimes m) = (h_2\rightharpoonup u)\otimes
S^{2-2i}(h_3)mS(h_1)\,.
    \]
  \item There is a structure of left dg
    $H$-module on ${\mathrm{Hom}}_{A^e}(X,M)$ such that
    \begin{equation}
      \label{eq:30}
          (h\rightharpoonup f)(x)=S^{2-2i}(h_3)
    f(S^{1-2i}(h_2)\rightharpoonup x)S(h_1)\,.
    \end{equation}
  \item  Given
    $f\in {\mathrm{Hom}}_{A^e}(X,M)$, the following are equivalent
    \begin{enumerate}[(i)]
    \item $f\in {\mathrm{Hom}}_{\Delta_i}(X,M)$,
    \item $(\forall h\in H)\ \ h\rightharpoonup f= \epsilon(h) f$,
    \item $(\forall h\in H)\ (\forall x\in X)\ \ f(h\rightharpoonup x) =
      h_1f(x)S^{1+2i}(h_2)$.
    \end{enumerate}
  \item There is a functorial isomorphism
    \[
    \begin{array}{rcl}
      {\mathrm{Hom}}_{H}(\,_H\k,{\mathrm{Hom}}_{A^e}(X,M))
      & \xrightarrow{\simeq} &
                               {\mathrm{Hom}}_{\Delta_i}(X,M) \\
      \lambda & \mapsto & \lambda(1)\,.
    \end{array}
    \]
  \item Assume, here, that $M$ has an additional structure of right dg
    $A^e$-module which is compatible with the structure of left dg
    $\Lambda^e$-module in the sense of (\ref{eq:compat}) (\emph{e.g.}
    $M=A^e$). Consider ${\mathrm{Hom}}_{A^e}(X,M)$ as dg $A$-bimodule using this
    additional action of $A^e$ on $M$. Then (2) makes of
    ${\mathrm{Hom}}_{A^e}(X,M)$ an $H_{S^{2-2i}}$-equivariant dg $A$-bimodule:
    ${\mathrm{Hom}}_{A^e}(X,M)\in \mathcal C(\Delta_{1-i})$.
\item The canonical mapping ${\mathrm{Hom}}_{A^e}(X,A^e)\underset{A^e}{\otimes} M\to {\mathrm{Hom}}_{A^e}(X,M)$ is $H$-linear.
\end{enumerate}
\end{lem}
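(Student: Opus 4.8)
The plan is to check the asserted $H$-linearity by a direct computation on elementary tensors, after first pinning down the two module structures involved. On the source $\Hom_{A^e}(X,A^e)\underset{A^e}{\otimes}M$ the $H$-action is the one supplied by part~(1); invoking part~(1) is legitimate precisely because part~(5), applied with $M=A^e$, guarantees that $\Hom_{A^e}(X,A^e)$ is a left dg $\Delta_{1-i}$-module. On the target $\Hom_{A^e}(X,M)$ the $H$-action is \eqref{eq:30} from part~(2). The canonical map $\Psi$ sends $g\otimes m$ to the morphism $x\mapsto g(x)m$, where $g(x)\in A^e$ acts on $m$ through the restriction to $A^e$ of the left dg $\Lambda^e$-module structure of $M$, and is well-defined over $\underset{A^e}{\otimes}$ since the right $A^e$-structure of $\Hom_{A^e}(X,A^e)$ used to form the tensor product is the one induced by the additional right $A^e$-action on the target. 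As $\Psi$ is additive and degree preserving, it suffices to prove that $\Psi(h\rightharpoonup(g\otimes m))$ and $h\rightharpoonup\Psi(g\otimes m)$ agree when evaluated at an arbitrary $x\in X$.

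Next I would expand both sides in Sweedler notation. On the target side, \eqref{eq:30} gives
\[
(h\rightharpoonup\Psi(g\otimes m))(x)=S^{2-2i}(h_3)\,\bigl(g(S^{1-2i}(h_2)\rightharpoonup x)\,m\bigr)\,S(h_1),
\]
the outer factors acting through the $\Lambda$-bimodule structure of $M$ and $g(-)m$ through the $A^e$-action. On the source side, the formula of part~(1) first writes $\Psi(h\rightharpoonup(g\otimes m))(x)$ as $(h_2\rightharpoonup g)(x)$ acting on the element $S^{2-2i}(h_3)\,m\,S(h_1)$ of $M$; expanding $(h_2\rightharpoonup g)(x)$ by a further application of \eqref{eq:30} then exhibits it as a value $g(S^{1-2i}(-)\rightharpoonup x)\in A^e$ surrounded by $H$-factors that act on this $A^e$-value through the $\Lambda^e$-structure of the target $A^e$. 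The heart of the argument is to transport these inner $H$-twists off the $A^e$-value and onto $m$, using the commutation relations \eqref{eq:4} (equivalently \eqref{eq:15}) between $H$ and $A^e$ inside $\Lambda^e$ together with the module-algebra axioms; the surplus antipode factors then telescope via $S(h_1)h_2=\epsilon(h)=h_1S(h_2)$ and coassociativity, collapsing the higher coproduct of the source side onto the three-fold coproduct of the target side and matching the two expressions.

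The main obstacle is bookkeeping rather than conceptual. One must scrupulously distinguish, on the target $A^e$, the \emph{left} dg $\Lambda^e$-module structure that governs the $H$-action of \eqref{eq:30} from the \emph{additional right} $A^e$-module structure that both yields the $\Delta_{1-i}$-bimodule structure of $\Hom_{A^e}(X,A^e)$ and is used to form $\underset{A^e}{\otimes}M$; it is exactly the compatibility \eqref{eq:compat} of these two structures that makes the inner twist transfer cleanly onto $m$. Alongside this, the Koszul signs coming from the grading of $M$ and of the values $g(x)$ must be tracked carefully. Finally, since both $H$-actions are by chain maps and $\Psi$ is a degree-zero chain map, compatibility with the differentials is automatic and requires no separate verification.
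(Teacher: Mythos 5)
There is a genuine gap: the statement you were asked to prove is the entire six-part lemma, but your proposal only proves part (6). Parts (1), (2) and (5) are not established anywhere in your argument — they are \emph{cited} as if already known ("the $H$-action is the one supplied by part (1)", "invoking part (1) is legitimate precisely because part (5)$\ldots$", "the $H$-action is \eqref{eq:30} from part (2)"), yet they are components of the very statement under proof. None of them is trivial: for (1) one must check that the formula $h\rightharpoonup (u\otimes m) = (h_2\rightharpoonup u)\otimes S^{2-2i}(h_3)mS(h_1)$ actually descends through the tensor product over $A^e$, which uses the $H_{S^{2-2i}}$-equivariance of $U$; for (2) one must verify that $h\rightharpoonup f$ remains $A^e$-linear; for (5) one must carry out the computation with the compatibility \eqref{eq:compat} showing $h\rightharpoonup (afb) = (h_1\rightharpoonup a)(h_2\rightharpoonup f)(S^{2-2i}(h_3)\rightharpoonup b)$; and parts (3)--(4) (the characterisation of $\Delta_i$-linearity and the resulting isomorphism $\Hom_H(\,_H\k,\Hom_{A^e}(X,M))\simeq \Hom_{\Delta_i}(X,M)$) are likewise untouched. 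A blind proof of the lemma must supply all of this; as written, your argument is circular with respect to (1), (2), (5) and silent on (3), (4).

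For the part you do treat, the outline is essentially the paper's own argument: the paper also reduces part (6) to evaluating both sides on $x\in X$ and uses the smash-product relation \eqref{eq:15} (equivalently \eqref{eq:4}) to move the pair $(S^{2-2i}(h_\bullet)\otimes S(h_\bullet))$ past the $A^e$-valued factor $\varphi(S^{1-2i}(h_\bullet)\rightharpoonup x)$ so that it lands on $m$, at which point the inner expression is recognised, by the definition of the action \eqref{eq:30} on $\Hom_{A^e}(X,A^e)$, as $(h_2\rightharpoonup \varphi)(x)$; no antipode telescoping is actually needed at that step. So your sketch of (6) is sound in spirit, but it does not repair the larger omission: the lemma as a whole remains unproved.
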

\begin{proof}
(1) Here, $U$ is considered as a right dg $A^e$-module in a natural
way: $u\leftharpoonup (a\otimes b) = (-1)^{{\mathrm{deg}}(b) ({\mathrm{deg}}(au))} bua$. The identity given in the statement  endows
$U\otimes M$ with a structure of left dg
$H$-module. Using that $U$ is $H_{S^{2-2i}}$-equivariant,
a simple computation shows that this structure factors through 
$U\otimes M\to U\underset{A^e}{\otimes}U$.


(2) The given action makes of ${\mathrm{Hom}}_\k(X,M)$ a left dg $H$-module. The
following computation where $f\in {\mathrm{Hom}}_{A^e}(X,M)$ proves that
${\mathrm{Hom}}_{A^e}(X,M)$ is a dg $H$-submodule of ${\mathrm{Hom}}_\k(X,M)$
\[
\begin{array}{rcl}
  (h\rightharpoonup f) (axb)
  & = &
        S^{2-2i}(h_3) f(S^{1-2i}(h_2)\rightharpoonup (axb)) S(h_1) \\
  & = &
        \pm S^{2-2i}(h_5) S^{1-2i}(h_4)\rightharpoonup a
        f(S^{1-2i}(h_3)\rightharpoonup x)
        S(h_2)\rightharpoonup b S(h_1) \\
  & = &
        \pm a(h\rightharpoonup f)(x) b\,,
\end{array}
\]
where $\pm$ is the sign $(-1)^{{\mathrm{deg}}(f)\cdot {\mathrm{deg}}(a)}$.


(3) The equivalence ``$(i)\Leftrightarrow (iii)$'' follows from the
definition of $\Delta_i$ (see \eqref{eq:31}). The implication ``$(iii)\Rightarrow (ii)$''
follows from the definition of the action of $H$ on
${\mathrm{Hom}}_{A^e}(X,M)$. If $(ii)$ holds true, then
\[
\begin{array}{rcl}
  f(h\rightharpoonup x)
  & = &
        \epsilon(h_1) f(h_2\rightharpoonup x) \epsilon(h_3) \\
  & = &
        h_1 S(h_2) f(h_3\rightharpoonup x) S^{2i}(h_4) S^{1+2i}(h_5) \\
  & = &
        h_1 (S^{2i-1}(h_2) \rightharpoonup f)(x) S^{1+2i}(h_3) \\
  & = &
        h_1 f(x) S^{1+2i}(h_2)\,.
\end{array}
\]
This proves that $(ii)\Rightarrow (iii)$.


(4) In view of (3), the given mapping is well-defined and
surjective. The mapping is injective, and hence it 
is an isomorphism.


(5) This follows from the computation below where $f\in {\mathrm{Hom}}_{A^e}(X,M)$.
To avoid any confusion, note that the computation does not involve the action of $A^e$ on $M$
arising from the action of $\Lambda^e$, and that the third equality is due to \eqref{eq:compat}.
\[
\begin{array}{ll}
  (h\rightharpoonup (f\leftharpoonup (a\otimes b)))(x) &= \\

  (S^{2-2i}(h_3)\otimes S(h_1))\rightharpoonup
        ((f\leftharpoonup (a\otimes b))(S^{1-2i}(h_2)\rightharpoonup x))&= \\
  
  \pm (S^{2-2i}(h_3)\otimes S(h_1))\rightharpoonup
  (f(S^{1-2i}(h_2)\rightharpoonup x)\leftharpoonup (a\otimes b))
  &=\\
  
  \pm ((S^{2-2i}(h_4)\otimes S(h_2)) \rightharpoonup (f(S^{1-2i}(h_3)
  \rightharpoonup x)))
  \leftharpoonup \\
  \ \ \ \ ((S^{2-2i}(h_5) \otimes S(h_1))\rightharpoonup (a\otimes b)) &=
  \\
  
  \pm((S^{2-2i}(h_4)\otimes S(h_2))\rightharpoonup
  f(S^{1-2i}(h_3)\rightharpoonup x))  \\
  
  \ \ \ \ \leftharpoonup (S^{2-2i}(h_5)\rightharpoonup a\otimes
  h_1\rightharpoonup b) &= \\
  
  ((h_2\rightharpoonup f) \leftharpoonup (S^{2-2i}(h_3)\rightharpoonup
  a \otimes h_1\rightharpoonup b))(x)\,,
\end{array}
\]
where $\pm$ is the sign $(-1)^{\deg(x)\cdot (\deg(a)+\deg(b))}$. In
other words, using bimodule notation, then
\[
  h\rightharpoonup (afb) = (h_1\rightharpoonup a ) (h_2\rightharpoonup
  f) (S^{2-2i}(h_3) \rightharpoonup b)\,.
  \]


(6) Following (5), ${\mathrm{Hom}}_{A^e}(X,A^e)$ is a left dg
$\Delta_{1-i}$-module
for which the corresponding structure of right dg $A^e$-module is given by
$(f\leftharpoonup (a\otimes b))(x) = f(x)\times (a\otimes b)$. Hence,
(1) provides ${\mathrm{Hom}}_{A^e}(X,A^e)\underset{A^e}{\otimes } M$ with a structure of  left dg $H$-module.

Let $\varphi\in {\mathrm{Hom}}_{A^e}(X,A^e)$, $m\in M$ and $h\in H$;
Denote by $f$ the image of $\varphi \otimes m$ in
${\mathrm{Hom}}_{A^e}(X,M)$; It is defined by
$f(x)= (-1)^{\deg(m)\cdot \deg(x)}\varphi(x)\rightharpoonup m$ (where
$\rightharpoonup$ is used to denote the action of $\Lambda^e$ on $M$);
Denote by $g$ the image of $h\rightharpoonup (\varphi \otimes m)$ in
${\mathrm{Hom}}_{A^e}(X,M)$; It is defined by
$g(x) = (-1)^{\deg(m)\cdot \deg(x)} (h_2\rightharpoonup \varphi)(x)
\rightharpoonup (S^{2-2i}(h_3) m S(h_1))$. Then
$g= h\rightharpoonup f$ according to the following computation where
$M$ is considered alternatively as a left dg $\Lambda^e$-module and as
a dg $\Lambda$-bimodule
\[
\begin{array}{rcl}
  (h\rightharpoonup f) (x)
  & = &
        (S^{2-2i}(h_3)\otimes S(h_1)) \rightharpoonup
        f(S^{1-2i}(h_2)\rightharpoonup x) \\
  & = &
        \pm (S^{2-2i}(h_3) \otimes S(h_1))\rightharpoonup
        (\varphi(S^{1-2i}(h_2)\rightharpoonup x) \rightharpoonup m) \\
  & \underset{\eqref{eq:15}}{=} &
                                  \pm ((S^{2-2i}(h_4)\otimes S(h_2))
                                  \rightharpoonup
                                  \varphi(S^{1-2i}(h_3)\rightharpoonup x)) \\
  &   &
                                  \rightharpoonup
                                  ((S^{2-2i}(h_5)\otimes
                                  S(h_1))\rightharpoonup m) \\
  & = &
        \pm (h_2\rightharpoonup \varphi)(x) \rightharpoonup
        (S^{2-2i}(h_3)mS(h_1)) \\
  & = &
        g(x)\,.        
\end{array}
\]
\end{proof}

\subsection{Equivariant actions on duals over $A$}
\label{sec:equiv-acti-duals}

In this section, $S$ is assumed to be invertible and $A$ is assumed to be a $\k$-algebra.
Let $i\in\mathbb Z$, let $D$ be an $H_{S^{2i}}$-equivariant  $A$-bimodule.
Let $\sigma\in {\mathrm{Aut}}_{\k-{\mathrm{alg}}}(H)$ be such that the identity
$\sigma(h)_1\otimes \sigma(h)_2 = S^{2i}(h_1) \otimes \sigma(h_2)$
holds in $H$. 
When $D$ is invertible as an $A$-bimodule, then ${\mathrm{Hom}}_A(D,A)\simeq {\mathrm{Hom}}_{A^{\mathrm{op}}}(D,A)$ is the inverse of
$D$. 
Hence,  
this section considers $\Lambda$-bimodules arising from
${\mathrm{Hom}}_A(D,A)$ and ${\mathrm{Hom}}_{A^{\mathrm{op}}}(D,A)$.

\subsubsection{}
\label{sec:equiv-acti-duals-1}
The following result shows that ${\mathrm{Hom}}_A(D,A)$ and ${\mathrm{Hom}}_{A^{\mathrm{op}}}(D,A)$ both have a structure of
$H_{S^{-2i}}$-equivariant $A$-bimodules.
\begin{lem}
Keep the setting stated previously.
  \begin{enumerate}
  \item There exists a structure of left $H$-module on
    ${\mathrm{Hom}}_A(D,A)$ such that $(h\rightharpoonup f)(d) =
    S^{-2i}(h_2) \rightharpoonup f(S^{-1-2i}(h_1)\rightharpoonup
    d)$ for every $f\in {\mathrm{Hom}}_A(D,A)$. For this structure, ${\mathrm{Hom}}_A(D,A)$ is an
    $H_{S^{-2i}}$-equivariant  $A$-bimodule.
  \item There exists a structure of left  $H$-module on
    ${\mathrm{Hom}}_{A^{\mathrm{op}}}(D,A)$ such that $(h\rightharpoonup f)(d) =
    h_1 \rightharpoonup f(S^{1-2i}(h_2)\rightharpoonup
    d)$ for every $f\in {\mathrm{Hom}}_{A^{\mathrm{op}}}(D,A)$. For this structure,
    ${\mathrm{Hom}}_{A^{\mathrm{op}}}(D,A)$ is an 
    $H_{S^{-2i}}$-equivariant  $A$-bimodule.
  \end{enumerate}
\end{lem}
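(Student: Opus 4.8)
The plan is to verify, for part~(1), three things about the left $H$-action proposed on $\Hom_A(D,A)$: that $h\rightharpoonup f$ remains left $A$-linear (so the action is internal), that the formula defines a genuine left $H$-module structure, and that this structure together with the natural $A$-bimodule structure satisfies the defining identity of an $H_{S^{-2i}}$-equivariant bimodule. First I would pin down the $A$-bimodule structure on $\Hom_A(D,A)$ coming from the two actions not consumed by the $\Hom$, namely $(a\cdot f\cdot b)(d)=f(da)\,b$, where the left action of $a$ precomposes $f$ with right multiplication by $a$ on $D$ and the right action of $b$ postcomposes with right multiplication by $b$ on the target $A$.

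For internality I would compute $(h\rightharpoonup f)(a'd)$. Expanding $S^{-1-2i}(h_1)\rightharpoonup(a'd)$ by the module-algebra axiom of $A$ and using that the odd power $S^{-1-2i}$ is a coalgebra anti-automorphism produces a factor $(S^{-1-2i}(h_2)\rightharpoonup a')$ on the left, which left $A$-linearity of $f$ pulls out of $f$; applying the module-algebra axiom once more to the outer $S^{-2i}(h_3)\rightharpoonup(-)$, then using that $S^{-2i}$ is an algebra homomorphism and the counit identity $\sum x_2S^{-1}(x_1)=\epsilon(x)$, collapses that multiplier to $a'$ and returns $a'\,(h\rightharpoonup f)(d)$. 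Hence $h\rightharpoonup f\in\Hom_A(D,A)$.

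The heart of the matter is the equivariance identity $h\rightharpoonup(afb)=(h_1\rightharpoonup a)(h_2\rightharpoonup f)(S^{-2i}(h_3)\rightharpoonup b)$. After peeling off the right factor $S^{-2i}(h_3)\rightharpoonup b$ using the module-algebra axiom on the outer action, everything reduces to the single identity
\[
S^{-2i}(h_2)\rightharpoonup f\bigl((S^{-1-2i}(h_1)\rightharpoonup d)\,a\bigr)=(h_2\rightharpoonup f)\bigl(d\,(h_1\rightharpoonup a)\bigr).
\]
To prove it I would expand the right-hand side: applying the definition of $h_2\rightharpoonup f$ and then the $H_{S^{2i}}$-equivariance of $D$ in its right-multiplication form $g\rightharpoonup(dc)=(g_1\rightharpoonup d)(S^{2i}(g_2)\rightharpoonup c)$ introduces $S^{2i}\circ S^{-1-2i}=S^{-1}$ on the $a$-factor, and the counit identity $\sum S^{-1}(h_2)h_1=\epsilon(h)$ contracts two Sweedler legs, delivering the left-hand side. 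I expect this bookkeeping --- tracking which antipode powers are coalgebra homomorphisms (the even ones) versus anti-homomorphisms (the odd ones), and applying the counit contractions at exactly the right stage --- to be the only genuine obstacle; the module axioms $1\rightharpoonup f=f$ and $(hk)\rightharpoonup f=h\rightharpoonup(k\rightharpoonup f)$ then follow routinely from $S^{\pm}$ being (anti)homomorphisms.

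For part~(2) I would run the entirely analogous computation for $\Hom_{A^{\rm op}}(D,A)$, now using right $A$-linearity of $f$, the bimodule structure built from the remaining left actions, and the left-multiplication form $g\rightharpoonup(cd)=(g_1\rightharpoonup c)(g_2\rightharpoonup d)$ of the equivariance of $D$; the absence of an $S$-twist on left multiplication is exactly what makes the outer power in the stated formula equal to $0$, while the odd inner power $S^{1-2i}$ effects the needed left--right conversion. Alternatively, part~(2) can in principle be obtained from part~(1) by passing to the opposite data, using that $A^{\rm op}$ is an $H^{\rm op}$-module algebra with $H^{\rm op}$ a Hopf algebra of antipode $S^{-1}$ and coproduct $h\mapsto h_2\otimes h_1$; but since the opposite-data dictionary itself demands care, the direct verification is the safer route. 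In either case the resulting twist is again $S^{-2i}$, consistent with the expectation that the inverse of an invertible $H_{S^{2i}}$-equivariant bimodule carries the opposite twist.
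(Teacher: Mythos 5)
Your proposal is correct and follows essentially the same route as the paper: a direct verification that $h\rightharpoonup f$ stays $A$-linear, followed by the equivariance identity $h\rightharpoonup(afb)=(h_1\rightharpoonup a)(h_2\rightharpoonup f)(S^{-2i}(h_3)\rightharpoonup b)$, proved exactly by combining the right-multiplication form of the $H_{S^{2i}}$-equivariance of $D$ with the counit contractions $\sum S^{-1}(h_2)h_1=\epsilon(h)$ and $\sum h_2S^{-1}(h_1)=\epsilon(h)$; the paper runs the same computation in the opposite direction (left-hand side to right-hand side, citing its identity \eqref{eq:32}) and, like you, disposes of part (2) as the analogous calculation.
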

\begin{proof}
  (1) If $f\in {\mathrm{Hom}}_A(D,A)$, then the mapping $h\rightharpoonup f\colon
  D\to A$ introduced in the statement is $A$-linear, indeed
  \[
  \begin{array}{rcl}
    (h\rightharpoonup f)(ad)
    & = &
          S^{-2i}(h_2)\rightharpoonup f(S^{-1-2i}(h_1)\rightharpoonup
          (ad)) \\
    & = &
          S^{-2i}(h_3) \rightharpoonup \left(
          (S^{-1-2i}(h_2)\rightharpoonup a)
          f(S^{-1-2i}(h_1)\rightharpoonup d)
          \right) \\
    & = &
          a(h\rightharpoonup f)(d)\,.          
  \end{array}
  \]
  Note that the $A$-bimodule structure of ${\mathrm{Hom}}_A(D,A)$ is such that
  $(afb)(d) = f(da)b$.
The following computation shows the second assertion of (1)
\[
\begin{array}{rcl}
  (h\rightharpoonup (afb))(d)
  & = &
        S^{-2i}(h_2)\rightharpoonup
        (f((S^{-1-2i}(h_1)\rightharpoonup d)a)b) \\
  & \underset{~\eqref{eq:32}}{=} &
                                   S^{-2i}(h_3)\rightharpoonup \left(
                                   f(S^{-1-2i}(h_2)\rightharpoonup
                                   (d(h_1\rightharpoonup a))) b
                                   \right) \\
  & = &
        (h_2\rightharpoonup f)(d(h_1\rightharpoonup a))
        (S^{-2i}(h_3)\rightharpoonup b) \\
  & = &
        \left(
        (h_1\rightharpoonup a) (h_2\rightharpoonup f)
        (S^{-2i}(h_3)\rightharpoonup b)
        \right) (d)\,.        
\end{array}
\]
(2) is proved similarly.
\end{proof}

\subsubsection{}
\label{sec:equiv-acti-duals-2}
The following technical result is used later to prove that $D$ is
invertible as an $A$-bimodule if and only if $D\sharp\,^\sigma H$ is
invertible as a $\Lambda$-bimodule.
\begin{lem}
  Keep the setting stated previously. Let $M\in {\mathrm{mod}}(\Lambda)$. Let $N\in {\mathrm{mod}}(\Lambda^{\mathrm{op}})$.
  \begin{enumerate}
  \item The following endows ${\mathrm{Hom}}_A(D,M)$ with a structure of left
    $\Lambda$-module (here $g\in {\mathrm{Hom}}_A(D,M)$)
    \[
    \begin{array}{crcl}
      ag \colon & d &\mapsto & g(da) \\
      hg \colon & d & \mapsto & \sigma^{-1}(h_2)
                                g(S^{-2i-1}(h_1)\rightharpoonup d)\,.
    \end{array}
    \]
    For this structure, the following mapping is an
    isomorphism in ${\mathrm{mod}}(\Lambda)$
    \[
    \begin{array}{rcl}
      {\mathrm{Hom}}_\Lambda(D\sharp\,^\sigma H, M) & \to & {\mathrm{Hom}}_A(D,M) \\
      f & \mapsto & f(-\otimes 1)\,.
    \end{array}
    \]
  \item Assume that $M=\Lambda$. Then, the canonical mapping
    from ${\mathrm{Hom}}_A(D,A)\otimes H$ to ${\mathrm{Hom}}_A(D,A\otimes H)$ is a morphism in ${\mathrm{mod}}(\Lambda^e)$ from ${\mathrm{Hom}}_A(D,A)\sharp \,^{\sigma^{-1}} H$ to
    ${\mathrm{Hom}}_A(D,\Lambda)$. In particular, if $D$ is finitely presented
    as a left $A$-module or if ${\mathrm{dim}}_{\k}\,H<\infty$, then
    ${\mathrm{Hom}}_\Lambda(D\sharp\,^\sigma H,\Lambda) \simeq
    {\mathrm{Hom}}_A(D,A)\sharp\,^{\sigma^{-1}} H$ in ${\mathrm{mod}}(\Lambda^e)$.
  \item The following actions endow ${\mathrm{Hom}}_{A^{\mathrm{op}}}(D,N)$ with a
    structure of right $\Lambda$-module (here $g\in {\mathrm{Hom}}_{A^{\mathrm{op}}}(D,N)$)
    \[
    \begin{array}{crcl}
      ga \colon & d & \mapsto & g(ad) \\
      gh \colon & d & \mapsto & g(h_1\rightharpoonup d) \sigma(h_2)\,.
    \end{array}
    \]
    For this structure, the following mapping is an isomorphism in
    ${\mathrm{mod}}(\Lambda^{\mathrm{op}})$
    \[
    \begin{array}{rcl}
      {\mathrm{Hom}}_{\Lambda^{\mathrm{op}}}(D\sharp\,^\sigma H, N) & \to &
                                                             {\mathrm{Hom}}_{A^{\mathrm{op}}}(D,N)
      \\
      f & \mapsto & f(- \otimes 1) \,.
    \end{array}
    \]
  \item Assume that $N=\Lambda$. Then, there is a morphism in ${\mathrm{mod}}(\Lambda^e)$
    \[
    {\mathrm{Hom}}_{A^{\mathrm{op}}}(D,A) \sharp\,^{\sigma^{-1}}H \to {\mathrm{Hom}}_{A^{\mathrm{op}}}(D,\Lambda)
    \]
    such that the image of a tensor $\varphi \otimes \ell$ is the
    morphism $D\to \Lambda$ given by $d\mapsto
    \varphi(\ell_1\rightharpoonup d) \sigma(\ell_2)$. If $D$ is
    finitely presented as a right $A$-module or if ${\mathrm{dim}}_{\k}\,H<\infty$, then it is an isomorphism.
  \end{enumerate}
\end{lem}
\begin{proof}
  (1) Given $f\in {\mathrm{Hom}}_\Lambda(D\sharp\,^\sigma H, M)$, consider
  $g\colon D\to M$ defined by $g(d) = f(d\otimes 1)$. Then,
  \begin{itemize}
  \item $g\in {\mathrm{Hom}}_A(D,M)$,
  \item $f$ is given by $f(d\otimes \ell) = \sigma^{-1}(\ell_2) g(
    S^{-1-2i}(\ell_1) \rightharpoonup d)$ (see \eqref{eq:44}).
  \end{itemize}
  Under the above construction, given $a\in A$ and $h\in H$, the
  morphisms $af,hf\in {\mathrm{Hom}}_\Lambda(D\sharp\,^\sigma H,M)$ are mapped
  onto $ag$ and $hg$, respectively.

  Conversely, let $g\in {\mathrm{Hom}}_A(D,M)$ and define $f\colon D\otimes H
  \to M$ by $f(d\otimes \ell) = \sigma^{-1}(\ell_2)
  g(S^{-1-2i}(\ell_1)\rightharpoonup d)$. The following computations
  made in $M$ show that $f\in {\mathrm{Hom}}_\Lambda(D\sharp\,^\sigma H,M)$
  \[
  \begin{array}{rcl}
    f( a ( d\otimes \ell))
    & = &
          f(ad\otimes \ell) \\
    & = &
          \sigma^{-1}(\ell_2) g(S^{-1-2i}(\ell_1) \rightharpoonup (ad)
          ) \\
    & = &
          \sigma^{-1}(\ell_3) (S^{-1-2i}(\ell_2)\rightharpoonup a)
          g(S^{-1-2i}(\ell_1)\rightharpoonup d) \\
    & = &
          \sigma^{-1}(\ell_2)_2 (S^{-1}(\sigma^{-1}(\ell_2)_1)\rightharpoonup a)
          g(S^{-1-2i}(\ell_1)\rightharpoonup d) \\
    & = &
          a\sigma^{-1}(\ell_2) g( S^{-1-2i}(\ell_1)\rightharpoonup d)
    \\
    & = &
          af(d\otimes \ell)
  \end{array}
  \]
  and
  \[
  \begin{array}{rcl}
    f(h(d\otimes \ell))
    & = &
          f(h_1\rightharpoonup d \otimes \sigma(h_2) \ell) \\
    & = &
          \sigma^{-1}(\sigma(h_3)\ell_2)
          g(S^{-1-2i}(S^{2i}(h_2)\ell_1)\rightharpoonup
          (h_1\rightharpoonup d)) \\
    & = &
          h \sigma^{-1}(\ell_2) g( S^{-1-2i}(\ell_1)\rightharpoonup d)
    \\
    & = &
          h f(d\otimes \ell)\,.
  \end{array}
  \]
  The previous considerations prove that the mapping from ${\mathrm{Hom}}_\Lambda
  (D\sharp\,^\sigma H , M)$ to ${\mathrm{Hom}}_A(D,M)$ given in the statement of
  the lemma is well-defined and bijective. They also prove that the
  actions of $A$ and $H$ on ${\mathrm{Hom}}_A(D,M)$ given in the statement form
  a structure of $\Lambda$-module such that the mapping mentioned
  previously if $\Lambda$-linear.


  (2) It suffices to prove the first statement. The structure of left
  $H$-module on ${\mathrm{Hom}}_A(D,M)$ is functorial in $M$, and hence,
  ${\mathrm{Hom}}_A(D,\Lambda)$ is a $\Lambda$-bimodule with structure of right
  $\Lambda$-module inherited from the one of $\Lambda$. The given canonical
  mapping is a morphism in ${\mathrm{mod}}(A)$ and ${\mathrm{mod}}(H^{\mathrm{op}})$. There remains to prove that it is so in ${\mathrm{mod}}(A^{\mathrm{op}})$ and in ${\mathrm{mod}}(H)$.
  Let $\varphi \otimes \ell \in {\mathrm{Hom}}_A(D,A) \otimes H$, and denote by
  $f\colon d \mapsto \varphi(d) \ell$ its image in
  ${\mathrm{Hom}}_A(D,\Lambda)$.

  Let $a\in A$, and denote by $f'$ the image of
  $(\varphi \otimes \ell) a = (\varphi (\ell_1\rightharpoonup a)
  \otimes \ell_2)$ in ${\mathrm{Hom}}_A(D,\Lambda)$. The computation below made
  in $\Lambda$ proves that $f'=fa$
  \[
  \begin{array}{rcl}
    (fa)(d)
    & = &
          f(d) a \\
    & = &
          \varphi(d) \ell a \\
    & = &
          \varphi(d) (\ell_1\rightharpoonup a) \ell_2 \\
    & = &
          f'(d)\,.
  \end{array}
  \]
  Let $h\in H$, and denote by $f''$ the image of $h(\varphi \otimes
  \ell) = h_1\rightharpoonup \varphi \otimes \sigma^{-1}(h_2) \ell$ in
  ${\mathrm{Hom}}_\Lambda(D,\Lambda)$. The computation below made in $\Lambda$
  proves that $f''=hf$
  \[
  \begin{array}{rcl}
    (hf)(d)
    & \underset{\text{see (1)}}= &
                                    \sigma^{-1}(h_2) f(S^{-1-2i}(h_1)
                                    \rightharpoonup d ) \\
    & = &
          \sigma^{-1}(h_2) \varphi(S^{-1-2i}(h_1)\rightharpoonup d)
          \ell \\
    & = &
          S^{-2i}(h_2) \rightharpoonup
          \varphi(S^{-1-2i}(h_1)\rightharpoonup d) \sigma^{-1}(h_3)
          \ell \\
    & = &
          (h_1\rightharpoonup \varphi)(d) \sigma^{-1}(h_2)\ell \\
    & = &
          f''(d)\,.          
  \end{array}
  \]
  This proves (2).


  (3) Let $f\in {\mathrm{Hom}}_{\Lambda^{\mathrm{op}}}(\Lambda,N)$ and define $g\in
  {\mathrm{Hom}}_\k(D,N)$ by $g(d)=f(d\otimes 1)$. Then, $g\in {\mathrm{Hom}}_{A^{\mathrm{op}}}(D,N)$. This construction defines a mapping from ${\mathrm{Hom}}_{\Lambda^{\mathrm{op}}}(D\sharp \,^\sigma H,N)$ to ${\mathrm{Hom}}_{A^{\mathrm{op}}}(D,N)$.
  Keep $f$ and $g$ as above. Given $a\in A$, then $ga$ corresponds to
  $fa$ under the same construction. Also, given $h\in H$, then $gh$
  corresponds to $fh$ under this construction as proved by the
  following computation
  \[
  \begin{array}{rcl}
    (fh) (d\otimes 1)
    & = &
          f(h(d\otimes 1)) \\
    & = &
          f(h_1\rightharpoonup d \otimes \sigma(h_2)) \\
    & = &
          g(h_1\rightharpoonup d) \sigma(h_2)\,.
  \end{array}
  \]
  Now, let $g\in {\mathrm{Hom}}_{A^{\mathrm{op}}}(D,N)$ and define $f\colon D\sharp
  \,^\sigma H \to N$ by $f(d\otimes \ell) =g(d) \ell$. Then, $f$
  lies in ${\mathrm{Hom}}_{H^{\mathrm{op}}}(D\sharp\, ^\sigma H,N)$. And so does it
  in ${\mathrm{Hom}}_{A^{\mathrm{op}}}(D\sharp\,^\sigma H, N)$ as proved by the
  following computation
  \[
  \begin{array}{rcl}
    f((d\otimes \ell) a)
    & = &
          f( d (\ell_1\rightharpoonup a) \otimes \ell_2) \\
    & = &
          g(d (\ell_1\rightharpoonup a)) \ell_2 \\
    & = &
          g(d) (\ell_1\rightharpoonup a) \ell_2 \\
    & = &
          g(d) \ell a \\
    & = &
          f(d\otimes \ell) a\,.
  \end{array}
  \]
  These considerations prove that there is a well-defined mapping from
  ${\mathrm{Hom}}_{A^{\mathrm{op}}}(D,N)$ to
  ${\mathrm{Hom}}_{\Lambda^{\mathrm{op}}}(D\sharp\,^\sigma H,N)$ which is inverse to
  the mapping from ${\mathrm{Hom}}_{\Lambda^{\mathrm{op}}}(D\sharp\,^\sigma H,N)$ to
  ${\mathrm{Hom}}_{A^{\mathrm{op}}}(D,N)$ introduced in the statement of (3). This
  proves (3).


  (4) Like in (2), ${\mathrm{Hom}}_{A^{\mathrm{op}}}(D,\Lambda)$ is a
  $\Lambda$-bimodule with structure of left $\Lambda$-module inherited
  from the one of $\Lambda$.

  First, the given mapping from ${\mathrm{Hom}}_{A^{\mathrm{op}}}(D,A)\sharp\,^{\sigma^{-1}}H$ to ${\mathrm{Hom}}_{A^{\mathrm{op}}}(D,\Lambda)$ is well-defined. Indeed, let $\varphi \otimes
  \ell\in {\mathrm{Hom}}_{A^{\mathrm{op}}}(D,A)\sharp\,^{\sigma^{-1}}H$. Define a mapping $g\colon D\to
  \Lambda$ by $g(d) =\varphi(\ell_1\rightharpoonup
  d) \sigma(\ell_2)$. In view of the following equalities in
  $\Lambda$, this mapping is a morphism in ${\mathrm{mod}}(A^{\mathrm{op}})$
  \[
  \begin{array}{rcl}
    g(da)
    & = &
          \varphi(\ell_1\rightharpoonup (da)) \sigma(\ell_2) \\
    & = &
          \varphi(\ell_1\rightharpoonup d)
          (S^{2i}(\ell_2)\rightharpoonup a) \sigma(\ell_3) \\
    & = &
          \varphi(\ell_1\rightharpoonup d) \sigma(\ell_2) a\,.
  \end{array}
  \]
  Denote by $\theta$ the resulting mapping from ${\mathrm{Hom}}_{A^{\mathrm{op}}}(D,A)\sharp\,^{\sigma^{-1}}H$ to ${\mathrm{Hom}}_{A^{\mathrm{op}}}(D,\Lambda)$.

  Next, $\theta$ is a morphism in ${\mathrm{mod}}(\Lambda^e)$. Indeed, keep
  the notation ($\varphi,\ell,g$) introduced previously, then
  \[
  \begin{array}{rcl}
    (ag)(d)
    & = &
          ag(d) \\
    & = &
          (a\varphi)(\ell_1\rightharpoonup d) \sigma(\ell_2) \\
    \\
    (hg)(d)
    & = &
          hg(d) \\
    & = &
          h \varphi(\ell_1\rightharpoonup d) \sigma(\ell_2) \\
    & = &
          h_1\rightharpoonup \varphi(\ell_1\rightharpoonup d) h_2
          \sigma(\ell_2) \\
    & = &
          h_1\rightharpoonup \varphi((S^{1-2i}(h_2) S^{-2i}(h_3)
          \ell_1)
          \rightharpoonup d) h_4 \sigma(\ell_2) \\
    & = &
          (h_1\rightharpoonup \varphi) (
          (\sigma^{-1}(h_2)\ell)_1 \rightharpoonup d)
          \sigma((\sigma^{-1}(h_2)\ell)_2) \\
    \\
    (ga)(d)
    & = &
          g(ad) \\
    & = &
          \varphi(\ell_1\rightharpoonup (ad) ) \sigma(\ell_2) \\
    & = &
          (\varphi \cdot (\ell_1\rightharpoonup a))
          (\ell_2\rightharpoonup d) \sigma(\ell_3)  \\
    \\
    (gh)(d)
    & = &
          g(h_1\rightharpoonup d) \sigma(h_2) \\
    & = &
          \varphi((\ell_1h_1)\rightharpoonup d) \sigma(\ell_2 h_2)\,.
  \end{array}
  \]
  In order to prove (4), there only remains to prove that $\theta$ is
  bijective when $D$ is finitely presented as a right $A$-module or
  ${\mathrm{dim}}_\k\,H<\infty$. For this purpose, consider the composite
  mapping
  \[
  H^\sigma\sharp {\mathrm{Hom}}_{A^{\mathrm{op}}}(D,A) \underset{~\ref{sec:45}}
  {\xrightarrow{\sim}}
  {\mathrm{Hom}}_{A^{\mathrm{op}}}(D,A)\sharp \,^{\sigma^{-1}}H
  \xrightarrow{\theta}
  {\mathrm{Hom}}_{A^{\mathrm{op}}}(D,\Lambda)\,.
  \]
  Recall from the proof in \ref{sec:45} that the left hand-side
  mapping is defined by $\ell \otimes \varphi \mapsto
  \ell_1\rightharpoonup \varphi \otimes \sigma^{-1}(\ell_2)$. Hence,
  to any $\ell\otimes \varphi \in H\otimes {\mathrm{Hom}}_{A^{\mathrm{op}}}(D,A)$,
  the composite mapping 
  associates $\theta(\ell_1\rightharpoonup \varphi \otimes
  \sigma^{-1}(\ell_2))$. Denote this morphism by $g$. Then
  \[
  \begin{array}{rcl}
    g(d)
    & = &
          (\ell_1\rightharpoonup \varphi)
          (\sigma^{-1}(\ell_2)_1\rightharpoonup d)
          \sigma(\sigma^{-1}(\ell_2)_2) \\
    & = &
          (\ell_1\rightharpoonup \varphi)
          (S^{-2i}(\ell_2)\rightharpoonup d)\ell_3 \\
    & = &
          \ell_1\rightharpoonup \varphi((
          S^{1-2i}(\ell_2) S^{-2i}(\ell_3))\rightharpoonup d) \ell_4 \\
    & = &
          \ell \varphi (d)\,.
  \end{array}
  \]
  Hence, the above composite mapping is the canonical one from
  $H\otimes {\mathrm{Hom}}_{A^{\mathrm{op}}}(D,A)$ to ${\mathrm{Hom}}_{A^{\mathrm{op}}}(D,H\otimes
  A)$. It is hence bijective when $D$ is finitely presented in ${\mathrm{mod}}(A^{\mathrm{op}})$ or ${\mathrm{dim}}_\k\,H<\infty$. Accordingly,
  $\theta$ is bijective under the same assumption.
\end{proof}

\subsection{Tensor product of equivariant bimodules}
\label{sec:tens-prod-equiv}

Here, $A$ is a dg algebra.
\subsubsection{}
\label{sec:26}

The following result describes $(D\sharp\,^\sigma H)
\underset{\Lambda}{\otimes} (D'\sharp\,^\tau H)$ for equivariant 
dg $A$-bimodules $D$ and $D'$. Here is its setting
\begin{itemize}
\item $i,j\in \mathbb Z$ and $S$ is invertible as soon as $i<0$ or $j<0$,
\item $D$ is an $H_{S^{2i}}$-equivariant  dg $A$-bimodule,
\item $D'$ is an $H_{S^{2j}}$-equivariant  dg $A$-bimodule,
\item $\sigma,\tau$ are automorphisms of $H$ 
  which satisfy the identities
  $\sigma(h)_1\otimes \sigma(h)_2 = S^{2i}(h_1) \otimes \sigma(h_2)$
  and $\tau(h)_1\otimes \tau(h)_2 = S^{2j}(h_1)\otimes \tau(h_2)$
  in $H$, and such that   $\sigma$ and $S^2$ commute, and such that $\tau$
  and $S^2$ commute (for instance, 
  $\sigma=S^{+2i}\circ \Xi_\pi^r$ and $\mu = S^{+2j}\circ \Xi_{\pi'}^r$ for
algebra homomorphisms $\pi,\pi'\colon H\to \k$).
\end{itemize}
\begin{lem}
  Keep the setting stated previously.
  \begin{enumerate}
  \item There is a structure of $H_{S^{2(i+j)}}$-equivariant 
   dg $A$-bimodule on $D\underset{A}{\otimes} D'$ such that
   $h\rightharpoonup (d\otimes d')=h_1\rightharpoonup d\otimes
   S^{2i}(h_2)\rightharpoonup d'$,
 \item $(D\sharp\,^\sigma H)\underset{\Lambda}{\otimes} (D'\sharp\,
   ^\tau H) \simeq (D\underset{A}{\otimes} D')\sharp \,^{\tau\circ
     \sigma}H$ as  dg $\Lambda$-bimodules.
  \end{enumerate}
\end{lem}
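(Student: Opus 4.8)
The plan is to prove the two assertions separately, with part~(1) supplying the equivariant structure that is needed merely to state part~(2).

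For part~(1) I would first check that the formula $h\rightharpoonup(d\otimes d')=(h_1\rightharpoonup d)\otimes(S^{2i}(h_2)\rightharpoonup d')$ descends from $D\otimes D'$ to $D\underset{A}{\otimes}D'$. The only nonformal point is that it respects the relation $da\otimes d'=d\otimes ad'$: evaluating the action on both sides and invoking the $H_{S^{2i}}$-equivariance \eqref{eq:33} of $D$ for the right action of $a$ on $d$, together with the equivariance of $D'$ for the left action of $a$ on $d'$, both reduce to $(h_1\rightharpoonup d)\otimes\bigl(S^{2i}(h_2)\rightharpoonup(ad')\bigr)$. Here I use throughout that $S^2$ commutes with the comultiplication, so that $S^{2i}$ is a coalgebra endomorphism and $S^{2i}(h)_1\otimes S^{2i}(h)_2=S^{2i}(h_1)\otimes S^{2i}(h_2)$. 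The module axiom $(hk)\rightharpoonup(d\otimes d')=h\rightharpoonup\bigl(k\rightharpoonup(d\otimes d')\bigr)$ follows from the same fact, and the $H_{S^{2(i+j)}}$-equivariance \eqref{eq:33} for $D\underset{A}{\otimes}D'$ is obtained by expanding $h\rightharpoonup\bigl((ad)\otimes(d'b)\bigr)$, applying equivariance of $D$ to the left factor and of $D'$ to the right factor; the composite twist $S^{2j}\circ S^{2i}=S^{2(i+j)}$ appears exactly on the right bimodule action of $b$, as required.

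For part~(2) I would begin by verifying that $\tau\circ\sigma$ is an admissible twist, i.e.\ satisfies \eqref{eq:43} with $i$ replaced by $i+j$: applying the defining identity of $\tau$ to the argument $\sigma(h)$ and then that of $\sigma$ yields $(\tau\sigma)(h)_1\otimes(\tau\sigma)(h)_2=S^{2(i+j)}(h_1)\otimes(\tau\sigma)(h_2)$, and $\tau\circ\sigma$ commutes with $S^2$ since $\sigma$ and $\tau$ do; thus the right-hand side $(D\underset{A}{\otimes}D')\sharp\,^{\tau\circ\sigma}H$ is well defined. I would then exhibit the explicit map
\[
\Phi\colon(D\sharp\,^\sigma H)\underset{\Lambda}{\otimes}(D'\sharp\,^\tau H)\longrightarrow(D\underset{A}{\otimes}D')\sharp\,^{\tau\circ\sigma}H,\qquad (d\otimes\ell)\underset{\Lambda}{\otimes}(d'\otimes m)\longmapsto\bigl(d\otimes(\ell_1\rightharpoonup d')\bigr)\otimes\tau(\ell_2)m ,
\]
together with its candidate inverse $(d\otimes d')\otimes m\mapsto(d\otimes1)\underset{\Lambda}{\otimes}(d'\otimes m)$. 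The idea behind $\Phi$ is that in the balanced tensor product the $H$-part $\ell$ of the left factor, being an element of $H\subseteq\Lambda$, may be carried across $\otimes_\Lambda$ to act on the left of $d'\otimes m$ by \eqref{eq:35}, after which the $A$-part is absorbed; concretely $(d\otimes\ell)\underset{\Lambda}{\otimes}(d'\otimes m)=(d\otimes1)\underset{\Lambda}{\otimes}\bigl((\ell_1\rightharpoonup d')\otimes\tau(\ell_2)m\bigr)$ by \eqref{eq:35} and \eqref{eq:44}, which makes the two displayed maps visibly mutually inverse.

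It then remains to check that $\Phi$ is well defined (that is, $\Lambda$-balanced) and $\Lambda$-bilinear. Balancedness is tested on the generators $A$ and $H$ of $\Lambda$: for $h\in H$ both sides collapse to $\bigl(d\otimes((\ell_1h_1)\rightharpoonup d')\bigr)\otimes\tau(\ell_2h_2)m$ using that $\tau$ is an algebra map, while for $a\in A$ one expands $\ell_1\rightharpoonup(ad')$ by the equivariance of $D'$ and then uses the $A$-balancing of $D\underset{A}{\otimes}D'$. Bilinearity splits into four verifications: the left $A$- and right $H$-actions are immediate, whereas the right $A$-action and the left $H$-action are the delicate ones, and I expect them to be the main obstacle. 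There the Sweedler bookkeeping forces one to commute the twists $S^{2i}$, $S^{2j}$, $\sigma$ and $\tau$ past one another, and the computations close only because the defining relations \eqref{eq:43} convert the $S^{2i}$-twist coming from the part~(1) action (in the left $H$-action) and the $S^{2j}$-twist coming from the equivariance of $D'$ (in the right $A$-action) into the twists hidden inside $\sigma$ and $\tau$, while the algebra-map property $\tau(x)\tau(y)=\tau(xy)$ recombines the $H$-slot of the target. Once these two coherences are settled, $\Phi$ is a dg $\Lambda$-bimodule isomorphism and the proof is complete.
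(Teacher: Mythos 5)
Your proof is correct and essentially the paper's: the same verification that $\tau\circ\sigma$ satisfies \eqref{eq:43} with $i+j$ in place of $i$, the same key identity \eqref{eq:36}, and the same pair of mutually inverse maps, with part (1) checked exactly as the paper (implicitly) does. The only difference is organizational---the paper verifies $\Lambda$-bilinearity on the inverse map $(d\underset{A}{\otimes}d')\otimes\ell'\mapsto(d\otimes1)\underset{\Lambda}{\otimes}(d'\otimes\ell')$, where the left and right actions fall directly on the two tensor factors and \eqref{eq:36} finishes the computation, whereas you check balancedness and bilinearity of the forward map; note also that \eqref{eq:36} needs only \eqref{eq:35}, not \eqref{eq:44}, which matters because the lemma's setting permits a non-invertible antipode when $i,j\geqslant 0$.
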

\begin{proof}
  (1) The given action is indeed a structure of left  $H$-module on
  $D\underset{A}{\otimes} D'$ because $D$ is
  $H_{S^2i}$-equivariant. For this structure, $D\underset{A}{\otimes}D'$
  is $H_{S^{2(i+j)}}$-equivariant because $D$ is $H_{S^{2i}}$-equivariant
  and $D'$ is $H_{S^{2j}}$-equivariant.


  (2) Note the following identity in $H$
  \[
  \begin{array}{rcl}
  (\tau \circ \sigma)(h)_1\otimes (\tau \circ \sigma)(h)_2
    & = &
          S^{2j}(\sigma(h)_1) \otimes \tau(\sigma(h)_2) \\
    & = &
          S^{2i+2j}(h_1) \otimes \tau\circ \sigma(h_2)\,.
  \end{array}
  \]
  Hence, $(D\underset{A}{\otimes} D')\sharp \,^{\tau\circ \sigma}H$ is
  well-defined.

  Note the following identity in $(D\sharp\,^\sigma
  H)\underset\Lambda\otimes (D'\sharp \,^\tau H)$:
  \begin{equation}
    \label{eq:36}
    (d\otimes \ell) \underset{\Lambda}{\otimes} (d'\otimes \ell')
    = 
          (d\otimes 1) \underset{\Lambda}{\otimes}
          (\ell_1\rightharpoonup d' \otimes \tau(\ell_2)\ell')\,.
        \end{equation}
Therefore, there is a well-defined linear mapping
\[
\begin{array}{rcl}
  (D\sharp \,^\sigma H) \underset{\Lambda}{\otimes} (D'\sharp
    \,^\tau H)
  & \to
  & (D\underset{A}{\otimes} D')\otimes H \\
   (d\otimes \ell)\underset{\Lambda}{\otimes} (d'\otimes \ell')
  & \mapsto
  & (d\underset{A}{\otimes} \ell_1\rightharpoonup d') \otimes
    \tau(\ell_2) \ell'\,.
\end{array}
\]
There is also a well-defined linear mapping
\[
\begin{array}{rcl}
  (D\underset{A}{\otimes} D')\otimes H
  & \to
  & (D\sharp \,^\sigma H) \underset{\Lambda}{\otimes} (D'\sharp
    \,^\tau H) \\
  (d\underset{A}{\otimes} d') \otimes \ell'
  & \mapsto
  & (d\otimes 1) \underset{\Lambda}{\otimes} (d'\otimes\ell')
\end{array}
\]
In view of \eqref{eq:36}, these two linear mappings are inverse to each
other. In order to prove the lemma, it therefore suffices to check that the
latter mapping is a morphism of dg $\Lambda$-bimodules from
$(D\underset A \otimes D')\sharp \,^{\tau\circ \sigma} H$ to $(D\sharp
\,^\sigma H) \underset\Lambda\otimes (D'\sharp\,^{\tau} H)$. This
follows from the computations below made in $(D\sharp
\,^\sigma H) \underset\Lambda\otimes (D'\sharp\,^{\tau} H)$:
\[
\begin{array}{rcl}
  a((d\otimes 1) \underset{\Lambda}{\otimes} (d'\otimes \ell'))b
  & = &
        (a(d\otimes 1)) \underset\Lambda\otimes ((d'\otimes \ell')b)
  \\
  & = &
        (ad\otimes 1) \underset{\Lambda}{\otimes}
        (d'(\ell'_1\rightharpoonup b)\otimes \ell'_2) \\
\end{array}
\]
and
\[
\begin{array}{rcl}
  h((d\otimes 1) \underset{\Lambda}{\otimes} (d'\otimes \ell'))k
  & = &
        (h(d\otimes 1)) \underset\Lambda\otimes ((d'\otimes \ell')k)
  \\
  & = &
        (h_1\rightharpoonup d \otimes \sigma(h_2)) \underset\Lambda\otimes
        (d'\otimes \ell' k) \\
  & \underset{\eqref{eq:36}}= &
        (h_1\rightharpoonup d \otimes 1)
        \underset{\Lambda}{\otimes}
        (\sigma(h_2)_1\rightharpoonup d'\otimes \tau(\sigma(h_2)_2)\ell' k) \\
  & = &
        (h_1\rightharpoonup d \otimes 1) \underset{\Lambda}{\otimes}
        (S^{2i}(h_2)\rightharpoonup d' \otimes \tau(\sigma(h_3))\ell'k)\,.
\end{array}
\]
\end{proof}

\subsubsection{}
\label{sec:27}
In this paragraph, $S$ need not be invertible. 
Let $D$ be an $H_{S^2}$-equivariant dg $A$-bimodule and let $\sigma\in
{\mathrm{Aut}}_{\k-{\mathrm{alg}}}(H)$ satisfy the identity
$\sigma(h)_1\otimes \sigma(h)_2=S^2(h_1)\otimes \sigma(h_2)$ in $H$ and
commute with $S^2$. The
preceding result 
provides a description of $T_{A\sharp H}(D\sharp\,^\sigma H)$. This is
used later when discussing on Calabi-Yau completions.

For every $n\geqslant 1$, consider $D^{\underset{A}{\otimes}n}$ as an
$H_{S^{2n}}$-equivariant dg $A$-bimodule for the following action of $H$
(see \ref{sec:26})
\begin{equation}
  \label{eq:28}
  h\rightharpoonup (d_1\underset{A}{\otimes}\cdots
  \underset{A}{\otimes} d_n) = h_1\rightharpoonup d_1
  \underset{A}{\otimes} S^2(h_2) \rightharpoonup d_2
  \underset{A}{\otimes} \cdots \underset{A}{\otimes}
  S^{2(n-1)}(h_n)\rightharpoonup d_n\,.
\end{equation}
According to \ref{sec:26}, there exists an isomorphism of algebras
\[
T_A(D)\otimes H\xrightarrow \sim T_{A\sharp H}(D\sharp\,^\sigma H) 
\]
which extends the identity maps $A\to A$ and $H\to H$ as well as the
mapping $D\to D\sharp\,^{\sigma} H,\, d\mapsto d\otimes 1$, and where
$T_A(D)\otimes H$  is endowed with the product such that 
$T_A(D)$ and $H$ are subalgebras  in the natural way and
\begin{equation}
  \label{eq:29}
  h\times (d_1\underset{A}{\otimes}\cdots
  \underset{A}{\otimes} d_n) = h_1\rightharpoonup (d_1\underset{A}{\otimes}\cdots
  \underset{A}{\otimes} d_n) \otimes \sigma^n(h_2)\,.
\end{equation}

This dg algebra with underlying complex $T_A(D)\otimes
H$ is denoted by $T_A(D)\sharp\,^{\sigma^*}H$. If
$\sigma={\mathrm{Id}}_H$, then $T_A(D)$ is an $H$-module dg algebra and
$T_A(D)\sharp\,^{\sigma^*}H=T_A(D)\sharp H$.

\subsection{On invertible  $\Lambda$-bimodules}
\label{sec:invertible-dg-lambda}
Assume that $A$ is a $\k$-algebra and that $S$ is invertible.  The
following results relate equivariant $A$-bimodules which are invertible
as $A$-bimodules to invertible $\Lambda$-bimodules.  Recall that an
$A$-bimodule $D$ is called \emph{invertible} if there exists an
$A$-bimodule $D'$ such that
$D\underset{A}\otimes D'\simeq D'\underset{A}\otimes D\simeq A$ in
${\mathrm{mod}}(A^e)$. In view of the adjunctions
\[
\xymatrix{
  \mathrm{mod}(A) \ar@/_/[d]_{D\underset A \otimes -}
  &&
  \mathrm{mod}(A^{\mathrm{op}}) \ar@/_/[d]_{-\underset A \otimes D}
  \\
  \mathrm{mod}(A) \ar@/_/[u]_{{\mathrm{Hom}}_A(D,-)}
  &&
  \mathrm{mod}(A^{\mathrm{op}}) \ar@/_/[u]_{{\mathrm{Hom}}_{A^{\mathrm{op}}}(D,-)}
  }
\]
the bimodule $D$ is invertible if and only if the canonical morphisms
${\mathrm{Hom}}_A(D,A) \otimes_A D\to A$ 
and $D \otimes_A {\mathrm{Hom}}_{A^{\mathrm{op}}}(D,A) \to A$ are bijective.

\subsubsection{}
\label{sec:invertible-dg-lambda-1}
The following result gives a sufficient condition for
$D\sharp\,^\sigma H$ to be invertible.
\begin{prop}
  Let $H$ be a Hopf algebra with invertible antipode. Let $A$ be an
  $H$-module $\k$-algebra.
  Let $D$ be an $H_{S^{2i}}$-equivariant  $A$-bimodule (for some $i\in
  \mathbb Z$) which is invertible as an  $A$-bimodule. Let $\sigma$ be
  an automorphism of $H$ which commutes with $S^2$ and satisfies the
  identity $\sigma(h)_1\otimes \sigma(h)_2= S^{2i}(h_1)\otimes
  \sigma(h_2)$ in $H$. Then, $D\sharp\,^\sigma H$ is an invertible
  $\Lambda$-bimodule.
\end{prop}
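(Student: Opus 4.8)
The plan is to construct an explicit two‑sided inverse of $D\sharp\,^\sigma H$ by smashing the $A$‑bimodule inverse of $D$ with $H$. Since $D$ is invertible over $A$, \ref{sec:invertible-dg-lambda} provides canonical $A$‑bimodule isomorphisms $D\underset{A}{\otimes}\Hom_A(D,A)\xrightarrow{\sim}A$ and $\Hom_{A^{\rm op}}(D,A)\underset{A}{\otimes}D\xrightarrow{\sim}A$, namely the evaluation maps $d\otimes f\mapsto f(d)$ and $f\otimes d\mapsto f(d)$. By \ref{sec:equiv-acti-duals-1}, both $\Hom_A(D,A)$ and $\Hom_{A^{\rm op}}(D,A)$ carry natural $H_{S^{-2i}}$‑equivariant structures, and by \eqref{eq:42} the automorphism $\sigma^{-1}$ commutes with $S^2$ and satisfies $\sigma^{-1}(h)_1\otimes\sigma^{-1}(h)_2=S^{-2i}(h_1)\otimes\sigma^{-1}(h_2)$; hence $\Hom_A(D,A)\sharp\,^{\sigma^{-1}}H$ and $\Hom_{A^{\rm op}}(D,A)\sharp\,^{\sigma^{-1}}H$ are well-defined dg $\Lambda$‑bimodules, which will serve as the candidate inverses.

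The key step, and the one I expect to be the main obstacle because of the extra powers of $S$ entering the actions on the duals, is to verify that these two evaluation maps are \emph{equivariant}, i.e.\ morphisms of $H_{{\rm Id}}$-equivariant $A$‑bimodules once the sources are equipped with the tensor-product equivariant structure of \ref{sec:26}(1) and $A$ with its canonical $H_{S^0}$-equivariant structure. For $D\underset{A}{\otimes}\Hom_A(D,A)$ this reduces to the identity $(S^{2i}(h_2)\rightharpoonup f)(h_1\rightharpoonup d)=h\rightharpoonup f(d)$, which I would obtain by expanding the action of \ref{sec:equiv-acti-duals-1}(1), using that $S$ commutes with its powers, and collapsing with the antipode identity $\sum S^{-1}(h_2)h_1=\epsilon(h)1$. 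The computation for $\Hom_{A^{\rm op}}(D,A)\underset{A}{\otimes}D$ is symmetric, using \ref{sec:equiv-acti-duals-1}(2) together with $\sum S(h_2)h_3=\epsilon(h_{(2)})1$. As the evaluation maps are bijective $A$‑bimodule morphisms by invertibility of $D$ over $A$, this upgrades them to isomorphisms of $H_{{\rm Id}}$-equivariant $A$‑bimodules onto $A$.

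Finally I would assemble the pieces. Since the construction $D'\mapsto D'\sharp\,^{\tau}H$ is functorial in the equivariant bimodule $D'$, applying $-\sharp\,^{{\rm Id}}H$ to the two equivariant isomorphisms just obtained and combining with \ref{sec:26}(2) (which produces the twist $\sigma^{-1}\circ\sigma=\sigma\circ\sigma^{-1}={\rm Id}_H$) yields
\[
(D\sharp\,^\sigma H)\underset{\Lambda}{\otimes}(\Hom_A(D,A)\sharp\,^{\sigma^{-1}}H)\simeq\bigl(D\underset{A}{\otimes}\Hom_A(D,A)\bigr)\sharp\,^{{\rm Id}}H\simeq A\sharp\,^{{\rm Id}}H,
\]
and likewise $(\Hom_{A^{\rm op}}(D,A)\sharp\,^{\sigma^{-1}}H)\underset{\Lambda}{\otimes}(D\sharp\,^\sigma H)\simeq A\sharp\,^{{\rm Id}}H$. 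One then observes that $A\sharp\,^{{\rm Id}}H$ is nothing but $\Lambda$ viewed as a bimodule over itself, as is read off directly from \eqref{eq:35} with $i=0$ and $\sigma={\rm Id}$. Thus $D\sharp\,^\sigma H$ admits a right inverse and a left inverse in $({\rm mod}(\Lambda^e),\underset{\Lambda}{\otimes},\Lambda)$; by the standard argument that a left and a right inverse in a monoid necessarily coincide, the two candidates are isomorphic and $D\sharp\,^\sigma H$ is invertible, proving the proposition.
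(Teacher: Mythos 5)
Your proposal is correct and follows essentially the same route as the paper: both reduce invertibility to showing that $\Hom_A(D,A)\sharp\,^{\sigma^{-1}}H$ and $\Hom_{A^{\rm op}}(D,A)\sharp\,^{\sigma^{-1}}H$ are right and left inverses, using the equivariant structures on the duals from \ref{sec:equiv-acti-duals-1}, the tensor-product decomposition of \ref{sec:26}(2), and the $H$-linearity of the two evaluation maps (verified by exactly the computations you indicate), before concluding by the one-sided-inverses argument.
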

\begin{proof} The conclusion is a consequence of following the
  assertion proved below: 
  ${\mathrm{Hom}}_A(D,A)\sharp 
  \,^{\sigma^{-1}} H$ and ${\mathrm{Hom}}_{A^{\mathrm{op}}}(D,A)\sharp
  \,^{\sigma^{-1}}H$ are right and left inverses of
  $D\sharp\,^{\sigma} H$,
  respectively. Note that ${\mathrm{Hom}}_A(D,A)$ and
  ${\mathrm{Hom}}_{A^{\mathrm{op}}}(D,A)$ are $H_{S^{-2i}}$-equivariant 
  $A$-bimodules (see \ref{sec:equiv-acti-duals-1}) and 
  $\sigma^{-1}$ satisfies the identity $\sigma^{-1}(h)_1\otimes
  \sigma^{-1}(h)_2= S^{-2i}(h_1)\otimes \sigma^{-1}(h_2)$.

  Using part (2) of the lemma in \ref{sec:26} yields isomorphisms of 
  $\Lambda$-bimodules
  \[
  \begin{array}{l}
  (D\sharp \,^\sigma H)\underset{\Lambda}{\otimes}
    ({\mathrm{Hom}}_A(D,A)\sharp \,^{\sigma^{-1}}H) \simeq
    (D\underset{A}{\otimes}{\mathrm{Hom}}_A(D,A))\sharp H\\
    ({\mathrm{Hom}}_{A^{\mathrm{op}}}(D,A)\sharp\,^{\sigma^{-1}} H)
    \underset{\Lambda}{\otimes} (D\sharp \,^{\sigma} H) \simeq
    ({\mathrm{Hom}}_{A^{\mathrm{op}}}(D,A)\underset{A}{\otimes} D)\sharp H\,.
    \end{array}
    \]
    Note that the $D\underset A \otimes {\mathrm{Hom}}_A(D,A)$ and
    ${\mathrm{Hom}}_{A^{\mathrm{op}}}(D,A)\underset A\otimes D$ are $H$-equivariant $A$-bimodules
    (see \ref{sec:26}).
  Moreover, the following mappings are isomorphisms in ${\mathrm{mod}}(A^e)$
  \[
  \begin{array}{rclccrcl}
    D\underset{A}{\otimes} {\mathrm{Hom}}_A(D,A)& \to & A
    &&&
        {\mathrm{Hom}}_{A^{\mathrm{op}}}(D,A)\underset{A}{\otimes} D & \to & A \\
    d\otimes f & \mapsto & f(d) &&& f \otimes d & \mapsto &f(d)\,.
  \end{array}
  \]
  These are actually
  $H$-linear as proved by the two following
  computations where $f$ lies in ${\mathrm{Hom}}_A(D,A)$ and ${\mathrm{Hom}}_{A^{\mathrm{op}}}(D,A)$, respectively
  \[
  \begin{array}{rcl}
    (S^{2i}(h_2)\rightharpoonup f)(h_1\rightharpoonup d)
    & = &
          h_3\rightharpoonup f(S^{-1}(h_2)\rightharpoonup
          (h_1\rightharpoonup d)) \\
    & = &
          h\rightharpoonup f(d) \\
  \end{array}
  \]
  and
  \[
  \begin{array}{rcl}
    (h_1\rightharpoonup f)(S^{-2i}(h_2)\rightharpoonup d)
    & = &
          h_1 \rightharpoonup f(S^{1-2i}(h_2)\rightharpoonup
          (S^{-2i}(h_3)\rightharpoonup d)) \\
    & = &
          h \rightharpoonup f(d)\,.
  \end{array}
  \]
  Therefore, $D\underset A\otimes {\mathrm{Hom}}_A(D,A)\simeq A$ and
  ${\mathrm{Hom}}_{A^{\mathrm{op}}}(D,A)\underset A\otimes D\simeq A$ as
  $H$-equivariant $A$-bimodules. 
  Thus, there are isomorphisms of  $\Lambda$-bimodules
  \[
  \begin{array}{l}
  (D\underset{A}{\otimes}{\mathrm{Hom}}_A(D,A))\sharp H \simeq \Lambda \\
  ({\mathrm{Hom}}_{A^{\mathrm{op}}}(D,A)\underset{A}{\otimes} D)\sharp H \simeq
    \Lambda\,.
    \end{array}
  \]
This proves that ${\mathrm{Hom}}_A(D,A)\sharp\,^{\sigma^{-1}} H$  and ${\mathrm{Hom}}_{A^{\mathrm{op}}}(D,A)\sharp \,
^{\sigma^{-1}} H$ are right inverse and left inverse to
$D\sharp\,^\sigma H$, respectively.
\end{proof}

\subsubsection{}
\label{sec:invertible-dg-lambda-2}
The following result gives a necessary condition for $D\sharp\,^\sigma
H$ to be invertible.
\begin{prop}
  Let $H$ be a Hopf algebra with invertible antipode. Let $A$ be an
  $H$-module $\k$-algebra.
  Let $D$ be an $H_{S^{2i}}$-equivariant $A$-bimodule. Let $\sigma\in
  {\mathrm{Aut}}_{\k-{\mathrm{alg}}}(H)$ be such that $\sigma\circ S^2=S^2\circ
  \sigma$ and such that the identity $\sigma(h)_1\otimes \sigma(h)_2 =
  S^{2i}(h_1) \otimes \sigma(h_2)$ holds in $H$. Assume that
  $D\sharp\,^\sigma H$ is invertible as a $\Lambda$-bimodule. Then,
  $D$ is invertible as an $A$-bimodule.
\end{prop}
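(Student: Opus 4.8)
The plan is to verify the two conditions of the invertibility criterion recalled in \ref{sec:invertible-dg-lambda}, namely that the canonical evaluation morphisms $D\underset{A}{\otimes}\Hom_A(D,A)\to A$ and $\Hom_{A^{\rm op}}(D,A)\underset{A}{\otimes}D\to A$ in ${\rm mod}(A^e)$ are bijective. The idea is to recognise each of these, after applying $-\sharp H$, as the counit of the invertible bimodule $D\sharp\,^\sigma H$, and then to descend the resulting isomorphism back to $A$ by means of the functor $-\underset{\Lambda}{\otimes}A$, where $A\simeq \Lambda\underset{H}{\otimes}\k$ is regarded as a left $\Lambda$-module via the counit of $H$. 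A direct computation shows that this functor sends $X\sharp H\mapsto X$ for an $H$-equivariant $A$-bimodule $X$ (and $\Lambda\mapsto A$), so it will recover the $A$-level maps from their $\sharp H$-versions.

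First I would record the formal part of the argument, assuming for the moment that $D$ is finitely presented both in ${\rm mod}(A)$ and in ${\rm mod}(A^{\rm op})$. Under that assumption, \ref{sec:equiv-acti-duals-2}(2) and (4) identify $\Hom_A(D,A)\sharp\,^{\sigma^{-1}}H$ and $\Hom_{A^{\rm op}}(D,A)\sharp\,^{\sigma^{-1}}H$ with $\Hom_\Lambda(D\sharp\,^\sigma H,\Lambda)$ and $\Hom_{\Lambda^{\rm op}}(D\sharp\,^\sigma H,\Lambda)$, respectively; since $D\sharp\,^\sigma H$ is invertible, these are exactly its right and left inverses. Using \ref{sec:26}(2) one has
\[
(D\sharp\,^\sigma H)\underset{\Lambda}{\otimes}(\Hom_A(D,A)\sharp\,^{\sigma^{-1}}H)\simeq (D\underset{A}{\otimes}\Hom_A(D,A))\sharp H,
\]
and, because the evaluation $D\underset{A}{\otimes}\Hom_A(D,A)\to A$ is $H$-linear (this is the computation already carried out in the proof in \ref{sec:invertible-dg-lambda-1}), applying $-\sharp H$ to it yields precisely the counit of the invertible bimodule, which is bijective. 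Applying $-\underset{\Lambda}{\otimes}A$ then recovers $D\underset{A}{\otimes}\Hom_A(D,A)\to A$ and shows it is bijective; the same reasoning on the other side handles $\Hom_{A^{\rm op}}(D,A)\underset{A}{\otimes}D\to A$. By the criterion of \ref{sec:invertible-dg-lambda}, $D$ is then invertible.

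It therefore remains to justify the finiteness assumption, and this is where the real work lies. Since $D\sharp\,^\sigma H$ is invertible, it is finitely generated projective both as a left and as a right $\Lambda$-module; as $\Lambda$ is free over $A$ on either side, restriction of scalars along $A\hookrightarrow\Lambda$ shows that $D\sharp\,^\sigma H$ is projective in ${\rm mod}(A)$ and in ${\rm mod}(A^{\rm op})$, whence $D$, being a direct summand ($D\otimes 1$ on the left, and $1\otimes D$ on the right \emph{via} the isomorphism in \ref{sec:45}), is projective in ${\rm mod}(A)$ and in ${\rm mod}(A^{\rm op})$. Promoting this to finite generation is the crux: restriction along $A\hookrightarrow\Lambda$ does not preserve finite generation when $H$ is infinite-dimensional, so one cannot simply transport the finiteness of $D\sharp\,^\sigma H$ over $\Lambda$ to $D$ over $A$. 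The route I would take is to exploit the rank-one nature of an invertible bimodule: the isomorphism $(D\sharp\,^\sigma H)\underset{\Lambda}{\otimes}E\simeq\Lambda$, with $E$ the inverse, provides a \emph{finite} coevaluation $1_\Lambda\leftrightarrow\sum_{j=1}^m u_j\underset{\Lambda}{\otimes}v_j$; using the identification $(D\sharp\,^\sigma H)\underset{\Lambda}{\otimes}A\simeq D$ and its analogue for $E$, I would push this finite datum through $-\underset{\Lambda}{\otimes}A$ so as to manufacture a finite dual basis for $D$ in ${\rm mod}(A)$ (and dually in ${\rm mod}(A^{\rm op})$), thereby upgrading projectivity to finite generation.

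The main obstacle is precisely this last point. Everything else is bookkeeping assembled from \ref{sec:26}, \ref{sec:equiv-acti-duals-1} and \ref{sec:equiv-acti-duals-2}; but finite generation of $D$ over $A$ is the one place where the full strength of invertibility of $D\sharp\,^\sigma H$ (and not merely its one-sided projectivity) must be used, and care is needed because the $H$-action unpacks inside the left $\Lambda$-action and thus prevents a naive restriction argument from detecting the finiteness.
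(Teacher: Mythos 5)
Your formal reduction is exactly the paper's own proof: the isomorphism $(D\underset{A}{\otimes}\Hom_A(D,A))\sharp H\simeq (D\sharp\,^\sigma H)\underset{\Lambda}{\otimes}(\Hom_A(D,A)\sharp\,^{\sigma^{-1}}H)$ from \ref{sec:26}, the identification $\Hom_A(D,A)\sharp\,^{\sigma^{-1}}H\simeq \Hom_\Lambda(D\sharp\,^\sigma H,\Lambda)$ from \ref{sec:equiv-acti-duals-2}, the bijectivity of the evaluation of the invertible bimodule, and the final application of $-\underset{\Lambda}{\otimes}A$, together with the symmetric argument, are precisely the steps \eqref{eq:46}--\eqref{eq:49}. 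Your projectivity argument (restriction along $A\subseteq\Lambda$ plus a direct-summand observation) is also correct, and is more explicit than the paper on that point.

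The genuine gap is finite generation of $D$ in ${\rm mod}(A)$ and ${\rm mod}(A^{\rm op})$, which you rightly call the crux but only sketch --- and the sketch, as stated, would fail. The coevaluation/evaluation data of $M=D\sharp\,^\sigma H$ is only one-sided $\Lambda$-linear: writing $E$ for the inverse, $\mu\colon M\underset{\Lambda}{\otimes}E\to\Lambda$, $\nu\colon E\underset{\Lambda}{\otimes}M\to\Lambda$, and $\mu^{-1}(1)=\sum_j u_j\otimes v_j$, the maps $m\mapsto \mu(m\otimes v_j)$ are left but not right $\Lambda$-linear, so they do not descend along $-\underset{\Lambda}{\otimes}A$ at all; the maps that do descend, such as $m\otimes x\mapsto \nu(v_j\otimes m)\rightharpoonup x$ on $D\simeq M\underset{\Lambda}{\otimes}A$, are well defined but not left $A$-linear, and the identity they satisfy, $m\otimes x=\sum_j u_j\otimes\bigl(\nu(v_j\otimes m)\rightharpoonup x\bigr)$, puts the coefficients inside the second tensor leg rather than as left $A$-multiples of fixed elements of $D$. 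So no dual basis for $D$ over $A$ can be ``manufactured'' by this formal push-forward; abstract invertibility of $M$ alone does not see the subring $A$.

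What closes the gap is the smash-product structure, which your outline never invokes. Invertibility does give that $M$ is finitely generated as a left and as a right $\Lambda$-module. By \ref{sec:45} one has $M\simeq \Lambda\underset{A}{\otimes}D$ as left $\Lambda$-modules (and $M\simeq D\underset{A}{\otimes}\Lambda$ as right $\Lambda$-modules, directly from \eqref{eq:35}). Write finitely many left $\Lambda$-generators as finite sums of elementary tensors $\lambda_{jl}\underset{A}{\otimes}d_{jl}$. The map $\Lambda\to A$, $ah\mapsto S^{-1}(h)\rightharpoonup a$, is right $A$-linear (here $H\rightharpoonup A\subseteq A$ and the invertibility of $S$ enter), hence induces a surjection $\Lambda\underset{A}{\otimes}D\to D$ which sends every $\mu\lambda_{jl}\underset{A}{\otimes}d_{jl}$ into $Ad_{jl}$; therefore $D=\sum_{j,l}Ad_{jl}$ is finitely generated in ${\rm mod}(A)$, and the symmetric argument with the left $A$-linear retraction $ah\mapsto a\epsilon(h)$ handles ${\rm mod}(A^{\rm op})$. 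Combined with your projectivity argument, this yields the finite presentation required by \ref{sec:equiv-acti-duals-2}(2) and (4), after which your formal part finishes the proof. (The paper compresses this finiteness step into a single sentence, but the route you propose to fill it --- a descended dual basis --- is the one step of your plan that cannot be carried out as described.)
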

\begin{proof}
  Since $(D\sharp \,^\sigma H)\underset \Lambda \otimes A\simeq D$ in
  ${\mathrm{mod}}(A)$, and since $D\sharp \,^\sigma H$ is invertible as a
  $\Lambda$-bimodule, then $D$ is finitely generated and projective in
  ${\mathrm{mod}}(A)$. Similarly, since $A\underset \Lambda \otimes
  (D\sharp \,^\sigma H) \underset{~\ref{sec:45}} \simeq A\underset
  \Lambda \otimes (H^{\sigma^{-1}}\sharp D) \simeq D$ in ${\mathrm{mod}}(A^{\mathrm{op}})$, then $D$ is also finitely generated and
  projective in ${\mathrm{mod}}(A^{\mathrm{op}})$.

  In order to prove that $D$ is invertible as an $A$-bimodule, it
  suffices to prove that the canonical mappings $D\underset A \otimes
  {\mathrm{Hom}}_A(D,A)\to A$ and ${\mathrm{Hom}}_{A^{\mathrm{op}}}(D,A) \underset{A}\otimes D
  \to A$ are bijective.
  Consider the former one. The following mapping is an
  isomorphism in ${\mathrm{mod}}(\Lambda^e)$ (see
  \ref{sec:equiv-acti-duals-1} and \ref{sec:26})
  \begin{equation}
    \label{eq:46}
    \begin{array}{rcl}
      (D\underset A \otimes {\mathrm{Hom}}_A(D,A)) \sharp H 
      &         \longrightarrow
      & (D\sharp \,^\sigma H) \underset \Lambda \otimes
        ({\mathrm{Hom}}_A(D,A)\sharp\,^{\sigma^{-1}} H) \\
      (d\otimes \varphi) \otimes \ell
      & \longmapsto & (d\otimes 1) \underset \Lambda \otimes (\varphi
        \otimes \ell)
    \end{array}
  \end{equation}
  Combining the isomorphisms given in parts (1) and (2) of
  \ref{sec:equiv-acti-duals-2} yields the following isomorphism in
  ${\mathrm{mod}}(\Lambda^e)$
  \begin{equation}
    \label{eq:47}
    \begin{array}{rcl}
      {\mathrm{Hom}}_A(D,A) \sharp\,^{\sigma^{-1}} H 
      &\longrightarrow
      & {\mathrm{Hom}}_\Lambda(D\sharp\,^\sigma H, \Lambda) \\
      \varphi \otimes \ell
      & \longmapsto
      & (d\otimes h\mapsto \sigma^{-1}(h_2)
        \varphi(S^{-1-2i}(h_1)\rightharpoonup d)\ell)\,.
    \end{array}
  \end{equation}
  Now, since $D\sharp\,^\sigma H$ is invertible, the following
  canonical mapping is an isomorphism in ${\mathrm{mod}}(\Lambda^e)$
  \begin{equation}
    \label{eq:48}
    (D\sharp \,^\sigma H) \underset \Lambda \otimes
    {\mathrm{Hom}}_\Lambda(D\sharp\,^\sigma H,\Lambda) \to \Lambda\,.
  \end{equation}
  Thus, combining \eqref{eq:46}, \eqref{eq:47} and \eqref{eq:48}
  yields an isomorphism in ${\mathrm{mod}}(\Lambda^e)$
  \begin{equation}
    \label{eq:49}
    \begin{array}{rcl}
      (D\underset A \otimes {\mathrm{Hom}}_A(D,A))\sharp H
      & \to & \Lambda \\
      (d\otimes \varphi) \otimes \ell
      & \mapsto & \varphi(d)\ell\,.
    \end{array}
  \end{equation}
  Note that the canonical mapping
  $D\underset A\otimes {\mathrm{Hom}}_A(D,A) \to A$ is obtained upon
  applying
  $-\underset \Lambda \otimes A\colon {\mathrm{mod}}(A^e)\to
  {\mathrm{mod}}(A)$ to \eqref{eq:49}, hence it is bijective.

  Using similar considerations proves that the canonical mapping
  ${\mathrm{Hom}}_{A^{\mathrm{op}}}(D,A)\underset A\otimes D \to A$ is
  bijective. Thus, $A$ is invertible as an $A$-bimodule.
\end{proof}

\subsubsection{}
\label{sec:corollary}
Combining the results in \ref{sec:invertible-dg-lambda-1} and
\ref{sec:invertible-dg-lambda-2} yields the following corollary.

\begin{cor}
  Let $H$ be a Hopf algebra with invertible antipode. Let $A$ be an
  $H$-module $\k$-algebra.  Let $D$ be an $H_{S^{2i}}$-equivariant
  $A$-bimodule (for some $i\in \mathbb Z$). Let $\sigma$ be an
  automorphism of $H$ which commutes with $S^2$ and satisfies the
  identity
  $\sigma(h)_1\otimes \sigma(h)_2= S^{2i}(h_1)\otimes \sigma(h_2)$ in
  $H$. Then, the following assertions are equivalent.
  \begin{enumerate}[(i)]
  \item $D$ is invertible as an $A$-bimodule.
  \item $D\sharp^\sigma H$ is invertible as an $A\sharp H$-bimodule.
  \end{enumerate}
\end{cor}

\section{The inverse dualising complex of \texorpdfstring{$A\sharp H$}{AH}}
\label{sec:proof-main-result}

This section assumes that $S$ is invertible and describes
${\mathrm{RHom}}_{\Lambda^e}(\Lambda,\Lambda^e)$. Starting from the observations
that $A\in \mathcal C(\Delta_0)$ and
$\Lambda^e\otimes_{\Delta_0}A \simeq \Lambda$ in
$\mathcal C(\Lambda^e)$, the description is obtained by transforming
${\mathrm{RHom}}_{\Lambda^e}(\Lambda^e\otimes_{\Delta_0}A,\Lambda^e)$ using a
series of adjunctions and the existence of some
$D_A\in \mathcal C(\Delta_1)$ such that
$D_A \simeq {\mathrm{RHom}}_{A^e}(A,A^e)$ in $\mathcal D(A^e)$. For this
purpose, Section~\ref{sec:ident-involv-dg} collects some needed
identities in $\Delta_0$ and $\Delta_1$, Section~\ref{sec:TensorH}
proves technical details used in the series of adjunctions,
Section~\ref{sec:prel-homol-alg} establishes a sufficient condition
for $\Lambda$ to be homologically smooth, Section~\ref{sec:dualA}
introduces $D_A$, and Section~\ref{sec:inverse-dual-compl} gives the
description of ${\mathrm{RHom}}_{\Lambda^e}(\Lambda,\Lambda^e)$.

\subsection{Identities on the involved dg algebras}
\label{sec:ident-involv-dg}

\subsubsection{}
\label{sec:Delta}

The following identities hold true in $\Delta_0$
\begin{equation}
  \label{eq:6}
  \begin{array}{rcl}
h\times
    (a\otimes b)
    & = &
          ((h_1\rightharpoonup a)\otimes (h_3\rightharpoonup
          b))\times  h_2\\
    & =&
         ((h_1\otimes
    S(h_3))\rightharpoonup (a\otimes b))\times h_2\,.
    \end{array}
\end{equation}
\begin{equation}
  \label{eq:2}
\begin{array}{rcl}
  (a\otimes b) \times h & = & h_2\times ((S^{-1}(h_1)\rightharpoonup a)
  \otimes (S(h_3)\rightharpoonup b)) \\
  & = & h_2\times ((S^{-1}(h_1)\otimes S^2(h_3))\rightharpoonup
        (a\otimes b))\,.
\end{array}
\end{equation}
The following identities hold true in $\Delta_1$
\begin{equation}
  \label{eq:5}
h\times (a\otimes b)= (h_1\rightharpoonup a\otimes
S^2(h_3)\rightharpoonup b) \times h_2\,.
\end{equation}
\begin{equation}
\label{eq:3}
(a\otimes b) \times h = h_2\times (S^{-1}(h_1)\rightharpoonup a
\otimes S^3(h_3)\rightharpoonup b)\,.
\end{equation}

\subsubsection{Dg module structures over $A$ and $A^e$}
\label{sec:ModuleStructuresOnA}

There is a natural structure of left dg $\Lambda$-module on $A$ defined by
$(a h)\rightharpoonup x= a\,(h\rightharpoonup x)$.
Also there is a natural structure of left dg $\Delta_0$-module on $A$ defined by
$(a\otimes b\otimes h)\cdot  x= (-1)^{{\mathrm{deg}}(b)\,{\mathrm{deg}}(x)}a (h\rightharpoonup x) b$.

\begin{lem}
  There is an isomorphism
  $\Lambda^e\underset{\Delta_0}{\otimes} A\to \Lambda$ in
  $\mathcal C(\Lambda^e)$ which maps $(u\otimes v) \otimes a$ to
  $(-1)^{\mathrm{deg}(v)\,\mathrm{deg}(a)}uav$ (where $u,v\in \Lambda$
  and $a\in A$, $v$ and $a$ being homogeneous).
\end{lem}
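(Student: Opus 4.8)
There is an isomorphism $\Lambda^e \underset{\Delta_0}{\otimes} A \to \Lambda$ in $\mathcal C(\Lambda^e)$ sending $(u\otimes v)\otimes a$ to $uav$ (for $u,v\in\Lambda$).

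The plan is to construct the map explicitly and exhibit an inverse, rather than to argue abstractly. First I would verify that the assignment $(u\otimes v)\otimes a\mapsto uav$ descends to a well-defined map on the tensor product over $\Delta_0$. The subtlety is exactly the balancing condition: the left $\Delta_0$-action on $A$ is $(a'\otimes b'\otimes h)\cdot x = (-1)^{\deg(b')\deg(x)} a'(h\rightharpoonup x)b'$, and $\Delta_0$ acts on $\Lambda^e$ on the right through the algebra map $\Delta_0\to\Lambda^e$ of \eqref{eq:31} with $i=0$, namely $(a'\otimes b')\times h\mapsto (a'\otimes b')\times(h_1\otimes S(h_2))$. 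So to prove well-definedness I must check that the two ways of moving a generator $a'\otimes b'\otimes h$ of $\Delta_0$ across the tensor sign produce the same element of $\Lambda$ after applying multiplication. Concretely, writing the right $\Lambda^e$-action and the left $\Delta_0$-action out, this reduces to the Hopf-algebra identity obtained by combining the product rule in $\Lambda$, the module-algebra axioms $h\rightharpoonup(ab)=(h_1\rightharpoonup a)(h_2\rightharpoonup b)$, and the antipode relations $h_1 S(h_2)=\epsilon(h)=S(h_1)h_2$. I expect this compatibility to follow from a short Sweedler computation once the identity \eqref{eq:6} (or \eqref{eq:2}) in $\Delta_0$ is used to normalize how $h$ passes across $a'\otimes b'$.

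Next I would define the candidate inverse $\Lambda\to \Lambda^e\underset{\Delta_0}{\otimes}A$. The natural guess is $\lambda\mapsto (\lambda\otimes 1)\otimes 1$, or on elements $ah\mapsto (ah\otimes 1)\otimes 1$; but I should instead aim for a formula that lands in the image of $1\otimes 1$ so that surjectivity and injectivity become transparent. The key structural fact is that, since $A$ is an $H$-module algebra, every element of $\Lambda^e\underset{\Delta_0}{\otimes}A$ can be rewritten, using the relations coming from $\Delta_0$, so that the last tensor factor is $1\in A$ and the $\Lambda^e$-factor absorbs everything; this is the analogue, for $i=0$, of the normal-form identity \eqref{eq:44} used elsewhere in the paper. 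Establishing that normal form is the heart of the surjectivity of the inverse and shows the tensor product is generated over $\Lambda^e$ by $1\otimes 1$. I would then check that $\mu\colon(u\otimes v)\otimes a\mapsto uav$ and the inverse $\nu\colon\lambda\mapsto(\lambda\otimes 1)\otimes 1$ compose to the identity in both directions: $\mu\circ\nu=\mathrm{id}_\Lambda$ is immediate, while $\nu\circ\mu=\mathrm{id}$ uses precisely the normal form to push $uav$ back to $(u\otimes v)\otimes a=(u\otimes v)\cdot\big((1\otimes 1)\otimes a\big)$ and then across to $\big((uav\otimes 1)\big)\otimes 1$.

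Finally I would confirm that $\mu$ is a morphism of left dg $\Lambda^e$-modules and commutes with the differentials. $\Lambda^e$-linearity is built into the construction since the action is on the outer $\Lambda^e$-factor and $\mu$ is multiplication, so $\mu\big((u'uv v')\otimes a\big) = u'(uav)v' = (u'\otimes v')\rightharpoonup\mu\big((u\otimes v)\otimes a\big)$ up to the Koszul sign dictated by $\deg(v')$. The differential check is routine because all the maps are defined by the product and the action, both of which are degree-zero chain maps; following the paper's stated convention, these verifications may be suppressed. The main obstacle is the well-definedness computation in the first step: getting the Sweedler bookkeeping and the placement of $S$ correct so that the $\Delta_0$-balancing genuinely collapses under multiplication in $\Lambda$. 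Everything else is formal once the normal-form identity is in hand.
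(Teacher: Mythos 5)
Your proposal is correct and follows essentially the same route as the paper: define the map by multiplication, check $\Delta_0$-balancing on the algebra generators $A^e$ and $H$ of $\Delta_0$ (where it collapses to $h_1aS(h_2)=h\rightharpoonup a$ in $\Lambda$), and exhibit an explicit inverse whose verification is the normal-form Sweedler computation. The paper writes its inverse as $ah\mapsto (a\otimes h)\underset{\Delta_0}{\otimes}1$ rather than your $\lambda\mapsto(\lambda\otimes 1)\underset{\Delta_0}{\otimes}1$, but these are the same map, and the paper's check that it is a two-sided inverse is exactly your normal-form argument carried out on $(ah\otimes bk)\underset{\Delta_0}{\otimes}c$.
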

\begin{proof}
Let $\mu\colon \Lambda^e\otimes A\to \Lambda$ be defined by
$\mu((u\otimes v) \otimes a)= uav$. Then
\[
\mu(((u\otimes v)\times
(h_1\otimes S(h_2)))\otimes a) = \pm u\underset{h\rightharpoonup
  a}{\underbrace{h_1aS(h_2)}}v = \mu((u\otimes v)\otimes
  (h\rightharpoonup a))
\]
and
$\mu(((u\otimes v)\times (b\otimes c)) \otimes a) = \pm ubacv= \pm
\mu((u\otimes v) \otimes (bac)$. Whence the
existence of the morphism in $\mathcal C(\Lambda^e)$ given in the
statement of the lemma. Denote it by $\nu$. 
 
Let $\lambda\colon \Lambda\to \Lambda^e\underset{\Delta_0}{\otimes}
  A$ be defined by $a h \mapsto (a\otimes h)\underset{\Delta_0}\otimes 1$. Then
$\lambda$ is a right inverse for $\mu$ and
\[
\begin{array}{rcl}
  \lambda\circ \mu ((ah\otimes bk) \otimes c) 
  & = &
         \pm \lambda(ahcbk) = \pm  (a(h_1\rightharpoonup c) (h_2\rightharpoonup
        b) \otimes h_3k)\underset{\Delta_0}\otimes 1 \\
  & = & 
        \pm ( a \otimes h_3 k) \underset{\Delta_0}\otimes (h_1\rightharpoonup
c) (h_2\rightharpoonup b) \\
  & = &
        (a \otimes (h_2\rightharpoonup b) h_3 k) \underset{\Delta_0}\otimes (h_1\rightharpoonup c)
  \\
 & = & 
       ( (a\otimes (h_3\rightharpoonup b) h_4k) \times (h_1 \otimes S(h_2)))
\underset{\Delta_0}\otimes c \\
  & = & (ah \otimes bk) \underset{\Delta_0}\otimes c\,.
\end{array}
\]
Thus, $\lambda$ is an isomorphism in $\mathcal C(\Lambda^e)$.
\end{proof}

\subsection{Useful (bi)module structures on morphism spaces}
\label{sec:TensorH}

\begin{lem}
Let $U\in \mathcal C(\Delta_1)$ and $Y\in \mathcal C(H^{\mathrm{op}})$.
\begin{enumerate}
\item 
$U\otimes H^e$ has a  structure of dg  $H-\Lambda^e$-bimodule where the action of $H$ is given by
$\ell \rightharpoonup (u\otimes h\otimes k) = \ell_2\rightharpoonup u
\otimes S^2(\ell_3)h \otimes k S(\ell_1)$
 and the action of $\Lambda^e$ is given (in bimodule notation) by
\[
\begin{array}{rcl}
  a(u \otimes h \otimes k)b
  & = & ((S^{-1}(k_1)\rightharpoonup a) u  (h_1\rightharpoonup b)) 
         \otimes h_2\otimes k_2\\
  h'(u\otimes h\otimes k)k'
  & = & u\otimes hk'\otimes h'k\,,
\end{array}
\]
\item the canonical mapping  $U\otimes H^e\to U\underset{A^e}{\otimes}
  \Lambda^e$ is both $H$-linear and $\Lambda^e$-linear,
\item $U\otimes H\otimes Y$ has a  structure of dg right
  $\Lambda^e$-module such that (in bimodule notation)  $a(u\otimes
  \ell\otimes y) b  = au(\ell_1\rightharpoonup b) \otimes \ell_2 \otimes
  y$ and $h(u\otimes \ell\otimes y)k
   = (h_1\rightharpoonup u) \otimes S^2(h_2)\ell k\otimes
  y\leftharpoonup S^{-1}(h_3)$,
\item the mapping $U\otimes H \otimes Y \to Y\underset{H}{\otimes} (U \otimes 
  H^e)$ defined by
  \[
  u\otimes \ell \otimes y
  \mapsto (-1)^{\mathrm{deg}(y)\,\mathrm{deg}(u)}y\otimes (u\otimes
  \ell\otimes 1)
  \]
  is a $\Lambda^e$-linear isomorphism.
  \end{enumerate}
\end{lem}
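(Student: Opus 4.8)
The plan is to read the whole statement as a sequence of explicit checks of (bi)module axioms, of linearity, and of one bijectivity, the only non-formal point being part~(4). The two inputs I would use repeatedly are the $H_{S^2}$-equivariance of $U$ as a left $\Delta_1$-module (the identity \eqref{eq:33} with $i=1$, together with the product rules \eqref{eq:5} and \eqref{eq:3} in $\Delta_1$), and the realisation of $\Lambda^e$ as the smash product $A^e\sharp H^e$ recalled in \ref{sec:cross-prod-lambd-2} (identities \eqref{eq:15} and \eqref{eq:4}). It is also useful to keep in mind the conceptual origin of the structures in~(1): since $\Lambda^e\cong A^e\sharp H^e$ has underlying space $A^e\otimes H^e$ and $U$ carries its natural right $A^e$-action (see \ref{sec:AdjunctionDelta}(1)), the canonical map $U\otimes H^e\to U\underset{A^e}{\otimes}\Lambda^e$ is already a linear isomorphism, and the right $\Lambda^e$-action and left $H$-action displayed in~(1) are exactly the transports of the natural structures on $U\underset{A^e}{\otimes}\Lambda^e$. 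This makes part~(2) essentially a restatement of part~(1).

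For part~(1) I would first verify, using coassociativity and the fact that $S^2$ is an algebra endomorphism, that $\ell\rightharpoonup(u\otimes h\otimes k)=\ell_2\rightharpoonup u\otimes S^2(\ell_3)h\otimes kS(\ell_1)$ defines a left $H$-module structure; this reduces to the module axiom for $U$. That the two displayed formulas define a right $\Lambda^e$-action (associativity and unitality) is a lighter computation of the same kind. The laborious point is that the left $H$-action commutes with the right $\Lambda^e$-action, that is $\ell\rightharpoonup(awb)=a(\ell\rightharpoonup w)b$ and $\ell\rightharpoonup(h'wk')=h'(\ell\rightharpoonup w)k'$ in bimodule notation; the $H$-bilinear case is immediate from coassociativity, while the $A$-bilinear case is exactly where the equivariance \eqref{eq:33} of $U$ is consumed, the two sides matching after the antipode strings $S^{-1}(k_i)\,S^2(\ell_j)$ collapse. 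Part~(2) then amounts to checking that $u\otimes h\otimes k\mapsto u\underset{A^e}{\otimes}(h\otimes k)$ is $H$- and $\Lambda^e$-linear, which follows by the same manipulations using \eqref{eq:15}. Part~(3) is an independent but entirely parallel round of axiom checks for the right $\Lambda^e$-module $U\otimes H\otimes Y$, again using \eqref{eq:33} for the $A$-action and coassociativity for the $H$-action.

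The main obstacle is the bijectivity in part~(4). The map $u\otimes\ell\otimes y\mapsto y\otimes(u\otimes\ell\otimes1)$ is defined on a free tensor product, hence is well-defined, and I would verify that it is $\Lambda^e$-linear by a direct computation: the $\Lambda^e$-action of~(3) was designed precisely so that, after the $H$-balancing relation $(y\leftharpoonup m)\otimes w=y\otimes(m\rightharpoonup w)$ and a collapse of antipode strings, it agrees with the $\Lambda^e$-action of~(1) transported through $Y\underset{H}{\otimes}-$. The real content is invertibility, which I would obtain by a Hopf-module straightening: using the balancing relation together with the left $H$-action of~(1), every element $y\otimes(u\otimes h\otimes k)$ can be brought to a normal form $y'\otimes(u'\otimes h'\otimes1)$ whose last tensor factor is $1$, by acting with $S^{-1}$ of the coproduct components of $k$ and transferring the freed $H$-content onto $Y$. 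This normal form supplies a two-sided inverse, and here the invertibility of $S$ assumed throughout this section is indispensable. The fiddly heart of the proof is to pin down this normalisation with the correct Sweedler indices and antipode powers, to check that it is well-defined over $\underset{H}{\otimes}$, and to confirm that both composites with the stated map are the identity. Once~(4) is established, the lemma is complete.
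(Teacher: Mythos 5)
Your proposal is correct and takes essentially the same route as the paper: parts (1)--(3) are the same direct verifications (with the right $\Lambda^e$-action of (1) justified through the smash decomposition $\Lambda^e\cong A^e\sharp H^e$, so that (2) holds essentially by construction, and with the commutation of the two actions checked via the $H_{S^2}$-equivariance of $U$, exactly as you indicate). For part (4) the paper argues precisely by your ``straightening'': it records the normal-form identity $(u\otimes h\otimes k)=S^{-1}(k_3)\rightharpoonup\bigl(k_1\rightharpoonup u\otimes S^2(k_2)h\otimes 1\bigr)$ in $U\otimes H^e$ and reads off from it both the $\Lambda^e$-linearity of the stated map and its explicit inverse $y\otimes(u\otimes h\otimes k)\mapsto (k_1\rightharpoonup u)\otimes S^2(k_2)h\otimes\bigl(y\leftharpoonup S^{-1}(k_3)\bigr)$.
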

\begin{proof}
  (1) By construction the given actions define a structure of dg 
  $H-(H^e)$-bimodule. The given action of $A^e$ on $U\otimes H^e$ may
  be rewritten as the 
  following composite map
\[
(U\otimes H^e)\otimes A^e \xrightarrow{{\mathrm{Id}}\otimes \tau} U\otimes
A^e\otimes H^e \xrightarrow{\alpha\otimes {\mathrm{Id}}} U\otimes H^e\,.
\]
Here, $\tau\colon H^e\otimes A^e\to A^e\otimes H^e$ is defined by
$\tau(h\otimes k\otimes a\otimes b)=(h_1\rightharpoonup a\otimes
S^{-1}(k_1)\rightharpoonup b) \times (h_2\otimes k_2)$, and
$\alpha\colon U\otimes A^e\to U$ is the action of $A^e$ 
on $U$ inherited from the action of $\Delta_1$.
These considerations together with (\ref{eq:4}) explain that
the given actions in the statement of the lemma define a structure of
right dg $\Lambda^e$-module on $U\otimes H^e$. Hence, there only remains
to prove that the given actions of $H$ on the left and of $A^e$ on the
right commute in order to prove that $U\otimes H^e\in \mathcal
C(H\otimes (\Lambda^e)^{\mathrm{op}})$. To this end, adopt the
$A$-bimodule notation to show that  $a(\ell \rightharpoonup (u \otimes
h\otimes k))b$ is equal to
\[
\begin{array}{cl}
  & a(\ell_2\rightharpoonup u
    \otimes S^2(\ell_3)h \otimes k S(\ell_1))b\\
  = &
      (((\ell_2 S^{-1}(k_1))\rightharpoonup a) (\ell_3\rightharpoonup
      u) ((S^2(\ell_4)h_1)\rightharpoonup b))
      \otimes
      S^2(\ell_5) h_2 \otimes k_2 S(\ell_1) \\
  = &
      (\ell_2\rightharpoonup ((S^{-1}(k_1)\rightharpoonup a) u
      (h_1\rightharpoonup b)))
      \otimes S^2(\ell_3)h_2 \otimes k_2 S(\ell_1) \\
  = &
      \ell \rightharpoonup (a(u\otimes h \otimes k)b)\,.
\end{array}
\]


(2) The action of $H$ on $U\underset{A^e}\otimes \Lambda^e$ which is
being considered here is the one given in part (1) of the lemma in
\ref{sec:AdjunctionDelta}. 
The given mapping is $H$-linear. Note that the structure of right dg
$\Lambda^e$-module of $U\underset{A^e}{\otimes}\Lambda^e$   
is inherited from the one of $\Lambda^e$ itself. Hence, the given mapping
is $\Lambda^e$-linear. 


(3) The given actions define structures of right dg $H^e$-module and
right dg $A^e$-module on $U\otimes H\otimes Y$. They form a structure
of right dg $\Lambda^e$-module in view of the following computations
(with bimodule notation)
\[
\begin{array}{rcl}
  (h_1\rightharpoonup a) (h_2(u\otimes \ell \otimes y))
  & = &
        (h_1\rightharpoonup a) (h_2\rightharpoonup u) \otimes S^2(h_3)
        \ell \otimes y\leftharpoonup S^{-1}(h_4) \\
  & = &
        h(a(u\otimes h \otimes y))\,, \\ \\
  ((u\otimes \ell \otimes y)(k_1\rightharpoonup b))k_2
  & = &
        u(\ell_1\rightharpoonup (k_1\rightharpoonup b)) \otimes \ell_2
        k_2 \otimes y \\
  & = &
        ((u\otimes \ell \otimes y)k)b
\end{array}
\]
\[
\begin{array}{rcl}
  h(( u\otimes \ell \otimes y)b)
  & = &
        h_1\rightharpoonup (u (\ell_1\rightharpoonup b))
        \otimes S^2(h_2)\ell_2 \otimes y \leftharpoonup S^{-1}(h_3) \\
  & = &
        (h_1\rightharpoonup u) ((S^2(h_2) \ell_1) \rightharpoonup b)
        \otimes S^2(h_3) \ell_2
        \otimes y \leftharpoonup S^{-1}(h_4) \\
  & = &
        (h(u\otimes \ell \otimes y))b\,.
\end{array}
\]


(4)
Since $U\otimes H^e\in \mathcal C(H\otimes (\Lambda^e)^{\mathrm{op}})$,
then  $Y\underset
H\otimes (U\otimes H^e)\in \mathcal C((\Lambda^e)^{\mathrm{op}})$. Note the
following identity in $U\otimes H^e$ 
\begin{equation}
\label{eq:7}
(u\otimes h\otimes k) = S^{-1}(k_3) \rightharpoonup
(k_1\rightharpoonup u \otimes S^2(k_2)h \otimes 1)\,,
\end{equation}
Indeed,
\[
\begin{array}{rcl}
  S^{-1}(k_3) \rightharpoonup
  (k_1\rightharpoonup u \otimes S^2(k_2)h \otimes 1)
  & = &
        S^{-1}(k_2) \rightharpoonup (k_1\rightharpoonup u) \otimes h
         \otimes k_3 \\
  & = &
        (u\otimes h\otimes k)\,.
\end{array}
\]
The following computations in $Y\underset H\otimes (U\otimes H^e)$
show that the mapping from $U\otimes H\otimes Y$ to $Y\underset
H\otimes (U\otimes H^e)$ given in the statement of the lemma is $\Lambda^e$-linear,
\[
\begin{array}{rcl}
  a(y\underset H\otimes (u\otimes \ell\otimes 1) ) b
  & = &
        (-1)^{\mathrm{def}(y)\,\mathrm{deg}(a)}y \underset H\otimes (a(u\otimes \ell\otimes 1) b) \\
  & = &
        (-1)^{\mathrm{def}(y)\,\mathrm{deg}(a)} y \underset H\otimes (au(\ell_1\rightharpoonup b) \otimes
        \ell_2 \otimes 1)
\end{array}
\]
and
\[
\begin{array}{rcl}
  h ( y\underset H\otimes (u\otimes\ell\otimes 1))k
  & = &
        y\underset H\otimes h(u\otimes \ell\otimes 1) k \\
  & = &
        y \underset H\otimes (u \otimes \ell k\otimes h ) \\
  & \underset{\eqref{eq:7}} = &
                                y \underset H\otimes S^{-1}(h_3)
                                \rightharpoonup
                                (h_1\rightharpoonup u \otimes S^2(h_2)
                                \ell k \otimes 1) \\
  & = &
        y \leftharpoonup S^{-1}(h_3) \underset H\otimes
        (h_1\rightharpoonup u \otimes S^2(h_2)\ell k \otimes 1)\,.
\end{array}
\]
The given mapping is  bijective with inverse 
$Y\underset{H}{\otimes} (U\otimes H^e) \to U\otimes H\otimes Y$
well-defined by $y\otimes (u\otimes h\otimes k) \mapsto
(-1)^{\mathrm{deg}(y)\,\mathrm{deg}(u)}
(k_1\rightharpoonup u) \otimes
S^2(k_2) h\otimes y\leftharpoonup S^{-1}(k_3)$ (see \eqref{eq:7}).
\end{proof}

\subsection{Preliminaries on homological algebra}
\label{sec:prel-homol-alg}
This section gives some needed material to transform
$\mathrm{RHom}_{\Lambda^e}(\Lambda,\Lambda^e)$ using adjunctions. It
also proves that $\Lambda$ is homologically smooth if $A$ and $H$ are
so.

\subsubsection{}
\label{sec:prel-homol-alg-2}
The following result gives sufficient conditions for
${\mathrm{Hom}}_{A^e}(X,M)$ to be homotopically injective in $\mathcal C(H)$
given $M\in \mathcal C(\Lambda^e)$ and $X\in \mathcal
C(\Delta_0)$. Note that the structure of left dg $H$-module on 
${\mathrm{Hom}}_{A^e}(X,M)$ is taken from \ref{sec:AdjunctionDelta} (part (2)).
\begin{lem}
  Let $X\in \mathcal C(\Delta_0)$ and $M\in \mathcal C(\Lambda^e)$. For
  every $N\in \mathcal C(H)$, there is a natural structure of
  left dg $\Delta_0$-module on ${\mathrm{Hom}}_{\k}(N,M)$ such that (functorially)
\[
{\mathrm{Hom}}_H(N,{\mathrm{Hom}}_{A^e}(X,M))\simeq {\mathrm{Hom}}_{\Delta_0}(X,{\mathrm{Hom}}_{\k}(N,M))\,.
\]
 As a consequence, ${\mathrm{Hom}}_{A^e}(X,M)$ is homotopically injective in $\mathcal C(H)$ if
  and only if
 $X$ is homotopically projective in $\mathcal C(\Delta_0)$.
\end{lem}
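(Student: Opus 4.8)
The plan is to produce the isomorphism by viewing both sides as two incarnations of the single graded vector space $\Hom_\k(N\otimes X,M)$, and to read off the required $\Delta_0$-structure on $\Hom_\k(N,M)$ as a by-product. First I would write down the standard ``currying'' bijections of graded $\k$-spaces, functorial in all arguments and up to the usual Koszul sign, namely $\Hom_\k(N,\Hom_\k(X,M))\cong \Hom_\k(N\otimes X,M)\cong \Hom_\k(X,\Hom_\k(N,M))$. Because $\k$ is a field these become isomorphisms of complexes once the Hom-differentials are used, so the entire argument lives in $\mathcal C(\k)$ and is then refined by imposing linearity conditions on each side.

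Next I would identify, inside $\Hom_\k(N\otimes X,M)$, the two subspaces being compared. On the left, an element of $\Hom_H(N,\Hom_{A^e}(X,M))$ is a map $\lambda\colon N\to\Hom_\k(X,M)$ whose values are $A^e$-linear and which is $H$-linear for the action of \eqref{eq:30} with $i=0$, i.e. $(h\rightharpoonup f)(x)=S^2(h_3)f(S(h_2)\rightharpoonup x)S(h_1)$. Transporting these two conditions through the correspondence $f(x)(n)=\lambda(n)(x)$ (with the sign dictated by the degrees), the $A^e$-linearity of each $\lambda(n)$ turns into $A$-bimodule linearity of $f\colon X\to\Hom_\k(N,M)$ for the structure $(a\,g\,b)(n)=a\,g(n)\,b$ inherited from $M$, while the $H$-linearity of $\lambda$ turns into $H$-equivariance of $f$ for a uniquely determined left $H$-action on $\Hom_\k(N,M)$. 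That action is the ``conjugation'' action assembled from the left $H$-action on $N$ and from the two $H$-actions on $M$ coming from its $\Lambda$-bimodule structure, twisted by powers of $S$ precisely so as to match \eqref{eq:30}; solving $f(h\rightharpoonup x)=h\rightharpoonup f(x)$ against the transported condition fixes its formula. I would then check that this is indeed a left $H$-module structure and that, together with the $A$-bimodule structure coming from $M$, it satisfies the equivariance identity \eqref{eq:33} with $i=0$; this exhibits $\Hom_\k(N,M)\in\mathcal C(\Delta_0)$, after which the correspondence $\lambda\leftrightarrow f$ is by construction a bijection $\Hom_H(N,\Hom_{A^e}(X,M))\to\Hom_{\Delta_0}(X,\Hom_\k(N,M))$. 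Naturality in $N$ is inherited from naturality of the currying, and the map commutes with the differentials because all three Hom-complex differentials correspond under the swap.

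The main obstacle is exactly this bookkeeping: determining the correct antipode powers in the $H$-action on $\Hom_\k(N,M)$ together with the Koszul signs, and then verifying both the module axioms and that the two linearity conditions match on the nose. This is of the same nature as, but somewhat heavier than, the verifications already performed in \ref{sec:AdjunctionDelta}, and I would arrange it so that the equivalence ``$H$-linear $+$ pointwise $A^e$-linear $\Leftrightarrow$ $\Delta_0$-linear'' is established once, by a single Sweedler computation using the antipode identities together with $\pi\circ S^2=\pi$-type commutations.

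For the consequence I would invoke the standard criteria: $\Hom_{A^e}(X,M)$ is homotopically injective in $\mathcal C(H)$ if and only if $\Hom_H(N,\Hom_{A^e}(X,M))$ is acyclic for every acyclic $N\in\mathcal C(H)$, and dually $X$ is homotopically projective in $\mathcal C(\Delta_0)$ if and only if $\Hom_{\Delta_0}(X,L)$ is acyclic for every acyclic $L\in\mathcal C(\Delta_0)$. Since $\k$ is a field, $\Hom_\k(N,-)$ is exact, so an acyclic $N$ forces $\Hom_\k(N,M)$ to be acyclic; combined with the isomorphism just constructed this shows immediately that if $X$ is homotopically projective then $\Hom_{A^e}(X,M)$ is homotopically injective, which is the implication used to transform $\rhom_{\Lambda^e}(\Lambda,\Lambda^e)$. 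For the reverse implication one reads the statement with $M$ ranging over $\mathcal C(\Lambda^e)$ and observes, via the cofree construction along the forgetful functor $\mathcal C(\Delta_0)\to\mathcal C(\k)$, that every acyclic object of $\mathcal C(\Delta_0)$ is dominated by some $\Hom_\k(N,M)$ with $N$ acyclic, so that acyclicity of $\Hom_{\Delta_0}(X,\Hom_\k(N,M))$ for all such $N$ and $M$ propagates to all acyclic $L$; this realisation step is the subtler half and is the reason the forward direction is the one actually invoked in the sequel.
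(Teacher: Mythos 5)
Your construction of the isomorphism is essentially the paper's own proof. The paper introduces the left dg $H$-action on $\Hom_{\k}(N,M)$ explicitly, namely $(h\rightharpoonup f)(n)=h_1\,f(S^{-1}(h_2)\rightharpoonup n)\,S(h_3)$ (its \eqref{eq:37}), observes that together with the $A^e$-action inherited from $M$ this is a left dg $\Delta_0$-module structure, and then checks that the argument swap $F(\lambda)(x)(n)=(-1)^{\deg x\cdot\deg n}\lambda(n)(x)$ is a bijection onto $\Hom_{\Delta_0}(X,\Hom_{\k}(N,M))$; the dictionary ``pointwise $A^e$-linear plus $H$-linear $\Leftrightarrow$ $\Delta_0$-linear'' is exactly the one you propose, and deriving the formula for the action by solving the transported condition against \eqref{eq:30} with $i=0$ is legitimate (it uses invertibility of $S$, which is in force in this section). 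Your argument for the implication ``$X$ homotopically projective $\Rightarrow\Hom_{A^e}(X,M)$ homotopically injective'' --- an acyclic $N$ is contractible over a field, so $\Hom_{\k}(N,M)$ is an acyclic object of $\mathcal C(\Delta_0)$ --- is precisely the content of the paper's ``the remaining assertion follows immediately'', and it is the only direction used later (in \ref{sec:prel-homol-alg-3} and \ref{sec:4}).

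The genuine gap is your last step, the reverse implication, and it cannot be repaired because that direction is in fact false as stated. For a fixed $M$ it fails trivially ($M=0$ makes $\Hom_{A^e}(X,M)=0$ homotopically injective for every $X$). With $M$ ranging over $\mathcal C(\Lambda^e)$, as you propose, it still fails: take $H=\k$, so $\Delta_0=A^e$ and $\mathcal C(H)=\mathcal C(\k)$. Every object of $\mathcal C(\k)$ is homotopically injective (acyclic complexes of vector spaces are contractible), so the hypothesis is vacuous, while an acyclic non-contractible $X\in\mathcal C(A^e)$ --- which exists whenever $A^e$ is not semisimple, by splicing a projective with an injective resolution of a non-projective module --- is not homotopically projective. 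The same example shows why the cofree-domination step collapses: when the $H$-part of the $\Delta_0$-action is trivial, precomposition with a contracting homotopy of an acyclic $N$ is $\Delta_0$-linear, so \emph{all} your test objects $\Hom_{\k}(N,M)$ are contractible in $\mathcal C(\Delta_0)$, hence $\Hom_{\Delta_0}(X,\Hom_{\k}(N,M))$ is acyclic for every $X$ whatsoever; such a class cannot detect homotopy projectivity. You are in good company here --- the paper offers no argument for this direction either, and its ``if and only if'' should be read as the single implication that is true and actually used.
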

\begin{proof}
On the one hand, the following action of $H$ on ${\mathrm{Hom}}_{\k}(N,M)$ is a
structure of left dg
$H$-module
\begin{equation}
  \label{eq:37}
(h\rightharpoonup f) (n) = h_1 f(S^{-1}(h_2)\rightharpoonup n) S(h_3)\,.
\end{equation}
On the other hand, the action of $A^e$ on $M$ defines a structure of 
left dg  $A^e$-module 
 on ${\mathrm{Hom}}_{\k}(N,M)$. Those two structures form a structure of left dg 
$\Delta_0$-module. This claim follows from the following
computation where $f\in {\mathrm{Hom}}_\k(N,M)$,
\[
\begin{array}{rcl}
  (h\rightharpoonup (afb))(n) 
  & = &
        h_1 af(S^{-1}(h_2) \rightharpoonup n) bS(h_3) \\
  & = &
        (h_1\rightharpoonup a) h_2 f(S^{-1}(h_3) \rightharpoonup n)
        S(h_4) (h_5\rightharpoonup b) \\
  & = &
        ((h_1\rightharpoonup a) (h_2\rightharpoonup f)
        (h_3\rightharpoonup b))(n)\,.
\end{array}
\]

Here is a mapping $F\colon {\mathrm{Hom}}_H(N,{\mathrm{Hom}}_{A^e}(X,M))\to
{\mathrm{Hom}}_{\Delta_0} (X,{\mathrm{Hom}}_\k(N,M))$ which fits the conclusion of the
lemma. 
Let $\lambda\in {\mathrm{Hom}}_H(N,{\mathrm{Hom}}_{A^e}(X,M))$ and
define $\mu\in {\mathrm{Hom}}_{\k}(X,{\mathrm{Hom}}_{\k}(N,M))$ by
$\mu(x)(n)=(-1)^{{\mathrm{deg}}(n)\cdot{\mathrm{deg}}(x)} \lambda(n)(x)$. Then,
$\mu$ is $\Delta_0$-linear.  Indeed,
\[
\begin{array}{rcl}
  \mu((a\otimes b)x) (n)
  & = &
        (-1)^{{\mathrm{deg}}(n)\cdot ({\mathrm{deg}}(a\otimes b) +{\mathrm{deg}}(x))}
        \lambda(n)((a\otimes b) x) \\
  & = &
        (-1) ^{{\mathrm{deg}}(n)\cdot ({\mathrm{deg}}(a\otimes b) +{\mathrm{deg}}(x))
        + {\mathrm{deg}}(a\otimes b) \cdot {\mathrm{deg}}(\lambda(n))}
        (a\otimes b) \lambda(n)(x) \\
  & = &
        (-1)^{{\mathrm{deg}}(a\otimes b)\cdot {\mathrm{deg}}(\lambda)} \mu(x)(n)        
\end{array}
\]
 and
\[
\begin{array}{rcl}
  (h\rightharpoonup
  \mu(x))(n)
  & \underset{~\eqref{eq:37}}= & 
        h_1 \mu(x)(S^{-1 }(h_2)\rightharpoonup n) S(h_3) \\
  & = &
        \pm h_1\lambda(S^{-1}(h_2)\rightharpoonup n)(x) S(h_3) \\
  & = &
        \pm h_1 (S^{-1}(h_2)\rightharpoonup \lambda(n))(x) S(h_3) \\
  & \underset{~\eqref{eq:30}}= &
                                 \pm
                                 h_1 S(h_2) \lambda(n)(h_3\rightharpoonup x) h_4 S(h_5) \\
  & = &
        \mu(h\rightharpoonup x)(n)\,,
\end{array}
\]
where $\pm$ is the sign $(-1)^{{\mathrm{deg}}(x)\cdot {\mathrm{deg}}(n)}$.
Thus, defining $F(\lambda)$ by  $F(\lambda)=\mu$
yields an injective mapping 
\[
F\colon {\mathrm{Hom}}_H(N,{\mathrm{Hom}}_{A^e}(X,M)) \to 
{\mathrm{Hom}}_{\Delta_0}(X,{\mathrm{Hom}}_{\k}(N,M))\,.
\]

Here is why $F$ is surjective.
Let $\mu\in {\mathrm{Hom}}_{\Delta_0}(X,{\mathrm{Hom}}_{\k}(N,M))$. For every $n$, define
$\lambda(n)\in {\mathrm{Hom}}_{\k}(X,M)$ by $\lambda(n)(x)=(-1)^{{\mathrm{deg}}(x)\cdot {\mathrm{deg}}(n)}\mu(x)(n)$. Then,
$\lambda(n)$ is $A^e$-linear because $\mu$ is $\Delta_0$-linear. Moreover, the 
mapping $N\to {\mathrm{Hom}}_{A^e}(X,M)$ defined by $n\mapsto \lambda(n)$ is
$H$-linear:
\[
\begin{array}{rcl}
  (h\rightharpoonup \lambda(n))(x)
  & \underset{~\eqref{eq:30}}= &
        S^2(h_3)\lambda(n)(S(h_2)\rightharpoonup x) S(h_1) \\
  & = &
        \pm S^2(h_3) \mu(S(h_2)\rightharpoonup x)(n) S(h_1) \\
  & = &
        \pm S^2(h_3) (S(h_2)\rightharpoonup \mu(x))(n) S(h_1) \\
  &  \underset{~\eqref{eq:37}}= &
                                  \pm S^2(h_5) S(h_4) \mu(x)(h_3\rightharpoonup n) S^2(h_2) S(h_1)
  \\
  & = &
        \lambda(h\rightharpoonup n)(x)\,,
\end{array}
\]
where $\pm$ is the degree $(-1)^{{\mathrm{deg}}(x)\cdot {\mathrm{deg}}(n)}$.
Thus, $\lambda\in {\mathrm{Hom}}_H(N,{\mathrm{Hom}}_{A^e}(X,M))$ and
$F(\lambda)=\mu$. This proves that $F$ is surjective, and hence
bijective. And it is functorial. Whence the first assertion of the
lemma.  The remaining assertion follows immediately.
\end{proof}

\subsubsection{}
\label{sec:prel-homol-alg-3}
The following result asserts that $\Lambda$ is homologically smooth if
$A$ and $H$ are so. For ordinary algebras this 
was already proved in \cite[Proposition 2.11]{MR2905560}.
\begin{prop}
  Let $H$ be a Hopf algebra with invertible antipode. Let $A$ be an
  $H$-module dg algebra.
If  $_H\k\in {\mathrm{per}}(H)$ and 
$A\in {\mathrm{per}}(A^e)$, then $\Lambda\in
{\mathrm{per}}(\Lambda^e)$.
\end{prop}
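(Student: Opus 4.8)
The plan is to reduce the statement to the assertion that $A\in {\rm per}(\Delta_0)$, where $A$ is viewed as a left dg $\Delta_0$-module as in \ref{sec:ModuleStructuresOnA}, and then to obtain that assertion by gluing the two hypotheses across the subalgebras $A^e\hookrightarrow \Delta_0$ and $H\hookrightarrow \Delta_0$. First I would invoke the isomorphism $\Lambda\simeq \Lambda^e\otimes_{\Delta_0}A$ in $\mathcal C(\Lambda^e)$ of \ref{sec:ModuleStructuresOnA}, together with the dg algebra homomorphism $\Delta_0\to \Lambda^e$ of \eqref{eq:31}. One checks that $\Lambda^e$ is free as a right dg $\Delta_0$-module (of rank $\dim_\k H$), so that $\Lambda^e\otimes_{\Delta_0}-$ is exact and the above isomorphism computes the derived tensor product. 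The induced triangle functor $\mathcal D(\Delta_0)\to \mathcal D(\Lambda^e)$ sends $\Delta_0$ to $\Lambda^e\in {\rm per}(\Lambda^e)$, hence maps ${\rm per}(\Delta_0)$ into ${\rm per}(\Lambda^e)$. Thus it suffices to prove $A\in {\rm per}(\Delta_0)$.

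For the first stage I would use that $\Delta_0$ is free as a right dg $A^e$-module, via $h\otimes(a\otimes b)\mapsto (1\otimes 1\otimes h)\times(a\otimes b\otimes 1)$ (see \eqref{eq:21}). Consequently the triangle functor $\Delta_0\otimes_{A^e}-\colon \mathcal D(A^e)\to \mathcal D(\Delta_0)$ is the derived one and sends $A\in {\rm per}(A^e)$ to $M:=\Delta_0\otimes_{A^e}A\in {\rm per}(\Delta_0)$. A direct computation using \eqref{eq:2} identifies $M$ with $A\otimes H$ endowed with the left $\Delta_0$-action $k\rightharpoonup(x\otimes h)=x\otimes kh$ and $(a\otimes b)(x\otimes h)=\pm(S^{-1}(h_1)\rightharpoonup a)\,x\,(S(h_3)\rightharpoonup b)\otimes h_2$.

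The heart of the argument is the second stage. I would equip $M=\Delta_0\otimes_{A^e}A$ with the right $H$-action $(\delta\otimes_{A^e}x)\leftharpoonup k=\delta k_1\otimes_{A^e}(S^{-1}(k_2)\rightharpoonup x)$ (right multiplication by $k_1\in H\subseteq\Delta_0$ combined with the equivariant action on $A$); this is manifestly a right action, it commutes with left multiplication by $\Delta_0$, and it is $A^e$-balanced because $A\in\mathcal C(\Delta_0)$. In the coordinates above it reads $(x\otimes h)\leftharpoonup k=(S^{-1}(k_2)\rightharpoonup x)\otimes hk_1$, and the assignment $x\otimes h\mapsto (S^{-1}(h_2)\rightharpoonup x)\otimes h_1$ exhibits $M$ as a free right dg $H$-module. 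Moreover the action map $\Delta_0\otimes A\to A$ factors through a $\Delta_0$-linear surjection $\pi\colon M\to A$, $x\otimes h\mapsto h\rightharpoonup x$, which is right $H$-coinvariant since $\sum k_1S^{-1}(k_2)=\epsilon(k)$; comparing dimensions, $\pi$ induces an isomorphism $M\otimes_H\k\simeq A$ in $\mathcal C(\Delta_0)$. Now the triangle functor $M\otimes_H-\colon \mathcal D(H)\to \mathcal D(\Delta_0)$ is derived (as $M$ is right $H$-free) and sends $H$ to $M\in {\rm per}(\Delta_0)$, hence maps ${\rm per}(H)$ into ${\rm per}(\Delta_0)$. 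Applying it to the hypothesis $\,_H\k\in {\rm per}(H)$ yields $A\simeq M\otimes_H\k\in {\rm per}(\Delta_0)$, and the first paragraph then gives $\Lambda\in {\rm per}(\Lambda^e)$.

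The adjunctions, the two freeness statements, and the preservation of perfect objects by the relevant extension-of-scalars functors are routine. The main obstacle is precisely the second stage: producing the right $H$-module structure on $M$ that commutes with the left $\Delta_0$-action and yields $M\otimes_H\k\simeq A$. This is where the twisting built into $\Delta_0$—the interaction of the $H$-action with $S$ and $S^{-1}$ in \eqref{eq:21}--\eqref{eq:2}—must be tracked carefully, and where the invertibility of the antipode is genuinely used.
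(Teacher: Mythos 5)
Your reduction strategy is genuinely different from the paper's: the paper never proves (or claims) that $A\in{\rm per}(\Delta_0)$; instead it establishes compactness of $\Lambda$ in $\mathcal D(\Lambda^e)$ directly, by combining the adjunction $\Hom_{\Lambda^e}(\Lambda^e\underset{\Delta_0}{\otimes}P,-)\simeq \Hom_{\Delta_0}(P,-)$ with the isomorphism $\Hom_{\Delta_0}(P,-)\simeq \Hom_H(\,_H\k,\Hom_{A^e}(P,-))$ and the homotopical-injectivity lemma of \ref{sec:prel-homol-alg-2}, so that compactness of $A$ over $A^e$ and of $\,_H\k$ over $H$ can be applied successively. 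Your preliminary stages (freeness of $\Delta_0$ over $A^e$ via \eqref{eq:2}, freeness of $\Lambda^e$ over $\Delta_0$, preservation of perfect objects by extension of scalars) are fine. The problem is exactly the step you single out as the heart of the argument: the formula $(\delta\underset{A^e}{\otimes}x)\leftharpoonup k=\delta k_1\underset{A^e}{\otimes}(S^{-1}(k_2)\rightharpoonup x)$ is \emph{not} $A^e$-balanced, hence does not define a map on $M=\Delta_0\underset{A^e}{\otimes}A$ at all (equivalently, in your coordinates it does not commute with the action of $A^e\subset\Delta_0$). Indeed, apply it to the two representatives $(a\otimes b)\otimes x$ and $1\otimes (a\otimes b)x$ of the same class and pass to the normal form $M\simeq A\otimes H$ using \eqref{eq:2}; suppressing signs, the first gives
\[
k_2\underset{A^e}{\otimes}\bigl(S^{-1}(k_1)\rightharpoonup a\bigr)\bigl(S^{-1}(k_4)\rightharpoonup x\bigr)\bigl(S(k_3)\rightharpoonup b\bigr),
\]
while the second gives
\[
k_1\underset{A^e}{\otimes}\bigl(S^{-1}(k_4)\rightharpoonup a\bigr)\bigl(S^{-1}(k_3)\rightharpoonup x\bigr)\bigl(S^{-1}(k_2)\rightharpoonup b\bigr),
\]
because $S^{-1}$ reverses the coproduct. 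These agree when $H$ is cocommutative (your formula is the correct one for group algebras), but not in general: the coproduct legs of $k$ occur in opposite orders, and $b$ is hit by $S$ on one side and by $S^{-1}$ on the other. Concretely, take $H$ the Taft algebra ($g^2=1$, $t^2=0$, $tg=-gt$, $\Delta(t)=t\otimes 1+g\otimes t$, $S(t)=-gt$) acting on $A=\k[u]$ by $g\rightharpoonup u=-u$, $t\rightharpoonup u=1$; with $a=u$, $b=1$, $x=1$, $k=t$, the first expression has the nonzero component $(S^{-1}(t)\rightharpoonup u)\otimes 1=1\otimes 1$ in $A\otimes\k 1$, whereas the second has no component there.

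Two further remarks confirm that this is not a repairable slip of the same construction. First, even if the action existed, your coinvariance claim for $\pi$ rests on the identity $\sum k_1S^{-1}(k_2)=\epsilon(k)$, which is not a Hopf identity (the valid ones are $\sum k_2S^{-1}(k_1)=\epsilon(k)=\sum S^{-1}(k_2)k_1$); for the Taft algebra, $\sum t_1S^{-1}(t_2)=t-gtg=2t\neq 0$. Second, a right $H$-action commuting with the left $\Delta_0$-action would amount to an anti-homomorphism $H\to {\rm End}_{\Delta_0}(M)\simeq\{m\in M:\ am=ma \text{ for all }a\in A\}$, and one checks (same Taft example) that the obvious candidates $1\otimes k$ are not central in the $A$-bimodule $M$, again because of the $S$ versus $S^{-1}$ mismatch on the right-hand action of $A$. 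This mismatch is precisely the phenomenon the paper's machinery is built to absorb: it is why duals and inverse-dualising-type objects live over $\Delta_1$ (with the $S^2$-twist in \eqref{eq:5}) rather than $\Delta_0$, and why the paper works with invariants $\Hom_H(\,_H\k,-)$ applied to $\Hom_{A^e}(P,-)$, where the compatible $H$-action \eqref{eq:30} exists, rather than with a coinvariants construction on a tensor product. To salvage your plan you would need to prove $A\in{\rm per}(\Delta_0)$ by some other device; as it stands, the second stage fails for every non-cocommutative $H$.
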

\begin{proof}
 Let $P\to A$ be a cofibrant replacement in $\mathcal
 C(\Delta_0)$. This is also a 
cofibrant replacement in $\mathcal C(A^e)$ because
$\Delta_0\simeq A^e\otimes H$ in $\mathcal C(A^e)$. Moreover, the
induced 
morphism $\Lambda^e\underset{\Delta_0}{\otimes}P\to
\Lambda^e\underset{\Delta_0}{\otimes}A\simeq \Lambda$ is a cofibrant replacement
in $\mathcal C(\Lambda^e)$. Let $Q\to \,_H\k$ be a
cofibrant replacement in $\mathcal C(H)$. In order to prove the
statement of the lemma, it suffices to prove that $\Lambda$ is compact in $\mathcal
D(\Lambda^e)$, that is, for any given family  $(M_i)_{i\in I}$ in
$\mathcal C(\Lambda^e)$
with direct sum denoted by $M$, the canonical mapping
$\oplus_{i\in I}{\mathrm{Hom}}_{\Lambda^e}(\Lambda^e\underset{\Delta_0}{\otimes}P,M_i)\to
{\mathrm{Hom}}_{\Lambda^e}(\Lambda^e\underset{\Delta_0}{\otimes}P,M)$ is a 
quasi-isomorphism. By  adjunction, this reduces to proving
that the canonical mapping $\oplus_{i\in I}{\mathrm{Hom}}_{\Delta_0}(P,M_i)\to
{\mathrm{Hom}}_{\Delta_0}(P,M)$ is a quasi-isomorphism.

 Note that, if $N$ is either $M$ or one of the $M_i$,
then  ${\mathrm{Hom}}_{A^e}(P,N)$
  has a structure of left dg $H$-module which is 
  functorial in $N$ (see~\ref{sec:AdjunctionDelta}, part (2)) and such that there
  is a functorial isomorphism ${\mathrm{Hom}}_{\Delta_0}(P,N)\xrightarrow{\simeq}
  {\mathrm{Hom}}_H(\,_H\k,{\mathrm{Hom}}_{A^e}(P,N))$.

Since ${\mathrm{Hom}}_{A^e}(P,M)$ is homotopically injective
in $\mathcal C(H)$ (see \ref{sec:prel-homol-alg-2}), the functorial isomorphism
${\mathrm{Hom}}_{\Delta_0}(P,M)\xrightarrow{\simeq} 
  {\mathrm{Hom}}_H(\,_H\k,{\mathrm{Hom}}_{A^e}(P,M))$ induces a quasi-isomorphism
\[
{\mathrm{Hom}}_{\Delta_0}(P,M)
\xrightarrow{\lambda}
{\mathrm{Hom}}_H(Q,{\mathrm{Hom}}_{A^e}(P,M))\,. 
\]
Since $P\to A$ is a cofibrant replacement in $\mathcal C(A^e)$ and
since $A\in {\mathrm{per}}(A^e)$, the canonical mapping 
$\oplus_{i\in I} {\mathrm{Hom}}_{A^e}(P,M_i)\to {\mathrm{Hom}}_{A^e}(P,M)$ is a
quasi-isomorphism. Therefore it
induces a quasi-isomorphism
\[
{\mathrm{Hom}}_H(Q,\oplus_{i\in I} {\mathrm{Hom}}_{A^e}(P,M_i)) 
\xrightarrow{\mu_1}
{\mathrm{Hom}}_H(Q,{\mathrm{Hom}}_{A^e}(P,M))\,.
\]
Now, using that $Q\to \,_H\k$ is a cofibrant replacement in $\mathcal
C(H)$ and that $\,_H\k\in {\mathrm{per}}(H)$, it follows that  the 
canonical mapping below is a quasi-isomorphism 
\[
\oplus_{i\in I} {\mathrm{Hom}}_H(Q,{\mathrm{Hom}}_{A^e}(P,M_i))
\xrightarrow{\mu_2}
{\mathrm{Hom}}_H(Q,\oplus_{i\in I} {\mathrm{Hom}}_{A^e}(P,M_i))\,.
\]
The analogues of $\lambda$ for the $M_i$ (instead of for $M$) give
rise to a quasi-isomorphism
\[
\oplus_{i\in I} {\mathrm{Hom}}_{\Delta_0}(P,M_i)
\xrightarrow{\nu}
\oplus_{i\in I} {\mathrm{Hom}}_H(Q, {\mathrm{Hom}}_{A^e}(P,M_i))\,.
\]
Therefore, the canonical mapping $\oplus_{i\in I}
{\mathrm{Hom}}_{\Delta_0}(P,M_i) \to {\mathrm{Hom}}_{\Delta_0}(P,M)$ is a quasi-isomorphism
because it fits into a commutative
diagram
\[
\xymatrix{
 \oplus_{i\in I}
{\mathrm{Hom}}_{\Delta_0}(P,M_i) \ar[rr] \ar[d]_{\nu} && {\mathrm{Hom}}_{\Delta_0}(P,M) \ar[d]^\lambda\\
\oplus_{i\in I} {\mathrm{Hom}}_H(Q,{\mathrm{Hom}}_{A^e}(P,M_i))
\ar[rr]_{\mu_1\circ \mu_2} && {\mathrm{Hom}}_H(Q,{\mathrm{Hom}}_{A^e}(P,M))\,.
}
\]

Thus, $\Lambda$ is compact in
$\mathcal D(\Lambda^e)$, and hence $\Lambda\in {\mathrm{per}}(\Lambda^e)$.
\end{proof}

\subsection{Additional structure on the inverse dualising complex of
  $A$}
\label{sec:dualA}\label{sec:dualA.2}
The following is a direct consequence of
\ref{sec:AdjunctionDelta}. Recall that $A^e$ is a dg $H^e$-module by
means of \eqref{eq:15} and that, if $X\in \mathcal C(\Delta_0)$, then
${\mathrm{Hom}}_{A^e}(X,A^e)$ is a left dg $\Delta_1$-module obtained by combining its
natural structure of dg  $A$-bimodule and  the structure of
left dg $H$-module
 defined by \eqref{eq:30}.
\begin{prop}
  Let $H$ be a Hopf algebra with invertible antipode. Let $A$ be
  an $H$-module dg algebra.
  There exists $D_A\in \mathcal C(\Delta_1)$ such that $D_A\simeq
  \mathrm{RHom}_{A^e}(A,A^e)$ in $\mathcal D((A^e)^{\mathrm{op}})$. 
  More precisely, if $P\to A$ is a quasi-isomorphism in 
$\mathcal C(\Delta_0)$ such that $P$ is cofibrant in $\mathcal C(A^e)$,
then $D_A$ may be taken equal to  ${\mathrm{Hom}}_{A^e}(P,A^e)$.
\end{prop}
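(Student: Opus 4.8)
The plan is to realise $D_A$ as $\Hom_{A^e}(P,A^e)$ for a well-chosen replacement $P$ of $A$, and to read off its $\Delta_1$-structure directly from parts (5) and (6) of the lemma in \ref{sec:AdjunctionDelta}. Indeed, specialising that lemma to $i=0$ and $M=A^e$ shows that for \emph{any} $X\in\mathcal C(\Delta_0)$ the complex $\Hom_{A^e}(X,A^e)$, equipped with its natural dg $A$-bimodule structure together with the $H$-action \eqref{eq:30}, is an object of $\mathcal C(\Delta_{1})$ (recall $S$ is invertible throughout this section, as that lemma requires). So the only issue is to arrange that $X=P$ is at once a left dg $\Delta_0$-module quasi-isomorphic to $A$ and cofibrant over $A^e$, so that $\Hom_{A^e}(P,A^e)$ computes $\rhom_{A^e}(A,A^e)$.

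First I would take a cofibrant replacement $P\to A$ in $\mathcal C(\Delta_0)$, where $A$ carries its natural $\Delta_0$-module structure from \ref{sec:ModuleStructuresOnA}. To see that $P$ is then cofibrant in $\mathcal C(A^e)$, I invoke the elementary fact recalled in Section~\ref{sec:conventions-notation}: along a morphism of dg algebras $A^e\to B$ for which $B\simeq A^e\otimes V$ in $\mathcal C(A^e)$, restriction of scalars preserves cofibrant objects. Since $A^e$ is a dg subalgebra of $\Delta_0$ and $\Delta_0\simeq A^e\otimes H$ in $\mathcal C(A^e)$ (as already used in \ref{sec:prel-homol-alg-3}), the restriction of $P$ to $\mathcal C(A^e)$ is cofibrant, while $P\to A$ stays a quasi-isomorphism of $A$-bimodules. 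Hence $P\to A$ is a cofibrant replacement in $\mathcal C(A^e)$.

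Having such a $P$, I set $D_A=\Hom_{A^e}(P,A^e)$. Part (6) of the lemma in \ref{sec:AdjunctionDelta} (with $i=0$) gives $D_A\in\mathcal C(\Delta_1)$, and by construction its restriction to the dg subalgebra $A^e\subset\Delta_1$ is exactly the dg $A$-bimodule $\Hom_{A^e}(P,A^e)$, that is, the right dg $A^e$-module structure on $\rhom_{A^e}(A,A^e)$ described in \ref{sec:dual-cond-dg-1}. Because $P$ is a cofibrant replacement of $A$ in $\mathcal C(A^e)$, one has $\Hom_{A^e}(P,A^e)\simeq\rhom_{A^e}(A,A^e)$ in $\mathcal D((A^e)^{\rm op})$. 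This proves the existence statement; and since the argument only used that $P\to A$ is a quasi-isomorphism in $\mathcal C(\Delta_0)$ with $P$ cofibrant over $A^e$, it also yields the ``more precisely'' clause verbatim.

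The argument is short because the real content sits in parts (5)--(6) of \ref{sec:AdjunctionDelta}, where the compatibility of the $H$-action \eqref{eq:30} with the bimodule structure was already checked. The one genuine point to watch is the transfer of cofibrancy: I must be sure that a $\Delta_0$-cofibrant replacement remains cofibrant, hence homotopically projective, over $A^e$, so that the underived $\Hom_{A^e}(P,A^e)$ really represents the derived functor. This is precisely what the freeness $\Delta_0\simeq A^e\otimes H$ over $A^e$ guarantees, and it is the only place where anything beyond a bookkeeping check is needed.
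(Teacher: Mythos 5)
Your proposal is correct and follows essentially the same route as the paper: the $\Delta_1$-structure on $\Hom_{A^e}(P,A^e)$ comes from the lemma in \ref{sec:AdjunctionDelta} (applied with $i=0$, $M=A^e$), and the cofibrancy of a $\Delta_0$-cofibrant replacement over $A^e$ is transferred via the isomorphism $\Delta_0\simeq A^e\otimes H$ in $\mathcal C(A^e)$, exactly as the paper does. The only slip is cosmetic: the assertion $\Hom_{A^e}(P,A^e)\in\mathcal C(\Delta_1)$ is part (5), not part (6), of that lemma (part (6) concerns $H$-linearity of the canonical map into $\Hom_{A^e}(X,M)$), but you cite both parts correctly at the outset.
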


Whenever a cochain $\varphi$ of ${\mathrm{Hom}}_{A^e}(P,A^e)$ is given
and denoted symbolically by
$p\mapsto \varphi'(p)\otimes \varphi''(p)$, the structure of dg
$\Delta_1$-module of ${\mathrm{Hom}}_{A^e}(P,A^e)$ is such that
$h\rightharpoonup \varphi$ is the cochain
\begin{equation}
  \label{eq:52}
  p \mapsto S^2(h_3)\rightharpoonup \varphi'(S(h_2)\rightharpoonup p)
  \otimes h_1\rightharpoonup \varphi''(S(h_2)\rightharpoonup p)\,.
\end{equation}
Note that, when $S^2={\mathrm{Id}}_H$, then $(h\rightharpoonup \varphi)(p)
= h_3\rightharpoonup \varphi'(S(h_2)\rightharpoonup p) \otimes
h_1\rightharpoonup \varphi''(S(h_2)\rightharpoonup p)$.

\begin{ex}
Consider the setting of the running example in
\ref{sec:running-example}. Let $P$ be the Koszul resolution
$K^\bullet$. Given $X\in \mathfrak g$ and given a cochain
$\varphi\in {\mathrm{Hom}}_{A^e}(K^\bullet,A^e)$, then $X\rightharpoonup
\varphi$ is the cochain denoted by $\partial_X(\varphi)$
\begin{equation}
  \label{eq:54}
\begin{array}{crcl}
  \partial_X(\varphi) \colon & K^\bullet & \to & A^e \\
  & \omega & \mapsto & \partial_X(\varphi(\omega)) -
                       \varphi(\partial_X(\omega))\,.
\end{array}  
\end{equation}
\end{ex}

\subsection{An expression of an inverse dualising complex of $\Lambda$}
\label{sec:inverse-dual-compl}

\subsubsection{}
\label{sec:4}
Here is a description  of ${\mathrm{RHom}}_{\Lambda^e}(\Lambda,\Lambda^e)$.
\begin{prop}
  Let $A$ be an $H$-module dg algebra where $H$ is a Hopf algebra with
  invertible antipode. Assume that  both
  $A$ and $H$ are homologically smooth. Then, $\Lambda$ is
  homologically smooth and $D_A\otimes H\otimes E_H\simeq
  {\mathrm{RHom}}_{\Lambda^e}(\Lambda,\Lambda^e)$ in $\mathcal
  D((\Lambda^e)^{\mathrm{op}})$ where
  \begin{itemize}
  \item $D_A$ is any dg $\Delta_1$-module isomorphic to
    ${\mathrm{RHom}}_{A^e}(A,A^e)$ in $\mathcal D(A^e)$, see \ref{sec:dualA},
  \item $E_H\in \mathcal C(H^{\mathrm{op}})$ is any cofibrant replacement of
    ${\mathrm{RHom}}_H(\,_H\k,H)$, 
  \item the right dg $\Lambda^e$-module structure is given (with
    bimodule notation) by
    \begin{equation}
      \label{eq:11}
      \begin{array}{rcl}
        a(d\otimes \ell\otimes e)b
        & = &
              ad(\ell_1\rightharpoonup b) \otimes \ell_2 \otimes e \\
        h(d\otimes \ell \otimes e)k
        & = &
              h_1\rightharpoonup d \otimes S^2(h_2)\ell k \otimes
              e\leftharpoonup S^{-1}(h_3)\,.
      \end{array}
    \end{equation}
    \end{itemize}
  \end{prop}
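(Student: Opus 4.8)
The plan is to compute $\rhom_{\Lambda^e}(\Lambda,\Lambda^e)$ along the route announced in the overview: replace the source $\Lambda$ by the cofibrant resolution built from a $\Delta_0$-cofibrant replacement of $A$, and push the computation through the string of adjunctions $\Delta_0\rightsquigarrow A^e\rightsquigarrow H$ set up in \ref{sec:AdjunctionDelta}, \ref{sec:prel-homol-alg-2} and \ref{sec:TensorH}, carrying along the ``outer'' right $\Lambda^e$-action of $\Lambda^e$ on itself throughout. Homological smoothness of $\Lambda$ is separate from this computation: since $H$ is homologically smooth, \ref{sec:23} applied to $H^{\rm op}$ (a Hopf algebra because $S$ is invertible) gives $\,_H\k\in{\rm per}(H)$, and then \ref{sec:prel-homol-alg-3} yields $\Lambda\in{\rm per}(\Lambda^e)$.

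For the isomorphism, first I would fix a quasi-isomorphism $P\to A$ in $\mathcal C(\Delta_0)$ with $P$ cofibrant; since $\Delta_0\simeq A^e\otimes H$ in $\mathcal C(A^e)$, $P$ is also cofibrant over $A^e$, and because $A\in{\rm per}(A^e)$ it is moreover perfect (compact) there. By \ref{sec:ModuleStructuresOnA}, $\Lambda^e\underset{\Delta_0}\otimes P\to\Lambda$ is a cofibrant replacement in $\mathcal C(\Lambda^e)$, so
\[
\rhom_{\Lambda^e}(\Lambda,\Lambda^e)\simeq\Hom_{\Lambda^e}(\Lambda^e\underset{\Delta_0}\otimes P,\Lambda^e)\cong\Hom_{\Delta_0}(P,\Lambda^e),
\]
the last step being the tensor--Hom adjunction along the algebra map \eqref{eq:31}, where $\Lambda^e$ is viewed as a left $\Delta_0$-module through its inner structure while its outer structure supplies the right $\Lambda^e$-module structure throughout. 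Using the adjunction of \ref{sec:prel-homol-alg-2} (with $N=\,_H\k$) this becomes $\Hom_H(\,_H\k,\Hom_{A^e}(P,\Lambda^e))$, and this undervied $\Hom_H$ already computes $\rhom_H$ because $P$ is $\Delta_0$-cofibrant, hence $\Hom_{A^e}(P,\Lambda^e)$ is homotopically injective over $H$ by \ref{sec:prel-homol-alg-2}.

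Next I would identify the coefficient module. As $P$ is perfect over $A^e$, the $H$-linear canonical map of \ref{sec:AdjunctionDelta}(6) is a quasi-isomorphism $\Hom_{A^e}(P,A^e)\underset{A^e}\otimes\Lambda^e\to\Hom_{A^e}(P,\Lambda^e)$; writing $D_A=\Hom_{A^e}(P,A^e)\in\mathcal C(\Delta_1)$ as in \ref{sec:dualA}, and using that $\Lambda^e\cong A^e\otimes H^e$ is $A^e$-free (see \ref{sec:cross-prod-lambd-2}), \ref{sec:TensorH}(2) provides an isomorphism $D_A\otimes H^e\xrightarrow{\sim}\Hom_{A^e}(P,\Lambda^e)$ of $H$--$\Lambda^e$-bimodules for the structures of \ref{sec:TensorH}(1). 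Thus $\rhom_{\Lambda^e}(\Lambda,\Lambda^e)\simeq\Hom_H(\,_H\k,D_A\otimes H^e)$. Finally, since $\,_H\k\in{\rm per}(H)$, I would pick $Q\to\,_H\k$ a cofibrant replacement given by a bounded complex of finitely generated projective left $H$-modules and set $E_H=\Hom_H(Q,H)$, a cofibrant replacement of $\rhom_H(\,_H\k,H)$ over $H^{\rm op}$. Homotopy injectivity of $D_A\otimes H^e$ over $H$ gives $\Hom_H(\,_H\k,D_A\otimes H^e)\simeq\Hom_H(Q,D_A\otimes H^e)$, and the degreewise perfect-duality isomorphism $\Hom_H(Q,H)\underset{H}\otimes(-)\xrightarrow{\sim}\Hom_H(Q,-)$ identifies this with $E_H\underset{H}\otimes(D_A\otimes H^e)$. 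The isomorphism of \ref{sec:TensorH}(4), with $U=D_A$ and $Y=E_H$, rewrites the latter as $D_A\otimes H\otimes E_H$ carrying precisely the right $\Lambda^e$-module structure \eqref{eq:11}, completing the proof.

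The hard part will not be any single homological input — homotopy injectivity, perfectness and perfect duality are all furnished by the preceding lemmas — but the bookkeeping of the three competing $\Lambda^e$-actions as they cascade through the adjunctions. Concretely, one must check that the left $H$-module structure on $\Hom_{A^e}(P,\Lambda^e)$ coming from restricting the inner $\Lambda^e$-structure to $\Delta_0$ (formula \eqref{eq:30}) agrees with the structure placed on $D_A\otimes H^e$ in \ref{sec:TensorH}(1), and that the surviving outer $\Lambda^e$-action is transported correctly and commutes with the $H$-action, so that \ref{sec:AdjunctionDelta}(6), \ref{sec:TensorH}(2) and (4), and the perfect-duality map are all $(\Lambda^e)^{\rm op}$-linear. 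These verifications are exactly what Lemmas \ref{sec:AdjunctionDelta} and \ref{sec:TensorH} were designed to package, so the present argument mainly chains them in the right order and confirms that each arrow respects every structure in play.
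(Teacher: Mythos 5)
Your overall route is the one the paper follows: resolve $A$ by a $\Delta_0$-cofibrant $P$, convert $\Hom_{\Lambda^e}$ into $\Hom_{\Delta_0}$ and then into $\Hom_H(\,_H\k,\Hom_{A^e}(P,\Lambda^e))$, identify the coefficients with $D_A\otimes H^e$ via \ref{sec:AdjunctionDelta}(6) and \ref{sec:TensorH}(2), and finish with duality for $Q$ and \ref{sec:TensorH}(4). Your treatment of smoothness is correct, and your opening (a cofibrant replacement $\Lambda^e\underset{\Delta_0}{\otimes}P$ of $\Lambda$ mapped into the plain target $\Lambda^e$, instead of the paper's fibrant replacement $I$ of $\Lambda^e$) is a legitimate, even slightly cleaner, variant. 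However, there is a genuine gap in the middle. Both your displayed claim ``$\rhom_{\Lambda^e}(\Lambda,\Lambda^e)\simeq\Hom_H(\,_H\k,D_A\otimes H^e)$'' and your subsequent replacement of $\,_H\k$ by $Q$ rest on the assertion that $D_A\otimes H^e$ is homotopically injective in $\mathcal C(H)$, and this assertion is never justified and is false in general. What \ref{sec:prel-homol-alg-2} gives you is homotopical injectivity of $\Hom_{A^e}(P,\Lambda^e)$; this does \emph{not} pass to the quasi-isomorphic complex $D_A\otimes H^e$, since homotopical injectivity is not invariant under quasi-isomorphism, and underived $\Hom_H(\,_H\k,-)$ does not carry quasi-isomorphisms of coefficients to quasi-isomorphisms. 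In fact the step fails concretely: an antipode untwisting (of the same kind as the isomorphisms used in \ref{sec:24}) shows that $D_A\otimes H^e$, equipped with the $H$-action of \ref{sec:TensorH}(1), is isomorphic as a dg $H$-module to a \emph{free} module $V\otimes H$ (action by multiplication on the right-hand factor) for some complex of vector spaces $V$. Take $A=\k$ and $H=\k[x]$ with $x$ primitive, so $S(x)=-x$ is invertible and both algebras are homologically smooth: then $\Hom_H(\,_H\k,V\otimes H)$ is the kernel of multiplication by $x$, which is zero in every degree, whereas $\Lambda=\k[x]$ has ${\rm Ext}^1_{\Lambda^e}(\Lambda,\Lambda^e)\cong\k[x]\neq 0$. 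So the intermediate isomorphism you assert is not merely unproven; it is wrong.

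The gap disappears once the two replacements are performed in the opposite order, which is exactly what the paper does. First, since $\Hom_{A^e}(P,\Lambda^e)$ \emph{is} homotopically injective in $\mathcal C(H)$ (\ref{sec:prel-homol-alg-2}, because $P$ is $\Delta_0$-cofibrant), the cofibrant replacement $Q\to\,_H\k$ induces a quasi-isomorphism $\Hom_H(\,_H\k,\Hom_{A^e}(P,\Lambda^e))\to\Hom_H(Q,\Hom_{A^e}(P,\Lambda^e))$ of right dg $\Lambda^e$-modules. Only then replace the coefficients: $Q$ is cofibrant, hence homotopically projective, so $\Hom_H(Q,-)$ preserves quasi-isomorphisms, and the $H$- and $\Lambda^e$-linear map $D_A\otimes H^e\to\Hom_{A^e}(P,\Lambda^e)$ (which, note, is a quasi-isomorphism and not an isomorphism as you wrote: it is the isomorphism of \ref{sec:TensorH}(2) composed with the map of \ref{sec:AdjunctionDelta}(6), a quasi-isomorphism by perfectness of $A$ over $A^e$) induces a quasi-isomorphism $\Hom_H(Q,D_A\otimes H^e)\to\Hom_H(Q,\Hom_{A^e}(P,\Lambda^e))$. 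From $\Hom_H(Q,D_A\otimes H^e)$ your last two steps (degreewise duality for your strictly perfect $Q$, then \ref{sec:TensorH}(4)) apply verbatim and produce $D_A\otimes H\otimes E_H$ with the structure \eqref{eq:11}. Alternatively, you could keep your order of operations but phrase everything through the derived functor $\rhom_H(\,_H\k,-)$ on dg $H\otimes(\Lambda^e)^{\rm op}$-modules, for which replacing coefficients along a quasi-isomorphism is automatic; with either repair your argument becomes complete and coincides with the paper's proof.
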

\begin{proof}
  Let $P\to A$ be a quasi-isomorphism in $\mathcal C(\Delta_0)$ such
  that $P$ is cofibrant in $\mathcal C(A^e)$. Note that any cofibrant
  replacement of $A$ in $\mathcal C(\Delta_0)$ fits this requirement
  because $\Delta_0\simeq A^e\otimes H$ in $\mathcal C(A^e)$. Let
  $Q\to \,_H\k$ be a cofibrant replacement in $\mathcal C(H)$. Then,
  ${\mathrm{Hom}}_{A^e}(P,A^e)$ may be used for $D_A$ (see \ref{sec:dualA}) and
  ${\mathrm{Hom}}_H(Q,H)$ may be used for $E_H$.  Let $\Lambda^e\to I$ be a
  fibrant replacement in $\mathcal C((\Lambda^e)^e)$.  Then,
  ${\mathrm{Hom}}_{\Lambda^e}(\Lambda,I)$ has a structure of right dg
  $\Lambda^e$-module inherited by the one of $I$ and
\[
{\mathrm{Hom}}_{\Lambda^e}(\Lambda,I)\simeq \mathrm{RHom}_{\Lambda^e}(\Lambda,\Lambda^e) \ \text{in $\mathcal
  D((\Lambda^e)^{\mathrm{op}})$.}
  \]

Since $\Lambda\simeq \Lambda^e\underset{\Delta_0}{\otimes} A$ in
$\mathcal C(\Lambda^e)$ (see \ref{sec:ModuleStructuresOnA}), there is
an isomorphism in $\mathcal C((\Lambda^e)^{\mathrm{op}})$
\[
{\mathrm{Hom}}_{\Lambda^e}(\Lambda,I)\simeq {\mathrm{Hom}}_{\Delta_0}(A,I)\,.
\]
Since $\Lambda^e\simeq \Delta_0\otimes H$ in
$\mathcal C(\Delta_0)$,  then $I$ is
fibrant in $\mathcal C(\Delta_0)$. Hence,
the quasi-isomorphism $P\to A$ induces a  quasi-isomorphism in
$\mathcal C((\Lambda^e)^{\mathrm{op}})$
\[
{\mathrm{Hom}}_{\Delta_0}(A,I)\xrightarrow{\mathrm{qis}} {\mathrm{Hom}}_{\Delta_0}(P,I)\,.
\]
Using the functorial construction in \ref{sec:AdjunctionDelta} (part
(2)),  the structure of right dg $\Lambda^e$-module  on  
 ${\mathrm{Hom}}_{A^e}(P,I)$ may be extended to a structure of dg  $H-
 \Lambda^e$-bimodule
  such that there is an  isomorphism in $\mathcal C((\Lambda^e)^{\mathrm{op}})$
\[
{\mathrm{Hom}}_{\Delta_0}(P,I)\simeq {\mathrm{Hom}}_H(\,_H\k,{\mathrm{Hom}}_{A^e}(P,I))\,.
\]
It follows from \ref{sec:prel-homol-alg-2} that  ${\mathrm{Hom}}_{A^e}(P,I)$ is
homotopically injective in $\mathcal C(H)$. Therefore, $Q\to \,_H\k$ induces a
 quasi-isomorphism in $\mathcal C ((\Lambda^e)^{\mathrm{op}})$
\[
{\mathrm{Hom}}_H(\,_H\k,{\mathrm{Hom}}_{A^e}(P,I))\xrightarrow{\mathrm{qis}}
{\mathrm{Hom}}_H(Q,{\mathrm{Hom}}_{A^e}(P,I))\,.
\]
Like for ${\mathrm{Hom}}_{A^e}(P,I)$, the structure of right dg $\Lambda^e$-module of
${\mathrm{Hom}}_{A^e}(P,\Lambda^e)$ extends to a structure of dg $H-
\Lambda^e$-bimodule. Since $P$ and $Q$ are cofibrant in $\mathcal
C(A^e)$ and $\mathcal C(H)$, respectively, the
quasi-isomorphism  $\Lambda^e\to I$  
induces a quasi-isomorphism in $\mathcal C ((\Lambda^e)^{\mathrm{op}})$
\[
{\mathrm{Hom}}_H(Q,{\mathrm{Hom}}_{A^e}(P,\Lambda^e))
\xrightarrow{\mathrm{qis}}
{\mathrm{Hom}}_H(Q,{\mathrm{Hom}}_{A^e}(P,I))\,.
\]
Thanks to \ref{sec:AdjunctionDelta} (parts (1) and (6)), there is a
structure of left dg
$H$-module on ${\mathrm{Hom}}_{A^e}(P,A^e)\underset{A^e}{\otimes}\Lambda^e$ such
that the canonical mapping from
${\mathrm{Hom}}_{A^e}(P,A^e)\underset{A^e}{\otimes}\Lambda^e$ to 
${\mathrm{Hom}}_{A^e}(P,\Lambda^e)$ is $H$-linear. This is a quasi-isomorphism
because $A\in {\mathrm{per}}(A^e)$. And it is $\Lambda^e$-linear (to the right) by
functoriality of the involved constructions.  It therefore induces a
 quasi-isomorphism in $\mathcal C((\Lambda^e)^{\mathrm{op}})$
\[
{\mathrm{Hom}}_H(Q,{\mathrm{Hom}}_{A^e}(P,A^e)\underset{A^e}{\otimes}\Lambda^e)
\xrightarrow{\mathrm{qis}}
{\mathrm{Hom}}_H(Q,{\mathrm{Hom}}_{A^e}(P,\Lambda^e))\,.
\]
Recall that ${\mathrm{Hom}}_{A^e}(P,A^e)\in \mathcal C(\Delta_1)$ as detailed in
\ref{sec:AdjunctionDelta} (part (5)). Accordingly (see
\ref{sec:TensorH}, parts (1) and (2)),
${\mathrm{Hom}}_{A^e}(P,A^e)\otimes H^e\in \mathcal C(H\otimes (\Lambda^e)^{\mathrm{op}})$ in such a way that the canonical mapping
${\mathrm{Hom}}_{A^e}(P,A^e)\otimes H^e\to
{\mathrm{Hom}}_{A^e}(P,A^e)\underset{A^e}{\otimes}\Lambda^e$ is both $H$-linear
and $\Lambda^e$-linear. It is an isomorphism because $\Lambda^e\simeq
A^e\otimes H^e$ in $\mathcal C(A^e)$. Therefore, it induces an
isomorphism in $\mathcal C((\Lambda^e)^{\mathrm{op}})$
\[
{\mathrm{Hom}}_H(Q,{\mathrm{Hom}}_{A^e}(P,A^e)\otimes H^e)
\xrightarrow{\sim}
{\mathrm{Hom}}_H(Q,{\mathrm{Hom}}_{A^e}(P,A^e)\underset{A^e}{\otimes}\Lambda^e)\,.
\]
Since $\,_H\k\in {\mathrm{per}}(H)$, the following canonical mapping
is a quasi-isomorphism in $\mathcal C((\Lambda^e)^{\mathrm{op}})$
\[
{\mathrm{Hom}}_H(Q,H)\underset{H}{\otimes}({\mathrm{Hom}}_{A^e}(P,A^e)\otimes H^e)
\xrightarrow{\mathrm{qis}}
{\mathrm{Hom}}_H(Q,{\mathrm{Hom}}_{A^e}(P,A^e)\otimes H^e)
\,.
\]
Finally, using  the structure of  left dg  $\Delta_1$-module on
${\mathrm{Hom}}_{A^e}(P,A^e)$, 
 there is an associated  structure of right
dg $\Lambda^e$-module on
${\mathrm{Hom}}_{A^e}(P,A^e)\otimes H\otimes {\mathrm{Hom}}_H(Q,H)$ introduced
in 
\ref{sec:TensorH} (part (3)). For this structure, there is an
isomorphism in $\mathcal C((\Lambda^e)^{\mathrm{op}})$ (see \ref{sec:TensorH}, part (4))
\[
{\mathrm{Hom}}_{A^e}(P,A^e)\otimes H\otimes {\mathrm{Hom}}_H(Q,H)
\xrightarrow{\sim} {\mathrm{Hom}}_H (Q,H)\underset{H}{\otimes}
({\mathrm{Hom}}_{A^e}(P,A^e)\otimes H^e)\,.
\]
Therefore ${\mathrm{RHom}}_{\Lambda^e}(\Lambda,\Lambda^e)\simeq
{\mathrm{Hom}}_{A^e}(P,A^e)\otimes H\otimes {\mathrm{Hom}}_H(Q,H)$ in $\mathcal
D((\Lambda^e)^{\mathrm{op}})$ for the structure given in \ref{sec:TensorH},
part (3). This structure is precisely the one given in
\eqref{eq:11}. This proves the proposition.
\end{proof}

\subsubsection{}
\label{sec:5}
The description of ${\mathrm{RHom}}_{\Lambda^e}(\Lambda,\Lambda^e)$
made in \ref{sec:4} gets simpler when $H$ satisfies the left
Artin-Schelter condition.  Indeed, keep the setting of \ref{sec:4} and
assume that there exists a natural integer $d$ such that
${\mathrm{dim}}_\k\,{\mathrm{Ext}}^i_H(\,_H\k,H)$ equals $1$ if $i=d$
and $0$ otherwise.  The left homological integral is
$\int_\ell = {\mathrm{Ext}}_H^d(\,_H\k,H)$. Also, denote by
$\int_{\ell}\colon H\to \k$ the algebra homomorphism such that the
right $H$-module structure of ${\mathrm{Ext}}^d_H(\,_H\k,H)$ is given
by $\alpha \leftharpoonup h= \int_\ell(h)\alpha$.  Let
$D_A\in \mathcal C(\Delta_1)$ be such that
$D_A\simeq \mathrm{RHom}_{A^e}(A,A^e)$ in
$\mathcal D((A^e)^{\mathrm{op}})$. Given that
$\Xi_{{\int_\ell}\circ S^{-1}}^r=(\Xi_{\int_\ell}^r)^{-1}$, the
conclusion of \ref{sec:4} entails that
${\mathrm{RHom}}_{\Lambda^e}(\Lambda,\Lambda^e)\simeq D_A\otimes
H[-d]$ in $\mathcal D((\Lambda^e)^{\mathrm{op}})$, where $D_A\otimes H[-d]$ has
structure of dg $\Lambda$-bimodule given by
\begin{equation}
  \label{eq:25}
  \begin{array}{l}
    a(d\otimes \ell)b = (ad(\ell_1\rightharpoonup b)) \otimes \ell_2\\
    h (d\otimes \ell) k  = (h_1\rightharpoonup d)
  \otimes (S^{-2}\circ \Xi_{{\int_\ell}}^r)^{-1}(h_2) \ell k\,.
\end{array}
\end{equation}

To sum up:
\begin{cor}
  \label{sec:31}
  Let $H$ be a Hopf algebra with Van den Bergh duality in dimension
  $d$ (and hence with invertible antipode $S$, see
  \ref{sec:38}). Let $A$ be an $H$-module dg
  algebra. Assume that $A$ is 
  homologically smooth and let $D_A\in \mathcal C(\Delta_1)$ be such
  that $D_A\simeq \mathrm{RHom}_{A^e}(A,A^e)$ in $\mathcal
  D((A^e)^{\mathrm{op}})$ (see \ref{sec:dualA}). Then, $\Lambda$ is
  homologically smooth and
  ${\mathrm{RHom}}_{\Lambda^e}(\Lambda,\Lambda^e)\simeq
  D_A\sharp\,^{(S^{-2}\circ \Xi_{\int_\ell}^r)^{-1}}H[-d]$ in
  $\mathcal D((\Lambda^e)^{\mathrm{op}})$. 
\end{cor}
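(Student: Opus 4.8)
The plan is to obtain the Corollary as the specialisation to Van den Bergh duality of the general description established in \ref{sec:4}, following the computation sketched in \ref{sec:5}. First, a Hopf algebra $H$ with Van den Bergh duality has invertible antipode (\ref{sec:38}) and is homologically smooth by definition, so $\,_H\k\in{\rm per}(H)$. Thus the hypotheses of \ref{sec:4} are met: $\Lambda$ is homologically smooth and $\rhom_{\Lambda^e}(\Lambda,\Lambda^e)\simeq D_A\otimes H\otimes E_H$ in $\mathcal D((\Lambda^e)^{\rm op})$, where $E_H$ is a cofibrant replacement of $\rhom_H(\,_H\k,H)$ and the right $\Lambda^e$-action is given by \eqref{eq:11}.

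Next I would pin down $E_H$. Since $H$ has Van den Bergh duality of dimension $d$ and this notion is left--right symmetric (apply \ref{sec:38} to $H^{\rm op}$, which again has Van den Bergh duality, together with \ref{sec:suff-cond-right}), the algebra $H$ satisfies the left Artin--Schelter condition with dimension $d$ and $\dim_\k{\rm Ext}^d_H(\,_H\k,H)=1$. Consequently $\rhom_H(\,_H\k,H)\simeq\int_\ell[-d]$ in $\mathcal D(H^{\rm op})$, where $\int_\ell={\rm Ext}^d_H(\,_H\k,H)$ carries the right $H$-action $\alpha\leftharpoonup h=\int_\ell(h)\alpha$. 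We are then exactly in the setting of \ref{sec:5}.

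Substituting this $E_H$ into \eqref{eq:11}, the third tensor factor contributes only the scalar $e\leftharpoonup S^{-1}(h_3)=\int_\ell(S^{-1}(h_3))$, so that $\rhom_{\Lambda^e}(\Lambda,\Lambda^e)\simeq D_A\otimes H[-d]$ with
\[
a(d\otimes\ell)b=ad(\ell_1\rightharpoonup b)\otimes\ell_2,\qquad h(d\otimes\ell)k=h_1\rightharpoonup d\otimes S^2(h_2)\int_\ell(S^{-1}(h_3))\,\ell k.
\]
The crux is to recognise the operator $h\mapsto S^2(h_1)\int_\ell(S^{-1}(h_2))$ as the automorphism $\sigma:=(S^{-2}\circ\Xi_{\int_\ell}^r)^{-1}$. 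Indeed, $\int_\ell\circ S^2=\int_\ell$ gives $\int_\ell\circ S^{-1}=\int_\ell\circ S$, hence $\Xi_{\int_\ell\circ S^{-1}}^r=(\Xi_{\int_\ell}^r)^{-1}$; since $S^2$ is linear and commutes with the winding automorphism, one gets $S^2(h_1)\int_\ell(S^{-1}(h_2))=S^2\bigl((\Xi_{\int_\ell}^r)^{-1}(h)\bigr)=(S^{-2}\circ\Xi_{\int_\ell}^r)^{-1}(h)$. After reindexing the coproduct, the $H$-action reads $h(d\otimes\ell)k=h_1\rightharpoonup d\otimes\sigma(h_2)\,\ell k$, which is precisely \eqref{eq:35} for $i=1$ (recall $D_A\in\mathcal C(\Delta_1)$). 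Hence $\rhom_{\Lambda^e}(\Lambda,\Lambda^e)\simeq D_A\sharp\,^\sigma H[-d]$, as claimed.

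I expect the main obstacle to be organisational rather than conceptual: one must check that $\sigma$ is an admissible twist for the $D_A\sharp\,^\sigma H$ construction, that is, that it satisfies the compatibility \eqref{eq:43} for $i=1$ and commutes with $S^2$. This holds because $\sigma=S^2\circ\Xi_{\int_\ell\circ S}^r$ is of the admissible form $S^{2i}\circ\Xi_\pi^r$ with $i=1$ and $\pi=\int_\ell\circ S$, so that the twisted smash product of the $H_{S^2}$-equivariant bimodule $D_A$ is well defined and its identification with \eqref{eq:35} is legitimate.
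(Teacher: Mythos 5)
Your proposal is correct and follows essentially the same route as the paper: the paper's own argument (\ref{sec:5}) likewise specialises the general description of \ref{sec:4} by replacing $E_H$ with $\int_\ell[-d]$ once the left Artin--Schelter condition is known, and then identifies the resulting twist $h\mapsto S^2(h_1)\int_\ell(S^{-1}(h_2))$ with $(S^{-2}\circ\Xi^r_{\int_\ell})^{-1}$ via $\Xi^r_{\int_\ell\circ S^{-1}}=(\Xi^r_{\int_\ell})^{-1}$ and the commutation of $S^2$ with winding automorphisms. Your explicit justification that Van den Bergh duality yields the left Artin--Schelter condition (via left--right symmetry and \ref{sec:38}/\ref{sec:suff-cond-right}) is a correct filling-in of a step the paper leaves implicit, not a departure from its method.
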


\section{Application to constructions of Calabi-Yau algebras}
\label{sec:appl-constr-calabi}

This section describes the deformed Calabi-Yau completions of
$\Lambda$. The undeformed case is treated in
Section~\ref{sec:calabi-yau-compl} and the general case is treated in
Section~\ref{sec:deformed-calabi-yau}. Section~\ref{sec:smash-products-with}
makes a specialisation to the case where $H$ is involutive and
Calabi-Yau.

Given a homologically smooth dg algebra $A$ and an integer
$n\in\mathbb Z$, the Calabi-Yau completion $\Pi_n(A)$ is the dg
algebra $T_A(D_A[n-1])$, where $D_A$ is any cofibrant replacement of
${\mathrm{RHom}}_{A^e}(A,A^e)$ in $\mathcal C((A^e)^{\mathrm{op}})$. Every $\alpha\in {\mathrm{HH}}_{n-2}(A)$ yields a
deformation $\Pi_n(A,\alpha)$ called a deformed Calabi-Yau completion
of $A$: Since ${\mathrm{HH}}_{n-2}(A)\simeq H^0{\mathrm{Hom}}_{A^e}(D_A[n-1],A[1])$,
then $\alpha$ is represented by some $0$-cocycle $c\colon D_A[n-1]\to
A[1]$; Then, $\Pi_n(A,\alpha)$ is the unique dg algebra such that
\begin{itemize}
\item it has the same underlying graded algebra as $\Pi_n(A)$,
\item its differential extends the one of $A$,
\item if $d_A\in D_A[n-1]$, then its differential in $\Pi_n(A,\alpha)$
  is the sum of its differential in $D_A[n-1]$ and of $c(d_A)$.
\end{itemize}
$\Pi_n(A)$ is $n$-Calabi-Yau (see \cite{MR2795754}), and so is
$\Pi_n(A,\alpha)$ if $A$ is finitely cellular and $c$ lifts to the
negative cyclic homology of $A$ (see \cite{yeung}).

In this section, $H$ is a Hopf algebra with Van den Bergh
duality in dimension $d$ and $A$
is a homologically smooth 
$H$-module dg 
algebra. Denote $A\sharp H$ by $\Lambda$. Let $n\in\mathbb Z$.
Let $\sigma=S^2\circ \Xi_{\int_\ell\circ S}^r$. Let $D_A$ be a
cofibrant left dg $\Delta_1$-module such that $D_A\simeq
{\mathrm{RHom}}_{A^e}(A,A^e)$ in $\mathcal D(A^e)$. In particular, $D_A$ is
cofibrant in $\mathcal C(A^e)$ (see the proof in
\ref{sec:prel-homol-alg-3}).
Until the end of the section, $\Pi_n(A)$ and $T_A(D_A[n-1])$ are
identified. 
Recall that, for every $\ell \geqslant
1$, the dg $A$-bimodule $D_A^{\underset A \otimes \ell}$ is
$H_{S^{2\ell}}$-equivariant (see \ref{sec:27}). In particular,
$T_A(D_A[n-1])$ is a left dg $H$-module.
Denote $D_A\sharp\,^\sigma H[-d]$ by $D_{\Lambda}$.
\begin{lem}
  $D_\Lambda$ is a cofibrant replacement of
  ${\mathrm{RHom}}_{\Lambda^e}(\Lambda,\Lambda^e)$ in $\mathcal C(\Lambda^e)$.
\end{lem}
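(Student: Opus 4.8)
The plan is to prove the two halves of the assertion separately: that $D_\Lambda$ is cofibrant in $\mathcal C(\Lambda^e)$, and that it is isomorphic in the derived category to $\rhom_{\Lambda^e}(\Lambda,\Lambda^e)$. Together these are exactly the definition of a cofibrant replacement.

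For the derived isomorphism I would simply invoke Corollary~\ref{sec:31}. First I would record that $\sigma=(S^{-2}\circ\Xi_{\int_\ell}^r)^{-1}$: indeed $(S^{-2}\circ\Xi_{\int_\ell}^r)^{-1}=\Xi_{\int_\ell\circ S}^r\circ S^2$, and since winding automorphisms commute with $S^2$ (see \ref{sec:homol-integr-wind}) this equals $S^2\circ\Xi_{\int_\ell\circ S}^r=\sigma$. Hence $D_\Lambda=D_A\sharp\,^\sigma H[-d]$ is precisely the dg $\Lambda$-bimodule appearing in Corollary~\ref{sec:31}, so that $D_\Lambda\simeq\rhom_{\Lambda^e}(\Lambda,\Lambda^e)$ in $\mathcal D((\Lambda^e)^{\rm op})$, equivalently in $\mathcal D(\Lambda^e)$ after the standard identification of the two $\Lambda^e$-module structures on a dg $\Lambda$-bimodule via the swap isomorphism $(\Lambda^e)^{\rm op}\cong\Lambda^e$.

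For cofibrancy I would use the interpretation of \ref{sec:43}. Since $D_A\in\mathcal C(\Delta_1)$, taking $i=1$ there gives an isomorphism $D_A\sharp\,^\sigma H\simeq(\Lambda\otimes\,^\varphi\Lambda)\underset{\Delta_1}{\otimes}D_A$ in $\mathcal C(\Lambda^e)$, where $\varphi(ah)=a\,\sigma(S^{-2}(h))$ is a dg automorphism of $\Lambda$. The key observation is that the $\Lambda^e$–$\Delta_1$-bimodule $\Lambda\otimes\,^\varphi\Lambda$ is nothing but $\Lambda^e$ turned into a right $\Delta_1$-module along a dg algebra homomorphism $\Delta_1\to\Lambda^e$: as a left $\Lambda^e$-module it is the regular one, and a direct check shows that its right $\Delta_1$-action is right multiplication by $f'(\delta)$, where $f'=({\rm id}_\Lambda\otimes\varphi)\circ f$ is the composite of the homomorphism $f$ of \eqref{eq:31} with the algebra automorphism ${\rm id}\otimes\varphi$ of $\Lambda^e$. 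Consequently $(\Lambda\otimes\,^\varphi\Lambda)\underset{\Delta_1}{\otimes}D_A$ is the extension of scalars of $D_A$ along $f'$; since $D_A$ is cofibrant in $\mathcal C(\Delta_1)$ and extension of scalars preserves cofibrant objects (see \ref{sec:conventions-notation}), $D_A\sharp\,^\sigma H$, and hence its shift $D_\Lambda$, is cofibrant in $\mathcal C(\Lambda^e)$.

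Finally I would combine the two halves: a cofibrant dg module that is isomorphic in $\mathcal D(\Lambda^e)$ to $\rhom_{\Lambda^e}(\Lambda,\Lambda^e)$ is a cofibrant replacement of it, because a cofibrant object is homotopically projective, so the canonical map $H^0\Hom_{\Lambda^e}(D_\Lambda,-)\to\mathcal D(\Lambda^e)(D_\Lambda,-)$ is bijective and the derived isomorphism is therefore represented by an honest chain map $D_\Lambda\to\rhom_{\Lambda^e}(\Lambda,\Lambda^e)$, which is a quasi-isomorphism. The main obstacle is the bookkeeping in the cofibrancy step, namely verifying that the twist by $\varphi$ built into $\Lambda\otimes\,^\varphi\Lambda$ is absorbed exactly by the automorphism ${\rm id}\otimes\varphi$ of $\Lambda^e$, so that the relevant functor is genuinely extension of scalars along a dg algebra map; once this identification is made, the rest is formal.
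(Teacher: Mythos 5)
Your proof is correct and takes essentially the same route as the paper's: the derived-category identification is exactly the invocation of Corollary~\ref{sec:31} (after matching $\sigma=(S^{-2}\circ\Xi_{\int_\ell}^r)^{-1}=S^2\circ\Xi_{\int_\ell\circ S}^r$), and cofibrancy is deduced from the isomorphism of \ref{sec:43} together with preservation of cofibrant objects under extension of scalars. The only difference is that you make explicit the bookkeeping the paper's two-line proof leaves implicit, namely that $\Lambda\otimes\,^\varphi\Lambda$ is the regular left $\Lambda^e$-module with right $\Delta_1$-action given by right multiplication along the dg algebra homomorphism $({\rm id}_\Lambda\otimes\varphi)\circ f$, with $f$ as in \eqref{eq:31} — a verification that is indeed needed for the paper's argument to go through.
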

\begin{proof}
  Following \ref{sec:5}, it is sufficient to prove that $D_A\sharp
  \,^\sigma H$ is cofibrant in $\mathcal C(\Lambda^e)$. Since $D_A$ is
  cofibrant in $\mathcal C(\Delta_1)$, this follows from the
  isomorphism proved in \ref{sec:43}.
\end{proof}
Until the end of the section, $\Pi_{n+d}(\Lambda)$ and
$T_\Lambda(D_\Lambda[n+d-1])$ are identified. Note that
$D_\Lambda[n+d-1]=D_A\sharp\,^\sigma H[n-1]$.

\subsection{The Calabi-Yau completion of $A\sharp H$}
\label{sec:calabi-yau-compl}

The dg algebra $\Pi_n(A)\sharp\,^{\sigma^*}H$ was
defined in \ref{sec:27}. It has $T_A(D_A[n-1])\otimes H$ as underlying
complex, it contains $A\sharp H$ as a dg subalgebra, and its product
is given by  \eqref{eq:29}.

\begin{prop}
  \begin{enumerate}
    \item 
There is an isomorphism of dg algebras 
$\Pi_n(A)\sharp
\,^{\sigma^*}H\xrightarrow{\sim} \Pi_{n+d}(A\sharp H)$.
\item If $H$ is involutive and Calabi-Yau, then $\Pi_n(A)$ is an
  $H$-module dg algebra and
  $\Pi_n(A)\sharp H\simeq \Pi_{n+d}(A\sharp H)$.
  \end{enumerate}
\end{prop}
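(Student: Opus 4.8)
The plan is to obtain both assertions as instances of the dg algebra isomorphism recorded in \ref{sec:27}, applied to the specific equivariant bimodule $D_A[n-1]$ and the twisting automorphism $\sigma=S^2\circ\Xi^r_{\int_\ell\circ S}$ that governs the inverse dualising complex of $\Lambda$.

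For part (1), I would first check that \ref{sec:27} applies with $D:=D_A[n-1]$ and this $\sigma$: the bimodule $D_A$ lies in $\mathcal C(\Delta_1)$ and hence is $H_{S^2}$-equivariant, a property preserved by the shift, while $\sigma=S^{2}\circ\Xi^r_{\int_\ell\circ S}$ is of the form $S^{2}\circ\Xi^r_\pi$ and so satisfies \eqref{eq:43} for $i=1$ and commutes with $S^2$. Thus \ref{sec:27} yields an isomorphism of dg algebras
\[
\Pi_n(A)\sharp\,^{\sigma^*}H=T_A(D_A[n-1])\sharp\,^{\sigma^*}H\xrightarrow{\ \sim\ }T_\Lambda\bigl((D_A[n-1])\sharp\,^\sigma H\bigr).
\]
It then remains to identify the target with $\Pi_{n+d}(\Lambda)$. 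Here I would use that the shift commutes with the smash construction, $(D_A[n-1])\sharp\,^\sigma H=(D_A\sharp\,^\sigma H)[n-1]=D_\Lambda[n+d-1]$ (routine sign bookkeeping), together with the Lemma just proved, which guarantees that $D_\Lambda=D_A\sharp\,^\sigma H[-d]$ is a cofibrant replacement of $\rhom_{\Lambda^e}(\Lambda,\Lambda^e)$; hence the target is $T_\Lambda(D_\Lambda[n+d-1])=\Pi_{n+d}(\Lambda)$. So part (1) is little more than bookkeeping layered on \ref{sec:27} and that Lemma.

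For part (2), the real content is to show that the two extra hypotheses force $\sigma={\rm Id}_H$. Once that is established, the closing remark of \ref{sec:27} applies verbatim: $\Pi_n(A)=T_A(D_A[n-1])$ becomes a genuine $H$-module dg algebra and $\Pi_n(A)\sharp\,^{\sigma^*}H=\Pi_n(A)\sharp H$, so part (1) gives $\Pi_n(A)\sharp H\simeq\Pi_{n+d}(\Lambda)$. Since $\sigma=S^2\circ\Xi^r_{\int_\ell\circ S}$, the involutivity hypothesis gives $S^2={\rm Id}_H$, which also strips all the $S^{2k}$-twists out of the action \eqref{eq:28}, turning the $H$-action on the tensor algebra into the ordinary diagonal one; it therefore only remains to prove $\Xi^r_{\int_\ell\circ S}={\rm Id}_H$, that is, $\int_\ell=\epsilon$.

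The main obstacle is thus the triviality of the left homological integral for a Calabi-Yau $H$. Here I would argue as follows. Since $H$ is Calabi-Yau, ${\rm Ext}^d_{H^e}(H,H^e)\simeq H$ as $H$-bimodules; on the other hand, specialising the Corollary \ref{sec:31} to $A=\k$ (so that $\Lambda=H$ and $D_A=\k$) gives ${\rm Ext}^d_{H^e}(H,H^e)\simeq{}^{S^2\circ\Xi^r_{\int_\ell\circ S}}H$. Comparing the two, the skew bimodule ${}^{S^2\circ\Xi^r_{\int_\ell\circ S}}H$ is trivial, which forces $S^2\circ\Xi^r_{\int_\ell\circ S}$ to be inner, and hence $\Xi^r_{\int_\ell\circ S}$ to be inner once $S^2={\rm Id}_H$. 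Finally, applying the counit to $\Xi^r_{\int_\ell\circ S}(h)=h_1\,(\int_\ell\circ S)(h_2)$ shows that any inner winding automorphism has trivial character, so $\int_\ell\circ S=\epsilon$ and therefore $\int_\ell=\epsilon$. This yields $\sigma={\rm Id}_H$ and completes part (2). Alternatively, one could invoke \ref{sec:char-calabi-yau} to get $\int_r=\epsilon$ together with the relation $\int_\ell=\int_r\circ S$.
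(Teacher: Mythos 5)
Your proposal is correct and follows essentially the same route as the paper: part (1) is exactly the isomorphism furnished by \ref{sec:27} applied to $D_A[n-1]$, combined with the Lemma identifying $D_\Lambda=D_A\sharp\,^\sigma H[-d]$ as a cofibrant replacement of $\rhom_{\Lambda^e}(\Lambda,\Lambda^e)$, and part (2) reduces, as in the paper, to showing $\sigma={\rm Id}_H$ from $S^2={\rm Id}_H$ and $\int_\ell=\epsilon$. The paper gets $\int_\ell=\epsilon$ by citing \ref{sec:char-calabi-yau} (your stated alternative); your direct derivation via ${\rm Ext}^d_{H^e}(H,H^e)\simeq\,^{\sigma}H$, innerness of $\sigma$, and the counit argument is a sound unwinding of the same fact rather than a genuinely different approach.
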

\begin{proof}
(1) This follows from the considerations in  \ref{sec:27}.


(2) Following \ref{sec:char-calabi-yau}, $\int_\ell=\epsilon$, and
hence $\sigma={\mathrm{Id}}_H$. Thus (2) follows from (1).
\end{proof}

Note that the isomorphism $\Pi_n(A)\sharp\,^{\sigma^*}H\to
\Pi_{n+d}(A\sharp H)$ in the previous result is uniquely determined by
the following properties
\begin{itemize}
\item it extends the identity mappings $A\to A$ and $H\to H$,
\item it maps every $d_A\in D_A[n-1]$ to $d_A\otimes 1\in D_{\Lambda}[n+d-1]$.
\end{itemize}

\subsection{Deformed Calabi-Yau completions of $A\sharp H$}
\label{sec:deformed-calabi-yau}

This section gives some descriptions of the deformed Calabi-Yau
completions of $A\sharp H$.

\subsubsection{}
\label{sec:41}

For this purpose, the following lemma relates the cohomology classes
in ${\mathrm{HH}}_{n+d-2}(\Lambda)$ to Hochschild cohomology classes of $A$.
\begin{lem}
  The mapping $c\mapsto c_{|D_A\otimes 1}$ yields an isomorphism from
  ${\mathrm{Hom}}_{\Lambda^e}(D_A\sharp\,^\sigma H,\Lambda)$ to the subcomplex of
  ${\mathrm{Hom}}_{A^e}(D_A,\Lambda)$ consisting of those $c$ satisfying the
  identity 
  \begin{equation}
    \label{eq:1}
    c(h\rightharpoonup d) = h_1c(d)\int_\ell(h_2)S^3(h_3)\,.
  \end{equation}
\end{lem}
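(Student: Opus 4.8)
The plan is to work directly with the explicit bimodule structure \eqref{eq:35} on $D_A\sharp\,^\sigma H$ (here $i=1$) and to characterise $\Lambda^e$-linearity of a map out of $D_A\sharp\,^\sigma H$ in terms of its restriction to $D_A\otimes 1$. Throughout I would use that $\sigma=S^2\circ\Xi_{\int_\ell\circ S}^r$ has inverse $\sigma^{-1}=S^{-2}\circ\Xi_{\int_\ell}^r$, so that $\sigma^{-1}(k)=S^{-2}(k_1)\int_\ell(k_2)$, and that \eqref{eq:43} and \eqref{eq:42} hold with $i=1$. First I would record that $D_A\otimes 1$ is an $A$-subbimodule of $D_A\sharp\,^\sigma H$ on which the two $A$-actions restrict to the given $A$-bimodule structure of $D_A$, since $a(d\otimes 1)b=adb\otimes 1$ by \eqref{eq:35}. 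Hence $c\mapsto c_{|D_A\otimes 1}$ is a well-defined morphism of complexes from $\Hom_{\Lambda^e}(D_A\sharp\,^\sigma H,\Lambda)$ to $\Hom_{A^e}(D_A,\Lambda)$, compatibility with differentials being immediate from the dg structures. For injectivity I would invoke \eqref{eq:44}, which with $i=1$ reads $d\otimes\ell=\sigma^{-1}(\ell_2)(S^{-3}(\ell_1)\rightharpoonup d\otimes 1)$; applying a left $\Lambda$-linear $c$ and writing $\bar c:=c_{|D_A\otimes 1}$ gives
\[
c(d\otimes\ell)=\sigma^{-1}(\ell_2)\,\bar c(S^{-3}(\ell_1)\rightharpoonup d),
\]
so $\bar c$ determines $c$.

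Conversely, for $\bar c\in\Hom_{A^e}(D_A,\Lambda)$ I would define $c$ by the displayed formula (which is genuinely bilinear in $(d,\ell)$, so no well-definedness issue arises) and determine exactly when $c$ is $\Lambda^e$-linear. The left and right $A$-linearity of $c$ follow from the $A^e$-linearity of $\bar c$ together with \eqref{eq:43}, \eqref{eq:42} and the antipode axioms, the key cancellation being $S^{-2}(k_2)S^{-3}(k_1)=S^{-3}(k_1S(k_2))=\epsilon(k)$. Left $H$-linearity is likewise automatic: expanding $c(h(d\otimes\ell))$ and using \eqref{eq:43} together with the co-opposite antipode identity $S^{-1}(h_2)h_1=\epsilon(h)$ collapses it to $h\,c(d\otimes\ell)$. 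The only genuine constraint comes from right $H$-linearity, which I would show (reducing to $\ell=1$, the general case following by left $A$-linearity of the formula) is equivalent to the identity
\[
\sigma^{-1}(k_2)\,\bar c(S^{-3}(k_1)\rightharpoonup x)=\bar c(x)\,k\qquad(k\in H,\ x\in D_A),
\]
which I will call condition (C).

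Finally I would prove that (C) is equivalent to \eqref{eq:1}. Substituting $\sigma^{-1}(k_2)=S^{-2}(k_2)\int_\ell(k_3)$ into (C) and inserting the value of $\bar c(S^{-3}(k_1)\rightharpoonup x)$ prescribed by \eqref{eq:1}, the powers of $S$ recombine ($S^3\circ S^{-3}=\mathrm{id}$ and $S^{-2}(k)S^{-3}(k')=S^{-3}(k'S(k))$), and, rewriting $\int_\ell\circ S^{-3}=\int_\ell\circ S$ via $\int_\ell\circ S^2=\int_\ell$, the two characters fuse through $\sum\int_\ell(S(k_2)k_3)\,k_1=k$; this reproduces exactly the right-hand side $\bar c(x)k$ of (C). Since $S$ is invertible and $\sigma$ is an automorphism, the substitution $k=S^3(h)$ is a bijection of $H$ that lets one run the same computation in reverse to recover \eqref{eq:1} from (C). I expect this last equivalence — reconciling the three powers of $S$ with the $\int_\ell$-twist carried by $\sigma$ — to be the main source of difficulty; it is a matter of careful Sweedler-index bookkeeping rather than a conceptual obstacle, the point being that the factor $\int_\ell(h_2)$ appearing in \eqref{eq:1} is precisely the deviation of $\sigma$ from the untwisted $S^2$, as recorded by $\sigma^{-1}(k)=S^{-2}(k_1)\int_\ell(k_2)$. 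Combining the three paragraphs shows that the restriction map is injective with image the subcomplex cut out by \eqref{eq:1}, and that this subcomplex is preserved by the differential, completing the proof.
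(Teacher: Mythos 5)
Your proof is correct, but it takes a genuinely different route from the paper's. The paper argues structurally: it invokes the isomorphism $D_A\sharp\,^\sigma H\simeq(\Lambda\otimes\,^\varphi\Lambda)\underset{\Delta_1}{\otimes}D_A$ of \ref{sec:43} (with $\varphi(ah)=a\sigma(S^{-2}(h))$), applies the tensor--hom adjunction to get $\Hom_{\Lambda^e}(D_A\sharp\,^\sigma H,\Lambda)\simeq\Hom_{\Delta_1}(D_A,\Lambda^\varphi)$, and then reads off the identity from the characterisation of $\Delta_1$-linear maps in \ref{sec:AdjunctionDelta} part (3), namely $c(h\rightharpoonup d)=h_1c(d)\varphi(S^3(h_2))$, finishing with the one-line computation $\varphi(S^3(h))=\sigma(S(h))=\int_\ell(h_1)S^3(h_2)$. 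You instead bypass that machinery entirely and work by hand: restriction to $D_A\otimes 1$, reconstruction of $c$ from $\bar c$ via \eqref{eq:44}, a case-by-case check that left/right $A$-linearity and left $H$-linearity of the extension are automatic, and an explicit Hopf computation identifying the sole remaining constraint (right $H$-linearity, your condition (C)) with \eqref{eq:1}. What the paper's route buys is brevity and a conceptual explanation of where the $\int_\ell$-twist comes from (it is exactly $\varphi$ applied to $S^3$); what your route buys is self-containedness --- it needs neither \ref{sec:43} nor part (3) of \ref{sec:AdjunctionDelta} --- at the cost of substantially more Sweedler bookkeeping. I verified your key cancellations ($S^{-2}(k_2)S^{-3}(k_1)=\epsilon(k)$, the collapse $\sum\int_\ell(S(k_2)k_3)k_1=k$ using $\int_\ell\circ S^{-3}=\int_\ell\circ S$) and they are all sound.

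Two small points of care. First, in your reduction of right $H$-linearity to the case $\ell=1$, the general case follows from the (already established) \emph{left $H$-linearity} of the extension formula, not from left $A$-linearity as you wrote; the reduction itself is valid. Second, the converse implication (C) $\Rightarrow$ \eqref{eq:1} is not literally ``the same computation in reverse'': after substituting $k=S^3(h)$, condition (C) reads $\sum S(h_2)\int_\ell(S(h_1))\,\bar c(h_3\rightharpoonup x)=\bar c(x)S^3(h)$, and to isolate $\bar c(h\rightharpoonup x)$ you must convolve on the left with the convolution inverse of $h\mapsto S(h_2)\int_\ell(S(h_1))$, which is $h\mapsto h_1\int_\ell(h_2)$; doing so and collapsing with the antipode axioms yields exactly \eqref{eq:1}. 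This is the standard convolution-inverse manipulation and poses no obstacle, but it is the step your sketch elides.
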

\begin{proof}
  Note that every dg $\Lambda^e$-module is a dg $\Delta_1$-module by
  restriction-of-scalars along \eqref{eq:31}. By adjunction, there is
  an isomorphism
  \[{\mathrm{Hom}}_{\Lambda^e}(D_A\sharp\,^\sigma H,\Lambda)\simeq 
  {\mathrm{Hom}}_{\Delta_1}(D_A, {\mathrm{Hom}}_{\Lambda^e}(\Lambda\otimes\,^\varphi
  \Lambda,\Lambda))\,,\]
  where $\varphi\in {\mathrm{Aut}}_{\mathrm{dg-alg}}(\Lambda)$ is
  defined by $\varphi(ah)= a\sigma(S^{-2}(h))$ (see
  \ref{sec:43}). Moreover, the dg $\Lambda^e$-module
  ${\mathrm{Hom}}_{\Lambda^e}(\Lambda\otimes \,^\varphi \Lambda,\Lambda)$
  identifies with $\Lambda^\varphi$. Finally,
  \ref{sec:AdjunctionDelta} yields that
  ${\mathrm{Hom}}_{\Delta_1}(D_A,\Lambda^\varphi)$ is the subcomplex of
  ${\mathrm{Hom}}_{A^e}(D_A,\Lambda)$ consisting of those $c$ satisfying the
  identity
  \[
  c(h\rightharpoonup d) = h_1 c(d) \varphi(S^3(h_2))\,.
  \]
  Note that $\varphi(S^3(h)) = \sigma(S(h)) = S^3(h_2)
  \int_\ell(S^2(h_1)) = S^3(h_2) \int_\ell(h_1)$. Whence the
  conclusion of the lemma.
\end{proof}

Note that the inverse of the isomorphism established in the previous lemma maps
every $c$ to the morphism $D_A\sharp\,^\sigma H\to \Lambda\,,d\otimes
\ell \mapsto c(d)\ell$.
Moreover, this lemma identifies ${\mathrm{HH}}_{n+d-2}(\Lambda)$
 with the $0$-th cohomology space of the subcomplex of
${\mathrm{Hom}}_{A^e}(D_A[n-1],\Lambda[1])$ consisting of those morphisms $c$
satisfying the identity \eqref{eq:1}.

\subsubsection{}
\label{sec:32}
In general, a deformed Calabi-Yau completion of $A\sharp H$ may not be
expressed in terms of one of
$A$. Instead, it may be related to a deformation of $\Pi_n(A)\sharp^{\sigma^*}H$. This
deformation is determined by the cocycle introduced by the following
lemma. 
Let $\alpha\in {\mathrm{HH}}_{n+d-2}(\Lambda)$ be represented by  $c\in 
Z^0{\mathrm{Hom}}_{A^e}(D_A[n-1],\Lambda[1])$ (in the sense of \ref{sec:41}) satisfying the identity
\eqref{eq:1}.
\begin{lem}
  There exists a unique degree $1$ square-zero derivation $\partial$
  on the graded algebra $\Pi_n(A)\sharp\,^{\sigma^*}H$ such that
  \begin{enumerate}[(a)]
  \item $\partial_{|\Lambda}$ is the differential of $\Lambda$,
  \item $\partial_{|D_A[n-1]}$ takes
    its values in $\Lambda[1] \oplus D_A[n]$ and is equal to the
    sum of $c\colon D_A[n-1]\to \Lambda[1]$ and the differential of $D_A[n-1]$.
  \end{enumerate}
\end{lem}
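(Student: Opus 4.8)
The plan is to produce $\partial$ by transporting the differential of a suitable deformed Calabi-Yau completion of $\Lambda$ along the graded algebra isomorphism of \ref{sec:calabi-yau-compl}, while settling uniqueness by a generation argument that does not use existence.

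I would first dispose of uniqueness, which is elementary. As a graded $\k$-algebra, $\Pi_n(A)\sharp\,^{\sigma^*}H$ has underlying complex $T_A(D_A[n-1])\otimes H$ and is generated by $A$, $H$ and $D_A[n-1]$, hence by $\Lambda$ together with $D_A[n-1]$. A degree $1$ derivation is determined by its restriction to any algebra generating set, since the difference of two derivations agreeing on generators is a derivation vanishing on those generators, hence on all products, hence identically. As (a) prescribes $\partial$ on $\Lambda$ and (b) prescribes it on $D_A[n-1]$, at most one such $\partial$ exists.

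For existence I would take the dg algebra isomorphism $\Phi\colon \Pi_n(A)\sharp\,^{\sigma^*}H\xrightarrow{\sim}\Pi_{n+d}(\Lambda)$ of \ref{sec:calabi-yau-compl}, the one extending $\mathrm{id}_A$, $\mathrm{id}_H$ and sending $d_A\in D_A[n-1]$ to $d_A\otimes 1\in D_\Lambda[n+d-1]$. By \ref{sec:41} the cocycle $c$ corresponds, via $\widetilde c(d\otimes\ell)=c(d)\ell$, to $\widetilde c\in\Hom_{\Lambda^e}(D_\Lambda[n+d-1],\Lambda[1])$; since \ref{sec:41} is an isomorphism of complexes and $c\in Z^0$, the transported $\widetilde c$ is again a cocycle, so it represents a class $\overline\alpha=[\widetilde c]\in{\rm HH}_{n+d-2}(\Lambda)$ and the deformed completion $\Pi_{n+d}(\Lambda,\overline\alpha)$ is a genuine dg algebra with differential $\partial'$ satisfying $\partial'^2=0$ (see \cite{MR2795754}). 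I would then set $\partial=\Phi^{-1}\circ\partial'\circ\Phi$; being the conjugate of a square-zero degree $1$ derivation by a graded algebra isomorphism, $\partial$ is automatically a square-zero degree $1$ derivation of $\Pi_n(A)\sharp\,^{\sigma^*}H$, which already yields (a) the square-zero property.

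It then remains to check (a) and (b). Since $\Phi$ is the identity on $\Lambda$ and $\partial'$ extends the differential of $\Lambda$, one gets $\partial_{|\Lambda}=\partial'_{|\Lambda}$ equal to the differential of $\Lambda$. For $d_A\in D_A[n-1]$ one has $\Phi(d_A)=d_A\otimes 1$ and, by definition of the deformed completion, $\partial'(d_A\otimes 1)=d'(d_A\otimes 1)+\widetilde c(d_A\otimes 1)$, where $d'$ is the internal differential of $D_\Lambda[n+d-1]=D_A\sharp\,^\sigma H[n-1]$, so that $d'(d_A\otimes 1)$ equals the differential of $D_A[n-1]$ applied to $d_A$ tensored with $1$, and $\widetilde c(d_A\otimes 1)=c(d_A)\in\Lambda[1]$; applying $\Phi^{-1}$ returns exactly the sum of the differential of $D_A[n-1]$ and $c$, with values in $\Lambda[1]\oplus D_A[n]$. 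The only delicate points, and thus the main obstacle, are the degree/shift bookkeeping through $D_\Lambda[n+d-1]=D_A\sharp\,^\sigma H[n-1]$ and confirming that the equivariance identity \eqref{eq:1} is precisely the condition cutting out the image of \ref{sec:41}, so that $\widetilde c$ is a well-defined $\Lambda$-bimodule cocycle; both are furnished by the results already established, so no genuinely new computation is needed.
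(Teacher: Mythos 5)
Your proof is correct, and it is genuinely different from the one in the paper. The paper proves existence \emph{intrinsically} on the twisted smash product: it checks that the prescribed values on generators are compatible with the defining relation $h\times d=(h_1\rightharpoonup d)\,\sigma(h_2)$ of $\Pi_n(A)\sharp\,^{\sigma^*}H$, the crux being the computation
\[
c(h_1\rightharpoonup d)\,\sigma(h_2)
= h_1c(d)\textstyle\int_\ell(h_2)\,S^3(h_3)\,\sigma(h_4)
= h\,c(d)\,,
\]
which uses \eqref{eq:1}, the $A^e$-linearity of $c$, and the identity $\sigma(h)_1\otimes\sigma(h)_2=S^2(h_1)\otimes\sigma(h_2)$; square-zero then follows because $c$ is a cocycle. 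You instead transport the differential $\partial'$ of $\Pi_{n+d}(\Lambda,\overline\alpha)$ along the graded algebra isomorphism $\Phi$ of \ref{sec:calabi-yau-compl}, where $\overline\alpha=[\widetilde c]$ and $\widetilde c$ is produced from $c$ by (the inverse of) the isomorphism of \ref{sec:41}. This is legitimate and non-circular: the existence of $\Pi_{n+d}(\Lambda,\overline\alpha)$ as a dg algebra is the general construction of \cite{MR2795754} applied to the homologically smooth algebra $\Lambda$ with the cofibrant replacement $D_\Lambda$ (established at the start of Section~\ref{sec:appl-constr-calabi}, before the lemma), and neither \ref{sec:calabi-yau-compl} nor \ref{sec:41} depends on the lemma being proved. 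What each route buys: yours outsources all Hopf-algebraic verification to the universal property of tensor algebras over $\Lambda$, and it makes the subsequent proposition \ref{sec:33} essentially tautological, since $\Phi$ intertwines the differentials by construction; the paper's route keeps the construction of $\partial$ independent of the identification with $\Pi_{n+d}(\Lambda,\alpha)$ (which is then the separate, easy content of \ref{sec:33}) and has the expository advantage of isolating exactly where the hypothesis \eqref{eq:1} on $c$ is used. One small slip in your write-up: the square-zero property of $\partial$ is not item (a) of the statement --- (a) concerns the restriction to $\Lambda$ --- but this is a labelling inaccuracy, not a gap, since conjugation by a graded algebra isomorphism clearly preserves being a square-zero degree~$1$ derivation.
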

\begin{proof}
  Since the graded algebra $\Pi_n(A)\sharp\,^{\sigma^*}H$ is generated
  by $\Lambda$ and $D_A$, the given conditions force $\partial$ to be
  unique when it exists. Note the following identities in $\Lambda$
  \[
  \begin{array}{rcl}
    c(h_1\rightharpoonup d) \sigma (h_2)
    & = &
          h_1c(d) \int_\ell(h_2)
          S^3(h_3) \sigma(h_4) \\
    & = &
          h_1 c(d) \int_\ell(h_2) S^3(h_3) S^2(h_4) \int_\ell(S(h_5))
    \\
    & = &
          hc(d)\,.
  \end{array}
  \]
  Given that $c$ is $A^e$-linear and that $\sigma$ satisfies the
  identity $\sigma(h)_1\otimes \sigma(h)_2=S^2(h_1) \otimes \sigma(h_2)$, these considerations entail that
  there exists a degree $1$ derivation $\partial$ on the graded
  algebra $\Pi_n(A)\sharp\,^{\sigma^*}H$ satisfying (a) and (b). Since
  $c$ is a cocycle, then $\partial^2=0$.
\end{proof}

\subsubsection{}
\label{sec:33}
The following proposition expresses the deformed Calabi-Yau
completions of $A\sharp H$ as deformations of $\Pi_n(A)\sharp\,^{\sigma^*}H$.
\begin{prop}
  Let $\alpha\in {\mathrm{HH}}_{n+d-2}(\Lambda)$ be represented by the
  cocycle $D_\Lambda[n+d-1]\to \Lambda[1]\,,d\otimes \ell \mapsto
  c(d)\otimes \ell$ where  $c\in Z^0{\mathrm{Hom}}_{A^e}(D_A[n-1],\Lambda[1])$ satisfies
  the identity \eqref{eq:1}.
  Then, $\Pi_{n+d}(A\sharp H,\alpha)\simeq
  (\Pi_n(A)\sharp\,^{\sigma^*}H,\partial)$ as dg algebras, where $\partial$
  is as in \ref{sec:32}.
  \end{prop}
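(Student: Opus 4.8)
The plan is to promote the dg-algebra isomorphism $\Psi\colon \Pi_n(A)\sharp\,^{\sigma^*}H\xrightarrow{\sim}\Pi_{n+d}(\Lambda)$ already produced in \ref{sec:calabi-yau-compl} to an isomorphism between the two \emph{deformed} dg algebras. Recall that $\Psi$ is the unique dg-algebra morphism extending the identities $A\to A$, $H\to H$ and sending each $d_A\in D_A[n-1]$ to $d_A\otimes 1\in D_\Lambda[n+d-1]$. The key observation is that a deformation of a Calabi-Yau completion only alters the \emph{differential}: both $(\Pi_n(A)\sharp\,^{\sigma^*}H,\partial)$ and $\Pi_{n+d}(\Lambda,\alpha)$ have exactly the same underlying graded algebras as $\Pi_n(A)\sharp\,^{\sigma^*}H$ and $\Pi_{n+d}(\Lambda)$, respectively. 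Hence $\Psi$ is automatically a graded-algebra isomorphism between the deformed objects, and the only thing left to prove is that it intertwines the two deformed differentials.

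First I would denote by $\bar\partial$ the differential of $\Pi_{n+d}(\Lambda,\alpha)$ and reduce the identity $\Psi\circ\partial=\bar\partial\circ\Psi$ to a check on algebra generators. Indeed, $\partial$ and $\bar\partial$ are graded derivations (of $\Pi_n(A)\sharp\,^{\sigma^*}H$ and $\Pi_{n+d}(\Lambda)$, respectively) and $\Psi$ is a morphism of graded algebras, so both $\Psi\circ\partial$ and $\bar\partial\circ\Psi$ are graded derivations along $\Psi$; their difference is again such a twisted derivation and is therefore determined by the Leibniz rule from its values on a generating set. Since $\Pi_n(A)\sharp\,^{\sigma^*}H$ is generated as a graded algebra by $A$, by $H$ and by $D_A[n-1]$, it suffices to verify the identity on these three families.

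On $A$ and on $H$ the verification is immediate: neither deformation modifies the differential of $\Lambda=A\sharp H$, so both $\partial$ and $\bar\partial$ restrict there to the differential of $\Lambda$, while $\Psi$ restricts to the identity. The substantial step is the check on $D_A[n-1]$. Here $\partial(d_A)=d_{D_A[n-1]}(d_A)+c(d_A)$ by the construction in \ref{sec:32}, with $d_{D_A[n-1]}(d_A)\in D_A[n]$ and $c(d_A)\in\Lambda[1]$, whereas $\bar\partial(d_A\otimes 1)=d_{D_\Lambda[n+d-1]}(d_A\otimes 1)+\bar c(d_A\otimes 1)$, where $\bar c$ is the cocycle $d\otimes\ell\mapsto c(d)\ell$ attached to $c$ by \ref{sec:41}. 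Because $\Psi$ is already a chain map for the \emph{undeformed} differentials, it carries $d_{D_A[n-1]}(d_A)$ to $d_{D_\Lambda[n+d-1]}(d_A\otimes 1)$; it therefore remains only to match the deformation terms, and this is where \ref{sec:41} does the work: by construction $\bar c(d_A\otimes 1)=c(d_A)$, while $\Psi(c(d_A))=c(d_A)$ since $c(d_A)\in\Lambda$ and $\Psi$ is the identity on $\Lambda$. Thus $\Psi(\partial(d_A))=\bar\partial(\Psi(d_A))$, as required.

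The main obstacle I anticipate is bookkeeping rather than conceptual: one must make sure that the internal differential of the shifted dg $\Lambda$-bimodule $D_\Lambda[n+d-1]=D_A\sharp\,^\sigma H[n-1]$ restricts on $D_A\otimes 1$ to the suitably shifted differential of $D_A[n-1]$, so that the undeformed chain-map property of $\Psi$ really supplies the match of the non-deformation terms, and that the shift signs introduced by passing from $n$ to $n+d$ are absorbed consistently on both sides. Once these sign and shift conventions are pinned down exactly as in \ref{sec:27} and \ref{sec:41}, the identification of the deformation cocycles $\bar c(d_A\otimes 1)=c(d_A)$ completes the argument and yields the desired isomorphism $\Pi_{n+d}(A\sharp H,\alpha)\simeq(\Pi_n(A)\sharp\,^{\sigma^*}H,\partial)$ of dg algebras.
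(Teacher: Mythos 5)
Your proof is correct and follows essentially the same route as the paper: take the dg-algebra isomorphism of \ref{sec:calabi-yau-compl} and check that it intertwines the deformed differentials, which reduces (by the derivation property) to comparing the differentials of $d_A$ and $d_A\otimes 1$, where the deformation terms match because $\bar c(d_A\otimes 1)=c(d_A)$. The paper merely states this comparison in one line; your write-up supplies the generator-by-generator verification that it summarizes.
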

\begin{proof}
Following \ref{sec:calabi-yau-compl}, there exists an isomorphism of
dg algebras $\varphi\colon T_A(D_A[n-1])\sharp\,^{\sigma^*}H\to
T_{\Lambda}(D_\Lambda[n+d-1])$
extending the identity mappings $A\to A$ and $H\to H$ and mapping
every $d_A\in D_A[n-1]$ to $d_A\otimes 1\in D_\Lambda[n+d-1]$.
Comparing the differentials of $d_A$ and $d_A\otimes 1$ in
$(\Pi_n(A)\sharp\,^{\sigma^*}H,\partial)$ and $\Pi_{n+d}(A\sharp
H,\alpha)$, respectively, yields that $\varphi$ is also an isomorphism
of dg algebras $(\Pi_n(A)\sharp\,^{\sigma^*}H,\partial) \to
\Pi_{n+d}(A\sharp
H,\alpha)$.
\end{proof}

\subsection{Smash products with involutory Hopf algebras}
\label{sec:smash-products-with}

Assume that $S^2={\mathrm{Id}}_H$ and $H$ is
Calabi-Yau. In this particular case, the following result shows that
every deformed Calabi-Yau completion of $A$ is an $H$-module dg
algebra, and that the resulting smash product is a deformed Calabi-Yau 
completion of $A\sharp H$.

Indeed, $\int_\ell=\epsilon$
(see \ref{sec:char-calabi-yau}) and $\sigma={\mathrm{Id}}_H$.
Then, any $H$-linear cocycle $c\colon D_A[n-1]\to A[1]$
may be considered as lying in $Z^0{\mathrm{Hom}}_{A^e}( D_A[n-1],\Lambda[1])$ and, as
such,  satisfies the identity \eqref{eq:1}; In such a situation, the
corresponding cocycle  (see \ref{sec:41}) in
${\mathrm{Hom}}_{\Lambda^e}(D_\Lambda[n+d-1],\Lambda[1])$ is denoted by
$\overline c$. 
\begin{prop}
  Let $H$ be an involutive Hopf algebra which is moreover Calabi-Yau
  in dimension $d$. Let $A$ be a homologically smooth $H$-module dg
  algebra. Let $n\in \mathbb Z$. Let
  $c\in Z^0 {\mathrm{Hom}}_{A^e}(D_A[n-1],A[1])$ be $H$-linear. Denote by
  $\alpha\in {\mathrm{HH}}_{n-2}(A)$ and
  $\overline \alpha\in {\mathrm{HH}}_{n+d-2}(\Lambda)$ the cohomology
  classes of $c$ and $\overline c$, respectively. Then
  $\Pi_n(A,\alpha)$ is an $H$-module dg algebra and
  $\Pi_n(A,\alpha)\sharp H\simeq \Pi_{n+d}(A\sharp H,\overline
  \alpha)$.
\end{prop}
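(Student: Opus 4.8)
The plan is to exploit that the involutivity of $S$ together with the Calabi-Yau hypothesis makes the twist $\sigma$ trivial, so that the twisted smash products appearing in \ref{sec:27}--\ref{sec:33} collapse to honest smash products, and then to identify the abstract deformed differential $\partial$ of \ref{sec:32} with the smash-product differential of $\Pi_n(A,\alpha)\sharp H$. First I would record the numerical simplifications: since $H$ is Calabi-Yau with $S^2=\mathrm{Id}_H$, \ref{sec:char-calabi-yau} gives $\int_\ell=\epsilon$ and hence $\sigma=S^2\circ\Xi^r_{\int_\ell\circ S}=\mathrm{Id}_H$. By \ref{sec:27} this makes $\Pi_n(A)=T_A(D_A[n-1])$ a genuine $H$-module algebra with $\Pi_n(A)\sharp\,^{\sigma^*}H=\Pi_n(A)\sharp H$. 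I would also use that inside $\Lambda$ one has $h\rightharpoonup a=h_1aS(h_2)$, so that for the $H$-linear cocycle $c$ the identity \eqref{eq:1} reduces to $c(h\rightharpoonup d)=h_1c(d)S(h_2)=h\rightharpoonup c(d)$, i.e. exactly to $H$-linearity; this legitimises viewing $c$ in $Z^0\Hom_{A^e}(D_A[n-1],\Lambda[1])$ and forming $\overline c$ and $\overline\alpha$, as in the preamble to this subsection.

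For the first assertion I would note that $\Pi_n(A,\alpha)$ and $\Pi_n(A)$ share the same underlying graded algebra, hence the same $H$-action, so the module-algebra axioms of \ref{sec:cross-prod-lambd-1} hold automatically; the only point to check is that the deformed differential $d_\alpha$ is $H$-linear. On the generators this is immediate, since $d_\alpha$ restricts to $d_A$ on $A$ and to $d_{D_A[n-1]}+c$ on $D_A[n-1]$, both $H$-linear maps. As $d_\alpha$ is a derivation and the action satisfies $h\rightharpoonup(xy)=(h_1\rightharpoonup x)(h_2\rightharpoonup y)$ with $H$ concentrated in degree zero, a straightforward induction on word length propagates $H$-linearity from the generators to all of $T_A(D_A[n-1])$. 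Thus $\Pi_n(A,\alpha)$ is an $H$-module dg algebra, and $\Pi_n(A,\alpha)\sharp H$ is defined.

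For the isomorphism I would form $\Pi_n(A,\alpha)\sharp H$ and observe that its underlying graded algebra is $\Pi_n(A)\sharp H=\Pi_n(A)\sharp\,^{\sigma^*}H$, and that, because $H$ sits in degree zero with vanishing differential, its differential is $d_{\Pi_n(A,\alpha)}\otimes\mathrm{id}_H$ (a derivation for the smash-product multiplication precisely by the $H$-linearity just established). Evaluating this differential on the generators $A$, $H$ and $D_A[n-1]\otimes 1$ gives $d_A$, $0$ and $d_{D_A[n-1]}+c$ respectively, which coincide term by term with the values of the square-zero derivation $\partial$ of \ref{sec:32} on $\Lambda$ and on $D_A[n-1]$. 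Since two derivations of the same graded algebra that agree on generators are equal, the identity map is a dg-algebra isomorphism $\Pi_n(A,\alpha)\sharp H\xrightarrow{\sim}(\Pi_n(A)\sharp\,^{\sigma^*}H,\partial)$; combining this with the identification of the right-hand side with $\Pi_{n+d}(A\sharp H,\overline\alpha)$ provided by \ref{sec:33} completes the proof.

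The main obstacle is conceptual rather than computational: one must recognise that the vanishing of $\sigma$ collapses the twisted smash product $\Pi_n(A)\sharp\,^{\sigma^*}H$ to the honest one, and that the derivation $\partial$ constructed abstractly in \ref{sec:32} is then literally the smash-product differential $d_\alpha\otimes\mathrm{id}_H$. Once this identification is seen, every remaining verification reduces to comparing two derivations on their generators, which is routine.
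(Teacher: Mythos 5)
Your proof is correct and follows essentially the same route as the paper's: trivialise the twist (Calabi-Yau plus $S^2=\mathrm{Id}_H$ gives $\int_\ell=\epsilon$, hence $\sigma=\mathrm{Id}_H$), observe that $H$-linearity of $c$ makes the deformed differential $H$-equivariant so that $\Pi_n(A,\alpha)$ is an $H$-module dg algebra, and then identify the two dg algebras by comparing square-zero derivations on the generators $A$, $H$ and $D_A[n-1]$ of the common underlying graded algebra. The only cosmetic difference is that you factor through the general statement of \ref{sec:33} (itself proved by exactly this generator comparison), whereas the paper applies the isomorphism of \ref{sec:calabi-yau-compl} (part (2)) directly to the deformed differentials.
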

\begin{proof}
  Denote by $\delta$ the differential of $D_A[n-1]$. Hence, $\delta
  \otimes {\mathrm{Id}}_H$ is the differential of
  $D_{\Lambda}[n+d-1]$. Note that $\overline c$ is given by $\overline
  c(d_A\otimes \ell)=c(d_A)\otimes \ell$ for every $d_A\otimes \ell \in D_\Lambda$.

  The dg algebra $\Pi_n(A,\alpha)$ is uniquely determined by the
  following properties
  \begin{itemize}
  \item it has the same underlying graded algebra as $\Pi_n(A)$,
  \item its differential extends the one of $A$,
  \item if $d_A\in D_A[n-1]$, then its differential is
    $\delta(d_A)+c(d_A)$.
  \end{itemize}
  Since $\Pi_n(A)$ is an $H$-module dg algebra (see
  \ref{sec:calabi-yau-compl}, part (2)) and since $c$ is $H$-linear,
  then $\Pi_n(A,\alpha)$ is an $H$-module dg algebra.

  The dg algebra $\Pi_{n+d}(A\sharp H,\overline \alpha)$ is uniquely
  determined by the following properties
  \begin{itemize}
  \item it has the same underlying graded algebra as
    $\Pi_{n+d}(A\sharp H)$,
  \item its differential extends the one of $A\sharp H$,
  \item if $d_A\otimes \ell\in D_\Lambda[n+d-1]$, then its
    differential is $(\delta \otimes {\mathrm{Id}}_H)(d_A\otimes \ell) +
    \overline c(d_A\otimes \ell)=\delta(d_A)\otimes \ell +
    c(d_A)\otimes \ell$.
  \end{itemize}
  Now, the isomorphism of dg algebras $\Pi_n(A)\sharp H\to
  \Pi_{n+d}(A\sharp H)$ of \ref{sec:calabi-yau-compl} (part (2))
  extends the identity mappings $A\to A$ and $H\to H$  and it maps
  every $d_A\in D_A[n-1]$ to $d_A\otimes 1\in
  D_{\Lambda}[n+d-1]$. Hence, it also is an isomorphism of dg algebras
  $\Pi_n(A,\alpha)\sharp H\to \Pi_{n+d}(A\sharp H,\overline \alpha)$.
\end{proof}

\section{Application to Van den Bergh and Calabi-Yau duality}
\label{sec:appl-dual-non}

Assume that $A$ is a $\k$-algebra. This
section studies when $\Lambda=A\sharp
H$ has Van den Bergh duality (in Section~\ref{sec:18}) or is skew Calabi-Yau (in
Section~\ref{sec:19}). In the latter case
a Nakayama automorphism is given for
$\Lambda$. This is based on the notion of weak homological determinant
given in Section~\ref{sec:17}. This notion  extends the definition of homological
determinant when the latter is not defined properly. Finally, a
characterisation of when $\Lambda$ is Calabi-Yau 
is given in Section~\ref{sec:calabi-yau-duality} when $A$ is connected
($\mathbb N$-)graded
and $H$ is Calabi-Yau.

\subsection{The inverse dualising bimodule}
\label{sec:18}
Following \ref{sec:dualA}, when the antipode of $H$ is invertible, the
cohomology space ${\mathrm{Ext}}^i_{A^e}(A,A^e)$ is an
$H_{S^2}$-equivariant $A$-bimodule (equivalently a left
$\Delta_1$-module). Recall (\ref{sec:25}) that a Hopf algebra with Van
den Bergh duality has an invertible antipode.  Note that in the
particular case where $H$ is Calabi-Yau, it is proved in \cite[Theorem
17]{MR2115022} that if $A$ has Van den Bergh duality then so does
$\Lambda$.
\begin{prop}
   Let $A$ be an $H$-module algebra where $H$ is a Hopf algebra.
   Assume that the antipode $S$ is invertible and that both $A$ and
   $H$ are homologically smooth. Then, the following assertions are
   equivalent
   \begin{enumerate}[(i)]
   \item $A$ and $H$ have Van den Bergh duality,
   \item $\Lambda$ has Van den Bergh duality.
   \end{enumerate}
   When these conditions are satisfied and $n$, $d$ are the
   corresponding homological dimensions of $A$ and $H$, respectively,
   then $\Lambda$ has dimension $n+d$ and
   \[{\mathrm{Ext}}^{n+d}_{\Lambda^e}(\Lambda,\Lambda^e)\simeq {\mathrm{Ext}}^n_{A^e}(A,A^e)\sharp\,^{(S^{-2}\circ \Xi_{\int_\ell}^r)^{-1}}H\,.\]
\end{prop}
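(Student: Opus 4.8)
The plan is to extract both implications from the description of the inverse dualising complex of $\Lambda$ in \ref{sec:4} together with its specialisation \ref{sec:31}, feeding the output into the invertibility criteria \ref{sec:invertible-dg-lambda-1} and \ref{sec:invertible-dg-lambda-2}. Throughout I write $U_A={\rm Ext}^n_{A^e}(A,A^e)$, regarded as an $H_{S^2}$-equivariant $A$-bimodule (left $\Delta_1$-module, see the opening of \ref{sec:18}), and $V_H={\rm Ext}^d_H(\,_H\k,H)$, and I set $\sigma=(S^{-2}\circ\Xi_{\int_\ell}^r)^{-1}=S^2\circ\Xi_{\int_\ell\circ S}^r$. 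This $\sigma$ commutes with $S^2$ and satisfies $\sigma(h)_1\otimes\sigma(h)_2=S^2(h_1)\otimes\sigma(h_2)$, so it is admissible for the constructions of \ref{sec:asharp-h-bimodules} with $i=1$.

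For $(i)\Rightarrow(ii)$, assume $A$ and $H$ have Van den Bergh duality of dimensions $n$ and $d$. Since $H$ has Van den Bergh duality, \ref{sec:31} gives $\rhom_{\Lambda^e}(\Lambda,\Lambda^e)\simeq D_A\sharp\,^\sigma H[-d]$ with $D_A\in\mathcal C(\Delta_1)$ and $D_A\simeq\rhom_{A^e}(A,A^e)$. As $A$ has Van den Bergh duality, the cohomology of $D_A$ is concentrated in degree $n$ with value the invertible $A$-bimodule $U_A$, so truncation yields $D_A\simeq U_A[-n]$ in $\mathcal D(\Delta_1)$. Because $-\sharp\,^\sigma H$ is $-\otimes H$ on underlying complexes and $H$ is $\k$-flat, this functor preserves quasi-isomorphisms, whence $\rhom_{\Lambda^e}(\Lambda,\Lambda^e)\simeq(U_A\sharp\,^\sigma H)[-n-d]$. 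Finally \ref{sec:invertible-dg-lambda-1} (applied with $i=1$) shows $U_A\sharp\,^\sigma H$ is an invertible $\Lambda$-bimodule, so $\Lambda$ has Van den Bergh duality of dimension $n+d$ with ${\rm Ext}^{n+d}_{\Lambda^e}(\Lambda,\Lambda^e)\simeq U_A\sharp\,^\sigma H$, which is exactly the announced formula.

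For $(ii)\Rightarrow(i)$, I start from the general description \ref{sec:4}: $\rhom_{\Lambda^e}(\Lambda,\Lambda^e)\simeq D_A\otimes H\otimes E_H$, where $H^*(D_A)={\rm Ext}^*_{A^e}(A,A^e)$ and $H^*(E_H)={\rm Ext}^*_H(\,_H\k,H)$. Over the field $\k$, Künneth computes the total cohomology as ${\rm Ext}^*_{A^e}(A,A^e)\otimes H\otimes{\rm Ext}^*_H(\,_H\k,H)$, and since $\Lambda$ has Van den Bergh duality this is concentrated in one degree. As both outer factors are nonzero (homological smoothness) and $H\neq0$, this forces each of them to be concentrated in a single degree, say $n$ and $d$, with $n+d$ the dimension of $\Lambda$ and ${\rm Ext}^{n+d}_{\Lambda^e}(\Lambda,\Lambda^e)\cong U_A\otimes H\otimes V_H$ invertible. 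The decisive point is then to prove that $H$ has Van den Bergh duality: granting this, \ref{sec:31} identifies the invertible bimodule above with $U_A\sharp\,^\sigma H$, and \ref{sec:invertible-dg-lambda-2} immediately yields that $U_A$ is invertible as an $A$-bimodule, so $A$ has Van den Bergh duality and the proof closes (with the same dimension count and formula as above).

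The main obstacle is precisely establishing Van den Bergh duality of $H$. Since Van den Bergh duality is left–right symmetric, I apply Theorem~\ref{thm1} to $H^{\rm op}$: given that $_H\k\in{\rm per}(H)$ and $S$ is invertible, $H$ has Van den Bergh duality as soon as $V_H$ is finite-dimensional, in which case it is one-dimensional by \ref{sec:suff-cond-right}. Homological smoothness of $H$ forces ${\rm gl.dim.}\,H<\infty$, so the spectral sequence of \ref{sec:suff-cond-right} is available in its left-module form; combined with the concentration of ${\rm Ext}^*_H(\,_H\k,H)$ in degree $d$ it degenerates and gives ${\rm Ext}^d_{H^{\rm op}}(V_H,H)\simeq\k$ with vanishing in other degrees, i.e. $\rhom_{H^{\rm op}}(V_H,H)\simeq\,_H\k[-d]$. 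The genuinely delicate step, which I expect to be the crux, is upgrading this to $\dim_\k V_H=1$: mere concentration in one degree does not bound the dimension (indeed $U_A\otimes H\otimes V_H$ can be finitely generated over $\Lambda^e$ for any $V_H$). The lever must be the invertibility of $U=U_A\otimes H\otimes V_H$ itself. Concretely I would use that $\Lambda$ is free as a right $H$-module, so that $A\cong\Lambda\underset{H}{\otimes}\,_H\k$ lies in ${\rm per}(\Lambda)$, and that the self-equivalence $U\underset{\Lambda}{\otimes}-$ carries this object to $U\underset{H}{\otimes}\,_H\k\cong U_A\otimes V_H$; forcing the latter to remain suitably finite then pins $V_H$ down to dimension one, after which the identification $U\cong U_A\sharp\,^\sigma H$ reassembles everything as in the forward implication.
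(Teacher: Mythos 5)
Your forward implication and steps (a) and (d) of the converse coincide with the paper's argument, and you have correctly isolated the crux: showing that $E={\rm Ext}^d_H(\,_H\k,H)$ is finite-dimensional, so that the Artin--Schelter lemma (\ref{sec:suff-cond-right}) and the characterisation of Van den Bergh duality for Hopf algebras can be invoked. But precisely at that point your argument has a genuine gap. You apply the equivalence $U\underset{\Lambda}{\otimes}-$ to $A\simeq\Lambda\underset{H}{\otimes}\,_H\k\in{\rm per}(\Lambda)$ and obtain $U\underset{\Lambda}{\otimes}A\simeq U_A\otimes E$, hoping that finiteness of the latter over $\Lambda$ ``pins $E$ down''. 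It does not: in the left $\Lambda$-module $U_A\otimes E$ the action of $H$ is diagonal, $h\cdot(u\otimes e)=h_1\rightharpoonup u\otimes e\leftharpoonup S^{-1}(h_2)$, so a finite generating set only shows that $E$ is finitely generated as a (twisted) $H$-module --- which is perfectly compatible with $\dim_\k E=\infty$, and perfection of $U_A\otimes E$ over $\Lambda$ does not exclude it either. Nothing in your sketch converts this into $\dim_\k E<\infty$, and that is the one statement everything else hinges on.

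The paper closes this gap by running the equivalence in the other direction, after first proving an auxiliary fact (its Step (b)): for every $M\in{\rm mod}(\Lambda)$ there is an isomorphism of left $\Lambda$-modules $D_\Lambda\underset{\Lambda}{\otimes}M\simeq(D_A\underset{A}{\otimes}M)\otimes E$ in which $A$ acts only on the factor $D_A\underset{A}{\otimes}M$ (the twisting by $H$ survives, but $A$ never touches $E$). Taking $M=D_\Lambda^{-1}\underset{\Lambda}{\otimes}A$, the left-hand side becomes $A$ itself, so $A\simeq(D_A\underset{A}{\otimes}M)^{(\dim_\k E)}$ in ${\rm mod}(A)$; since $A$ is cyclic over itself and $D_A\underset{A}{\otimes}M\neq0$, this forces $\dim_\k E<\infty$. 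The idea you are missing is exactly this arrangement: the module whose finiteness you exploit must decompose, over a ring whose action ignores $E$ (here $A$, not $\Lambda$), as a direct sum indexed by a basis of $E$, and that is achieved by feeding $D_\Lambda^{-1}\underset{\Lambda}{\otimes}A$ --- rather than $A$ --- into the equivalence. With that step restored, the remainder of your plan (the degenerate spectral sequence, the left-right symmetric form of Theorem~\ref{thm1}, then \ref{sec:31} and \ref{sec:invertible-dg-lambda-2} to get invertibility of ${\rm Ext}^n_{A^e}(A,A^e)$) goes through as in the paper.
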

\begin{proof}
  The implication $(i)\Rightarrow (ii)$ follows from  \ref{sec:5} and
  \ref{sec:invertible-dg-lambda-1}.

  Conversely, assume that $\Lambda$ has Van den Bergh duality.
  There exist maximal
  integers $n$ and $d$  such 
  that ${\mathrm{Ext}}^n_{A^e}(A,A^e)$ and ${\mathrm{Ext}}^d_H(\,_H\k,H)$ are
  nonzero. Denote ${\mathrm{Ext}}_{A^e}^n(A,A^e)$, ${\mathrm{Ext}}^d_H(\,_H\k,H)$ and ${\mathrm{Ext}}^{n+d}_{\Lambda^e}(\Lambda,\Lambda^e)$ by $D_A$, $E$ and
  $D_\Lambda$, respectively. The proof that $A$ has Van den
  Bergh duality proceeds according the following steps
  \begin{enumerate}[(a)]
  \item $\Lambda$ has Van den Bergh duality in dimension $n+d$,
    moreover ${\mathrm{Ext}}_{A^e}^i(A,A^e)\neq 0$ if and only if $i=n$, and
    ${\mathrm{Ext}}_H^j(\,_H\k,H)\neq 0$ if and only if $j=d$,
  \item given any left $\Lambda$-module $M$, there exists an
    isomorphism of left $\Lambda$-modules
    $D_{\Lambda}\underset{\Lambda}{\otimes} M \simeq
    (D_A\underset{A}{\otimes} M) \otimes E$ where the module
    structure of the right hand-side term is given by $a(d\otimes m
    \otimes e) = ad\otimes m \otimes e$ and $h(d\otimes m \otimes e) =
    h_1\rightharpoonup d \otimes S^2(h_2) m \otimes e\leftharpoonup
    S^{-1}(h_3)$,
    \item $H$ has Van den Bergh duality,
    \item $A$ has Van den Bergh duality.
    \end{enumerate}

    \textbf{Step (a) - } Taking cohomology in the description of
    ${\mathrm{RHom}}_{\Lambda^e}(\Lambda^e,\Lambda)$ (\ref{sec:4}) shows that
    $n+d$ is the largest natural integer such that ${\mathrm{Ext}}^{n+d}_{\Lambda^e}(\Lambda,\Lambda^e)\neq 0$. Therefore, $\Lambda$
    has Van den Bergh duality in dimension $n+d$. The same
    consideration then shows that ${\mathrm{Ext}}^i_{A^e}(A,A^e)=0$ if
    $i\neq n$ and ${\mathrm{Ext}}^j_H(\,_H\k,H)=0$ if $j\neq d$.
    Using \ref{sec:4}, the $\Lambda$-bimodule $D_\Lambda$ is
    identified with $D_A\otimes H \otimes E$ for the structure of
    $\Lambda$-bimodule described in \eqref{eq:11}.
    
    \textbf{Step (b) - } In view of the structure of right
    $\Lambda$-module of $D_\Lambda=D_A\otimes H\otimes E$, there are
    well-defined linear mappings $\Phi\colon (D_A\underset{A}{\otimes}
    M) \otimes E\to D_\Lambda\underset{\Lambda}{\otimes} M$ and
    $\Psi\colon D_\Lambda\underset{\Lambda}{\otimes} M \to
    (D_A\underset A \otimes M) \otimes E$ given by $\Phi(d\otimes m
    \otimes e) = (d\otimes 1\otimes e) \otimes m$ and $\Psi(d\otimes
    \ell \otimes e) \otimes m = d\otimes \ell m \otimes e$. Note the
 identity $(d\otimes \ell \otimes e) \otimes m = (d\otimes 1 \otimes
 e) \otimes \ell m$ in $D_\Lambda\underset \Lambda \otimes
 M$. Therefore $\Phi$ and $\Psi$ are inverse to each other. The claim
 of Step (b) is therefore proved by transporting the left
 $\Lambda$-module structure of $D_\Lambda\underset \Lambda \otimes M$
 to $(D_A\underset A \otimes M) \otimes E$ using $\Phi$ and $\Psi$.

 \textbf{Step (c) - } Let $M=D_\Lambda^{-1}\underset \Lambda\otimes A\in
 {\mathrm{mod}}(\Lambda)$. From the previous step, it follows that $A\simeq
 (D_A\underset A\otimes M)^{({\mathrm{dim}}_\k\,E)}$ in ${\mathrm{mod}}(A)$. Thus, ${\mathrm{dim}}_\k\,E<\infty$. Using
 \ref{sec:suff-cond-right}, \ref{sec:25} and step (a) yields that $H$ has Van
 den Bergh duality in dimension $d$.

 \textbf{Step (d) - } It suffices to prove that $D_A$ is invertible as
 an $A$-bimodule. Since $H$ has Van den Bergh duality, it follows from
 \ref{sec:5} that $D_\Lambda \simeq D_A\sharp \,^\sigma H$ where
 $\sigma = (S^{-2}\circ \Xi_{\int_\ell}^r)^{-1}$. Now, applying
 \ref{sec:invertible-dg-lambda-2} yields the desired conclusion.
\end{proof}

More explicitly, ${\mathrm{Ext}}_{\Lambda^e}^{n+d}(\Lambda,\Lambda^e)$ is
isomorphic to the vector space ${\mathrm{Ext}}_{A^e}^n(A,A^e)\otimes H$
endowed with the following structure of $\Lambda$-bimodule:
\begin{equation}
  \label{eq:12}
\begin{array}{rcl}
  a(d\otimes \ell)b
 & = &
       (ad(\ell_1\rightharpoonup b)) \otimes
       \ell_2\\
  h(d\otimes \ell)k
  & = &
        (h_1 \rightharpoonup d) \otimes (S^{-2}\circ
        \Xi^r_{\int_\ell})^{-1}(h_2) \ell k\,.
\end{array}
\end{equation}

\subsection{(Weak) homological determinants}
\label{sec:17}\label{sec:37}

Assume that the antipode of $H$ is invertible, that $A$ has Van den
Bergh duality in dimension $n$, and that
${\mathrm{Ext}}^n_{A^e}(A,A^e)$ is free of rank one in
${\mathrm{mod}}(A)$ (for instance, $A$ is skew Calabi-Yau). Weak
homological determinants are introduced here to express a Nakayama
automorphism for $\Lambda$.  When the homological determinant is
well-defined (like in \cite{MR2568355,MR1758250,MR3250287}), the two
notions coincide.

Fix a free generator $e_A$ of the left $A$-module ${\mathrm{Ext}}^n_{A^e}(A,A^e)$. It determines a unique
algebra homomorphism $\mu_A\colon A\to A$ such that the following identity
holds in ${\mathrm{Ext}}^n_{A^e}(A,A^e)$: $e_Aa=\mu_A(a)e_A$. When $A$
happens to be skew Calabi-Yau in dimension $n$,
then $\mu_A$ is a Nakayama automorphism of $A$.

Denote by $f_A$ the morphism of left
$A$-modules ${\mathrm{Ext}}^n_{A^e}(A,A^e)\to A$ defined by $f_A(e_A)=1$.
Since ${\mathrm{Ext}}^n_{A^e}(A,A^e)$ is an $H_{S^2}$-equivariant
$A$-bimodule (see \ref{sec:AdjunctionDelta}, part (5)), then
${\mathrm{Hom}}_A({\mathrm{Ext}}^n_{A^e}(A,A^e),A)$ is an $H_{S^{-2}}$-equivariant
$A$-bimodule in the sense of \ref{sec:equiv-acti-duals-1} (part (1)).


This setting is assumed until the end of the section.

\begin{ex}
  Consider the situation of the example in
  \ref{sec:running-example}. Keep the notation and material introduced
  there and in \ref{sec:dualA.2}. Then $A=\k[x_1,\ldots,x_n]$ is
  $n$-Calabi-Yau and $e_A$ may be taken equal to the cohomology class
  of the cochain
\[
\begin{array}{crcl}
  \varphi_A\colon & A\otimes \Lambda^nV \otimes A & \to & A^e \\
  & a\otimes x_1\wedge \cdots \wedge x_n \otimes b & \mapsto & a\otimes b\,.
\end{array}
\]
The Nakayama automorphism of $A$ corresponding to $e_A$ is
${\mathrm{Id}}_A$. Indeed, for a given $i\in \{1,\ldots,n\}$, the
coordinate cochain
\[
\varphi_i\colon  A\otimes \Lambda^{n-1}V \otimes A  \to  A^e
\]
associated with the term $1\otimes x_1\wedge \cdots
\widehat{x_i}\cdots \wedge x_n\otimes 1$ of the canonical basis of
$A\otimes \Lambda^{n-1}V \otimes A$ has coboundary
$(-1)^{n-1}\varphi_i\circ d_K$, which is given by
\[
\begin{array}{crcl}
  A\otimes \Lambda^n V \otimes A & \to & A\otimes A \\
  1\otimes x_1\wedge \cdots \wedge x_n \otimes 1
                                 & \mapsto &
                                             (-1)^{n+i}(x_i\otimes 1 -
                                             1\otimes x_i)\,;
\end{array}
\]
This is $(-1)^{n+i}(\varphi_A x_i -x_i \varphi_A)$; Therefore
$\varphi_A x_i$ and $x_i\varphi_A$ are cohomologous, and hence
$e_A x_i=x_i e_A$.

Given $X\in \mathfrak g$, then $X\rightharpoonup e_A$ is represented
by $\partial_X(\varphi_A)$ which is such that (see \eqref{eq:54})
\[
\begin{array}{rcl}
  \partial_X(\varphi_A)(1\otimes x_1\wedge \cdots \wedge x_n\otimes 1)
  & = &
        - \varphi_A(\partial_X(1\otimes x_1\wedge \cdots \wedge
        x_n\otimes 1)) \\
  & = &
        - \sum_i \partial_X(x_i)_i'\otimes \partial_X(x_i)_i''\,.
\end{array}
\]
Thus,
$X\rightharpoonup e_A = - \sum_i \partial_X(v)_i''\partial_X(v)_i'\
e_A$ and, writing $\mathrm{div}$ for the usual divergence,
\begin{equation}
  \label{eq:53}
X\rightharpoonup e_A =- \mathrm{div}(\partial_X)\,e_A \,.
\end{equation}
\end{ex}

\subsubsection{}
\label{sec:34}
The definition of the weak homological determinant is made possible by
the following technical result.
\begin{lem}
In the setting introduced previously,
  \begin{enumerate}
  \item $f_A$ is a free generator of the right
    $A$-module ${\mathrm{Hom}}_A({\mathrm{Ext}}^n_{A^e}(A,A^e),A)$,
    \item Let $\lambda,w\colon H\to A$ be the mappings such that the
      following identities hold in ${\mathrm{Ext}}^n_{A^e}(A,A^e)$ and
      ${\mathrm{Hom}}_A({\mathrm{Ext}}^n_{A^e}(A,A^e),A)$, respectively
      \[
      \left\{
        \begin{array}{rcl}
          h \rightharpoonup e_A & = & \lambda(h) e_A \\
          h \rightharpoonup f_A & = & f_A w(h)\,.
        \end{array}
      \right.
      \]
      Then, they are connected by the following relations
      \[
      \left\{
        \begin{array}{rcl}
          w(h) & = & S^{-2}(h_2) \rightharpoonup \lambda(S^{-3}(h_1))
          \\
          \lambda (h) & = & h_1\rightharpoonup (w(S^3(h_2)))\,.
        \end{array}
        \right.
        \]
        \item $\lambda$ satisfies the identity $\lambda(hk) = (h_1
          \rightharpoonup \lambda(k)) \lambda(h_2)$ in $A$.
        \item $w$ satisfies the identity $w(hk)=w(h_1)
          (S^{-2}(h_2)\rightharpoonup w(k))$ in $A$.
  \end{enumerate}
\end{lem}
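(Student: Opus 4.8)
The plan is to read off everything from two inputs: that ${\rm Ext}^n_{A^e}(A,A^e)$ is free of rank one over $A$ on the left with generator $e_A$, and the explicit description of the dual module $\Hom_A({\rm Ext}^n_{A^e}(A,A^e),A)$ coming from \ref{sec:equiv-acti-duals-1}. For the latter, recall that (with $i=1$) the $H$-action is $(h\rightharpoonup f)(d)=S^{-2}(h_2)\rightharpoonup f(S^{-3}(h_1)\rightharpoonup d)$, that the $A$-bimodule structure satisfies $(afb)(d)=f(da)b$, and that this makes $\Hom_A({\rm Ext}^n_{A^e}(A,A^e),A)$ an $H_{S^{-2}}$-equivariant $A$-bimodule.

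For (1), I would note that the right action is $(f\cdot b)(d)=f(d)b$, so that evaluation at $e_A$ defines a morphism of right $A$-modules $\Hom_A({\rm Ext}^n_{A^e}(A,A^e),A)\to A$. Since every left $A$-linear map out of ${\rm Ext}^n_{A^e}(A,A^e)$ is determined by its value on the free generator $e_A$, this evaluation map is bijective; as it sends $f_A$ to $1$, the element $f_A$ is a free generator of the right $A$-module, proving (1).

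For (2), I would evaluate the identity $h\rightharpoonup f_A=f_A w(h)$ at $e_A$. The right-hand side gives $w(h)$, while the explicit action together with $h\rightharpoonup e_A=\lambda(h)e_A$ and the left $A$-linearity of $f_A$ give $(h\rightharpoonup f_A)(e_A)=S^{-2}(h_2)\rightharpoonup\lambda(S^{-3}(h_1))$; this is the first relation. The second relation I would obtain by substitution: replacing the argument by $S^3(h_2)$ in the first relation and using that odd powers of $S$ reverse the comultiplication (so that $\Delta(S^3(h_2))=S^3(h_3)\otimes S^3(h_2)$) yields $w(S^3(h_2))=S(h_2)\rightharpoonup\lambda(h_3)$; applying $h_1\rightharpoonup(-)$ and collapsing by the three-fold antipode identity $h_1S(h_2)\otimes h_3=1\otimes h$ then gives $h_1\rightharpoonup w(S^3(h_2))=\lambda(h)$.

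For (3), I would expand $(hk)\rightharpoonup f_A$ in two ways. Directly it is $f_A w(hk)$. Using the module axiom and the $H_{S^{-2}}$-equivariance identity \eqref{eq:33} applied to $f_A w(k)$ (with trivial left factor, so that $h_1\rightharpoonup 1=\epsilon(h_1)$), it is $h\rightharpoonup(f_A w(k))=(h_1\rightharpoonup f_A)(S^{-2}(h_2)\rightharpoonup w(k))=f_A\big(w(h_1)(S^{-2}(h_2)\rightharpoonup w(k))\big)$. Comparing the two expressions and cancelling the free generator $f_A$ from part (1) gives (3). The only genuinely delicate point throughout is the Sweedler bookkeeping with the powers of $S$ in (2)---in particular getting the comultiplication of $S^3(h_2)$ right and applying the antipode axiom in its contracted three-fold form; the rest is an unwinding of the definitions recalled above.
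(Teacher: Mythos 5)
Your proposal is correct and follows essentially the same route as the paper: part (1) from freeness of $e_A$ via evaluation, part (2) by evaluating $h\rightharpoonup f_A=f_Aw(h)$ at $e_A$ and then substituting $S^3(h_2)$ into the resulting relation (with the same Sweedler bookkeeping, since the paper's intermediate step is exactly $w(S^3(h_2))=S(h_2)\rightharpoonup\lambda(h_3)$), and part (3) by computing $(hk)\rightharpoonup f_A$ in two ways using the $H_{S^{-2}}$-equivariance of $\Hom_A({\rm Ext}^n_{A^e}(A,A^e),A)$ and cancelling the free generator $f_A$. The only differences are cosmetic: you spell out part (1) and the final cancellation in part (3), which the paper leaves implicit.
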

\begin{proof}
  (1) follows from the definition of $f_A$.


  (2) Since ${\mathrm{Ext}}_{A^e}^n(A,A^e)$ is free of rank one in
  ${\mathrm{mod}}(A)$, the mappings $\lambda$ and $\mu$ exist and are unique.
  The third equality is due to the  following computation
  \[
  \begin{array}{rcl}
    (h\rightharpoonup f_A)(e_A)
    & = &
          (f_A w(h))(e_A) = f_A(e_A)w(h)=w(h)\\
    & \underset{~\ref{sec:equiv-acti-duals-1}\  part\ (1)}
      = &
          S^{-2}(h_2)\rightharpoonup
          f_A(S^{-3}(h_1)\rightharpoonup
          e_A) \\
    & = &
          S^{-2}(h_2)\rightharpoonup f_A(\lambda(S^{-3}(h_1))e_A) \\
    & = &
          S^{-2}(h_2) \rightharpoonup \lambda(S^{-3}(h_1))\,.
  \end{array}
  \]

  The fourth equality is due to the following computation
  \[
    h_1\rightharpoonup w(S^3(h_2)) =
    h_1\rightharpoonup (S(h_2)\rightharpoonup \lambda(h_3)) =
          \lambda(h)\,.
  \]


  (3) The equality is due to the following computation,
  \[
  \begin{array}{rcl}
  (hk)\rightharpoonup e_A &=& h\rightharpoonup (k \rightharpoonup e_A)
  = h\rightharpoonup (\lambda(k) e_A) = (h_1\rightharpoonup
                              \lambda(k)) (h_2\rightharpoonup e_A) \\
    & = & ((h_1\rightharpoonup
          \lambda(k))\lambda(h_2))e_A\,.
  \end{array}
  \]

  
  (4) The equality is due to the computation below 
  \[
  \begin{array}{rcl}
    h\rightharpoonup (k\rightharpoonup f_A) 
    & = &
          (hk)\rightharpoonup f_A = f_A w(hk) \\
    & = &
          h\rightharpoonup (f_A w(k)) \\
    & \underset{~\ref{sec:equiv-acti-duals-1}\ part\ (1)}
      = &
          (h_1\rightharpoonup f_A)  (S^{-2}(h_2)\rightharpoonup
          w(k)) \\
    & = &
          f_A w(h_1) (S^{-2}(h_2) \rightharpoonup w(k))\,.          
  \end{array}
  \]
\end{proof}

\subsubsection{}
\label{sec:35}\label{sec:39}
Apply \ref{sec:34} and call the \emph{weak homological determinant} associated
with $e_A$ the linear mapping ${\mathrm{whdet}}\colon H\to A$ which
satisfies the following identity in ${\mathrm{Hom}}_A({\mathrm{Ext}}_{A^e}^n(A,A^e),A)$
\[
h\rightharpoonup f_A =f_A{\mathrm{whdet}}(h) \,.
\]
Whence the following identity in ${\mathrm{Ext}}^n_{A^e}(A,A^e)$
\begin{equation}
  \label{eq:17}
h\rightharpoonup e_A = (h_1\rightharpoonup {\mathrm{whdet}}(S^3(h_2)))e_A\,.
\end{equation}
The weak homological
determinant is compatible with the multiplicative structure of $H$ and
$A$ in the following sense
\begin{equation}
  \label{eq:19}
{\mathrm{whdet}}(hk) = {\mathrm{whdet}}(h_1) (S^{-2}(h_2)\rightharpoonup {\mathrm{whdet}}(k))\,.
\end{equation}
In view of expressing a Nakayama automorphism of $\Lambda$, denote by
$\theta_{\mathrm{whdet}}$ the mapping $H\to \Lambda$ defined by
\begin{equation}
  \label{eq:9}
\theta_{\mathrm{whdet}}(h)={\mathrm{whdet}}(S^2(h_1)) h_2\,.
\end{equation}
According to \eqref{eq:19}, this is an algebra homomorphism from $H$
to $\Lambda$.

\begin{ex}
  Consider the situation of the example in \ref{sec:running-example}
  and keep the notation and material introduced there, in
  \ref{sec:dualA.2} and in \ref{sec:37}.  Then, the mapping
  $\lambda\colon H\to A$ of \ref{sec:34} (part (2)) is given by
  $\lambda(1)=1$ and (see \eqref{eq:53})
\[
(\forall X\in \mathfrak g)\ \ \lambda(X) = - {\mathrm{div}}(\partial_X)\,.
\]
Therefore, ${\mathrm{whdet}}\colon H\to A$ is given by ${\mathrm{whdet}}(1) = 1$ and (see
\ref{sec:34}, part (2))
\begin{equation}
  \label{eq:56}
(\forall X\in \mathfrak g)\ \ {\mathrm{whdet}}(X) = {\mathrm{div}}(X)\,.  
\end{equation}
And hence $\theta_{\mathrm{whdet}}\colon H\to \Lambda$ is given by
$\theta_{\mathrm{whdet}}(1) = 1$ and
\begin{equation}
  \label{eq:55}
\theta_{\mathrm{whdet}}(X) = {\mathrm{div}}(\partial_X)+X\,.  
\end{equation}
\end{ex}

\subsubsection{}
\label{sec:42}
Distinct choices for $e_A\in {\mathrm{Ext}}^n_{A^e}(A,A^e)$ may yield
distinct weak homological determinants related to each other as
follows (because ${\mathrm{Hom}}_A(\mathrm{Ext}^n_{A^e}(A,A^e),A)$ is an
$H_{S^{-2}}$-equivariant $A$-bimodule, see
\ref{sec:equiv-acti-duals-1}).

\begin{lem}
  Let $e_A'$ be free generator of the left $A$-module ${\mathrm{Ext}}^n_{A^e}(A,A^e)$ and let ${\mathrm{whdet}}'$ be the associated weak
  homological determinant. Let $a_0\in A^\times$ be such that
  $e_A'=a_0e_A$. Then,
  ${\mathrm{whdet}}'(h) = a_0{\mathrm{whdet}}(h_1) (S^{-2}(h_2)\rightharpoonup
  a_0^{-1})$ for all $h\in H$.
\end{lem}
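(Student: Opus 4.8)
The plan is to exploit that $\Hom_A({\rm Ext}^n_{A^e}(A,A^e),A)$ is a free right $A$-module of rank one generated by $f_A$ (see \ref{sec:34}, part (1)), together with the $H_{S^{-2}}$-equivariant $A$-bimodule structure on it provided by \ref{sec:equiv-acti-duals-1}. The first step is to locate $f_A'$, the morphism of left $A$-modules dual to $e_A'$ (with $f_A'(e_A')=1$), inside that module. Since $e_A'=a_0e_A$ and $f_A'$ is left $A$-linear, evaluating gives $f_A'(e_A)=a_0^{-1}$. Recalling that the $A$-bimodule structure is $(afb)(d)=f(da)b$, the element $f_Aa_0^{-1}$ obtained by right multiplication satisfies $(f_Aa_0^{-1})(e_A)=f_A(e_A)a_0^{-1}=a_0^{-1}$. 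As $e_A$ is a free generator of the left $A$-module ${\rm Ext}^n_{A^e}(A,A^e)$, any left $A$-linear map is determined by its value on $e_A$; hence $f_A'=f_Aa_0^{-1}$.

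Next I would compute $h\rightharpoonup f_A'$ in two ways. On the one hand, the defining identity of ${\rm whdet}'$ (see \ref{sec:35}) gives $h\rightharpoonup f_A'=f_A'\,{\rm whdet}'(h)=f_A\,(a_0^{-1}{\rm whdet}'(h))$, using associativity of the right $A$-action. On the other hand, the $H_{S^{-2}}$-equivariance of $\Hom_A({\rm Ext}^n_{A^e}(A,A^e),A)$, namely $h\rightharpoonup(afb)=(h_1\rightharpoonup a)(h_2\rightharpoonup f)(S^{-2}(h_3)\rightharpoonup b)$ from \ref{sec:equiv-acti-duals-1}, applied with $a=1$, $f=f_A$ and $b=a_0^{-1}$, together with the defining identity $h\rightharpoonup f_A=f_A\,{\rm whdet}(h)$, yields
\[
h\rightharpoonup f_A'=f_A\,{\rm whdet}(h_1)\,(S^{-2}(h_2)\rightharpoonup a_0^{-1})\,.
\]

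Finally I would compare the two expressions and cancel $f_A$, which is legitimate since $f_A$ is a free generator of the right $A$-module $\Hom_A({\rm Ext}^n_{A^e}(A,A^e),A)$ by \ref{sec:34}, part (1). This produces $a_0^{-1}{\rm whdet}'(h)={\rm whdet}(h_1)(S^{-2}(h_2)\rightharpoonup a_0^{-1})$, and left multiplication by $a_0$ gives exactly \eqref{eq:8}. I expect no genuine obstacle: the only points demanding care are the correct power $S^{-2}$ in the equivariance identity for the dual module and the distinction between the left and right $A$-actions on $\Hom_A(\cdot,A)$, both of which are already recorded in \ref{sec:equiv-acti-duals-1} and \ref{sec:34}.
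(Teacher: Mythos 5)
Your proposal is correct and follows essentially the same route as the paper: identify $f_A'=f_Aa_0^{-1}$ using freeness of $e_A$, compute $h\rightharpoonup f_A'$ via the $H_{S^{-2}}$-equivariance of $\Hom_A({\rm Ext}^n_{A^e}(A,A^e),A)$ and the defining identities of ${\rm whdet}$ and ${\rm whdet}'$, then cancel the free generator. The only difference is cosmetic (the paper writes $f_A=f_A'a_0$ and cancels $f_A'$, while you cancel $f_A$).
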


\subsubsection{}
\label{sec:36}

If $A$ is connected graded and $e_A$ is chosen to be homogeneous, then
$\mathrm{whdet}$ is equal to the homological determinant already
developed in \cite{MR1758250,MR2568355,MR3250287}. Besides, distinct
choices for $e_A$ (with the homogeneity requirement) yield the same
homological determinants and the same Nakayama automorphisms of
$A$. See \cite[Definition 3.7 and Remark 3.8]{MR3250287} for details,
keeping in mind that $R^d\Gamma_{\mathfrak m_A}(A)^*$ there is
${\mathrm{Hom}}_A(\mathrm{Ext}^n_{A^e}(A,A^e),A)$ here.

In general, $\k\cdot e_A$ is an $H$-submodule of
${\mathrm{Ext}}^n_{A^e}(A,A^e)$ if and only if $\lambda$ and
$\mathrm{whdet}$ take values in $\k$. In this case, and following the
spirit of \cite[Definition 3.7]{MR3250287}, the action of $H$ on $A$
is said to \emph{have a homological determinant} and $\mathrm{whdet}$
is denoted by $\mathrm{hdet}$. In particular,
$\mathrm{hdet}\colon H \to \k$ is an algebra homomorphism (see
\eqref{eq:19}), $\mathrm{hdet} \circ S^2 = \mathrm{hdet}$, the
identity $h\rightharpoonup e_A= {\mathrm{hdet}}(S(h))e_A$ holds in
$\mathrm{Ext}^n_{A^e}(A,A^e)$, and
$\theta_{\mathrm{hdet}}=\Xi^{\ell}_{\mathrm{hdet}}$.

\subsection{Nakayama automorphisms of $A\sharp H$}
\label{sec:19}

This section gives necessary and sufficient conditions for $\Lambda=A\sharp H$ to be
skew Calabi-Yau. In such a case, it gives a Nakayama automorphism for $\Lambda$.
Recall that the antipode is invertible as soon as $H$ has Van den
Bergh duality (\ref{sec:25}).

\subsubsection{}
\label{sec:19-1}

The following result gives a sufficient condition for $\Lambda$ to be
skew Calabi-Yau. Note that it was first established in \cite[Theorem
4.1]{MR3250287} assuming that $A$ is connected graded and $H$ is
finite dimensional. See also \cite{MR3575984} for similar conclusions
about Hopf Galois objects of skew Calabi-Yau Hopf algebras.

\begin{prop}
  Let $A$ be an $H$-module algebra where $H$ is a Hopf algebra. Assume
  that $A$ and $H$ are skew Calabi-Yau in dimension $n$ and $d$,
  respectively. Then, $\Lambda=A\sharp H$ is skew Calabi-Yau in
  dimension $n+d$ and admits a Nakayama automorphism $\mu_{\Lambda}$
  given by
\begin{equation}
  \label{eq:18}
\mu_{\Lambda}=\mu_A\sharp (\theta_{\mathrm{whdet}}\circ \mu_H)
\end{equation}
where $\mu_A$ is the Nakayama
automorphism of $A$  and ${\mathrm{whdet}}\colon H\to A$ is the  weak homological
determinant associated to any generator of ${\mathrm{Ext}}^n_{A^e}(A,A^e)$
in ${\mathrm{mod}}(A)$,
and  $\mu_H=S^{-2}\circ \Xi_{\int_{\ell}}^r$.

If, moreover, the action of $H$ on $A$ has a homological determinant
(\ref{sec:36}), 
then
\begin{equation}
  \label{eq:20}
  \mu_{A\sharp H} = \mu_A\sharp ( \Xi_{\mathrm{hdet}}^\ell\circ \mu_H)\,.
\end{equation}
\end{prop}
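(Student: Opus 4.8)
The plan is to realise $A\sharp H$ as skew-Calabi-Yau by pinning down its inverse dualising bimodule and recognising it as a twist of $\Lambda$ by the asserted automorphism. Since $A$ and $H$ are skew-Calabi-Yau, the bimodules $\mathrm{Ext}^n_{A^e}(A,A^e)\simeq A^{\mu_A}$ and $\mathrm{Ext}^d_H({}_H\k,H)$ are invertible, so $A$ and $H$ have Van den Bergh duality. First I would invoke the proposition of \ref{sec:18}: it gives that $\Lambda$ has Van den Bergh duality of dimension $n+d$ and that $U:=\mathrm{Ext}^{n+d}_{\Lambda^e}(\Lambda,\Lambda^e)$ is, as a $\Lambda$-bimodule, the space $D_A\otimes H$ (with $D_A:=\mathrm{Ext}^n_{A^e}(A,A^e)$) carrying the structure \eqref{eq:12}, that is $D_A\sharp\,^\sigma H$ with $\sigma=(S^{-2}\circ\Xi^r_{\int_\ell})^{-1}=\mu_H^{-1}$. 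Thus it remains only to exhibit an isomorphism of $\Lambda$-bimodules $U\simeq\Lambda^{\mu_\Lambda}$, for then $\Lambda$ is skew-Calabi-Yau of dimension $n+d$ with Nakayama automorphism $\mu_\Lambda$.

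To produce that isomorphism I would fix a free generator $e_A$ of $D_A$ in $\mathrm{mod}(A)$, determining $\mu_A$ through $e_Aa=\mu_A(a)e_A$ and the weak homological determinant $\mathrm{whdet}$ through $h\rightharpoonup f_A=f_A\,\mathrm{whdet}(h)$ as in \ref{sec:17}. Set $\omega:=e_A\otimes 1\in U$ and define $\Phi\colon\Lambda^{\mu_\Lambda}\to U$ by $\Phi(x)=x\cdot\omega$ using the left action of $\Lambda$ on $U$; this is left $\Lambda$-linear by construction, and it is a morphism of bimodules as soon as the relation $\omega\lambda'=\mu_\Lambda(\lambda')\omega$ holds in $U$ for every $\lambda'\in\Lambda$. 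Because $\mu_\Lambda$ is an algebra endomorphism and $\Lambda$ is generated by $A$ and $H$, it suffices to check this relation on $A$ and on $H$. On $A$ it is immediate from \eqref{eq:12} and $e_Aa=\mu_A(a)e_A$, giving $\omega a=\mu_A(a)\omega=\mu_\Lambda(a)\omega$. On $H$ one computes $\omega h=e_A\otimes h$ and, writing $g=\mu_H(h)$ and $\theta_{\mathrm{whdet}}(g)=\mathrm{whdet}(S^2(g_1))g_2$, expands $\mu_\Lambda(h)\omega=\theta_{\mathrm{whdet}}(g)\cdot\omega$ by means of \eqref{eq:12} and \eqref{eq:17}.

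The core of the argument is this last computation: the $A$-coefficient that appears is $\mathrm{whdet}(S^2(g_1))\bigl(g_2\rightharpoonup\mathrm{whdet}(S^3(g_3))\bigr)$, and I would collapse it by applying the multiplicativity \eqref{eq:19}, read backwards as $\mathrm{whdet}(x_1)\bigl(S^{-2}(x_2)\rightharpoonup\mathrm{whdet}(y)\bigr)=\mathrm{whdet}(xy)$ with $x=S^2$ applied to the first two legs of $g$ and $y=S^3(g_3)$, then using that $S^2$ is an algebra automorphism to rewrite $S^2(g_1)S^3(g_2)=S^2\bigl(g_1S(g_2)\bigr)$, and finally the antipode axiom $g_1S(g_2)=\epsilon(g)1$ together with $\mathrm{whdet}(1)=1$. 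The upshot is $\mu_\Lambda(h)\omega=e_A\otimes h=\omega h$, which both confirms the formula for $\mu_\Lambda$ and completes the bimodule relation. I expect this bookkeeping of iterated coproducts to be the main obstacle, the delicate point being to keep the Sweedler indices coming from $\theta_{\mathrm{whdet}}$, from the left $H$-action of \eqref{eq:12}, and from \eqref{eq:17} correctly synchronised.

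It then remains to see that $\Phi$ is bijective. Surjectivity follows from the identity \eqref{eq:44}, which writes an arbitrary $d\otimes\ell$ as an element of $\Lambda\cdot\omega$; injectivity follows by producing an explicit inverse, for which one identifies $\Phi$, after the $A$-linear isomorphism $D_A\xrightarrow{\sim}A$ sending $e_A\mapsto 1$, with the twist $a\otimes h\mapsto a\lambda(h_1)\otimes\mu_H^{-1}(h_2)$, where $h\rightharpoonup e_A=\lambda(h)e_A$. This twist is invertible because $\lambda$ is a crossed homomorphism with $\lambda(1)=1$ that is convolution-invertible, equivalently because $U$ is already known to be an invertible $\Lambda$-bimodule by \ref{sec:invertible-dg-lambda-1}. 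Hence $U\simeq\Lambda^{\mu_\Lambda}$; by the lemma of \ref{sec:dual-cond-dg-2} the endomorphism $\mu_\Lambda$ is then automatically an automorphism, so $\Lambda$ is skew-Calabi-Yau of dimension $n+d$ with Nakayama automorphism $\mu_\Lambda=\mu_A\sharp(\theta_{\mathrm{whdet}}\circ\mu_H)$. Finally, when the action admits a homological determinant, $\mathrm{whdet}$ takes scalar values and $\theta_{\mathrm{whdet}}=\Xi^\ell_{\mathrm{hdet}}$ by \ref{sec:36}, yielding $\mu_\Lambda=\mu_A\sharp(\Xi^\ell_{\mathrm{hdet}}\circ\mu_H)$.
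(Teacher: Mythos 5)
Your proposal follows the paper's own strategy almost step for step: invoke the result of \ref{sec:18} to identify ${\rm Ext}^{n+d}_{\Lambda^e}(\Lambda,\Lambda^e)$ with ${\rm Ext}^n_{A^e}(A,A^e)\sharp\,^{\mu_H^{-1}}H$, take $\omega=e_A\otimes 1$, and verify the twisting relation $\omega\lambda=\mu_\Lambda(\lambda)\omega$. Your central computation is correct, and it is essentially the mirror image of the paper's: the paper expands $(e_A\otimes 1)(ah)$ via \eqref{eq:44} and concludes with \eqref{eq:17}, whereas you expand $\mu_\Lambda(h)\omega$ via \eqref{eq:12} and \eqref{eq:17} and collapse the $A$-coefficient with \eqref{eq:19} and the antipode axiom; both rest on the same identities. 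The treatment of the homological-determinant case via \ref{sec:36} also agrees with the paper.

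There is, however, one step that fails as written: the claim that ``by the lemma of \ref{sec:dual-cond-dg-2} the endomorphism $\mu_\Lambda$ is then automatically an automorphism.'' That lemma presupposes that the bimodule in question is already known to be isomorphic to $\Lambda^\nu$ for \emph{some automorphism} $\nu$; here you only know $U\simeq\Lambda^{\mu_\Lambda}$ with $\mu_\Lambda$ an algebra endomorphism, so invoking the lemma is circular. The paper closes this point differently: it shows that $e_A\otimes 1$ is a free generator of $U$ on \emph{both} sides --- on the right because $(e_A\otimes 1)(bk)=\mu_A(b)e_A\otimes k$ and $\mu_A$ is bijective ($A$ being skew-Calabi-Yau), and on the left via the isomorphism $D_A\sharp\,^{\sigma}H\simeq H^{\sigma^{-1}}\sharp D_A$ of \ref{sec:45}. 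Then both $\lambda\mapsto\lambda\omega$ and $\lambda\mapsto\omega\lambda=\mu_\Lambda(\lambda)\omega$ are bijective, which forces $\mu_\Lambda$ to be bijective. (Alternatively, since $U$ is invertible by \ref{sec:18}, Morita theory --- for an invertible bimodule the right action induces an isomorphism $\Lambda\to{\rm End}_\Lambda({}_\Lambda U)^{\rm op}$ --- also yields that $\mu_\Lambda$ is an automorphism; but this has to be argued, it cannot be read off from \ref{sec:dual-cond-dg-2}.) A related, smaller imprecision: you justify checking the relation only on $A$ and $H$ ``because $\mu_\Lambda$ is an algebra endomorphism,'' which is not known a priori. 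The clean argument is that left-freeness of $\omega$ defines an algebra endomorphism $\nu$ by $\omega\lambda=\nu(\lambda)\omega$, and $\nu$ agrees with $\mu_\Lambda$ on $A$ and on $H$, hence everywhere, since $\mu_\Lambda(ah)=\mu_\Lambda(a)\mu_\Lambda(h)$ holds by the very definition of the $\sharp$ notation.
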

\begin{proof}
First, note (\ref{sec:18}) that $\Lambda$ has Van den Bergh duality
in dimension $n+d$ and ${\mathrm{Ext}}_{\Lambda^e}^{n+d}(\Lambda,\Lambda^e)\simeq {\mathrm{Ext}}_{A^e}^n(A,A^e)\sharp\,^{\mu_H^{-1}}H$. Let $e_A\in {\mathrm{Ext}}_{A^e}^n(A,A^e)$ be a free  generator in ${\mathrm{mod}}(A)$ satisfying the
identity $e_Aa=\mu_A(a)e_A$. 
Since ${\mathrm{Ext}}^{n+d}_{\Lambda^e}(\Lambda,\Lambda^e)\simeq {\mathrm{Ext}}^n_{A^e}(A,A^e)\sharp \,^{\mu_H^{-1}}H$ in ${\mathrm{mod}}(\Lambda^{\mathrm{op}})$, then $e_A\otimes 1$ is a free generator of
${\mathrm{Ext}}^{n+d}_{\Lambda^e}(\Lambda,\Lambda^e)$ in ${\mathrm{mod}}(\Lambda^{\mathrm{op}})$. Since ${\mathrm{Ext}}_{\Lambda^e}^{n+d}(\Lambda,\Lambda^e)\simeq H^{\mu_H}\sharp {\mathrm{Ext}}^n_{A^e}(A,A^e)$ in ${\mathrm{mod}}(\Lambda)$ (see \ref{sec:45}),
then $e_A\otimes 1$ is also a free generator of ${\mathrm{Ext}}^{n+d}_{\Lambda^e}(\Lambda,\Lambda^e)$ in ${\mathrm{mod}}(\Lambda)$.
Hence, in order to prove the result, it
suffices to show that the mapping $\mu_{\Lambda}\colon \Lambda\to
\Lambda$  satisfies the
following identity in ${\mathrm{Ext}}_{A^e}^n(A,A^e)\sharp\,^{\mu_H^{-1}}H$
\[
(e_A\otimes 1) (ah) = \mu_{\Lambda}(ah) (e_A\otimes 1)\,.
\]
This is done in the following computation
\[
\begin{array}{rcl}
  (e_A\otimes 1) (ah)
  & = &
        (e_A a\otimes h) \\
  & = &
        \mu_A(a)(e_A\otimes h) \\
  & \underset{~\eqref{eq:44}}=
      &
        \mu_A(a) \mu_H(h_2) (S^{-3}(h_1)\rightharpoonup e_A\otimes 1) \\
  & 
    = &
        \mu_A(a)\mu_H(h)_2 (S^{-1}(\mu_H(h)_1)\rightharpoonup e_A\otimes
        1) \\
  & \underset{~\eqref{eq:17}}{=} &
                                   \mu_{\Lambda}(ah) (e_A\otimes 1)\,.
\end{array}
\]
The description of $\mu_{A\sharp H}$ when the action of $H$ on $A$ has
a homological determinant follows from the discussion in \ref{sec:36}.
\end{proof}

\begin{ex}
Let $A=\k[x_1,\ldots,x_n]$ and let $\mathfrak g$ be a $d$-dimensional
Lie algebra. Let $\mathfrak g\to {\mathrm{Der}}_\k(A)$ be a Lie algebra
homomorphism inducing an action of $H=\mathcal U(\mathfrak g)$ on
$A$. Denote by $\partial_X\colon A\to A$ the derivation associated
with $X$ for every $X\in \mathfrak g$.  Then, (see \cite[Corollary
2.2]{MR1762922} or \eqref{eq:50}) $\mu_H$ is given by
\[
\mu_H(X) = X + {\mathrm{Tr}}({\mathrm{ad}}_X)\,.
\]
And $\Lambda=A\sharp H$ has a Nakayama automorphism
$\mu_\Lambda\colon \Lambda \to \Lambda$ such that, for all $a\in A$
and $X\in \mathfrak g$ (see \ref{sec:37} and \eqref{eq:55})
\begin{equation}
  \label{eq:57}
  \begin{array}{rcl}
    \mu_\Lambda(a) & = & a \\
    \mu_\Lambda(X) & = & X + {\mathrm{div}}(\partial_X) + {\mathrm{Tr}}({\mathrm{ad}}_X)\,.
  \end{array}
\end{equation}
Note that $\mu_\Lambda(X)$ need not lie in $H$.
\end{ex}

\subsubsection{}
\label{sec:19-2}
The following result is a partial converse to the implication proved
in \ref{sec:19-1}.
\begin{prop}
  Let $H$ be a Hopf algebra with invertible antipode. Let $A$ be an
  $H$-module algebra. Assume that $A$ and $H$ are homologically
  smooth and that $A\sharp H$ is skew Calabi-Yau. Then, $H$ is
  skew Calabi-Yau. If, moreover, the action of $H$ on $A$ has a
  homological determinant, then $A$ is skew Calabi-Yau as well.
\end{prop}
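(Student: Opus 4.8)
The plan is to read off both conclusions from the Van den Bergh duality of $\Lambda$ together with the analysis of $\rhom_{\Lambda^e}(\Lambda,\Lambda^e)$ carried out in \ref{sec:4}, \ref{sec:5} and \ref{sec:18}. First I would observe that a skew-Calabi-Yau algebra has Van den Bergh duality, since the bimodule $A^{\mu}$ is invertible; hence $\Lambda$ has Van den Bergh duality. As $S$ is invertible and both $A$ and $H$ are homologically smooth, the Proposition in \ref{sec:18} applies and forces both $A$ and $H$ to have Van den Bergh duality. Finally, the equivalence ``$(i)\Leftrightarrow(iii)$'' of \ref{sec:25} turns Van den Bergh duality of $H$ into the assertion that $H$ is skew-Calabi-Yau. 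This settles the first statement.

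For the second statement I would start from the fact, already available from the proof of \ref{sec:18} (Step (d), via \ref{sec:invertible-dg-lambda-2}), that $D_A := {\rm Ext}^n_{A^e}(A,A^e)$ is invertible as an $A$-bimodule. The hypothesis that the action of $H$ on $A$ has a homological determinant places us in the setting of \ref{sec:17}: the bimodule $D_A$ is free of rank one as a left $A$-module, say with free generator $e_A$, and one has the algebra homomorphism $\mu_A\colon A\to A$ determined by $e_Aa=\mu_A(a)e_A$. The map $A^{\mu_A}\to D_A$, $x\mapsto xe_A$, is then a bijective morphism of $A$-bimodules (left $A$-linearity is clear, and right $A$-linearity uses $\mu_A(b)e_A=e_Ab$), hence an isomorphism, so that $D_A\simeq A^{\mu_A}$ in ${\rm mod}(A^e)$.

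There remains to upgrade $\mu_A$ from an algebra endomorphism to an automorphism, and this is where the invertibility of $D_A$ is essential. Since $D_A\simeq A^{\mu_A}$ is invertible, the endofunctor $A^{\mu_A}\underset{A}{\otimes}-$ of ${\rm mod}(A)$ is an equivalence; but this functor is the extension of scalars along $\mu_A$, which is an equivalence only when $\mu_A$ is bijective. (Equivalently, once $D_A$ is known to be of the form $A^{\mu_A}$, one may invoke the argument of \ref{sec:dual-cond-dg-2}.) Hence $\mu_A\in{\rm Aut}_{\k-{\rm alg}}(A)$, so $D_A\simeq A^{\mu_A}$ with $\mu_A$ an automorphism, which is exactly the assertion that $A$ is skew-Calabi-Yau.

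The main obstacle is precisely this last promotion step: the homological determinant hypothesis only provides one-sided freeness of $D_A$, producing a priori a mere algebra homomorphism $\mu_A$, and without invertibility such a $\mu_A$ need not be surjective. I expect the remaining bookkeeping to be routine once it is granted that an invertible $A$-bimodule isomorphic to some $A^{\mu}$ (for an endomorphism $\mu$) forces $\mu$ to be an automorphism; the only care needed is to read the homological determinant hypothesis as supplying the freeness of $D_A$ as a one-sided module that underlies the constructions of \ref{sec:17}, the scalar-valuedness of the determinant itself being inessential for the bare skew-Calabi-Yau conclusion.
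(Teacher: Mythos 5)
Your proposal is correct, and for the first assertion it follows the paper's own path (skew-Calabi-Yau implies Van den Bergh duality for $\Lambda$, then \ref{sec:18} forces Van den Bergh duality for $A$ and $H$, then \ref{sec:25} upgrades this to $H$ being skew-Calabi-Yau). For the second assertion you take a genuinely different route. The paper works at the level of $\Lambda$: it reuses the computation of \ref{sec:19-1} to show that $e_A\otimes 1$ is a free generator of ${\rm Ext}^{n+d}_{\Lambda^e}(\Lambda,\Lambda^e)$ in ${\rm mod}(\Lambda)$ satisfying $(e_A\otimes 1)\lambda=\bigl(\mu_A\sharp(\Xi^\ell_{\rm hdet}\circ\mu_H)\bigr)(\lambda)(e_A\otimes 1)$, applies the lemma of \ref{sec:dual-cond-dg-2} to that $\Lambda$-bimodule (legitimate there, because $\Lambda$ being skew-Calabi-Yau supplies the automorphism $\nu$ that the lemma requires), concludes that $\mu_A\sharp(\Xi^\ell_{\rm hdet}\circ\mu_H)$ is an automorphism of $\Lambda$, and then extracts bijectivity of $\mu_A$ (injectivity by restriction, surjectivity by composing with ${\rm Id}_A\otimes\epsilon$ --- a step that genuinely uses that ${\rm hdet}$ is scalar-valued). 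You instead stay at the level of $A$: invertibility of ${\rm Ext}^n_{A^e}(A,A^e)$ (available once $A$ has Van den Bergh duality) together with one-sided freeness gives ${\rm Ext}^n_{A^e}(A,A^e)\simeq A^{\mu_A}$ for the endomorphism $\mu_A$, and an invertible bimodule of the form $A^{\mu}$ forces $\mu$ bijective. That fact is true, and although you leave it as granted, its proof is three lines: $A^{\mu}\otimes_A-$ is extension of scalars along $\mu$, left adjoint to restriction; if it is an equivalence, the unit of the adjunction is an isomorphism, and evaluated at the regular module the unit identifies (via $x\otimes y\mapsto x\mu(y)$) with $\mu\colon A\to\,_\mu A$ itself. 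Your route is more elementary (no smash-product computation), and, as you observe, it never uses scalar-valuedness of the determinant --- only the rank-one freeness underlying a \emph{weak} homological determinant --- so it actually yields a slightly stronger conclusion than the paper states; what it does not deliver is the paper's by-product, the explicit Nakayama automorphism $\mu_A\sharp(\Xi_{\rm hdet}^\ell\circ\mu_H)$ of $\Lambda$. One caveat: your parenthetical claim that one may ``invoke the argument of \ref{sec:dual-cond-dg-2}'' is off, since both the statement and the proof of that lemma presuppose an isomorphism $D\simeq A^\nu$ with $\nu$ already an \emph{automorphism}, which is precisely what is unavailable for $A$ at that point; drop the aside and rely on your adjunction argument, which is self-contained.
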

\begin{proof}
  Note that $A$ and $H$ have Van den Bergh duality, say in dimension
  $n$ and $d$, respectively, and hence the dimension of
  $\Lambda$ (as a skew Calabi-Yau algebra) is $n+d$ (see
  \ref{sec:18}). In particular,
  $H$ is skew Calabi-Yau (see \ref{sec:25}). Let $\mu_H=S^{-2}\circ
  \Xi_{\int_\ell}^r$.
  Then, ${\mathrm{Ext}}^{n+d}_{\Lambda^e}(\Lambda,\Lambda^e)\simeq {\mathrm{Ext}}^n_{A^e}(A,A^e)\sharp\,^{\mu_H^{-1}}H$ in ${\mathrm{mod}}(\Lambda^e)$ (see \ref{sec:18}).

  Applying $-\underset\Lambda \otimes A\colon {\mathrm{mod}}(\Lambda)\to
  {\mathrm{mod}}(A)$ to the free of rank one left $\Lambda$-module ${\mathrm{Ext}}^{n+d}_{\Lambda^e}(\Lambda,\Lambda^e)$ yields that ${\mathrm{Ext}}^n_{A^e}(A,A^e)$ is free of rank one in ${\mathrm{mod}}(A)$ (see
  \ref{sec:18}). Therefore, a weak homological determinant is
  defined (in the sense of \ref{sec:34}).

  Assume that a homological determinant ${\mathrm{hdet}}\colon H\to \k$
  exists (in the sense of \ref{sec:36}), say associated to a free
  generator $e_A\in {\mathrm{Ext}}^n_{A^e}(A,A^e)$ in ${\mathrm{mod}}(A)$. In particular,
  $\theta_{\mathrm{hdet}}=\Xi_{\mathrm{hdet}}^\ell$. Let $\mu_A\colon A\to A$ be
  the algebra homomorphism such that the identity $e_Aa=\mu_A(a)e_A$
  holds in ${\mathrm{Ext}}^n_{A^e}(A,A^e)$.
  In order to prove that $A$ is skew Calabi-Yau, it is convenient to
  prove that $\mu_A$ is an automorphism of $A$. For this purpose, the
  following arguments first prove that $\mu_A\sharp (\Xi_{\mathrm{hdet}}^\ell\circ \mu_H)$ is an automorphism of $\Lambda$.
  Part of the considerations made
  in the proof of \ref{sec:19-1} are valid in the present situation. In
  particular, $e_A\otimes 1$ is a free generator of ${\mathrm{Ext}}^n_{A^e}(A,A^e) \sharp\, ^{\mu_H}H$ in ${\mathrm{mod}}(\Lambda)$, and the following identity holds true (for
  $\lambda\in \Lambda$)
  \[
  (e_A\otimes 1) \lambda = (\mu_A\sharp (\Xi_{\mathrm{hdet}}^\ell\circ
  \mu_H))(\lambda) (e_A\otimes 1)\,.
  \]
  Since $\Lambda$ is skew Calabi-Yau, the lemma in
  \ref{sec:dual-cond-dg-2} applies to $D={\mathrm{Ext}}_{\Lambda^e}^{n+d}(\Lambda,\Lambda^e)\simeq {\mathrm{Ext}}^n_{A^e}(A,A^e)\sharp\, ^{\mu_H^{-1}}H$. It yields that
  $\mu_A\sharp (\Xi_{\mathrm{hdet}}^\ell\circ \mu_H)\colon \Lambda \to
  \Lambda$ is an automorphism. In particular
  \begin{itemize}
  \item $\mu_A$ is an injective mapping,
  \item composing with $1\otimes \epsilon \colon \Lambda \to A$ yields
    that $\mu_A$ is a surjective mapping.
  \end{itemize}
  Thus, $\mu_A\in {\mathrm{Aut}}_{\k-{\mathrm{alg}}}(A)$. Since the mapping
  \[
  \begin{array}{rcl}
    A^{\mu_A}& \to & {\mathrm{Ext}}^n_{A^e}(A,A^e) \\
    a & \mapsto  & ae_A
  \end{array}
  \]
  is an isomorphism in ${\mathrm{mod}}(A^e)$, then $A$ is skew Calabi-Yau.
\end{proof}

\subsection{When is $A\sharp H$ Calabi-Yau?}
\label{sec:calabi-yau-duality}

The algebra $A$ is Calabi-Yau if and only
if it is skew Calabi-Yau and any Nakayama automorphism is inner
(equivalently, the identity map of $A$ is a Nakayama automorphism). Using
\ref{sec:19}, this section gives necessary and sufficient
conditions for  $\Lambda$ to be 
Calabi-Yau. Unfortunately, the techniques used here require
restrictive hypotheses on $A$, that is, $A$ is augmented or connected graded.

\subsubsection{}
\label{sec:22}

Recall that the augmented ideal of an augmented $H$-module algebra is
always assumed to be an $H$-submodule.
\begin{lem}
  Assume that $A$ is an augmented $H$-module algebra.  Denote by
  $p\colon A\to \k$ the augmentation. Keep the hypotheses made in the
  proposition in \ref{sec:19-1} as well as the notation introduced
  there. Assume, moreover, that $\Lambda$ is Calabi-Yau.  Then,
\begin{enumerate}
\item $p({\mathrm{whdet}}(h_1))\int_\ell(h_2) = \epsilon(h)$ for
  all $h\in H$;
\item if the action of $H$ admits a homological determinant,
  then $\mathrm{hdet}(h_1)\int_\ell(h_2) = \epsilon(h)$ for all
  $h\in H$;
\item if $A$ is connected graded, then
  \begin{equation}
    \label{eq:14}
    (\exists h_A\in H^\times)\ (\forall a\in A)\ \
    \mu_A(a)=h_A\rightharpoonup a \underset{\text{in $\Lambda$}} = h_Aah_A^{-1}\,.
  \end{equation}
\end{enumerate}
\end{lem}
\begin{proof}
(1) The Nakayama automorphism ($\mu_{\Lambda}$) of $\Lambda$  given in
\ref{sec:19-1} is inner because $\Lambda$ is Calabi-Yau. Hence,
  there exists $\lambda\in \Lambda^\times$  such that the
  following identities hold in $\Lambda$
  \begin{equation}
    \label{eq:13}
    \left\{
      \begin{array}{l}
        \mu_{\Lambda}(h)=\theta_{\mathrm{whdet}}\circ \mu_H(h)=\lambda h\lambda^{-1}\\
        \mu_{\Lambda}(a)=\mu_A(a)=\lambda a\lambda^{-1}\,.
        \end{array}\right.
  \end{equation}
  Denote by $\mathfrak M$ the kernel of $p$. Since
  $\mathfrak M\otimes H$ is a two-sided ideal of $\Lambda$, there
  exist $k,k'\in H$ such that $\lambda\in k + \mathfrak M\otimes H$
  and $\lambda^{-1}\in k'+\mathfrak M\otimes H$. Since $\lambda$ is
  invertible, then $k$ is invertible in $H$ and $k'$ is its
  inverse. Let $h\in H$; Then,
  $\lambda h\lambda^{-1}\in khk^{-1}+\mathfrak M\otimes H$;
  Consequently
  $(p\otimes \epsilon) (\lambda h \lambda^{-1})=\epsilon(h)$; Now,
  \eqref{eq:13} entails that
  $(p\otimes \epsilon) (\lambda h \lambda^{-1})= (p \otimes \epsilon)
  \circ \theta_{\mathrm{whdet}}\circ \mu_H(h)$;
  Note that
  $(p \otimes \epsilon) \circ \theta_{\mathrm{whdet}} = p \circ {\mathrm{whdet}}
  \circ S^2$
  (see \eqref{eq:9}); Since $\mu_H = S^{-2}\circ \Xi^r_{\int_\ell}$,
  then $\epsilon(h) = p({\mathrm{whdet}}(h_1))\,\int_\ell(h_2)$.

  
  (2) The additional hypothesis means that ${\mathrm{whdet}}$ takes its
  values in $\k$, and (by definition) ${\mathrm{hdet}}={\mathrm{whdet}}$. The
  conclusion therefore follows from (1).


   (3) Assume that $A$ is connected graded. Then, there exists
   $\ell\in \mathbb Z$ such that
   ${\mathrm{Ext}}^n_{A^e}(A,A^e)\simeq A^{\mu_A}(\ell)$ as graded
   $A$-bimodules. In particular, $\mu_A\colon A\to A$ is
   homogeneous. Let $a\in A$ be homogeneous. Since $A$ is connected
   graded, then $\Lambda^\times = H^\times$, and hence
   $\mu_A(a)=kak^{-1}=(k_1\rightharpoonup a)k_2k^{-1}$ in $\Lambda$;
   Applying ${\mathrm{Id}}_A\otimes \epsilon \colon \Lambda\to A$ yields
   the equality $\mu_A(a) = \epsilon(k^{-1})k\rightharpoonup a$. Thus
   \eqref{eq:14} holds true taking $h_A=\epsilon(k^{-1})k$.
\end{proof}

Keep the setting of part (3) in the previous result. Since $\mu_A$ is
homogeneous, then $\mu_{\Lambda}(A_n\otimes H)=A_n\otimes
H$ for every $n\in \mathbb N$. Therefore, in the previous proof, one
may assume  that $\lambda=h_A$ when $A$ is connected graded. 

\subsubsection{}
\label{sec:6}

The following result determines when $A\sharp H$ is Calabi-Yau
assuming that $H$ is Calabi-Yau and $A$ is connected graded.
\begin{thm}
  Let $H$ be a Calabi-Yau Hopf algebra. 
   Let $A$ be a connected graded  $H$-module algebra.  Let $h_0\in
  H^\times$ be such that $S^{-2}$ is the inner automorphism of $h_0$
  (see \ref{sec:char-calabi-yau}). Then, $\Lambda=A\sharp H$ is
  Calabi-Yau if and only if the following conditions hold
  \begin{enumerate}[(a)]
    \item $A$ is skew Calabi-Yau,
    \item  ${\mathrm{hdet}}=\epsilon$,
    \item $(\exists k_A\in Z(H^\times))
          (\forall a\in A)\ \ 
          \mu_A(a)=(h_0k_A)\rightharpoonup a
          \underset{\text{in $\Lambda$}} = (h_0k_A) a(h_0k_A)^{-1}$.
  \end{enumerate}
\end{thm}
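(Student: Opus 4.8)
The plan is to treat the two implications separately, using throughout that a Calabi-Yau Hopf algebra $H$ has invertible antipode and trivial left homological integral $\int_\ell=\epsilon$ (see \ref{sec:char-calabi-yau}), so that its Nakayama automorphism is $\mu_H=S^{-2}\circ\Xi^r_{\int_\ell}=S^{-2}$, which by hypothesis is the inner automorphism of $h_0\in H^\times$. I will also use that $\Lambda$ is Calabi-Yau if and only if it is skew-Calabi-Yau and some Nakayama automorphism is inner.

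For the implication $(a),(b),(c)\Rightarrow\Lambda$ Calabi-Yau, conditions (a) and the skew-Calabi-Yau property of $H$ let me apply the proposition in \ref{sec:19-1}, so $\Lambda$ is skew-Calabi-Yau with Nakayama automorphism $\mu_\Lambda=\mu_A\sharp(\Xi^\ell_{\mathrm{hdet}}\circ\mu_H)$; the homological determinant is genuine since $A$ is connected graded skew-Calabi-Yau (\ref{sec:17-4}). By (b), $\Xi^\ell_{\mathrm{hdet}}=\Xi^\ell_\epsilon=\mathrm{Id}_H$, so $\mu_\Lambda(ah)=\mu_A(a)\,S^{-2}(h)$. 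Setting $g:=h_0k_A\in H^\times\subseteq\Lambda^\times$, condition (c) gives $gag^{-1}=\mu_A(a)$ for $a\in A$, while for $h\in H$ the centrality of $k_A$ and the fact that $h_0$ implements $S^{-2}$ give $ghg^{-1}=h_0hh_0^{-1}=S^{-2}(h)$. Since $\mu_\Lambda$ is an algebra homomorphism, $\mu_\Lambda(ah)=\mu_A(a)S^{-2}(h)=(gag^{-1})(ghg^{-1})=g(ah)g^{-1}$, so $\mu_\Lambda$ is inner and $\Lambda$ is Calabi-Yau.

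For the converse, assume $\Lambda$ is Calabi-Yau, hence homologically smooth. Granting the crucial preliminary step that $A$ is homologically smooth (discussed below), I argue as follows. Since $A$ is smooth and connected graded and $\Lambda$ has Van den Bergh duality, the proposition in \ref{sec:18} (Theorem~\ref{thm3}) shows $A$ and $H$ have Van den Bergh duality; an invertible graded $A$-bimodule has the form $A^{\mu_A}(\ell)$, so $A$ is skew-Calabi-Yau, which is (a). Then the homological determinant exists (\ref{sec:17-4}) and \ref{sec:19-1} yields $\mu_\Lambda=\mu_A\sharp(\Xi^\ell_{\mathrm{hdet}}\circ S^{-2})$. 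As $\Lambda$ is Calabi-Yau, $\mu_\Lambda$ is inner, so the lemma in \ref{sec:22} applies and gives both $\mathrm{hdet}\circ\mu_H=\epsilon$ and an element $h_A\in H^\times$ with $\epsilon(h_A)=1$ that implements $\mu_\Lambda$ as an inner automorphism of $\Lambda$ and satisfies $\mu_A(a)=h_A\rightharpoonup a=h_Aah_A^{-1}$. From $\mathrm{hdet}\circ S^{-2}=\epsilon$ together with $\mathrm{hdet}\circ S^2=\mathrm{hdet}$ (\ref{sec:36}) I obtain $\mathrm{hdet}=\epsilon$, which is (b). With (b) in hand, $\mu_\Lambda(h)=S^{-2}(h)$, so comparing the two ways $h_A$ and $h_0$ implement $S^{-2}$ on $H$ gives $h_Ahh_A^{-1}=h_0hh_0^{-1}$ for all $h$; hence $k_A:=h_0^{-1}h_A\in Z(H^\times)$, and since $h_0k_A=h_A$ this is precisely (c).

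The main obstacle is the homological smoothness of $A$ in the converse, that is, a converse to \ref{sec:prel-homol-alg-3}: from $\Lambda\in\mathrm{per}(\Lambda^e)$ and $H$ homologically smooth one must recover $A\in\mathrm{per}(A^e)$. Here I would exploit that $A$ is connected graded, so that homological smoothness is equivalent to finiteness of the projective dimension of $\k_A$, and that $\Lambda$ is free as a one-sided $A$-module with bijective antipode, so that restriction $\mathcal C(\Lambda)\to\mathcal C(A)$ carries projectives to projectives while the natural left $\Lambda$-module $A\simeq\Lambda\otimes^{\mathbb L}_H\k_H$ is perfect because $\k_H\in\mathrm{per}(H)$. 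The delicate point is to transfer perfectness across the induction/restriction pair without losing finiteness when $\dim_\k H=\infty$; I expect this to require running the adjunction chain of \ref{sec:prel-homol-alg-3} in reverse, controlling $A$ as an object of $\mathcal C(\Delta_0)$ and its image in $\mathcal C(A^e)$, rather than a naive restriction argument.
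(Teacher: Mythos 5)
Your two implications are argued exactly as in the paper. For $(a)\wedge(b)\wedge(c)\Rightarrow$ Calabi-Yau, the paper also combines \ref{sec:19-1} and \ref{sec:17-4} to get the Nakayama automorphism $\mu_\Lambda=\mu_A\sharp(\Xi^{\ell}_{\rm hdet}\circ\mu_H)$ with $\mu_H=S^{-2}$, and then observes that (b) and (c) make $\mu_\Lambda$ the inner automorphism of $h_0k_A\in\Lambda^\times$. For the converse, the paper likewise invokes \ref{sec:22} together with the remark following it (so that the invertible element implementing $\mu_\Lambda$ may be taken to be $h_A\in H^\times$), deduces ${\rm hdet}=\epsilon$ using ${\rm hdet}\circ S^2={\rm hdet}$ from \ref{sec:36}, and sets $k_A=h_0^{-1}h_A$, exactly as in your last display.

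The divergence, and the genuine gap, is in how one knows that $A$ is skew-Calabi-Yau (in particular homologically smooth) once $\Lambda$ is assumed Calabi-Yau. The paper never proves, and never uses, a smoothness-descent statement: its proof opens by reducing to the case where $A$ is skew-Calabi-Yau, justified by \ref{sec:19-2} (with \ref{sec:17-4} and \ref{sec:dual-cond-dg-3}). There, skew-Calabi-Yau-ness of $A$ is extracted from skew-Calabi-Yau-ness of $\Lambda$ by applying $-\otimes_\Lambda A$ to the free rank-one left $\Lambda$-module ${\rm Ext}^{n+d}_{\Lambda^e}(\Lambda,\Lambda^e)\simeq{\rm Ext}^n_{A^e}(A,A^e)\sharp\,^{\mu_H^{-1}}H$, which yields a free generator of ${\rm Ext}^n_{A^e}(A,A^e)$ in ${\rm mod}(A)$, hence a (weak) homological determinant and, via the lemma of \ref{sec:dual-cond-dg-2}, a Nakayama automorphism; no classification of invertible graded bimodules (an extra ingredient you appeal to, not proved in the paper) and no converse of \ref{sec:prel-homol-alg-3} enters. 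Your plan instead routes through ``$\Lambda$ Calabi-Yau $\Rightarrow A\in{\rm per}(A^e)$'' followed by \ref{sec:18}, and that is precisely the step you leave unproven: restricting a perfect resolution along $A\subseteq\Lambda$ only produces projectives of infinite rank when ${\rm dim}_\k H=\infty$, so at best it bounds ${\rm pd}_A(\k_A)$, and finite projective dimension of $\k_A$ does not give $\k_A\in{\rm per}(A)$ without finite-dimensionality of the spaces ${\rm Tor}^A_i(\k,\k)$, which you do not establish; your proposed repair (``running the adjunction chain of \ref{sec:prel-homol-alg-3} in reverse'') is only a hope. So, as written, your converse direction is incomplete at an admittedly crucial point. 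Your instinct is not baseless — the paper's reduction cites \ref{sec:19-2} and \ref{sec:18}, whose hypotheses themselves include homological smoothness of $A$, so that assumption is carried silently there as well — but the paper's route shows that you should obtain (a) through the transferred freeness of ${\rm Ext}^n_{A^e}(A,A^e)$ rather than through a smoothness-descent lemma that neither you nor the paper proves.
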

\begin{proof}
  In order to prove the equivalence it may be assumed that $A$ is
  skew Calabi-Yau in the graded sense (see \ref{sec:36},
  \ref{sec:19-1}, \ref{sec:19-2} and \ref{sec:dual-cond-dg-3}). Note
  that $\int_\ell=\epsilon$ due to \ref{sec:char-calabi-yau} applied
  to $H^{\mathrm{op}}$.

Assume that $\Lambda$ is Calabi-Yau. Following  \ref{sec:22}  and the
the final remark made there, ${\mathrm{hdet}}=\epsilon$ and there exists
$h_A\in H^{\times}$ such 
that the following identities hold
\[
\begin{array}{rcll}
  \Xi_{\mathrm{hdet}}^\ell\circ \mu_H(h)  & = & h_Ahh_A^{-1}  & \text{in $H$}\\
  \mu_A(a)  &  = & h_A\rightharpoonup a & \text{in $A$} \\
  & = & h_Aah_A^{-1} & \text{in $\Lambda$}\,.
\end{array}
\]
Since ${\mathrm{hdet}}=\epsilon$ and $\mu_H$ is given by $\bullet \mapsto
h_0 \bullet h_0^{-1}$, the first identity implies that
$h_0^{-1}h_A\in Z(H^\times)$. Set $k_A=h_0^{-1}h_A$. Then, $k_A\in
Z(H^{\times})$ and $k_A$ satisfies the identities $\mu_A(a) =
(h_0k_A)\rightharpoonup a$ in $A$ and $\mu_A(a) = (h_0k_A) a
(h_0k_A)^{-1}$ in $\Lambda$.

  Conversely, assume that ${\mathrm{hdet}}=\epsilon$ and
  that there exists $k_A\in Z(H^\times)$ such that the identities
  $\mu_A(a)=(h_0k_A)\rightharpoonup a$ and $\mu_A(a) =
  (h_0k_A)a(h_0k_A)^{-1}$ hold in $A$ and $\Lambda$, respectively. Then, the Nakayama
  automorphism of $\Lambda$ given in  \ref{sec:19-1} is inner (and
  associated with $h_0k_A\in \Lambda^\times$). Therefore, $\Lambda$ is
  Calabi-Yau.
\end{proof}

\subsubsection{}
\label{sec:28}
The previous result simplifies as follows when $A$ is assumed to be
Calabi-Yau (see \cite{MR2809906,MR2905560,MR2813562}).
\begin{cor}
  Let $H$ be a Calabi-Yau Hopf algebra. Let $A$ be a connected graded
  $H$-module algebra which is moreover Calabi-Yau. The following
  assertions are equivalent
  \begin{enumerate}[(i)]
  \item $A\sharp H$ is Calabi-Yau,
  \item ${\mathrm{hdet}}=\epsilon$.
  \end{enumerate}
\end{cor}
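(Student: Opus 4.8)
The plan is to deduce the corollary from the theorem in \ref{sec:6} by specialising it to a Calabi-Yau algebra $A$. Since $A$ is Calabi-Yau it is in particular skew-Calabi-Yau, so condition (a) of that theorem holds automatically; moreover, being connected graded, its unique homogeneous Nakayama automorphism is the identity (the only homogeneous inner automorphism of a connected graded algebra is ${\rm id}_A$), so $\mu_A={\rm id}_A$ (see \ref{sec:17-4} and \ref{sec:dual-cond-dg-3}). With this, the implication $(i)\Rightarrow(ii)$ is immediate: if $\Lambda$ is Calabi-Yau, then condition (b) of \ref{sec:6} gives ${\rm hdet}=\epsilon$.

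For $(ii)\Rightarrow(i)$ I would assume ${\rm hdet}=\epsilon$ and verify condition (c) of \ref{sec:6}, after which that theorem yields the conclusion. Because $\mu_A={\rm id}_A$, condition (c) requires a central unit $k_A\in Z(H^\times)$ with $(h_0k_A)\rightharpoonup a=a$ and $(h_0k_A)a(h_0k_A)^{-1}=a$ in $\Lambda$ for every $a$. The preparatory step, and the technical heart of the argument, is to turn ${\rm hdet}=\epsilon$ together with $\mu_A={\rm id}_A$ into a usable identity on the action. Fixing a free generator $e_A$ of ${\rm Ext}^n_{A^e}(A,A^e)$, the Calabi-Yau hypothesis makes this bimodule isomorphic to $A$ with $e_A$ commuting with $A$, while ${\rm hdet}=\epsilon$ forces $h\rightharpoonup e_A=\epsilon(h)e_A$ (see \ref{sec:36}). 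Evaluating $h\rightharpoonup(a'e_Aa'')=h\rightharpoonup((a'a'')e_A)$ in two ways, once via the $H_{S^2}$-equivariant structure of ${\rm Ext}^n_{A^e}(A,A^e)$ (\ref{sec:AdjunctionDelta}, part (5)) and once via the $H$-module-algebra axiom, and cancelling the free generator $e_A$, yields the identity $h\rightharpoonup a=S^2(h)\rightharpoonup a$ for all $h\in H$ and $a\in A$.

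It is illuminating to phrase the goal in terms of the Nakayama automorphism. By \ref{sec:19-1} a Nakayama automorphism of $\Lambda$ is $\mu_\Lambda=\mu_A\sharp(\theta_{\rm whdet}\circ\mu_H)$; under the present hypotheses this collapses to $\mu_\Lambda={\rm id}_A\sharp S^{-2}$, since ${\rm hdet}=\epsilon$ gives $\theta_{\rm whdet}={\rm id}$, since $H$ being Calabi-Yau gives $\int_\ell=\epsilon$ and hence $\mu_H=S^{-2}$, and since $\mu_A={\rm id}_A$. Thus $(ii)\Rightarrow(i)$ amounts to showing that ${\rm id}_A\sharp S^{-2}$ is an inner automorphism of $\Lambda$. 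As $S^{-2}$ is conjugation by $h_0$ on $H$ (\ref{sec:char-calabi-yau}), the candidate conjugator is $h_0$ corrected by a central unit, and the identity $h\rightharpoonup a=S^2(h)\rightharpoonup a$ is exactly the statement that $a\mapsto h_0\rightharpoonup a$ commutes with the full $H$-action on $A$, which is the compatibility needed to implement $\mu_\Lambda$ by an element of $h_0 Z(H^\times)$ centralising $A$.

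The main obstacle is precisely this final step: producing the central $k_A$ and checking \emph{both} equalities of condition (c), namely that $h_0k_A$ acts on $A$ through the counit and at the same time centralises $A$ inside the (non-cocommutative) algebra $\Lambda$. The derived relation $h\rightharpoonup a=S^2(h)\rightharpoonup a$ provides the essential leverage, and the delicate point is to convert the centrality of $a\mapsto h_0\rightharpoonup a$ into a genuinely central $k_A\in H^\times$ realising $(h_0k_A)\rightharpoonup a=a$ for all $a$. Once condition (c) is in hand, the theorem in \ref{sec:6} immediately gives that $\Lambda$ is Calabi-Yau, which finishes $(ii)\Rightarrow(i)$ and hence the corollary.
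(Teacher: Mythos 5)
Your proof of $(i)\Rightarrow(ii)$ is correct and is exactly the paper's: since $A$ is connected graded and Calabi-Yau, every homogeneous Nakayama automorphism is trivial, so $\mu_A={\rm Id}_A$, and condition (b) of the theorem in \ref{sec:6} gives ${\rm hdet}=\epsilon$. The problem is the converse. You correctly reduce $(ii)\Rightarrow(i)$ to verifying condition (c) of \ref{sec:6}, and your derivation of the identity $h\rightharpoonup a=S^{2}(h)\rightharpoonup a$ (from ${\rm hdet}=\epsilon$ together with the centrality and freeness of $e_A$, using the $H_{S^2}$-equivariant structure of ${\rm Ext}^n_{A^e}(A,A^e)$) is correct, and is a point the paper leaves implicit. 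But then the proposal stops: you announce that ``the delicate point is to convert the centrality of $a\mapsto h_0\rightharpoonup a$ into a genuinely central $k_A\in H^\times$'' and that ``once condition (c) is in hand'' the theorem finishes the job. The element $k_A$ is never constructed, so $(ii)\Rightarrow(i)$ is not established; what you have written is a plan whose key step is missing. For comparison, the paper's own proof consists of the observation $\mu_A={\rm Id}_A$ followed by the citation of \ref{sec:6}, i.e.\ it silently treats condition (c) as automatic; your write-up at least makes the missing step visible, but neither text supplies it.

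You should also be aware that this is not a gap one can close by a routine verification. Your identity only says that the operator $a\mapsto h_0\rightharpoonup a$ commutes with every operator $a\mapsto h\rightharpoonup a$; it does not force any element $h_0k_A$ with $k_A$ a central unit to act as the identity on $A$, nor to commute with $A$ inside $\Lambda$ (the latter is a constraint on $\Delta(h_0k_A)$ modulo $\ker(\rho)\otimes H$, where $\rho$ is the action, and centrality of $k_A$ does not control coproducts). When $S^2={\rm Id}_H$ --- which is the case in all the previously known results cited after the corollary --- the step is trivial: take $k_A=h_0^{-1}$, so $h_0k_A=1$ and (c) holds vacuously; this is the only situation in which your plan (and the paper's one-line reduction) goes through unchanged. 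In general it fails. Take $H=U_q(\mathfrak{sl}_2)$ with $q$ generic: it is Calabi-Yau of dimension $3$, $S^{-2}$ is the inner automorphism of $K^{-1}$, and its central units are the nonzero scalars. Let it act on $A=\k[x,y]$ through the Hopf projection $U_q(\mathfrak{sl}_2)\twoheadrightarrow \k[K^{\pm1}]$, with $K\rightharpoonup x=tx$ and $K\rightharpoonup y=t^{-1}y$. Then $A$ is connected graded Calabi-Yau, the action preserves the grading, and ${\rm hdet}=\epsilon$ because the homogeneous free generator of ${\rm Ext}^2_{A^e}(A,A^e)$ is fixed ($\det{\rm diag}(t,t^{-1})=1$); yet every candidate $h_0k_A=\lambda K^{-1}$ conjugates $a\in A$ to $K^{-1}\rightharpoonup a$, which differs from $a$ when $t\neq1$. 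So condition (c) fails, the Nakayama automorphism ${\rm Id}_A\sharp S^{-2}$ of \ref{sec:19-1} is not inner, and $\Lambda$ is not Calabi-Yau although ${\rm hdet}=\epsilon$. Hence the step you postponed is exactly where the content lies, and in the stated generality it cannot be carried out at all: a correct argument needs an extra hypothesis such as $S^2={\rm Id}_H$ (under which your $k_A=h_0^{-1}$ works), and the same caveat applies to the paper's own proof of this corollary.
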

\begin{proof}
  Since $A$ is connected graded and Calabi-Yau, then $\mu_A={\mathrm{Id}}_A$. The conclusion therefore follows from \ref{sec:6}.
\end{proof}

\subsubsection{}
\label{sec:49}

Combining the theorem in \ref{sec:6} and \cite[Theorem 1.1]{MR3188338}
yields a proof of \cite[Conjecture 6.4]{MR3250287} for connected
graded Artin-Schelter regular algebras.
\begin{cor}
  Let $A$ be an augmented skew Calabi-Yau algebra with Nakayama
  automorphism $\mu_A$. Then,
  \begin{enumerate}
  \item $p\circ {\mathrm{whdet}}(\mu_A)=1$;
  \item if $A$ is connected graded and $\mu_A$ is also graded, then
    ${\mathrm{hdet}}(\mu_A)=1$.
  \end{enumerate}
\end{cor}
\begin{proof}
  Let $H=\k\mathbb Z$. It is Calabi-Yau and $\mu_A$ determines a
  structure of $H$-module augmented algebra on $A$. According to
  \cite[Theorem 1.1]{MR3188338}, $A\sharp H$ is Calabi-Yau.
  Therefore, (1) follows from \ref{sec:22} (note that
  $\int_\ell=\epsilon$ according to \ref{sec:char-calabi-yau}),
  whereas (2) follows from \ref{sec:6}.
\end{proof}

\section{Example: actions of $\mathcal U_q(\mathfrak{sl}_2)$ on the quantum
  plane}
\label{sec:exampl-acti-u_qm}

Let $q\in \mathbb C^\times$. Assume that $q$ is not a root of
unity. Let $A$ be the quantum plane $\mathbb C_q[x,y]$. Let $H$ be the
quantum enveloping algebra $\mathcal U_q(\mathfrak{sl}_2)$. Assume
that $\mathbb C_q[x,y]$ is endowed with a structure of
$\mathcal U_q(\mathfrak{sl}_2)$-module algebra. This section applies
Section~\ref{sec:19} to the computation of a Nakayama automorphism of
$\mathbb C_q[x,y]\sharp \mathcal U_q(\mathfrak{sl}_2)$. For this
purpose, Section~\ref{sec:remind-u_qm-mathbb} recalls known material
on $A$ and $H$, Section~\ref{sec:acti-u_qm-mathbb} endows the Koszul
resolution of $A$ as a bimodule over itself with an action of $H$ so
that it lies in $\mathcal C(\Delta_0)$, Section~\ref{sec:acti-u_qm-rm}
computes the resulting action of $H$ on $\mathrm{Ext}^2_{A^e}(A,A^e)$
in terms of the mapping $\lambda$ introduced in \ref{sec:34},
Section~\ref{sec:comp-weak-homol} computes the weak homological
determinant of the action of $H$ on $A$, and
Section~\ref{sec:descr-nakay-autom} computes the Nakayama
automorphism. The computations are made according to the
classification of the actions of $H$ on $A$ made in \cite{MR2789302}.

\subsection{Reminder on $\mathcal U_q(\mathfrak{sl}_2)$ and on $\mathbb
  C_q[x,y]$}
\label{sec:remind-u_qm-mathbb}

As a $\mathbb C$-algebra,  $\mathcal U_q(\mathfrak{sl}_2)$ is given by generators
$E,F,K,K^{-1}$ and relations
\[
KK^{-1}=1 = K^{-1}K,\ KEK^{-1}=q^2E,\ KFK^{-1}=q^{-2}F,\ [E,F] =
\frac{K-K^{-1}}{q-q^{-1}}\,.
\]
The comultiplication, the counit and the antipode of
$\mathcal U_q(\mathfrak{sl}_2)$ are given by
\[
\begin{array}{c}
\Delta(K) = K \otimes K,\ \Delta(E) = E \otimes K + 1 \otimes E,\
\Delta(F) = F \otimes 1 + K^{-1} \otimes F, \\
\epsilon(K) = 1,\ \epsilon(E) = 0,\ \epsilon(F) = 0, \\
S(K) = K^{-1},\ S(E) = -EK^{-1},\ S(F) = -KF\,.
\end{array}
\]
$H$ is $2$-Calabi-Yau (see \cite[Theorem 3.3.2]{MR2054387}), and hence
(see \ref{sec:char-calabi-yau} and its dual version)
\begin{equation}
  \label{eq:58}
  \int_\ell=\int_r = \epsilon\,.
\end{equation}
The quantum plane is the Koszul $\mathbb C$-algebra
$A = \mathbb C_q[x,y] = \mathbb C \langle x,y\ |\ yx =
qxy\rangle$.
Let $V=\mathbb C\cdot x\oplus \mathbb C\cdot y$. Then,
$\mathbb C_q[x,y]$ admits the following Koszul resolution as a
bimodule over itself (\cite[Proposition 4.1]{MR1252939})
\begin{equation}
  \label{eq:60}
  P\colon \ 0 \to A\otimes \Lambda^2 V \otimes A \xrightarrow{d} A \otimes V
  \otimes A \xrightarrow{d} A\otimes A \to 0
\end{equation}
where, using ``$|$'' instead of ``$\otimes$'' for the ease of
readability,
\[
  \begin{array}{rcl}
  d(1 | x | 1)
  & = &
        x | 1 -1 |x \\
  d(1 | y | 1)
  & = &
        y | 1 -1 |y \\
  d(1 | x \wedge y | 1)
  & = &
        x | y | 1 - q^{-1} | y | x - q^{-1} y | x | 1 + 1 | x | y\,.
  \end{array}
  \]
  The algebra $\mathbb C_q[x,y]$ is (connected $\mathbb N$-graded)
  Artin-Schelter regular (see \cite[p. 172]{MR917738}), and hence
  skew Calabi-Yau (see \cite[Lemma 1.2]{MR3250287}) with a Nakayama
  automorphism as follows (see \cite[p. 76]{MR3341818}). Let
  $\varphi_A \in {\mathrm{Hom}}_{A^e}(A \otimes \Lambda^2 V \otimes A, A^e)$ be
  the following $2$-cocycle whose cohomology class in
  $\mathrm{Ext}^2_{A^e}(A,A^e)$ is denoted by $e_A$
  \begin{equation}
    \label{eq:61}
\begin{array}{crcl}
  \varphi_A \colon 
  & A \otimes \Lambda^2 V \otimes A & \to & A\otimes A \\
  & 1 | x\wedge y | 1 & \mapsto & 1 \otimes 1\,.
\end{array}
\end{equation}
Then,
  $e_A$ is a free generator of the left $A$-module ${\mathrm{Ext}}_{A^e}^2(A,A^e)$.
By considering the coboundaries of the two $1$-cocycles $A \otimes V
\otimes A \to A \otimes A$ defined by
\[
\begin{array}{lll}
  \left\{
  \begin{array}{ccc}
    1 | x | 1 & \to & 1 \otimes 1 \\
    1 | y | 1 & \to & 0
  \end{array}\right.
              &
                \text{and}
                \left\{
                \begin{array}{ccc}
                  1 | x | 1 & \to & 0 \\
                  1 | y | 1 & \to & 1 \otimes 1,
                \end{array}\right.
\end{array}
\]
respectively, it appears that $e_Ax = q^{-1}x e_A$ and $e_Ay = qye_A$.
Therefore, the Nakayama automorphism $\mu_A$ of $A$ corresponding to
$e_A$ is given by
\begin{equation}
  \label{eq:59}
  \mu_A(x) = q^{-1}x \ \text{and}\ \mu_A(y) = qy\,.
\end{equation}

The actions of $\mathcal U_q(\mathfrak{sl}_2)$ on $\mathbb C_q[x,y]$
are classified into six families with parameters (see \cite[Table
1]{MR2789302} and Table~\ref{tab:1} below). The figures in the first
column of Table~\ref{tab:1} serve as an internal reference for the
corresponding action.  Except for the first and last case, the action
does not preserve the augmentation ideal of $\mathbb C_q[x,y]$.
Therefore $\mathbb C_q[x,y]\sharp \mathcal U_q(\mathfrak{sl}_2)$ may
be non augmented and non connected graded.

\begin{table}[!ht]
  \tiny 
\begin{tabularx}{\textwidth}{>{\hsize=.3\hsize}X>{\hsize=1.1\hsize}X>{\hsize=\hsize}X>{\hsize=.6\hsize}X}
\hline
case & actions of $E,F,K$ on $x,y$ & & parameters \\
\hline
\\
0 &
$\begin{array}{l}
  K \rightharpoonup x = \pm x \\
  E \rightharpoonup x = 0 \\
  F \rightharpoonup x = 0 
\end{array}$
&
$\begin{array}{l}
    K \rightharpoonup y = \pm y \\
    E \rightharpoonup y = 0 \\
    F \rightharpoonup y = 0
\end{array}$
& $\emptyset$ \\
\hline
\\
1 &
$\begin{array}{l}
  K \rightharpoonup x = qx \\
  E \rightharpoonup x = 0 \\
  F \rightharpoonup x = b_0^{-1}xy 
\end{array}$
&
$\begin{array}{l}
    K \rightharpoonup y = q^{-2}y \\
    E \rightharpoonup y = b_0 \\
    F \rightharpoonup y = -qb_0^{-1}y^2
\end{array}$
& $b_0\in \mathbb C^\times$ \\
\hline
\\
2 &
$\begin{array}{l}
  K \rightharpoonup x = q^2x \\
  E \rightharpoonup x = -qc_0^{-1}x^2 \\
  F \rightharpoonup x = c_0 
\end{array}$
&
$\begin{array}{l}
    K \rightharpoonup y = q^{-1}y \\
    E \rightharpoonup y = c_0^{-1}xy \\
    F \rightharpoonup y = 0
\end{array}$
& $c_0\in \mathbb C^\times$ \\
\hline \\
3 &
$\begin{array}{l}
  K \rightharpoonup x = q^{-2}x \\
  E \rightharpoonup x = a_0 \\
  F \rightharpoonup x = -qa_0^{-1}x^2+ty^4 
\end{array}$
&
$\begin{array}{l}
    K \rightharpoonup y = q^{-1}y \\
    E \rightharpoonup y = 0 \\
    F \rightharpoonup y = -qa_0^{-1}xy+sy^3
\end{array}$
& $
\begin{array}{ll}
  a_0\in \mathbb C^\times\\
  s,t\in \mathbb C
  \end{array}$ \\
\hline \\
4 &
$\begin{array}{l}
   K \rightharpoonup x = qx  \\
  E \rightharpoonup x = -qd_0^{-1}xy+sx^3 \\
  F \rightharpoonup x = 0 
 \end{array}$
 &
 $\begin{array}{l}
    K \rightharpoonup y = q^2y \\
    E \rightharpoonup y = -qd_0^{-1}y^2+tx^4 \\
    F \rightharpoonup y = d_0
\end{array}$
&
$\begin{array}{ll}
  d_0\in \mathbb C^\times\\
  s,t\in \mathbb C
  \end{array}$ \\
\hline \\
5 &
$\begin{array}{l}
  K \rightharpoonup x = qx \\
  E \rightharpoonup x = 0 \\
  F \rightharpoonup x = \tau^{-1}y 
\end{array}$
&
$\begin{array}{l}
    K \rightharpoonup y = q^{-1}y \\
    E \rightharpoonup y = \tau x \\
    F \rightharpoonup y = 0
\end{array}$
& $\tau \in \mathbb C^\times$ \\
\hline
\end{tabularx}
  \caption{Classification of the actions of
    $\mathcal U_q(\mathfrak{sl}_2)$ on $\mathbb C_q[x,y]$}
  \label{tab:1}
\end{table}

\subsection{Action of $\mathcal U_q(\mathfrak{sl}_2)$ on the Koszul resolution
  $\mathbb C_q[x,y]$}
\label{sec:acti-u_qm-mathbb}

\begin{lem}
  There exists an action of $\mathcal U_q(\mathfrak{sl}_2)$ on the Koszul
  resolution $P$ of $\mathbb C_q[x,y]$ such that
  \begin{itemize}
  \item $P$ is complex of $\mathcal U_q(\mathfrak{sl}_2)$-equivariant
    $\mathbb C_q[x,y]$-bimodules (or, $P\in \mathcal C(\Delta_0)$),
\item the action on $\mathbb C_q[x,y]\otimes \mathbb C_q[x,y]$ is the
  natural one ($h\rightharpoonup (a\otimes b) = h_1\rightharpoonup a
  \otimes h_2 \rightharpoonup b$),
\item if $t_1,t_2\in \mathbb C^\times$ denote the scalars such that
  $K\rightharpoonup x = t_1x$ and $K\rightharpoonup y = t_2
  y$ (see Table~\ref{tab:1}), then
  \[
  \begin{array}{rclcrcl}
    K \rightharpoonup 1|x|1 & = & t_1 |x|1
    &&
       K^{-1} \rightharpoonup 1|x|1 & = & t_1^{-1} |x|1 \\
    K \rightharpoonup 1|y|1 & = & t_2 |y|1
     &&
        K^{-1} \rightharpoonup 1|y|1 & = & t_2^{-1} |y|1 \\
    K \rightharpoonup 1 |x\wedge y |1 & = & t_1t_2 | x\wedge
                                            y |1
     &&
    K^{-1} \rightharpoonup 1 |x\wedge y |1 & = & (t_1t_2)^{-1}|
                                                 x\wedge y |1 \\
  \end{array}
  \]
\item the actions of $E$ and $F$ on $1|x|1$, $1|y|1$ and $1|x \wedge
  y|1$ are such as in Table~\ref{tab:2}.
  \end{itemize}
\begin{table}[!ht]
  \tiny
\begin{tabularx}{\textwidth}{>{\hsize=.2\hsize}X>{\hsize=1.8\hsize}X}
  \hline
  \\
  Action of Table~\ref{tab:1} & Actions of
  $E,F$ on the generators $1|x|1$, $1|y|1$ and $1| x\wedge y|1$,
  ($1|*|1$ is any such generator). \\
  \hline
  \\
  Case 0 &
  $
  \begin{array}{rclcrcl}
    E \rightharpoonup 1|*|1
    & = &
          0
    &&
      F \rightharpoonup 1|*|1
    & = &
          0
  \end{array}
  $
  \\
  \hline
  \\
  Case 1 &
  $
  \begin{array}{rcl}
    E \rightharpoonup 1|*|1
    & = &
          0
    \\
      F \rightharpoonup 1|x|1
    & = &
          b_0^{-1}(x|y|1+1|x|y)
    \\
        F \rightharpoonup 1|y|1
    & = &
          -qb_0^{-1}(y|y|1 + 1|y|y)
    \\
       F \rightharpoonup 1|x \wedge y |1
    & = &
          -b_0^{-1}qy|x \wedge y | 1
  \end{array}
  $
 \\
  \hline
  \\
  Case 2 &
  $
  \begin{array}{rcl}
    E \rightharpoonup 1|x|1
    & = &
          -qc_0^{-1}(x|x|1 + 1|x|x)
    \\
    E \rightharpoonup 1|y|1
    & = &
          c_0^{-1}(x|y|1+1|x|y)
    \\
    E \rightharpoonup 1|x \wedge y |1
    & = &
          -c_0^{-1}q|x\wedge y| x
    \\
       F \rightharpoonup 1|* |1
    & = &
          0
  \end{array}
  $
 \\
  \hline
  \\
  Case 3 &
  $
  \begin{array}{rcl}
    E \rightharpoonup 1|*|1
    & = &
          0
    \\
    F \rightharpoonup 1|x|1
    & = &
          -q a_0^{-1} (x|x|1 + 1|x|x)
          + t(y^3|y|1 + y^2|y|y + y|y|y^2 + 1|y|y^3)
    \\
    F \rightharpoonup 1|y|1
    & = &
          -q a_0^{-1}(x|y|1 + 1|x|y)
          + s (y^2|y|1 + y|y|y + 1|y|y^2)
    \\
    F \rightharpoonup 1|x \wedge y |1
    & = &
          -a_0^{-1}((q+q^3)x|x \wedge y |1
          + q^2|x \wedge y |x) \\
    &   &
          + s(
          q^2 |x \wedge y | y^2
          + y^2 |x \wedge y | 1
          +q y |x \wedge y | y)
  \end{array}
  $
 \\
  \hline
  \\
  Case 4 &
  $
  \begin{array}{rclcrcl}
    E \rightharpoonup 1|x|1
    & = &
          -qd_0^{-1}(x|y|1 + 1|x|y)
          +s(x^2|x|1 + x|x|x + 1|x|x^2)
    \\
    E \rightharpoonup 1|y|1
    & = &
          -qd_0^{-1}(y|y|1 + 1|y|y)
          +t(x^3|x|1 + x^2|x|x + x|x|x^2 + 1|x|x^3)
    \\
    E \rightharpoonup 1|x \wedge y |1
    & = &
          -d_0^{-1}(
          q^2y |x \wedge y | 1
          +q^3 |x \wedge y | y
          +q   |x \wedge y |y
          )
          \\
    &   &
          +s(
          q^2x^2 |x \wedge y | 1
          +q x |x \wedge y | x
          +1 |x \wedge y | x^2
          )
    \\
       F \rightharpoonup 1|* |1
    & = &
          0
  \end{array}
  $
 \\
  \hline
  \\
  Case 5 &
  $
  \begin{array}{rclcrcl}
    E \rightharpoonup 1|x|1
    & = &
          0
    &&
      F \rightharpoonup 1|x|1
    & = &
          \tau^{-1}|y|1
    \\
    E \rightharpoonup 1|y|1
    & = &
          \tau|x|1
    &&
        F \rightharpoonup 1|y|1
    & = &
          0
    \\
    E \rightharpoonup 1|x \wedge y |1
    & = &
          0
    &&
       F \rightharpoonup 1|x \wedge y |1
    & = &
          0
  \end{array}
  $
 \\
  \hline
\end{tabularx}
\caption{Actions of $\mathcal U_q(\mathfrak{sl}_2)$ on the Koszul resolution
    of $\mathbb C_q[x,y]$}
  \label{tab:2}
\end{table}  
\end{lem}
\begin{proof}
  For each $h\in \{E,F,K,K^{-1}\}$, the action of $h$ on
  $\mathbb C_q[x,y] \otimes \mathbb C_q[x,y]$, $1|x|1$, $1|y|1$ and
  $1|x\wedge y |1$ given in the statement of the lemma may be extended
  to a $\mathbb C$-linear mapping
  \[
  \begin{array}{rcl}
    P & \to & P \\
    u & \mapsto & h\rightharpoonup u
  \end{array}
  \]
  in such a way that, for all $g\in \{x,y,x\wedge y\}$, and $a,b$.
  \[
  h \rightharpoonup (a |g |b) = (h_1\rightharpoonup a)
  (h_2\rightharpoonup (1|g|1)) (h_3 \rightharpoonup b)\,.
  \]
  Elementary (though tedious) computations show that this is a
  morphism of complexes of vector spaces. In order to prove the lemma,
  it is therefore sufficient to prove that, for all
  $u\in \{1|x|1,\,1|y|1,\,1|x \wedge y|1\}$,
  \begin{enumerate}[(a)]
  \item $K^{-1}\rightharpoonup (K\rightharpoonup u) = u = K
    \rightharpoonup (K^{-1}\rightharpoonup u)$,
  \item $K \rightharpoonup (E \rightharpoonup u) = q^2 E
    \rightharpoonup (K \rightharpoonup u)$,
  \item $K \rightharpoonup (F \rightharpoonup u) = q^{-2} F
    \rightharpoonup (K \rightharpoonup u)$,
  \item $E \rightharpoonup (F \rightharpoonup u) - F \rightharpoonup
    (E \rightharpoonup u) = \frac{1}{q-q^{-1}} ( K \rightharpoonup u -
    K^{-1} \rightharpoonup u)$.
  \end{enumerate}
The rest of the proof of the lemma explains why these equalities hold
true.


  \emph{Proof of (a).} This follows from the definition of
  the actions of $K$ and $K^{-1}$.


  \emph{Proof of (b) and (c).} In case $0$, the equalities may be
  checked directly. In the other cases, denote by $\gamma$ the
  relative integer such that $K\rightharpoonup u = q^\gamma u$. Then,
  it can be checked from Table~\ref{tab:2} that $E\rightharpoonup u$
  (or $F \rightharpoonup u$) is either $0$ or an eigenvector of the
  action of $K$ with eigenvalue $q^{\gamma+2}$ (or $q^{\gamma-2}$),
  which proves the equality (b) (or, (c), respectively).


  \emph{Proof of (d).} In case $0$, the equality may be checked
  directly from Table~\ref{tab:2}.

  Assume that the considered action is the one of case $1$. Then,
  using that $E\rightharpoonup x = E\rightharpoonup 1|*|1 = 0$,
  \[
  \begin{array}{rcl}
    E \rightharpoonup (F \rightharpoonup (1 |x |1))
    & = &
          b_0^{-1}E \rightharpoonup (x |y|1 + 1 |x|y) \\
    & = &
          b_0^{-1}(1|x|1) (E\rightharpoonup y) \\
    & = &
          1 |x |1 \\
    F \rightharpoonup (E \rightharpoonup (1 |x|1))
    & = &
          0 \\
    K \rightharpoonup (1|x|1)
    & = &
          q |x|1 \\
    K^{-1}\rightharpoonup (1|x|1)
    & = &
          q^{-1}|x|1,
  \end{array}
  \]
  from which the considered equality may be checked directly when
  $u=1|x|1$. Next,
  \[
  \begin{array}{rcl}
    E \rightharpoonup (F \rightharpoonup (1|y|1))
    & = &
          -qb_0^{-1} E\rightharpoonup (y|y|1 + 1|y|y) \\
    & = &
          -qb_0^{-1}((E\rightharpoonup y) (K \rightharpoonup
          (1|y|1)) + (1|y|1)(E\rightharpoonup y)) \\
    & = &
          -(q^{-1}+q)|y|1 \\
    F \rightharpoonup (E \rightharpoonup (1|y|1))
    & = &
          0 \\
    K \rightharpoonup (1|y|1)
    & = &
          q^{-2}|y|1 \\
    K^{-1} \rightharpoonup (1|y|1)
    & = &
          q^2|y|1,        
  \end{array}
  \]
  from which the considered equality may be proved directly when
  $u=1|y|1$. Finally,
  \[
  \begin{array}{rcl}
    E \rightharpoonup (F \rightharpoonup (1|x \wedge y|1))
    & = &
          -b_0^{-1}qE \rightharpoonup (y | x\wedge y |1) \\
    & = &
          -b_0^{-1}q(E\rightharpoonup y) (K\rightharpoonup (1|x \wedge
          y|1) \\
    & = &
          -1|x \wedge y |1 \\
    F \rightharpoonup (E \rightharpoonup (1|x \wedge y|1))
    & = &
          0 \\
    K \rightharpoonup (1| x \wedge y|1)
    & = &
          q^{-1}| x \wedge y |1 \\
    K^{-1} \rightharpoonup (1|x \wedge y|1)
    & = &
          q|x \wedge y|1,        
  \end{array}
  \]
  from which the considered equality may be proved directly when $u=
  1| x\wedge y |1$.

  Now, assume that the considered action is the one of case $2$. Note
  that, there is an isomorphism of Hopf algebras between
  $\mathcal U_q(\mathfrak{sl}_2)$ and
  $\mathcal U_q(\mathfrak{sl}_2)^{\mathrm{op}}$ which exchanges $K$ and
  $K^{-1}$ and exchanges $E$ and $F$. Under this isomorphism, the
  action of case $2$ corresponds to the one of case $1$ provided that
  the following changes are made,
  \begin{itemize}
  \item exchange $x$ and $y$, and next,
  \item transform each tensor $a|*|b$ into $b|*|a$.
  \end{itemize}
  The previous considerations for case $1$ therefore apply to case
  $2$, and hence prove the equality in the latter case.

  Now assume that the considered action is the one of case $3$. Then,
  using that $E\rightharpoonup y= E\rightharpoonup (1|*|1)=0$,
    \[
  \begin{array}{rcl}
    E \rightharpoonup (F \rightharpoonup (1 |x |1))
    & = &
          -qa_0^{-1} E\rightharpoonup (x|x|1 + 1|x|x) \\
    &   &
          +t E \rightharpoonup  (y^3|y|1+\cdots) \\
    & = &
          -qa_0^{-1}((E\rightharpoonup x) (K \rightharpoonup (1|x|1))
          + (1|x|1) (E \rightharpoonup x)) \\
    & = &
          -(q+q^{-1})|x|1 \\
    F \rightharpoonup (E \rightharpoonup (1 |x|1))
    & = &
          0 \\
    K \rightharpoonup (1|x|1)
    & = &
          q^{-2} |x|1 \\
    K^{-1}\rightharpoonup (1|x|1)
    & = &
          q^2|x|1,
  \end{array}
  \]
  from which the considered equality may be checked directly when
  $u=1|x|1$. Next,
  \[
  \begin{array}{rcl}
    E \rightharpoonup (F \rightharpoonup (1|y|1))
    & = &
          -q a_0^{-1} E\rightharpoonup (x|y|1 + 1|x|y) \\
    &   &
          + s E \rightharpoonup (y^2|y|1+\cdots) \\
    & = &
          -qa_0^{-1}(E \rightharpoonup x) (K \rightharpoonup (1|y|1)) \\
    & = &
          -1|y|1 \\
    F \rightharpoonup (E \rightharpoonup (1|y|1))
    & = &
          0 \\
    K \rightharpoonup (1|y|1)
    & = &
          q^{-1}|y|1 \\
    K^{-1} \rightharpoonup (1|y|1)
    & = &
          q|y|1,        
  \end{array}
  \]
  from which the considered equality may be proved directly when
  $u=1|y|1$. Finally,
  \[
  \begin{array}{rcl}
    E \rightharpoonup (F \rightharpoonup (1|x \wedge y|1))
    & = &
          -a_0^{-1}E \rightharpoonup ((q+q^3)x|x \wedge y |1 + q^2| x
          \wedge y |x) \\
    &   &
          +s E\rightharpoonup (q^2|x \wedge y |y^2+\cdots) \\
    & = &
          -a_0^{-1}((q+q^3) (E\rightharpoonup x) (K \rightharpoonup (1
          | x\wedge y |1)) \\
    &   &
          + (q^2 | x\wedge y |1) (E\rightharpoonup x)) \\
        & = &
              -(q^{-2}+1+q^2) | x \wedge y |1 \\
    F \rightharpoonup (E \rightharpoonup (1|x \wedge y|1))
    & = &
          0 \\
    K \rightharpoonup (1| x \wedge y|1)
    & = &
          q^{-3}| x \wedge y |1 \\
    K^{-1} \rightharpoonup (1|x \wedge y|1)
    & = &
          q^3|x \wedge y|1,        
  \end{array}
  \]
  from which the considered equality may be proved directly when $u=
  1| x\wedge y |1$.

  The equality in case $4$ may be deduced from the one in case
  $3$ the same way the one in case $2$ was deduced from the one in
  case $1$.

  To end with, when the considered action is the one of case $5$, then
  the equality may be checked directly from Table~\ref{tab:2}.
\end{proof}

\subsection{The action of $\mathcal U_q(\mathfrak{sl}_2)$ on ${\mathrm{Ext}}^2_{\mathbb C_q[x,y]^e}(\mathbb C_q[x,y],\mathbb C_q[x,y]^e)$}
\label{sec:acti-u_qm-rm}

\begin{lem}
  Let $e_A$ be the free
  generator of
  ${\mathrm{Ext}}^2_{\mathbb C_q[x,y]^e}(\mathbb C_q[x,y],\mathbb
  C_q[x,y]^e)$
  introduced in \eqref{eq:61}. Denote by $\lambda$ the
  mapping $\mathcal U_q(\mathfrak{sl}_2)\to \mathbb C_q[x,y]$ such that the
  identity $h \rightharpoonup e_A = \lambda(h) e_A$ holds. Then,
  Table~\ref{tab:3} describes the mapping $\lambda$.
  \begin{table}[!ht]
    \centering
    \tiny
    \begin{tabularx}{\textwidth}{>{\hsize=.5\hsize}X>{\hsize=1.5\hsize}X}
      \hline
      \\
      Action of Table~\ref{tab:1} & Description of $\lambda$ \\
      \hline
      \\
      Case 0 &
      $
      \begin{array}{rcl}
        K
        & \mapsto &
                    1
                    \\
        E
        & \mapsto &
                    0
                    \\
        F
        & \mapsto &
                    0
      \end{array}
      $
      \\
      \hline
      \\
      Case 1 &
      $
      \begin{array}{rcl}
        K
        & \mapsto &
                    q
                    \\
        E
        & \mapsto &
                    0
                    \\
        F
        & \mapsto &
                    b_0^{-1}q^{-1}y
      \end{array}
      $
      \\
      \hline
      \\
      Case 2 &
      $
      \begin{array}{rcl}
        K
        & \mapsto &
                    q^{-1}
                    \\
        E
        & \mapsto &
                    c_0^{-1}x
                    \\
        F
        & \mapsto &
                    0
      \end{array}
      $
      \\
      \hline
      \\
      Case 3 &
      $
      \begin{array}{rcl}
        K
        & \mapsto &
                    q^3
                    \\
        E
        & \mapsto &
                    0
                    \\
        F
        & \mapsto &
                    a_0^{-1}(q^{-5}+q^{-3}+q^{-1})x
                    -s(q^{-1} + q^{-3} + q^{-2})y^2
      \end{array}
      $
      \\
      \hline
      \\
      Case 4 &
      $
      \begin{array}{rcl}
        K
        & \mapsto &
                    q^{-3}
                    \\
        E
        & \mapsto &
                    d_0^{-1}(q+1+q^{-2})y
                    -s(q+q^{-1}+q^{-3})x^2
                    \\
        F
        & \mapsto &
                    0
      \end{array}
      $
      \\
      \hline
      \\
      Case 5 &
      $
      \begin{array}{rcl}
        K
        & \mapsto &
                    1
                    \\
        E
        & \mapsto &
                    0
                    \\
        F
        & \mapsto &
                    0
      \end{array}
      $
      \\
      \hline
    \end{tabularx}
    \caption{The action of $\mathcal U_q(\mathfrak{sl}_2)$ on ${\mathrm{Ext}}^2_{\mathbb C_q[x,y]^e}(\mathbb C_q[x,y],\mathbb
      C_q[x,y]^e)$}
    \label{tab:3}
  \end{table}
\end{lem}
\begin{proof}
  Let $\varphi_A$ be as in \eqref{eq:61}.  For all $h$, then
  $h\rightharpoonup e_A$ is the cohomology class of
  $h\rightharpoonup \varphi_A$, which is given by \eqref{eq:52}; In
  view of the considerations of \ref{sec:remind-u_qm-mathbb}, if
  $\sum_i a_i\otimes b_i$ is an element of $\mathbb C_q[x,y]^e$ such
  that
  $h \rightharpoonup \varphi_A(1|x \wedge y | 1) = \sum_i a_i \otimes
  b_i$,
  then $h\rightharpoonup \varphi_A$ equals $\sum_i b_i \varphi_A a_i$,
  which is cohomologous to $(\sum_i b_i\mu_A(a_i))\varphi_A$, and
  hence $\lambda(h) = \sum_ib_i\mu_A(a_i)$.  In order to determine
  $\lambda(K)$, $\lambda(E)$ and $\lambda(F)$, it is therefore enough
  to determine $(h\rightharpoonup \varphi_A)(1|x\wedge y |1)$ for
  $h\in \{K,E,F\}$.
  For this purpose, the following piece of notation is used for all
  $h,k,a,b$,
  \[
  (h\otimes k) \rightharpoonup (a \otimes b) = (h\rightharpoonup a)
  \otimes (k \rightharpoonup b)\,.
  \]
  
  First consider $\lambda(K)$. Using \eqref{eq:52}, and using the fact
  that $K^{-1}\rightharpoonup (1| x\wedge y |1) \in \mathbb C\cdot
  (1|x \wedge y |1)$, then
  \[
  \begin{array}{rcl}
    (K \rightharpoonup \varphi_A)(1|x \wedge y|1)
    & = &
          (K \otimes K) \rightharpoonup \varphi_A(K^{-1}\rightharpoonup
          (1|x\wedge y|1)) \\
    & = &
          \varphi_A(K^{-1}\rightharpoonup
          (1|x\wedge y|1))\,.
  \end{array}
  \]
  Therefore, if $t_1,t_2\in \mathbb C^\times$ denote the scalars such
  that $K\rightharpoonup x = t_1 x$ and $K\rightharpoonup y = t_2y$,
  respectively, then $(K^{-1} \rightharpoonup \varphi_A)(1|x \wedge y|1) =
  (t_1t_2)^{-1}| x \wedge y |1$, and hence $\lambda(K) =
  (t_1t_2)^{-1}$. Whence the values of $\lambda(K)$ given in
  Table~\ref{tab:3}.

  Next, consider $\lambda(E)$. Using \eqref{eq:52} and the equality
  $E\rightharpoonup 1= 0$, and using that $K^{-1}\rightharpoonup (1|x
  \wedge y|1)\in \mathbb C\cdot (1|x \wedge y|1)$, then
  \[
  \begin{array}{rcl}
    (E \rightharpoonup \varphi_A)(1|x \wedge y|1)
    & = &
          (q^2E \otimes 1) \rightharpoonup \varphi_A(1|x \wedge y|1) \\
    &   &
          - (K \otimes 1) \rightharpoonup
          \varphi_A((EK^{-1})\rightharpoonup (1|x \wedge y|1)) \\
    &   &
          + (K \otimes E) \rightharpoonup \varphi_A(K^{-1}
          \rightharpoonup (1|x \wedge y|1)) \\
    & = &
                    - (K \otimes 1) \rightharpoonup
          \varphi_A((EK^{-1})\rightharpoonup (1|x \wedge y|1))\,.
  \end{array}
  \]
  Now, use Table~\ref{tab:2} in order to determine $(E\rightharpoonup
  \varphi_A)(1|x \wedge y |1)$. This is zero in cases $0$, $1$, $3$
  and $5$; Therefore, $\lambda(E)=0$ in these cases. In case $2$, then
  $(EK^{-1})\rightharpoonup (1|x \wedge y |1) = -c_0^{-1}|x \wedge y
  |x$, and hence $(E \rightharpoonup \varphi_A)(1|x \wedge y|1) =
  c_0^{-1}\otimes x$; Accordingly, $\lambda(E) = c_0^{-1}x$. And in
  case $4$, then $(EK^{-1})\rightharpoonup (1|x \wedge y |1)$ equals
  \[
          q^{-3}(
      -d_0^{-1}(
      q^2y |x \wedge y | 1
      +q^3 |x \wedge y | y
      +q   |x \wedge y |y
      )
    +s(
     q^2x^2 |x \wedge y | 1
     +q x |x \wedge y | x
     +1 |x \wedge y | x^2
     )),
    \]
and hence $(E \rightharpoonup \varphi_A)(1|x \wedge y|1)$ equals
\[
q^{-3}(
d_0^{-1}(
q^4y \otimes 1
+q^3 \otimes y
+q   \otimes y
)
-s(
q^4x^2 \otimes  1
+q^2 x \otimes  x
+1 \otimes x^2
));
\]
Consequently
\[
\lambda(E) = d_0^{-1}(q+1+q^{-2})y - s(q+q^{-1}+q^{-3})x^2\,.
\]

Finally, consider $\lambda(F)$. Then,
\[
\begin{array}{rcl}
  (F \rightharpoonup \varphi_A)(1|x \wedge y |1)
  & = &
        (1 \otimes F) \rightharpoonup \varphi_A(1|x\wedge y |1) \\
  &   &
        - (1 \otimes K^{-1}) \rightharpoonup \varphi_A((KF)
        \rightharpoonup (1| x \wedge y |1)) \\
  &   &
        + (q^{-2}F \otimes K^{-1}) \rightharpoonup \varphi_A(K
        \rightharpoonup (1|x \wedge y |1)) \\
  & = &
        - (1 \otimes K^{-1}) \rightharpoonup \varphi_A((KF)
        \rightharpoonup (1| x \wedge y |1)),
\end{array}
\]
from which the value of $\lambda(F)$ is determined using
considerations analogous to those used in order to determine $\lambda(E)$.
\end{proof}

\subsection{Computation of the weak homological determinant}
\label{sec:comp-weak-homol}

\begin{lem}
  Let $e_A$ be the free
  generator of ${\mathrm{Ext}}^2_{\mathbb C_q[x,y]^e}(\mathbb
  C_q[x,y],\mathbb C_q[x,y]^e)$ introduced in
  \eqref{eq:61}. Denote by ${\mathrm{whdet}}$ the weak
  homological determinant corresponding to $e_A$. Then, Table~\ref{tab:4}
  gives the values of ${\mathrm{whdet}}$ on $K$, $E$ and $F$. And
  Table~\ref{tab:5} gives the values of $\theta_{\mathrm{whdet}}$ on $K$,
  $E$ and $F$.
  \begin{table}[!ht]
    \centering
    \tiny
    \begin{tabularx}{\textwidth}{>{\hsize=.5\hsize}X>{\hsize=1.5\hsize}X}
      \hline
      \\
      Action of Table~\ref{tab:1} & Description of ${\mathrm{whdet}}$ \\
      \hline
      \\
      Case 0 &
      $
      \begin{array}{rcl}
        K
        & \mapsto &
                    1
                    \\
        E
        & \mapsto &
                    0
                    \\
        F
        & \mapsto &
                    0
      \end{array}
      $
      \\
      \hline
      \\
      Case 1 &
      $
      \begin{array}{rcl}
        K
        & \mapsto &
                    q^{-1}
                    \\
        E
        & \mapsto &
                    0
                    \\
        F
        & \mapsto &
                    -q^2b_0^{-1}y
      \end{array}
      $
      \\
      \hline
      \\
      Case 2 &
      $
      \begin{array}{rcl}
        K
        & \mapsto &
                    q
                    \\
        E
        & \mapsto &
                    -q^{-1}c_0^{-1}x
                    \\
        F
        & \mapsto &
                    0
      \end{array}
      $
      \\
      \hline
      \\
      Case 3 &
      $
      \begin{array}{rcl}
        K
        & \mapsto &
                    q^{-3}
                    \\
        E
        & \mapsto &
                    0
                    \\
        F
        & \mapsto &
                    s(1+q^2+q^4)y^2
                    -a_0^{-1}(q+q^3+q^4)x
      \end{array}
      $
      \\
      \hline
      \\
      Case 4 &
      $
      \begin{array}{rcl}
        K
        & \mapsto &
                    q^3
                    \\
        E
        & \mapsto &
                    s(q^2+1+q^{-2})x^2
                    -d_0^{-1}(q^2+q+q^{-1})y
                    \\
        F
        & \mapsto &
                    0
      \end{array}
      $
      \\
      \hline
      \\
      Case 5 &
      $
      \begin{array}{rcl}
        K
        & \mapsto &
                    1
                    \\
        E
        & \mapsto &
                    0
                    \\
        F
        & \mapsto &
                    0
      \end{array}
      $
      \\
      \hline
    \end{tabularx}
    \caption{The values of ${\mathrm{whdet}}$ on $K$,
      $E$ and $F$}
    \label{tab:4}
  \end{table}
  \begin{table}[!ht]
    \centering
    \tiny
    \begin{tabularx}{\textwidth}{>{\hsize=.5\hsize}X>{\hsize=1.5\hsize}X}
      \hline
      \\
      Action of Table~\ref{tab:1} & Description of $\theta_{\mathrm{whdet}}$ \\
      \hline
      \\
      Case 0 &
      $
      \begin{array}{rcl}
        K
        & \mapsto &
                    K
                    \\
        E
        & \mapsto &
                    E
                    \\
        F
        & \mapsto &
                    F
      \end{array}
      $
      \\
      \hline
      \\
      Case 1 &
      $
      \begin{array}{rcl}
        K
        & \mapsto &
                    q^{-1}K
                    \\
        E
        & \mapsto &
                    E
                    \\
        F
        & \mapsto &
                    qF-b_0^{-1}yK
      \end{array}
      $
      \\
      \hline
      \\
      Case 2 &
      $
      \begin{array}{rcl}
        K
        & \mapsto &
                    qK
                    \\
        E
        & \mapsto &
                    E-qc_0^{-1}xK
                    \\
        F
        & \mapsto &
                    q^{-1}F
      \end{array}
      $
      \\
      \hline
      \\
      Case 3 &
      $
      \begin{array}{rcl}
        K
        & \mapsto &
                    q^{-3}K
                    \\
        E
        & \mapsto &
                    E
                    \\
        F
        & \mapsto &
                    q^3F
                    +s(q^{-2}+1+q^2)y^2
                    -a_0^{-1}(q^{-1}+q+q^2)x
      \end{array}
      $
      \\
      \hline
      \\
      Case 4 &
      $
      \begin{array}{rcl}
        K
        & \mapsto &
                    q^3K
                    \\
        E
        & \mapsto &
                    E
                    +s(q^4+q^2+1)x^2K
                    -d_0^{-1}(q^4+q^3+q)yK
                    \\
        F
        & \mapsto &
                    q^{-3}F
      \end{array}
      $
      \\
      \hline
      \\
      Case 5 &
      $
      \begin{array}{rcl}
        K
        & \mapsto &
                    K
                    \\
        E
        & \mapsto &
                    E
                    \\
        F
        & \mapsto &
                    F
      \end{array}
      $
      \\
      \hline
    \end{tabularx}
    \caption{The values of $\theta_{\mathrm{whdet}}$ on $K$,
      $E$ and $F$}
    \label{tab:5}
  \end{table}
\end{lem}
\begin{proof}
  Let $\lambda$ be such as in \ref{sec:acti-u_qm-rm}.   Recall from \ref{sec:34} and \ref{sec:35} that
  \begin{itemize}
  \item $\lambda$ is characterised by the identity $h \rightharpoonup
    e_A = \lambda(h) e_A$ and it satisfies the identity
    $\lambda(hk) = h_1 \rightharpoonup \lambda(k) \lambda(h_2)$,
  \item ${\mathrm{whdet}}$ is defined by ${\mathrm{whdet}}(h) = S^{-2}(h_2)
    \rightharpoonup \lambda(S^{-3}(h_1))$,
  \item $\theta_{\mathrm{whdet}}$ is defined by $\theta_{\mathrm{whdet}}(h) =
    {\mathrm{whdet}}(S^2(h_1)) h_2$.
  \end{itemize}
  Before proving the lemma, it is convenient to prove the following
  equalities first,
  \begin{enumerate}
  \item ${\mathrm{whdet}}(K) = \lambda(K^{-1})$ and $\theta_{\mathrm{whdet}}(K)
    = \lambda(K^{-1}) K$,
  \item ${\mathrm{whdet}}(K^{-1}) = \lambda(K)$ and $\theta_{\mathrm{whdet}}(K^{-1})
    = \lambda(K) K^{-1}$,
  \item ${\mathrm{whdet}}(E) = -q^{-2}\lambda(E) \lambda(K^{-1})$ and $\theta_{\mathrm{whdet}}(E)
    = E - \lambda(E) \lambda(K^{-1}) K$,
  \item ${\mathrm{whdet}}(F) = -q^2\lambda(K) \lambda(F)$ and $\theta_{\mathrm{whdet}}(F)
    = \lambda(K) F - \lambda(K) \lambda(F)$.
  \end{enumerate}

  (1) Since $\lambda(1) = 1$ and $\lambda(K) \in \mathbb C$, then
  $\lambda(K^{-1}) = \lambda(K)^{-1}$ and $\lambda(K^{-1}) \in \mathbb
  C$. Therefore, ${\mathrm{whdet}}(K) = K \rightharpoonup \lambda(K^{-1}) =
  \lambda(K^{-1})$, and hence $\theta_{\mathrm{whdet}}(K) = {\mathrm{whdet}}(K)
  K = \lambda(K^{-1}) K$.


  (2) is proved using the same considerations.


  (3) ${\mathrm{whdet}}(E) = S^{-2}(E) \rightharpoonup \lambda(1) + K
  \rightharpoonup \lambda(S^{-3}(E))$. Note that $S^{-2}(E) = q^{-2}
  E$; Moreover, since $S(E) = -EK^{-1}$ and $S(K) = K^{-1}$, then
  $S^{-1}(E) = -K^{-1}E$ and $S^{-3}(E) = - q^{-2} K^{-1}E$. Therefore,
  \[
  \begin{array}{rcll}
    {\mathrm{whdet}}(E)
    & = &
          q^{-2}E \rightharpoonup 1 - q^{-2}K \rightharpoonup
          \lambda(K^{-1}E) \\
    & = &
          -q^{-2}K \rightharpoonup (K^{-1}\rightharpoonup
          \lambda(E)\lambda(K^{-1})) \\
    & = &
          -q^{-2}\lambda(E) \lambda(K^{-1})
          & \text{(since $\lambda(K^{-1}) \in \mathbb C$).}
  \end{array}
  \]
  Now,
  \[
  \begin{array}{rcl}
    \theta_{\mathrm{whdet}}(E)
    & = &
          {\mathrm{whdet}}(S^2(E))K + {\mathrm{whdet}}(1) E \\
    & = &
          -\lambda(E) \lambda(K^{-1})K + E\,.
  \end{array}
  \]


  (4)
  \[
  \begin{array}{rcll}
    {\mathrm{whdet}}(F)
    & = &
    S^{-2}(1) \rightharpoonup \lambda(S^{-3}(F)) + S^{-2}(F)
          \rightharpoonup \lambda(S^{-3}(K^{-1})) \\
    & = &
          q^2 \lambda(S^{-1}(F)) + q^2 F \rightharpoonup \lambda(K) \\
    & = &
          q^2 \lambda(S^{-1}(F)) & \text{\tiny (since $\lambda(K) \in
                                   \mathbb C$).}
  \end{array}
  \]
  Since $S(F) = -KF$ and $S(K) = K^{-1}$, then $S^{-1}(F) =
  -FK$. Therefore,
  \[
  \begin{array}{rcll}
    {\mathrm{whdet}}(F)
    & = &
          -q^2 \lambda(FK) \\
    & = &
          -q^2 (F \rightharpoonup \lambda(K) \lambda(1) +
          K^{-1}\rightharpoonup \lambda(K) \lambda(F) ) \\
    & = &
          -q^2 \lambda(K) \lambda(F) &
                                       \text{\tiny (since
                                       $\lambda(K),\lambda(1) \in
                                       \mathbb C$).}
  \end{array}
  \]
  Now,
  \[
  \begin{array}{rcl}
    \theta_{\mathrm{whdet}}(F)
    & = &
          {\mathrm{whdet}}(S^2(F)) + {\mathrm{whdet}}(S^2(K^{-1})) F \\
    & = &
          q^{-2}{\mathrm{whdet}}(F) + \lambda(K) F \\
    & = &
          -\lambda(K) \lambda(F) + \lambda(K) F\,.
  \end{array}
  \]


  Now, using equalities (1), (2), (3), and (4), it is possible to
  recover Table~\ref{tab:4} and Table~\ref{tab:5} using
  Table~\ref{tab:3}, which proves the lemma.
\end{proof}

\subsection{Description of the Nakayama automorphism}
\label{sec:descr-nakay-autom}

\begin{prop}
  Let $q\in \mathbb C^\times$ be a non root of unity. Assume that
  $\mathbb C_q[x,y]$ is endowed with a structure of
  $\mathcal U_q(\mathfrak{sl}_2)$-module algebra. Then,
  $\mathbb C_q[x,y] \sharp \mathcal U_q(\mathfrak{sl}_2)$ is a skew Calabi-Yau
  algebra with a Nakayama automorphism given by
  \[
  x \mapsto q^{-1} x\ \text{and}\ y \mapsto qy\]
  and which values on $K$, $E$ and $F$ are given in Table~\ref{tab:6}.
  \begin{table}[!ht]
    \centering
    \tiny
    \begin{tabularx}{\textwidth}{>{\hsize=.5\hsize}X>{\hsize=1.5\hsize}X}
      \hline
      \\
      Action of Table~\ref{tab:1} & Values of the Nakayama
      automorphism on $K$, $E$ and $F$ \\
      \hline
      \\
      Case 0 &
      $
      \begin{array}{rclccrclccrcl}
        K
        & \mapsto &
                    K
        \\
             E
        & \mapsto &
                    q^{-2}E
        \\
        F
        & \mapsto &
                    q^2F
      \end{array}
      $
      \\
      \hline
      \\
      Case 1 &
      $
      \begin{array}{rclccrclccrcl}
        K
        & \mapsto &
                    q^{-1}K
        \\
        E
        & \mapsto &
                    q^{-2}E
        \\
        F
        & \mapsto &
                    q^3F-b_0^{-1}q^2yK
      \end{array}
      $
                    \\
                    \hline
                    \\
                    Case 2 & 
      $
      \begin{array}{rcl}
        K
        & \mapsto &
                    qK
                    \\
        E
        & \mapsto &
                    q^{-2}E-c_0^{-1}q^{-1}xK
                    \\
        F
        & \mapsto &
                    qF
      \end{array}
      $
      \\
      \hline
      \\
      Case 3 &
      $
      \begin{array}{rcl}
        K
        & \mapsto &
                    q^{-3}K
                    \\
        E
        & \mapsto &
                    q^{-2}E
                    \\
        F
        & \mapsto &
                    q^5F
                    +s(1+q^2+q^4)y^2
                    -a_0^{-1}(q+q^3+q^4)x
      \end{array}
      $
      \\
      \hline
      \\
      Case 4 &
      $
      \begin{array}{rcl}
        K
        & \mapsto &
                    q^3K
                    \\
        E
        & \mapsto &
                    q^{-2}E
                    +s(q^2+1+q^{-2})x^2K
                    -d_0^{-1}(q^2+q+q^{-1})yK
                    \\
        F
        & \mapsto &
                    q^{-1}F
      \end{array}
      $
      \\
      \hline
      \\
      Case 5 &
      $
      \begin{array}{rcl}
        K
        & \mapsto &
                    K
                    \\
        E
        & \mapsto &
                    q^{-2}E
                    \\
        F
        & \mapsto &
                    q^2F
      \end{array}
      $
      \\
      \hline
    \end{tabularx}
    \caption{Nakayama automorphism of $\mathbb C_q[x,y]\sharp
      \mathcal U_q(\mathfrak{sl}_2)$}
    \label{tab:6}
  \end{table}
\end{prop}
\begin{proof}
  It follows from  \ref{sec:19-1} that $\mathbb C_q[x,y]\sharp
  \mathcal U_q(\mathfrak{sl}_2)$ is skew Calabi-Yau and admits $\mu_A \sharp
  (\theta_{\mathrm{whdet}} \circ \mu_H)$ as a Nakayama automorphism, where
  \begin{itemize}
  \item $\mu_A$ is as in \eqref{eq:59},
  \item ${\mathrm{whdet}}$ and $\theta_{\mathrm{whdet}}$ are as in
    \ref{sec:comp-weak-homol},
  \item $\mu_H = S^{-2}\circ \Xi_{\int_{\ell}}^r$.
  \end{itemize}
  Note that $\Xi_{\int_\ell}^r={\mathrm{Id}}_H$ because
  $\int_\ell=\epsilon$ (see \eqref{eq:58}), and $S^{-2}$ is given by
  $K\mapsto K$, $E\mapsto q^{-2}E$ and $F\mapsto q^2F$. The
  description given in Table~\ref{tab:6} therefore follows from the
  one given in Table~\ref{tab:5}.
\end{proof}

\begin{rem}
\begin{enumerate}
\item In case $5$, the given Nakayama automorphism is the inner
  automorphism of $\mathbb C_q[x,y]\sharp \mathcal U_q(\mathfrak{sl}_2)$
  associated with $K^{-1}$. Accordingly, the smash-product is
  Calabi-Yau.
\item This fact can be recovered from the theorem of
  \ref{sec:6}. Indeed, in case $5$, any action of
  $\mathcal U_q(\mathfrak{sl}_2)$ preserves the grading of $\mathbb
  C_q[x,y]$. Moreover the weak homological determinant is a homological
  determinant, which is moreover trivial (see
  Table~\ref{tab:4}). Therefore, taking $h_0=K^{-1}$ and $k_A=1$ in
  \ref{sec:6} yields the desired conclusion.
\end{enumerate}
\end{rem}
\section{Applications to Artin-Schelter algebras}
\label{sec:appl-artin-schelt}

Assume that $A$ is an augmented $H$-module algebra and that the
antipode of $H$ is invertible. This section investigates when $\Lambda$
is Artin-Schelter Gorenstein/regular. Sufficient conditions for this
to be the case are presented in Section~\ref{sec:artin-schelt-algebr}
using an analogue of Stefan's spectral sequence presented in
Section~\ref{sec:case-artin-shelter}. For this purpose, some useful
module structures are introduced in
Section~\ref{sec:usef-module-struct}.

\subsection{Useful module structures}
\label{sec:usef-module-struct}

In the rest of the text, whenever  $M,N$ are left $H$-modules, the following
structure of left $H$-module is considered on ${\mathrm{Hom}}_{\k}(M,N)$
\begin{equation}
  \label{eq:10}
(h\rightharpoonup f)(m) = h_2\rightharpoonup
f(S^{-1}(h_1)\rightharpoonup m)\,.
\end{equation}
Also $M\otimes N$ is considered as a left $H$-module for the following
action
\begin{equation}
  \label{eq:16}
h\rightharpoonup (m\otimes n) = h_1\rightharpoonup m \otimes
h_2\rightharpoonup n\,.
\end{equation}


Assume that $M,N$ are left $\Lambda$-modules.

It is proved in \cite[(1)]{MR2809906} that ${\mathrm{Hom}}_A(M,N)$ is an
$H$-submodule of ${\mathrm{Hom}}_{\k}(M,N)$. Moreover, according to \cite[Lemma
2.2]{MR2809906}, the following canonical mapping is well-defined and
 bijective.
\begin{equation}
  \label{eq:22}
\begin{array}{rcl}
{\mathrm{Hom}}_H(\,_H\k,{\mathrm{Hom}}_A(M,N)) & \xrightarrow{\cong} &
  {\mathrm{Hom}}_{\Lambda}(M,N)\\
\phi & \mapsto& \phi(1)\,.
\end{array}
\end{equation}


In order to analyse the spectral sequence mentioned earlier, it is
necessary to compare objects such as ${\mathrm{Hom}}_A(M,A)\otimes H$ and
${\mathrm{Hom}}_A(M,\Lambda)$.

Firstly, consider ${\mathrm{Hom}}_A(M,\Lambda)$. The
natural structure of right $\Lambda$-module on  ${\mathrm{Hom}}_A(M,\Lambda)$ 
commutes with the action of $H$. Hence, ${\mathrm{Hom}}_A(M,\Lambda)$
is an $H-\Lambda$-bimodule.

Secondly, consider ${\mathrm{Hom}}_A(M,A)\otimes H$. In view of \eqref{eq:10}
and \eqref{eq:16}, it is a left $H$-module. It is also a right
$\Lambda$-module for the action defined by $(f\otimes \ell) ah = 
f \cdot (\ell_1\rightharpoonup a)\otimes \ell_2 h$ (here ${\mathrm{Hom}}_A(M,A)$ is
naturally a right $A$-module). Given that the structures
of left $H$-module and  right $A$-module of ${\mathrm{Hom}}_A(M,A)$ are
compatible in the following sense
\begin{equation}
  \label{eq:38}
h\rightharpoonup (fa)=
(h_1\rightharpoonup f) (h_2\rightharpoonup a)\,,
\end{equation}
it follows that 
${\mathrm{Hom}}_A(M,A)\otimes H$ is also an $H-\Lambda$-bimodule.

For these structures of $H-\Lambda$-bimodule,
the following  canonical mapping is $H\otimes \Lambda^{\mathrm{op}}$-linear
\begin{equation}
  \label{eq:39}
\begin{array}{rcl}
  {\mathrm{Hom}}_A(M,A)\otimes H & \to & {\mathrm{Hom}}_A(M,\Lambda)\\
  f \otimes \ell & \mapsto & (m \mapsto
                                f(m) \ell)\,.
\end{array}
\end{equation}
Note that it is a functorial isomorphism if $M$ is finitely
presented in ${\mathrm{mod}}(A)$ of if ${\mathrm{dim}}_\k\,H<\infty$.


Using projective resolutions to construct
 ${\mathrm{Ext}}$-spaces, the previous considerations entail the
 following result.  
\begin{lem}
  Let $M,N$ be left $\Lambda$-modules and let $p$ be a natural
  integer. 
  \begin{enumerate}
  \item There is a functorial structure of left  $H$-module  on ${\mathrm{Ext}}^p_A(M,N)$
    which coincides with the one introduced previously when $p=0$.
  \item Taking into account the natural structure of right
    $\Lambda$-module, ${\mathrm{Ext}}^p_A(M,\Lambda)$ is then an $H-\Lambda$-bimodule.
\item Taking into account the natural structure of right
  $\Lambda$-module such that
  \[
  (e\otimes \ell) ah = e (\ell_1 \rightharpoonup a) \otimes \ell_2
  h\,,
  \]
  ${\mathrm{Ext}}_A^p(M,A)\otimes H$ is then an $H-\Lambda$-bimodule.
\item There is a functorial morphism of $H-\Lambda$-bimodules
 ${\mathrm{Ext}}^p_A(M,A)\otimes H\to {\mathrm{Ext}}^p_A(M,\Lambda)$. If $M$ has a
 resolution in ${\mathrm{mod}}(A)$ by finitely generated projectives, then
this  is an isomorphism. 
 \end{enumerate}
\end{lem}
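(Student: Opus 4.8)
The plan is to compute every \({\rm Ext}\)-space appearing in the lemma from one and the same resolution. First I would choose a projective resolution \(P_\bullet\to M\) in \({\rm mod}(\Lambda)\). Since \(\Lambda\simeq A\otimes H\) as a left \(A\)-module, restriction of scalars along \(A\hookrightarrow\Lambda\) carries projective \(\Lambda\)-modules to projective \(A\)-modules; hence \(P_\bullet\to M\) is simultaneously a projective resolution in \({\rm mod}(A)\), so that \({\rm Ext}^p_A(M,N)=H^p\Hom_A(P_\bullet,N)\). The strategy for (1)--(3) is then uniform: equip each term of the relevant Hom-complex with the degree-\(0\) (bi)module structures recalled just before the lemma, verify that the differentials respect those structures, and pass to cohomology.

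For (1), each \(\Hom_A(P_i,N)\) is a left \(H\)-module via \eqref{eq:10} (being an \(H\)-submodule of \(\Hom_\k(P_i,N)\)). The differential \(\Hom_A(P_{i-1},N)\to\Hom_A(P_i,N)\), namely precomposition with the \(\Lambda\)-linear map \(P_i\to P_{i-1}\), is \(H\)-linear: this is a one-line computation using that \(P_i\to P_{i-1}\) is \(H\)-equivariant, so that \(h_2\rightharpoonup f(S^{-1}(h_1)\rightharpoonup d(x))=h_2\rightharpoonup f(d(S^{-1}(h_1)\rightharpoonup x))\). Thus \(\Hom_A(P_\bullet,N)\) is a complex of \(H\)-modules and \({\rm Ext}^p_A(M,N)\) inherits a left \(H\)-module structure, coinciding with \eqref{eq:10} when \(p=0\). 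Independence of the resolution and functoriality follow from the comparison theorem, the comparison chain maps and homotopies being \(\Lambda\)-linear and hence \(H\)-compatible. Parts (2) and (3) are obtained the same way: for (2) take \(N=\Lambda\) and note that the right \(\Lambda\)-action on \(\Hom_A(P_\bullet,\Lambda)\) commutes termwise with the \(H\)-action, so the cohomology is an \(H\)-\(\Lambda\)-bimodule; for (3) apply \(-\otimes H\), which is exact, giving \({\rm Ext}^p_A(M,A)\otimes H=H^p(\Hom_A(P_\bullet,A)\otimes H)\), while the termwise \(H\)-\(\Lambda\)-bimodule structure of \(\Hom_A(P_i,A)\otimes H\) (with right action as in the statement, using the compatibility \eqref{eq:38}) descends to cohomology.

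For (4), I would apply the natural transformation \eqref{eq:39} termwise to \(P_\bullet\): being \(H\otimes\Lambda^{\rm op}\)-linear and natural, it yields a morphism of complexes of \(H\)-\(\Lambda\)-bimodules \(\Hom_A(P_\bullet,A)\otimes H\to\Hom_A(P_\bullet,\Lambda)\), whose cohomology is the asserted functorial bimodule morphism \({\rm Ext}^p_A(M,A)\otimes H\to{\rm Ext}^p_A(M,\Lambda)\). The step I expect to be the main obstacle is the isomorphism claim, because the terms \(P_i\) of a \(\Lambda\)-projective resolution are in general \emph{not} finitely generated over \(A\) when \(\dim_\k H=\infty\); thus \eqref{eq:39} need not be a termwise isomorphism on \(P_\bullet\) and the conclusion cannot be read off directly. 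The remedy is that the induced map on \({\rm Ext}\) is a morphism of derived functors, hence independent of the chosen \(A\)-projective resolution: computing it instead from a resolution \(F_\bullet\to M\) by finitely generated projective \(A\)-modules (available by hypothesis), \eqref{eq:39} is a termwise isomorphism since finitely generated projectives are finitely presented, so the canonical map is invertible. Finally, naturality of \eqref{eq:39} along an \(A\)-linear comparison chain map between \(F_\bullet\) and \(P_\bullet\) shows that the isomorphism produced from \(F_\bullet\) agrees, under the canonical identifications of \({\rm Ext}\), with the \(H\)-\(\Lambda\)-bilinear map produced from \(P_\bullet\); hence this single canonical map is at once bilinear and invertible, which proves (4).
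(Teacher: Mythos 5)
Your proof is correct and takes essentially the same route as the paper, which gives no details beyond the remark that ``using projective resolutions to construct ${\rm Ext}$-spaces, the previous considerations entail the result'': restrict a projective resolution in ${\rm mod}(\Lambda)$ to an $A$-projective resolution (using $\Lambda\simeq A\otimes H$ in ${\rm mod}(A)$), apply the degree-zero structures and the canonical map \eqref{eq:39} termwise, and pass to cohomology. Your explicit treatment of the finite-generation subtlety in part (4) --- computing the map from an $A$-resolution by finitely generated projectives and transporting the resulting isomorphism back to the $H$-$\Lambda$-bilinear map via naturality of \eqref{eq:39} and the comparison isomorphisms --- is exactly the point the paper leaves implicit.
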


\subsection{A spectral sequence for the cohomology of augmented algebras}
\label{sec:case-artin-shelter}

\subsubsection{}
\label{sec:10}

The following is an analogue of the spectral sequence constructed by
Stefan (\cite[Theorem 3.3]{MR1358765}) for the Hochschild cohomology
on Hopf-Galois extension. The proof is also analogous and omitted.
\begin{prop}
  Let $M,N$ be left $\Lambda$-modules. There is a Grothendieck spectral
  sequence functorial in $M$ and $N$
\[
{\mathrm{Ext}}^p_H(\,_H\k, {\mathrm{Ext}}^q_A(M,N))\Rightarrow {\mathrm{Ext}}^{p+q}_{\Lambda}(M,N)\,.
\]
\end{prop}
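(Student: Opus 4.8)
The plan is to realise the right-hand side as the total derived functor of a composition and then invoke the Grothendieck spectral sequence for composite functors. Consider the two left exact functors
\[
F=\Hom_A(M,-)\colon {\rm mod}(\Lambda)\to {\rm mod}(H)\quad\text{and}\quad G=\Hom_H(\,_H\k,-)\colon {\rm mod}(H)\to {\rm mod}(\k),
\]
where $F(N)$ carries the $H$-module structure \eqref{eq:10} (which is well defined by \cite[(1)]{MR2809906}). The isomorphism \eqref{eq:22} is functorial in $N$ and identifies $G\circ F$ with $\Hom_\Lambda(M,-)$. Since ${\rm mod}(\Lambda)$ and ${\rm mod}(H)$ have enough injectives, the Grothendieck machinery will produce the announced spectral sequence once two things are verified: that the derived functors of $F$, $G$ and $G\circ F$ are the $\mathrm{Ext}$ functors in the statement, and that $F$ sends injective $\Lambda$-modules to $G$-acyclic $H$-modules.

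First I would identify the derived functors. By definition $R^pG={\rm Ext}^p_H(\,_H\k,-)$ and $R^{p+q}(G\circ F)={\rm Ext}^{p+q}_\Lambda(M,-)$. For $F$, note that $\Lambda$ is free, hence flat, as a right $A$-module (this uses the bijectivity of $S$; the relevant commutation relations are those in \eqref{eq:4}), so the induction functor $\Lambda\underset{A}{\otimes}-$ is exact and its right adjoint, the restriction functor ${\rm mod}(\Lambda)\to {\rm mod}(A)$, preserves injectives. Consequently, for an injective resolution $N\to I^\bullet$ in ${\rm mod}(\Lambda)$, each $I^j$ is injective in ${\rm mod}(A)$, so the complex $\Hom_A(M,I^\bullet)$ computes ${\rm Ext}^q_A(M,N)$; tracking the $H$-module structure along the way gives $R^qF={\rm Ext}^q_A(M,-)$ with the $H$-action of the lemma in \ref{sec:usef-module-struct}, part (1).

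The heart of the argument is the acyclicity condition. Every injective $\Lambda$-module $I$ is a direct summand of the cofree module $\Hom_\k(\Lambda,I)$ (the vector-space embedding $I\hookrightarrow \Hom_\k(\Lambda,I)$ splits $\Lambda$-linearly because $I$ is injective), so it suffices to show that $F\big(\Hom_\k(\Lambda,V)\big)$ is $G$-acyclic for every $\k$-vector space $V$. The hom-tensor adjunction gives
\[
\Hom_A\big(M,\Hom_\k(\Lambda,V)\big)\simeq \Hom_\k\big(\Lambda\underset{A}{\otimes}M,V\big),
\]
and, since $\Lambda\simeq H\otimes A$ as a right $A$-module, the right-hand side is $\Hom_\k(H\otimes M,V)\simeq \Hom_\k\big(H,\Hom_\k(M,V)\big)$. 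The point I would then check, which is the one genuinely computational step, is that under the action \eqref{eq:10} this isomorphism is one of $H$-modules onto the coinduced module $\Hom_\k\big(H,\Hom_\k(M,V)\big)$ equipped with its right-regular $H$-structure; the twist by $S^{-1}$ in \eqref{eq:10} is precisely what makes the two actions coincide. Coinduced modules from $\k$ are injective over $H$, hence $G$-acyclic, and a direct summand of a $G$-acyclic module is $G$-acyclic.

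With these two verifications in place, the Grothendieck spectral sequence for the composite $G\circ F$ reads $R^pG\big(R^qF(N)\big)\Rightarrow R^{p+q}(G\circ F)(N)$, which is exactly ${\rm Ext}^p_H(\,_H\k,{\rm Ext}^q_A(M,N))\Rightarrow {\rm Ext}^{p+q}_\Lambda(M,N)$. I expect the main obstacle to be the $H$-equivariance in the acyclicity step: one must check that the conjugation-type action \eqref{eq:10} on $\Hom_A\big(M,\Hom_\k(\Lambda,V)\big)$ transports, through the adjunction isomorphisms and the right $A$-module identification $\Lambda\simeq H\otimes A$, to the standard coinduced action. This is where the bijectivity of $S$ and the smash-product commutation relations \eqref{eq:4} are genuinely used.
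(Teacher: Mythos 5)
Your proposal is correct, but it is the dual of the paper's argument rather than the same one. The paper derives the isomorphism \eqref{eq:22} \emph{contravariantly in $M$}: it resolves $M$ by projective $\Lambda$-modules (these restrict to projective $A$-modules, $\Lambda$ being free over $A$), so the acyclicity input it needs is that $\Hom_A(\Lambda,N)$ is an \emph{injective} $H$-module -- every projective $\Lambda$-module is a summand of a direct sum of copies of $\Lambda$, and $\Hom_A(-,N)$ converts this into a summand of a product. That injectivity is proved by the explicit untwisting isomorphism $\theta(f)(\ell)=\ell_2\rightharpoonup f(S^{-1}(\ell_1))$ from $\Hom_A(\Lambda,N)\simeq\Hom_\k(H,N)$ onto the coinduced module with action $(h\rightharpoonup\varphi)(\ell)=\varphi(\ell h)$. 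You instead derive \emph{covariantly in $N$}, resolving $N$ by injective $\Lambda$-modules, and your acyclicity input is that $\Hom_A(M,I)$ is $G$-acyclic for $I$ injective over $\Lambda$, reduced via the cofree embedding $I\hookrightarrow\Hom_\k(\Lambda,I)$ and two adjunctions. Both routes are legitimate Grothendieck setups; the paper's is leaner (a single module to untwist, no cofree reduction), and its $E_2$-term carries, on the nose, the $H$-action on ${\rm Ext}^q_A(M,N)$ defined in the lemma of \ref{sec:usef-module-struct} (which is likewise built from projective resolutions of $M$), whereas your $E_2$-term carries the action obtained by deriving in $N$; these agree by the standard balancing argument for the double complex $\Hom_A(P_\bullet,I^\bullet)$, whose terms are $H$-modules and whose edge maps are quasi-isomorphisms of complexes of $H$-modules, so this is a remark to record rather than an obstacle.

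The step you flagged but did not carry out does go through, with one correction of wording. Transporting the action \eqref{eq:10} through the adjunction and through the right $A$-module identification $\Lambda\simeq H\otimes A$, $ah\mapsto h_2\otimes(S^{-1}(h_1)\rightharpoonup a)$, one finds on $\Hom_\k(H,W)$, $W=\Hom_\k(M,V)$, not the right-regular coinduced action but the \emph{twisted} action
\[
(h\rightharpoonup\varphi)(\ell)=h_1\rightharpoonup\varphi(\ell h_2)\,,
\qquad\text{where } (k\rightharpoonup w)(m)=w(S^{-1}(k)\rightharpoonup m) \text{ on } W\,.
\]
This twisted module is then \emph{isomorphic} to the right-regular coinduced module $\Hom_\k(H,W)$ (with $W$ regarded as a mere vector space) via
\[
\Theta(\varphi)(\ell)=\ell_1\rightharpoonup\varphi(\ell_2)\,,
\qquad
\Theta^{-1}(\psi)(\ell)=S(\ell_1)\rightharpoonup\psi(\ell_2)\,,
\]
which is the exact analogue of the paper's $\theta$; note that the untwisting itself uses only $S$, the inverses $S^{-1}$ having been absorbed into the module structures. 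Since coinduction $\Hom_\k(H,-)$ from $\k$ preserves injectives, $F(\Hom_\k(\Lambda,V))$ is injective over $H$, hence so is its summand $F(I)$, and the acyclicity you need follows. With that verification supplied, your proof is complete.
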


\subsubsection{}
\label{sec:11}

In view of determining when $\Lambda$ has the Artin-Schelter property,
it is useful to simplify the $E_2$ term of the spectral sequence.
\begin{lem}
  Assume that $_H\k$ has a resolution in ${\mathrm{mod}}(H)$ by 
  finitely generated projectives. Let $M\in {\mathrm{mod}}(\Lambda)$ admit a
  resolution in ${\mathrm{mod}}(A)$ by 
  finitely generated projectives. Then, for every  $p,q\in \mathbb N$, there
  is an isomorphism of right $\Lambda$-modules
\[
{\mathrm{Ext}}^p_H(\,_H\k,{\mathrm{Ext}}^q_A(M,\Lambda))\simeq {\mathrm{Ext}}^q_A(M,A)\otimes
 {\mathrm{Ext}}^p_H(\,_H\k,H)
\]
where the module structure of the right-hand side term is defined by 
\[
(e_M\otimes e_H) ah = (S^{-1}(h_1)\rightharpoonup (e_Ma) ) \otimes e_Hh_2\,.
\]
\end{lem}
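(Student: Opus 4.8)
The plan is to combine the bimodule isomorphism ${\rm Ext}^q_A(M,A)\otimes H\cong{\rm Ext}^q_A(M,\Lambda)$ furnished by part (4) of the preceding lemma (which applies because $M$ admits a resolution by finitely generated projectives in ${\rm mod}(A)$) with an explicit untwisting of the diagonal $H$-action, and then to feed the result into ${\rm Ext}^p_H(\,_H\k,-)$. Write $V={\rm Ext}^q_A(M,A)$; this is simultaneously a left $H$-module and a right $A$-module, the two actions being tied together by the compatibility \eqref{eq:38}. By the cited isomorphism it suffices to compute ${\rm Ext}^p_H(\,_H\k,V\otimes H)$ as a right $\Lambda$-module, where $V\otimes H$ carries the diagonal left $H$-action $h\rightharpoonup(v\otimes n)=h_1\rightharpoonup v\otimes h_2 n$ (regular on the $H$-factor) together with the right $\Lambda$-action of part (3), namely $(v\otimes n)a=v(n_1\rightharpoonup a)\otimes n_2$ and $(v\otimes n)h=v\otimes nh$.

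First I would untwist the diagonal action. The map $\gamma\colon V\otimes H\to V\otimes H$ given by $\gamma(v\otimes n)=S^{-1}(n_1)\rightharpoonup v\otimes n_2$, with inverse $w\otimes m\mapsto m_1\rightharpoonup w\otimes m_2$, is an isomorphism of left $H$-modules from the diagonal module onto the \emph{free} module with action $h\rightharpoonup(w\otimes m)=w\otimes hm$; checking this uses invertibility of $S$ and the reversed antipode identities $\sum S^{-1}(m_2)m_1=\epsilon(m)$ and $\sum m_2 S^{-1}(m_1)=\epsilon(m)$. Transporting the right $\Lambda$-action along $\gamma$ is the computational heart of the proof. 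The $H$-action transports, via \eqref{eq:38} and $\sum S^{-1}(m_2)m_1\otimes m_3=1\otimes m$, to $(w\otimes m)h=(S^{-1}(h_1)\rightharpoonup w)\otimes mh_2$. The $A$-action produces a priori the fifth-order expression $\sum\big((S^{-1}(m_4)m_1)\rightharpoonup w\big)\big((S^{-1}(m_3)m_2)\rightharpoonup a\big)\otimes m_5$; the key observation is that $\sum S^{-1}(m_4)m_1\otimes S^{-1}(m_3)m_2\otimes m_5=1\otimes1\otimes m$ (contract the inner pair $m_2,m_3$, then the outer pair $m_1,m_4$), so this collapses to the clean $wa\otimes m$. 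Thus $\gamma$ is an isomorphism of $H$--$\Lambda$-bimodules onto the free module equipped with the action $(w\otimes m)ah=(S^{-1}(h_1)\rightharpoonup(wa))\otimes mh_2$.

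Finally I would apply ${\rm Ext}^p_H(\,_H\k,-)$ to this free bimodule. Since $\,_H\k$ has a resolution by finitely generated projective $H$-modules, the functor commutes with arbitrary direct sums; writing the free module as $\bigoplus H$ (one copy of the regular module per basis vector of $V$) yields a natural isomorphism ${\rm Ext}^p_H(\,_H\k,V\otimes H)\cong V\otimes{\rm Ext}^p_H(\,_H\k,H)$. The transported right $\Lambda$-action commutes with the free left $H$-action, hence descends to the right-hand side: the factor $a$ acts on $V$ by right multiplication, while $h$ acts on $V$ by $S^{-1}(h_1)\rightharpoonup-$ and on the $H$-factor by right multiplication $m\mapsto mh_2$, the latter descending to the canonical right $H$-action on ${\rm Ext}^p_H(\,_H\k,H)$. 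This reproduces exactly the asserted formula $(e_M\otimes e_H)ah=(S^{-1}(h_1)\rightharpoonup(e_M a))\otimes e_H h_2$. I expect the main obstacle to be the middle step: verifying that $\gamma$ intertwines the two right $\Lambda$-structures, since the naive $A$-action is a tangled coproduct expression that simplifies only after the two antipode contractions — once that identity is in hand, the passage to ${\rm Ext}$ and the descent of the module structure are formal.
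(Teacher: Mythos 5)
Your proposal is correct and is in essence the same proof as the paper's: your untwisting map $\gamma(v\otimes n)=S^{-1}(n_1)\rightharpoonup v\otimes n_2$ and its inverse $w\otimes m\mapsto m_1\rightharpoonup w\otimes m_2$ are exactly the maps the paper uses (they appear as the $P=H$ instance of its comparison morphism $\theta\colon {\rm Ext}^q_A(M,A)\otimes\Hom_H(P,H)\to\Hom_H(P,{\rm Ext}^q_A(M,A)\otimes H)$ and its inverse), and both arguments rest on the same three ingredients: the bimodule identification \eqref{eq:39}, the compatibility \eqref{eq:38}, and finite generation of the projective resolution of $\,_H\k$. The only difference is bookkeeping: you untwist the coefficient bimodule \emph{before} applying $\Hom_H(P,-)$ and then invoke commutation of ${\rm Ext}$ with direct sums plus naturality to descend the right $\Lambda$-structure, whereas the paper performs the untwisting \emph{inside} $\Hom_H(P,-)$ via the single map $\theta$, checking its $\Lambda$-linearity and bijectivity directly.
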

\begin{proof}
  Let $P\to \,_H\k$ be a resolution by finitely generated projective
  left $H$-modules.   Deriving \eqref{eq:38} yields an analogous identity in ${\mathrm{Ext}}_A^q(M,A)$. Consider the following action of $\Lambda$ on 
  ${\mathrm{Ext}}^q_A(M,A) \otimes {\mathrm{Hom}}_H(P,H)$
  \[
  (e_M\otimes \varphi) ah := (S^{-1}(h_1)\rightharpoonup (e_Ma) )
  \otimes \varphi h_2\,.
  \]
  It is a structure of complex of right $\Lambda$-modules. Therefore,
  the action of $\Lambda$ on  
  ${\mathrm{Ext}}^q_A(M,A)\otimes
  {\mathrm{Ext}}^p_H(\,_H\k,H)$ given in the statement of the lemma is a
  structure of right $\Lambda$-module.


  The  $H-\Lambda$-bimodules
   ${\mathrm{Ext}}^q_A(M,\Lambda)$ and  ${\mathrm{Ext}}^q_A(M,A) \otimes
  H$ are isomorphic (see \eqref{eq:39}).  Consider the morphism of complexes
  \[
  \begin{array}{crcl}
    \theta\colon
    & {\mathrm{Ext}}^q_A(M,A) \otimes {\mathrm{Hom}}_H(P,H)
    & \to &
            {\mathrm{Hom}}_H(P, {\mathrm{Ext}}^q_A(M,A) \otimes H) \\
    & e_M \otimes \varphi
    & \mapsto &
                \left(
                p \mapsto (\varphi(p)_1\rightharpoonup e_M) \otimes
                \varphi(p)_2\right) \,.
  \end{array}
  \]
  Note that $\theta(e_M\otimes \varphi)$ is indeed $H$-linear because
  $\varphi$ is $H$-linear and because of the definition of the action
  of $H$ on ${\mathrm{Ext}}^q_A(M,A)\otimes H$ on the left (see
  \eqref{eq:16}). Since ${\mathrm{Ext}}^q_A(M,A)\otimes H\in {\mathrm{mod}}(H\otimes \Lambda^{\mathrm{op}})$ (see
  \ref{sec:usef-module-struct}), then ${\mathrm{Hom}}_H(P,{\mathrm{Ext}}^q_A(M,A)\otimes H)\in {\mathrm{mod}}(\Lambda^{\mathrm{op}})$.

  The mapping $\theta$ is $\Lambda$-linear. Indeed, let $e_M\otimes
  \varphi\in {\mathrm{Ext}}^q_A(M,A)\otimes {\mathrm{Hom}}_H(P,H)$, $a\in A$, $h\in H$
  and $p\in P$. Then,
  \[
  \begin{array}{rcl}
    (\theta(e_M\otimes \varphi)ah)(p) 
    & = &
          \theta(e_M\otimes \varphi)(p) ah \\
    & = &
          (\varphi(p)_1\rightharpoonup e_M\otimes \varphi(p)_2)ah \\
    & = &
          (\varphi(p)_1\rightharpoonup
          e_M)(\varphi(p)_2\rightharpoonup a) \otimes \varphi(p)_3 h
    \\
    & \underset{~\eqref{eq:38}}= &
          \varphi(p)_1\rightharpoonup (e_Ma) \otimes \varphi(p)_2h \\
    \theta((e_M\otimes \varphi)ah)(p)
    & = &
          \theta(S^{-1}(h_1) \rightharpoonup (e_Ma)  \otimes \varphi
          h_2)(p) \\
    & = &
          (\varphi h_2)(p)_1 \rightharpoonup (S^{-1}(h_1)
          \rightharpoonup (e_Ma)) \otimes (\varphi h_2) (p)_2 \\
    & = &
          (\varphi(p)_1h_2) \rightharpoonup
          (S^{-1}(h_1)\rightharpoonup (e_Ma)) \otimes \varphi(p)_2 h_3
    \\
    & = &
          \varphi(p)_1\rightharpoonup (e_Ma) \otimes \varphi(p)_2h\,,
  \end{array}
  \]
  which explains why $\theta$ is $\Lambda$-linear.

  In order to prove the lemma, it is therefore sufficient to prove
  that $\theta$ is bijective. When $P$ is replaced by $H$ in the
  description of $\theta$, the resulting mapping reduces to
  \[
  \begin{array}{rcl}
    {\mathrm{Ext}}_A^q(M,A)\otimes H & \to & {\mathrm{Ext}}_A^q(M,A)\otimes H \\
    e_M\otimes \varphi & \mapsto & \varphi_1\rightharpoonup e_M
                                   \otimes \varphi_2\,.
  \end{array}
  \]
  This is indeed bijective with inverse mapping given by $e_M\otimes
  \varphi \mapsto S^{-1}(\varphi_1)\rightharpoonup e_M \otimes
  \varphi_2$. Now, since $P$ consists of finitely generated projective left $H$-modules,
  the previous considerations show that $\theta$ is bijective.
\end{proof}

\subsection{The Artin-Schelter property of $A\sharp H$}
\label{sec:artin-schelt-algebr}

\subsubsection{}
\label{sec:12}

Here is how the left Artin-Schelter property behaves under taking
smash products. The dual statement for the right Artin-Schelter
property holds true.

\begin{prop}
  Let $H$ be a  Hopf algebra with invertible antipode. Assume that
   $_H\k$ has a resolution in ${\mathrm{mod}}(H)$ by finitely generated
  projectives. Let
  $A$ be an augmented $H$-module algebra. Assume that $_A\k$ has a
  resolution in ${\mathrm{mod}}(A)$ by finitely generated projectives. If $A$ and $H$
  have the left Artin-Schelter property in dimension $n$ and $d$,
  respectively, then  $A\sharp H$ has the left Artin-Schelter property
  in dimension $n+d$.
\end{prop}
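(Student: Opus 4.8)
The plan is to use the Grothendieck spectral sequence from \ref{sec:10} together with the simplification of its $E_2$-term from \ref{sec:11}, applied to $M=N=\,_A\k=\,_\Lambda\k$. First I would record that the hypotheses guarantee both that $_A\k$ admits a resolution in ${\rm mod}(A)$ by finitely generated projectives and that $_H\k$ admits such a resolution in ${\rm mod}(H)$, so that \ref{sec:11} applies. Note also that the augmentation of $\Lambda=A\sharp H$ is $p\otimes\epsilon$, so that $_\Lambda\k$ restricts to $_A\k$ along $A\hookrightarrow \Lambda$; this is what lets me feed $M=\,_\Lambda\k$ into a spectral sequence whose cohomology is computed over $A$.

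The key computation is the $E_2$-page. By \ref{sec:10} there is a spectral sequence
\[
E_2^{p,q}={\rm Ext}^p_H(\,_H\k,{\rm Ext}^q_A(\,_A\k,\Lambda))\Rightarrow {\rm Ext}^{p+q}_\Lambda(\,_\Lambda\k,\Lambda)\,,
\]
and by \ref{sec:11} (with $M=\,_A\k$) there is an isomorphism of right $\Lambda$-modules
\[
{\rm Ext}^p_H(\,_H\k,{\rm Ext}^q_A(\,_A\k,\Lambda))\simeq {\rm Ext}^q_A(\,_A\k,A)\otimes{\rm Ext}^p_H(\,_H\k,H)\,.
\]
Since $A$ has the left Artin-Schelter property with dimension $n$, the factor ${\rm Ext}^q_A(\,_A\k,A)$ is one-dimensional for $q=n$ and vanishes otherwise; since $H$ has the left Artin-Schelter property with dimension $d$, the factor ${\rm Ext}^p_H(\,_H\k,H)$ is one-dimensional for $p=d$ and vanishes otherwise. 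Hence $E_2^{p,q}$ is one-dimensional at $(p,q)=(d,n)$ and zero elsewhere. The spectral sequence therefore degenerates at $E_2$ for trivial reasons (a single nonzero entry), giving
\[
{\rm dim}_\k\,{\rm Ext}^i_\Lambda(\,_\Lambda\k,\Lambda)=\begin{cases}1 & i=n+d\\ 0 & \text{otherwise,}\end{cases}
\]
which is exactly the left Artin-Schelter condition for $\Lambda$ with dimension $n+d$.

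The step I expect to require the most care is not the degeneration itself but the verification that the $E_2$-term genuinely has the stated dimensions, i.e.\ that the tensor product appearing in \ref{sec:11} is taken over $\k$ and that the one-dimensionality of each tensor factor really does force one-dimensionality of the product. This hinges on both ${\rm Ext}$-spaces being finite-dimensional over $\k$, which is supplied by the Artin-Schelter hypotheses together with the finite-generation conditions, so that the convergence of the spectral sequence is unproblematic and the edge map ${\rm Ext}^{n+d}_\Lambda(\,_\Lambda\k,\Lambda)\simeq E_2^{d,n}$ is genuinely an isomorphism. A secondary point worth checking is that \ref{sec:11} requires $_H\k$ to have a finitely-generated projective resolution in ${\rm mod}(H)$ and $_A\k=\,M$ to have one in ${\rm mod}(A)$; both are among the standing hypotheses. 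The invertibility of $S$, used implicitly in the module-structure identities of \ref{sec:usef-module-struct} and \ref{sec:11}, is part of the hypotheses as well, so no extra argument is needed there.
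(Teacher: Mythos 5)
Your proof is correct and is essentially the paper's own argument: apply the spectral sequence of \ref{sec:10} with $M=\,_\Lambda\k$ and $N=\Lambda$, simplify the $E_2$-term via \ref{sec:11}, and conclude from the fact that $E_2$ is concentrated in the single one-dimensional entry $(p,q)=(d,n)$. One small slip to fix in the writeup: your opening sentence says the sequence is applied to $M=N=\,_\Lambda\k$, whereas (as your displayed formula correctly shows) one must take $N=\Lambda$, not $N=\k$.
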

\begin{proof}
  By assumption, ${\mathrm{Ext}}^q_A(\k,A)$ is one dimensional if $q=n$ and $0$
  otherwise. And  ${\mathrm{Ext}}^p_H(\,_H\k,H)$ is one dimensional if
  $p=d$ and $0$ otherwise. Let $M=\,_\Lambda\k$ and $N=\Lambda$. The proposition
  therefore follows from \ref{sec:10} and \ref{sec:11}.
\end{proof}

\subsubsection{}
\label{sec:30}

Whenever $B$ is an augmented $\k$-algebra with left Artin-Schelter
property in dimension $t$, there exists a unique algebra homomorphism
$\lambda_B\colon B\to \k$ such that the  right $B$-module structure
of ${\mathrm{Ext}}^t_B(\k,B)$ is given by $e_Bb=\lambda_B(b)e_B$ for every
$e_B\in {\mathrm{Ext}}^t_B(\k,B)$.

In the setting of \ref{sec:12}, the algebra homomorphism
$\lambda_{A\sharp H}$ may be described in terms of $\lambda_A$ and
$\lambda_H$. Since ${\mathrm{Ext}}^n_A(\k,A)$ is one dimensional, there exists an algebra
homomorphism $\delta\colon H\to \k$ such that the  left
$H$-module structure of ${\mathrm{Ext}}^n_A(\k,A)$ is given by $h\rightharpoonup
e_A=\delta(h)e_A$ for every $e_A\in {\mathrm{Ext}}^n_A(\k,A)$. Note that,
when $H$ is finite dimensional and $A$ is 
connected graded and Artin-Schelter Gorenstein, then $\delta\circ
S$ ($=\delta\circ S^{-1}$)
is the homological determinant defined in \cite[Definition
3.3]{MR2568355}. According to \ref{sec:11}, the algebra homomorphism
$\lambda_{A\sharp H}$ is given by
\[
\lambda_{A\sharp H}(ah) = \lambda_A(a)\delta (S^{-1}(h_1))\lambda_H(h_2)\,.
\]

\subsubsection{}
\label{sec:13}
Here are sufficient conditions for $\Lambda$ to be Artin-Schelter regular.
\begin{prop}
  Let $H$ be a  Hopf algebra with invertible antipode. Assume that
  $_H\k$ has a resolution in ${\mathrm{mod}}(H)$ by finitely generated
  projectives. Let
  $A$ be an augmented 
  $H$-module noetherian algebra. Assume that $H$ and $A$ are Artin-Schelter
  regular in dimension $n$ and $d$, respectively. Then, $A\sharp H$ is
  Artin-Schelter regular in dimension $n+d$.
\end{prop}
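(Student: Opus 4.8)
The plan is to verify separately the three ingredients entering the definition of Artin--Schelter regularity recalled in \ref{sec:dual-cond-dg-4}: the two-sided Artin--Schelter condition with dimension $n+d$, the finiteness of the global dimension, and the finiteness and equality of the injective dimensions of $\,_\Lambda\Lambda$ and $\Lambda_\Lambda$. Two of these reduce to machinery already available, and the third follows by a short squeezing argument.

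First I would establish the Artin--Schelter condition on both sides. Since $A$ and $H$ are Artin--Schelter regular, each satisfies the left \emph{and} the right Artin--Schelter conditions and has finite global dimension. As $A$ is noetherian and $\,_A\k$ is cyclic of finite projective dimension, it admits a resolution by finitely generated projective left $A$-modules, and likewise on the right. For $H$ the hypothesis provides such a resolution of $\,_H\k$; since $S$ is invertible, the algebra isomorphism $H\simeq H^{\rm op}$ it induces identifies ${\rm mod}(H)$ with ${\rm mod}(H^{\rm op})$, sends $\,_H\k$ to $\k_H$ and preserves finitely generated projectives, so $\k_H$ also has a resolution by finitely generated projectives. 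Thus the hypotheses of \ref{sec:12} and of its right-handed analogue are met, and $\Lambda$ satisfies both Artin--Schelter conditions with dimension $n+d$. In particular ${\rm Ext}^{i}_{\Lambda}(\,_\Lambda\k,\Lambda)$ and ${\rm Ext}^{i}_{\Lambda^{\rm op}}(\k_\Lambda,\Lambda)$ are one-dimensional for $i=n+d$ and vanish otherwise.

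Next I would bound the global dimension using the spectral sequence of \ref{sec:10}. For arbitrary left $\Lambda$-modules $M,N$, the term ${\rm Ext}^p_H(\,_H\k,{\rm Ext}^q_A(M,N))$ vanishes whenever $q>{\rm gl.dim.}(A)=d$ or $p>{\rm pd}_H(\,_H\k)$, and ${\rm pd}_H(\,_H\k)\leqslant {\rm gl.dim.}(H)=n$. Hence ${\rm Ext}^{m}_\Lambda(M,N)=0$ for all $m>n+d$, so ${\rm gl.dim.}(\Lambda)\leqslant n+d$; the non-vanishing of ${\rm Ext}^{n+d}_\Lambda(\,_\Lambda\k,\Lambda)$ forces the reverse inequality, whence ${\rm gl.dim.}(\Lambda)=n+d<\infty$.

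Finally, the injective dimensions follow by squeezing: ${\rm id}(\,_\Lambda\Lambda)\leqslant {\rm gl.dim.}(\Lambda)=n+d$, while the non-vanishing of ${\rm Ext}^{n+d}_\Lambda(\,_\Lambda\k,\Lambda)$ gives ${\rm id}(\,_\Lambda\Lambda)\geqslant n+d$; the right Artin--Schelter condition yields the same value for ${\rm id}(\Lambda_\Lambda)$. Both injective dimensions are therefore finite and equal to $n+d$, so $\Lambda$ is Artin--Schelter Gorenstein, and the finite global dimension upgrades this to Artin--Schelter regular of dimension $n+d$. I expect the only genuinely delicate point to be the verification of the finiteness hypotheses needed to apply \ref{sec:12} on the right, namely producing a finitely generated projective resolution of $\k$ over $H^{\rm op}$: this is exactly where invertibility of $S$ is indispensable, since $H$ is not assumed to be noetherian and so the resolution of $\k_H$ cannot simply be read off from finite projective dimension.
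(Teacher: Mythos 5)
Your proposal is correct and follows essentially the same route as the paper's proof: transport the finitely generated projective resolution of $\k$ across the invertible antipode to get one over $H^{\rm op}$, apply \ref{sec:12} and its dual to obtain both Artin--Schelter conditions with dimension $n+d$, bound ${\rm gl.dim.}\,\Lambda$ by $n+d$ via the spectral sequence of \ref{sec:10}, and squeeze the injective dimensions between this bound and the nonvanishing ${\rm Ext}$-group. The one place you are looser than the paper is the right-hand side: the upper bound ${\rm id}(\Lambda_\Lambda)\leqslant n+d$ requires the dual (right-module) version of the spectral sequence of \ref{sec:10} --- the paper's ``dual version'' --- since the right Artin--Schelter condition alone only supplies the lower bound; this is a one-line fix entirely within the machinery you already invoke.
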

\begin{proof}
  Since the antipode is invertible, then $\k_H$ has a resolution in
  ${\mathrm{mod}}(H^{\mathrm{op}})$ by finitely generated projectives.
  According to \ref{sec:12} and its dual version, it suffices to prove
  that ${\mathrm{gl.dim.}}\,\Lambda\leqslant n+d$. This follows from
  \ref{sec:10}, from its dual version and from the hypotheses on the
  global dimensions of $A$ and $H$.
\end{proof}

\subsubsection{}
\label{sec:20}
In order to present an analog of the previous result for
Artin-Schelter Gorenstein algebras it is necessary to deal with the
finiteness of injective dimensions. This is taken care of by the
following result.
\begin{lem}
  Let $H$ be a Hopf algebra with invertible antipode. Let $A$ be an $H$-module algebra. Let
  $\Lambda=A\sharp H$. Assume
  that $A$ is left noetherian. Then, ${\mathrm{id}}(\,_{\Lambda}\Lambda)<\infty$ under any of the two following conditions
  \begin{enumerate}[(a)]
  \item $H$ is finite-dimensional and ${\mathrm{id}}(\,_AA)<\infty$,
  \item  ${\mathrm{pd}}_{H}(\,_H\k)<\infty$
    and ${\mathrm{id}}_A(A)<\infty$.
    \end{enumerate}
\end{lem}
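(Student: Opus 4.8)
The plan is to feed everything through the Grothendieck spectral sequence of \ref{sec:10}, specialised to $N={}_\Lambda\Lambda$, namely
\[
E_2^{p,q}={\rm Ext}^p_H(\,_H\k,{\rm Ext}^q_A(M,\Lambda))\Rightarrow {\rm Ext}^{p+q}_\Lambda(M,\Lambda)\,,
\]
and to bound the two directions of the $E_2$-page separately so that it becomes supported in a bounded region; finiteness of ${\rm id}(\,_\Lambda\Lambda)$ then follows because ${\rm Ext}^i_\Lambda(M,\Lambda)$ vanishes for $i$ large and all relevant $M$. The $q$-direction is controlled by a preliminary observation valid in both cases. As a left $A$-module, $\Lambda=A\otimes H$ with $A$ acting by multiplication on the left tensorand, so $_A\Lambda$ is free (a direct sum of copies of $_AA$ indexed by a basis of $H$). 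Since $A$ is left noetherian, arbitrary direct sums of injective left $A$-modules are injective, and applying $-\otimes H$ to a finite injective resolution of $_AA$ shows ${\rm id}(\,_A\Lambda)={\rm id}(\,_AA)=:n<\infty$ (reading ${\rm id}_A(A)$ in (b) as the injective dimension of $_AA$, which is what matters because $_A\Lambda$ is free). Consequently ${\rm Ext}^q_A(M,\Lambda)=0$ for every left $A$-module $M$ and every $q>n$, so $E_2^{p,q}=0$ for $q>n$.

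In case (b), set $d={\rm pd}_H(\,_H\k)<\infty$. A projective resolution of $_H\k$ of length $d$ yields ${\rm Ext}^p_H(\,_H\k,X)=0$ for every left $H$-module $X$ and every $p>d$, so $E_2^{p,q}=0$ for $p>d$. Combining the two vanishing ranges, the $E_2$-page is supported on $\{0,\dots,d\}\times\{0,\dots,n\}$, whence ${\rm Ext}^i_\Lambda(M,\Lambda)=0$ for all $M$ and all $i>n+d$. Therefore ${\rm id}(\,_\Lambda\Lambda)\leqslant n+d<\infty$.

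In case (a), $H$ is finite-dimensional, hence a Frobenius algebra, so $_HH$ is an injective $H$-module and ${\rm Ext}^p_H(\,_H\k,H)=0$ for $p\geqslant 1$. To exploit this I would first reduce to finitely generated $M$: since $A$ is left noetherian and $H$ is finite-dimensional, $\Lambda$ is module-finite over $A$, hence left noetherian, and injective dimension over a noetherian ring is detected on finitely generated (indeed cyclic) modules. Such an $M$ is finitely generated over $A$ and thus admits a resolution by finitely generated projective $A$-modules; likewise $_H\k$ admits one over the finite-dimensional $H$. Then \ref{sec:11} applies and identifies $E_2^{p,q}\cong{\rm Ext}^q_A(M,A)\otimes{\rm Ext}^p_H(\,_H\k,H)$, which vanishes for $p\geqslant 1$. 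The spectral sequence collapses onto the column $p=0$, giving ${\rm Ext}^i_\Lambda(M,\Lambda)\cong{\rm Ext}^i_A(M,A)\otimes\Hom_H(\,_H\k,H)$; this is zero for $i>n$ because ${\rm id}(\,_AA)=n$, so ${\rm id}(\,_\Lambda\Lambda)\leqslant n<\infty$.

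The load-bearing points, and the places where I expect the real work, are the reduction ${\rm id}(\,_A\Lambda)={\rm id}(\,_AA)$, which must handle a possibly infinite free $A$-module in case (b) and therefore rests squarely on the left-noetherian hypothesis (so that arbitrary direct sums of injectives remain injective), and, in case (a), the interplay of the reduction to finitely generated modules with the self-injectivity of the finite-dimensional Hopf algebra $H$, which is precisely what makes the $H$-direction contribute nothing beyond degree zero. The convergence of the first-quadrant spectral sequence and the verification that the finite-generation hypotheses of \ref{sec:11} are met are the remaining routine checks.
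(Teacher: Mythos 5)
Your proof is correct and follows essentially the same route as the paper: in case (b) you bound the $E_2$-page of the spectral sequence of \ref{sec:10} using ${\rm id}(\,_A\Lambda)\leqslant{\rm id}(\,_AA)$ (via noetherianness and direct sums of injectives) together with ${\rm pd}_H(\,_H\k)<\infty$, and in case (a) you invoke \ref{sec:11} and the self-injectivity of the finite-dimensional Hopf algebra $H$ to collapse the spectral sequence onto the column $p=0$ for finitely generated modules. The only cosmetic difference is the final reduction in case (a), where the paper passes to arbitrary modules by ``taking direct limits'' while you argue via noetherianness of $\Lambda$ and detection of injective dimension on cyclic modules; both are standard and equivalent ways to conclude.
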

\begin{proof}
  First, assume (a).  Note that ${\mathrm{id}}(\,_HH)=0$, since all finite
  dimensional Hopf algebras are Frobenius, and that $_H\k$ has a
  resolution in ${\mathrm{mod}}(H)$ by finitely generated projectives. Any
  finitely generated left $\Lambda$-module is finitely generated as an
  $A$-module. Therefore, \ref{sec:10} and \ref{sec:11} apply to any
  $M\in {\mathrm{mod}}(\Lambda)$ which is finitely generated. For any such
  $M$, it follows that ${\mathrm{Ext}}_{\Lambda}^m(M,\Lambda)=0$ for
  $m>{\mathrm{id}}(\,_AA)$.  Taking direct limits entails that
  ${\mathrm{id}}_{\Lambda}(\Lambda)<\infty$.


Next, assume (b).
  The following properties imply that ${\mathrm{id}}_A(\Lambda)<\infty$:
  \begin{itemize}
  \item since $A$ is left noetherian any direct sum of injective
    left $A$-modules is an injective $A$-module (see \cite[Proposition
    3.46, p. 80]{MR1653294}),
  \item ${\mathrm{id}}(\,_AA)<\infty$,
  \item  $\Lambda\simeq A\otimes H$ as left $A$-modules.
  \end{itemize}
The conclusion of the lemma then follows from \ref{sec:10} applied to $N=\Lambda$.
\end{proof}

\subsubsection{}
\label{sec:21}
Here are sufficient conditions for $\Lambda$ to be Artin-Schelter Gorenstein.
\begin{prop}
  Let $H$ be a  Hopf algebra. Let
  $A$ be an augmented
  $H$-module noetherian algebra which is moreover 
  Artin-Schelter Gorenstein in dimension $n$.  
  \begin{enumerate}
  \item If $H$ is finite dimensional, then $\Lambda$ is
    Artin-Schelter Gorenstein in dimension $n$.
  \item If $H$ has Van den Bergh duality in dimension $d$, then $\Lambda$ is
    Artin-Schelter Gorenstein in dimension $n+d$.
  \end{enumerate}
\end{prop}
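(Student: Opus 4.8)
The plan is to establish the two defining properties of the Artin-Schelter Gorenstein condition for $\Lambda=A\sharp H$ separately: first that $\Lambda$ satisfies the left and right Artin-Schelter conditions with the announced dimension, and then that ${\rm id}(\,_\Lambda\Lambda)$ and ${\rm id}(\Lambda_\Lambda)$ are finite and equal. At the outset I would record that the antipode $S$ is invertible in both cases, so that \ref{sec:12} and \ref{sec:20} apply: in case (1) because every finite-dimensional Hopf algebra has bijective antipode, and in case (2) by Proposition~\ref{prop1} (see~\ref{sec:38}). I would also note that, $A$ being augmented and noetherian, the trivial modules $\,_A\k$ and $\k_A$ admit resolutions by finitely generated projectives, and that $A$ being Artin-Schelter Gorenstein of dimension $n$ supplies both one-sided Artin-Schelter conditions with dimension $n$ together with ${\rm id}(\,_AA)={\rm id}(A_A)=n$.

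For the Artin-Schelter conditions on $\Lambda$ I would invoke \ref{sec:12} and its right-handed analogue, which reduce matters to the corresponding one-sided conditions for $A$ and $H$ (plus the finitely generated projective resolutions of the trivial modules). The data for $A$ is already in hand, so the work is in treating $H$. In case (1), $H$ is finite-dimensional, hence Frobenius and self-injective, so ${\rm Ext}^i_H(\,_H\k,H)$ and its right analogue are one-dimensional for $i=0$ and vanish otherwise; thus $H$ satisfies both Artin-Schelter conditions with dimension $d=0$, and $\,_H\k$ has a finitely generated projective resolution since $H$ is noetherian. In case (2), $H$ has Van den Bergh duality, so \ref{sec:38} gives that it is right Artin-Schelter regular of dimension $d$; applying the same result to $H^{\rm op}$ — which again has Van den Bergh duality because $(H^{\rm op})^e\cong H^e$ once $S$ is invertible — yields the left Artin-Schelter condition of $H$ with the same dimension, while \ref{sec:23} (applied to $H$ and to $H^{\rm op}$) shows $\,_H\k\in{\rm per}(H)$ and $\k_H\in{\rm per}(H^{\rm op})$, hence the needed finitely generated projective resolutions. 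Feeding this into \ref{sec:12} and its dual shows that $\Lambda$ satisfies both Artin-Schelter conditions, of dimension $n+0=n$ in case (1) and $n+d$ in case (2).

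It remains to pin down the injective dimensions, for which I would apply \ref{sec:20}. In case (1) its condition (a) holds ($H$ finite-dimensional, ${\rm id}(\,_AA)<\infty$), and its proof in fact yields ${\rm id}(\,_\Lambda\Lambda)\le{\rm id}(\,_AA)=n$. In case (2) its condition (b) holds, using that Van den Bergh duality forces ${\rm gl.dim}\,H=d$, so ${\rm pd}_H(\,_H\k)\le d<\infty$, together with ${\rm id}_A(A)<\infty$; reading off the vanishing ranges in the spectral sequence of \ref{sec:10} (applied with $N=\Lambda$) gives ${\rm id}(\,_\Lambda\Lambda)\le {\rm pd}_H(\,_H\k)+{\rm id}_A(\Lambda)\le d+n$, where ${\rm id}_A(\Lambda)={\rm id}(\,_AA)=n$ because $\Lambda\cong A\otimes H$ is a direct sum of copies of $\,_AA$ over the noetherian ring $A$. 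The dual arguments supply the same upper bounds for ${\rm id}(\Lambda_\Lambda)$. Combining these with the lower bound furnished by the Artin-Schelter condition — namely ${\rm Ext}^m_\Lambda(\,_\Lambda\k,\Lambda)\ne0$ forces ${\rm id}(\,_\Lambda\Lambda)\ge m$, with $m=n$ resp.\ $n+d$ — forces both injective dimensions to equal this common value $m$; in particular they are finite and equal, and $\Lambda$ is Artin-Schelter Gorenstein of the stated dimension.

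The main obstacle is the second paragraph, specifically the verification of both one-sided Artin-Schelter conditions for $H$ in case (2): one must transfer Van den Bergh duality from $H$ to $H^{\rm op}$ in order to deduce the left condition from the right one provided by \ref{sec:38}, and this is exactly where invertibility of $S$ is used. Once the hypotheses of \ref{sec:12} and \ref{sec:20} are all in place, the remaining injective-dimension bookkeeping is routine, the only mild subtlety being to extract explicit numerical bounds from the proofs of \ref{sec:20} and \ref{sec:10} rather than mere finiteness.
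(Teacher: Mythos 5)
Your proof is correct and follows essentially the same route as the paper's: both reduce the Artin--Schelter conditions for $\Lambda$ to \ref{sec:12} and its dual version (obtaining the two one-sided AS conditions for $H$ from Van den Bergh duality by passing to $H^{\rm op}$, i.e.\ the ``dual versions'' of \ref{sec:38}, \ref{sec:23} and \ref{sec:25} that the paper invokes), and both obtain finiteness of the injective dimensions from \ref{sec:20} and its dual. The only place you go beyond the paper's terse proof is the explicit squeeze argument -- the upper bounds ${\rm id}(\,_\Lambda\Lambda),\,{\rm id}(\Lambda_\Lambda)\leqslant n$ resp.\ $n+d$ extracted from the spectral sequence of \ref{sec:10}, combined with the lower bound coming from ${\rm Ext}^{m}_{\Lambda}(\,_\Lambda\k,\Lambda)\neq 0$ -- which pins both injective dimensions to the common value and thereby makes explicit the ``finite and equal'' requirement that the paper leaves implicit.
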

\begin{proof}
  Any finite dimensional Hopf algebra has an invertible antipode and
  is selfinjective. In particular, it is Artin-Schelter Gorenstein.
  Also, recall that, if $H$ has Van den Bergh duality, then its
  antipode is invertible, $_H\k\in {\mathrm{per}}(H)$ and
  $\k_H\in {\mathrm{per}}(H^{\mathrm{op}})$, and $H$ has the Artin-Schelter
  property (see \ref{sec:25} and its dual version). Therefore (1) and
  (2) follow from \ref{sec:12}, from \ref{sec:20} and their dual
  versions.
\end{proof}

\section{Acknowledgements}

I thank an anonymous referee for several comments which improved the
presentation of this article and for having pointed out an error in a
previous version of the lemma in \ref{sec:22}.

\begin{table}[!ht]
  \centering
  \begin{tabularx}{\textwidth}{>{\tiny\hsize=.4\hsize}X>{\tiny\hsize=2.1\hsize}X>{\tiny\hsize=0.5\hsize}X}
    \hline
    \\
    \multicolumn{3}{l}{Index of notation}
    \\
    \hline Symbol & Meaning & Location
    \\
    \hline
    \\
    $\k$ & base field & Introduction \\
    $A$ & dg $H$-module $\k$-algebra & Introduction \\
    $A^e$ & $A\otimes_{\k} A^{\mathrm{op}}$ & Introduction \\
    $\mu_A$ & a Nakayama automorphism of $A$ & Introduction \\
    $H$ &  Hopf $\k$-algebra & Introduction \\
    $S$ & antipode of $H$ & Introduction \\
    $\Lambda$ & smash product $A\sharp H$ & Introduction \\
    $^\tau M^\sigma$ & twisting of a bimodule by algebra automorphisms
    $\tau,\sigma$ & Introduction \\
    $\otimes$ & $\otimes_\k$ & \ref{sec:conventions-notation} \\
    $\mathcal C(A)$ & category of left dg $A$-modules &
    \ref{sec:conventions-notation} \\
    $\mathcal D(A)$ & derived category of left dg $A$-modules &
    \ref{sec:conventions-notation} \\
    $\int_\ell,\int_r$ & left and right homological integrals &
    \ref{sec:homol-integr-wind} \\
    $\Xi^\ell_\bullet,\Xi^r_\bullet$ & winding automorphisms &
    \ref{sec:homol-integr-wind} \\
    $\bullet\uparrow^{H^e}$ & functor from left $H$-modules to
    $H$-bimodules & \ref{sec:24} \\
    $\Delta_i$ & dg algebra whose left dg modules are
    $H_{S^{2i}}$-equivariant dg $A$-bimodules &
    \ref{sec:algebras-delta_i} \\
    $D\sharp\,^\sigma H$ & $\Lambda$-bimodule extension of an
    $H_{S^{2i}}$-equivariant dg $A$-bimodule &
    \ref{sec:asharp-h-bimodules} \\
    $H^{\sigma}\sharp D$ & & \ref{sec:45} \\
    $\Pi_n(A)$ & Calabi-Yau completion of $A$ & Section
    \ref{sec:appl-constr-calabi} \\
    $\Pi_n(A,\alpha)$ & a deformed Calabi-Yau completion of $A$ &
    Section
    \ref{sec:appl-constr-calabi} \\
    $e_A$ & a free generator of the left $A$-module ${\mathrm{Ext}}^n_{A^e}(A,A^e)$ & \ref{sec:17} \\
    $\varphi_A$ & a cocycle representing $e_A$ & \ref{sec:17} \\
    ${\mathrm{whdet}}$ & weak homological determinant $H\to A$ & \ref{sec:35} \\
    $\theta_{\mathrm{whdet}}$ & algebra homomorphism $H\to \Lambda$
    corresponding to ${\mathrm{whdet}}$ & \ref{sec:35} \\
    ${\mathrm{hdet}}$ & homological determinant & \ref{sec:36} \\
    $\mu_\Lambda,\mu_H$ & Nakayama automorphisms of
    $\Lambda$ and $H$& \ref{sec:19} \\
    $\mathbb C_q[x,y]$ & the quantum plane &
    \ref{sec:remind-u_qm-mathbb} \\
    $\mathcal U_q(\mathfrak{sl}_2)$ & the quantum enveloping algebra &
    \ref{sec:remind-u_qm-mathbb}
    \\
    \hline
    \end{tabularx}
\end{table}

\bibliographystyle{plain}
\bibliography{biblio}

\end{document}